\def\beq{\begin{equation}}
\def\eeq{\end{equation}}
\theoremstyle{plain}
\newtheorem{theorem}[subsection]{Theorem}
\newtheorem*{theorem*}{Theorem}
\newtheorem{proposition}[subsection]{Proposition}
\newtheorem*{proposition*}{Proposition}
\newtheorem{lemma}[subsection]{Lemma}
\newtheorem*{lemma*}{Lemma}
\newtheorem*{fact*}{Fact}
\newtheorem{corollary}[subsection]{Corollary}
\newtheorem*{corollary*}{Corollary}
\theoremstyle{definition}
\theoremstyle{remark}
\newtheorem{remark}[subsection]{Remark}
\newtheorem*{remarks}{Remarks}
\newtheorem{example}[subsection]{Example}
\renewcommand{\comment}[1] {  }
\DeclareFontFamily{OT1}{rsfs}{}
\DeclareFontShape{OT1}{rsfs}{n}{it}{<-> rsfs10}{}
\DeclareMathAlphabet{\mathscr}{OT1}{rsfs}{n}{it}
\newcommand{\from}{\leftarrow}
\newcommand{\inv}{{\operatorname{inv}}}
\newcommand{\Res}{\mathrm{Res}}
\newcommand{\Z}{\mathbb{Z}}
\newcommand{\CC}{\mathbb{C}}
\newcommand{\RR}{\mathbb{R}}
\newcommand{\QQ}{\mathbb{Q}}
\newcommand{\unr}{{\operatorname{unr}}}
\newcommand{\ab}{{\operatorname{ab}}}
\newcommand{\Hom}{\operatorname{Hom}}
\newcommand{\End}{\operatorname{End}}
\newcommand{\Aut}{{\operatorname{Aut}}}
\newcommand{\varchi}{\mathcal{X}}
\newcommand{\Gm}{\mathbb{G}_m}
\newcommand{\Ga}{\mathbb{G}_a}
\newcommand{\GL}{\operatorname{GL}}
\newcommand{\Mat}{\operatorname{Mat}}
\newcommand{\PGL}{\operatorname{PGL}}
\newcommand{\SL}{\operatorname{SL}}
\newcommand{\SO}{{\operatorname{SO}}}
\newcommand{\spec}{\operatorname{spec}}
\newcommand{\Vol}{\operatorname{Vol}}
\newcommand{\diag}{{\operatorname{diag}}}
\newcommand{\disc}{{\operatorname{disc}}}
\newcommand{\temp}{{\operatorname{temp}}}
\newcommand{\sm}{{\operatorname{sm}}}
\newcommand{\cont}{{\operatorname{cont}}}
\newcommand{\Id}{\operatorname{Id}}
\newcommand{\cusp}{{\operatorname{cusp}}}
\newcommand{\noncusp}{{\operatorname{noncusp}}}
\newcommand{\rest}{{\operatorname{rest}}}
\newcommand{\Subunit}{{\operatorname{Subunit}}}
\newcommand{\sing}{{\operatorname{sing}}}
\newcommand{\Cent}{{\operatorname{Cent}}}
\newcommand{\St}{{\operatorname{St}}}
\begin{document}
\numberwithin{equation}{section}
\setcounter{tocdepth}{1}
\title{Paley--Wiener theorems for a $p$-adic spherical variety}

\author{Patrick Delorme}
\address[Patrick Delorme]{Aix Marseille Universit\'e\\
CNRS, Centrale Marseille, I2M, UMR 7373,\\
13453 Marseille, \\ France.}
\email{patrick.delorme@univ-amu.fr}

\author{Pascale Harinck}
\address[Pascale Harinck]{
CMLS, \'Ecole polytechnique,\\ CNRS, Universit\'e Paris-Saclay,\\ 91128 Palaiseau Cedex,\\ France.}
\email{pascale.harinck@polytechnique.edu}

\author{Yiannis Sakellaridis}
\address[Yiannis Sakellaridis]{Rutgers University - Newark \\
101 Warren Street \\
Smith Hall 216 \\
Newark, NJ 07102\\
USA.\\
 and  Department of Mathematics,
School of Applied Mathematical and Physical Sciences,
National Technical University of Athens,
Heroon Polytechneiou 9,
Zografou 15780, Greece.
}
\email{sakellar@rutgers.edu}

\begin{abstract}
Let $\mathcal S(X)$ be the Schwartz space of compactly supported smooth functions on the $p$-adic points of a spherical variety $X$, and let $\mathscr C(X)$ be the space of Harish-Chandra Schwartz functions. Under assumptions on the spherical variety, which are satisfied when it is symmetric, we prove Paley--Wiener theorems for the two spaces, characterizing them in terms of their spectral transforms. As a corollary, we get relative analogs of the smooth and tempered Bernstein centers --- rings of multipliers for $\mathcal S(X)$ and  $\mathscr C(X)$.  When $X=$ a reductive group, our theorem for $\mathscr C(X)$ specializes to the well-known theorem of Harish-Chandra, and our theorem for $\mathcal S(X)$ corresponds to a first step --- enough to recover the structure of the Bernstein center --- towards the well-known theorems of Bernstein \cite{BePadic} and Heiermann \cite{Heiermann}. 
\end{abstract}

\subjclass[2010]{22E35 (primary), and 43A85 (secondary)} 

\keywords{Harmonic analysis, Paley--Wiener, Schwartz space, symmetric spaces, spherical varieties, relative Langlands program}

\maketitle
\tableofcontents

\section{Introduction}

The goal of this paper is to characterize the spectral transform of the spaces of Schwartz (i.e.\ smooth, compactly supported) and Harish-Chandra Schwartz functions on the points of a homogeneous spherical variety over a $p$-adic field, { and produce rings of multipliers, that is, $G$-endomorphisms, which generalize the (tempered and smooth) Bernstein centers}. We do it under some assumptions on the variety, the main one being that the variety and its associated ``Levi varieties'' are ``factorizable'' --- this is a condition that allows one to continuously vary the central character of a representation appearing in the space of functions on the variety by multiplying by characters of the group.
This condition restricts us to a slightly larger setting than that of symmetric spaces. (In the non-symmetric case, there are some other conditions for which we have no general proof, and have to be checked ``by hand'' in each case; but we expect them to hold in general.) Our assumptions are explained in \S \ref{ssassumptions}, and the range of their validity is discussed in detail, including some examples, in Appendix \ref{app:factorizable}.

Let $X$ be a spherical variety for a group $G$ over a non-Archimedean local field $F$, satisfying those assumptions. We will be denoting $X(F)$ simply by $X$ (and similarly for other varieties), when this causes no confusion. We assume that $X=X(F)$ is endowed with a $G$-eigenmeasure, and normalize the action of $G$ on $L^2(X)$ (and other spaces of functions on $X$) so that it is unitary. The maximal split torus $\mathcal Z(X)$ of $G$-automorphisms of $X$ is the (split) \emph{center} of $X$. To $X$ one associates some ``simpler'' spherical $G$-spaces $X_\Theta$ with more symmetries, called the \emph{boundary degenerations}, parametrized by standard Levi subgroups in the ``dual group'' of $X$, whose Weyl group we denote by $W_X$. When $G$ is not split then we demand that $X$ is symmetric, and these symbols refer to their ``relative'' versions, cf.\ \S \ref{generalsymmetric}.

\subsection{Paley--Wiener for the Harish-Chandra Schwartz space}

The definition of the Harish-Chandra Schwartz space $\mathscr C(X_\Theta)$ (including the case $X_\Theta=X$) is recalled in \S \ref{sseigenmeasures}. It is a topological vector space (more precisely: an LF-space, i.e.\ countable strict direct limit of Fr\'echet spaces) of functions which plays a central role in the derivation of the Plancherel formula for the group by Harish-Chandra, cf.\ \cite{Waldspurger-Plancherel}. On the other hand, the method of proof of the Plancherel formula introduced in \cite{SV} and adopted in \cite{Delorme-Plancherel} directly leads to the $L^2$-Plancherel formula, without having to characterize the spectral transform of Harish-Chandra Schwartz functions; thus, this problem remained open.

In \S \ref{sec:discreteHC} it is shown that $\mathscr C(X_\Theta)$ has a direct summand, the intersection  with the ``discrete-modulo-center'' part $L^2(X_\Theta)_\disc$ of $L^2(X_\Theta)$, which we will denote by $\mathscr C(X_\Theta)_\disc$. That carries an action of a ring of multipliers $\mathfrak z^\disc(X_\Theta^L)$, the ``discrete center'' of $X_\Theta$, which is isomorphic to the ring of $C^\infty$ functions on the ``discrete spectrum'' $\widehat{X_\Theta^L}^\disc$ of $X_\Theta$ (to be explained below):
$$\mathfrak z^\disc(X_\Theta^L):= C^\infty(\widehat{X_\Theta^L}^\disc).$$

By \cite{SV, Delorme-Plancherel}, for each $\Theta$ one has a canonical ``Bernstein map'': 
$$\iota_\Theta: L^2(X_\Theta)\to L^2(X).$$
Moreover, for each $w\in W_X(\Omega,\Theta)$, i.e.\ each element of $W_X$ which takes a standard Levi $\Theta$ of the dual group to a standard Levi $\Omega$, there is a canonical ``scattering map'' 
$$S_w: L^2(X_\Theta)\xrightarrow\sim L^2(X_\Omega),$$
which is $w$-equivariant with respect to the ``centers'' (i.e.\ $G$-automorphism groups) of these spaces and such that we have a decomposition:
\beq\label{ii}\iota_\Omega^*\iota_\Theta = \sum_{w\in W_X(\Omega,\Theta)} S_w.\eeq
Notice that, despite the notation, the scattering operators are not parametrized by elements of $W_X$, but by triples $(\Theta,\Omega, w\in W_X(\Omega,\Theta))$.

The main theorem \cite[Theorem 7.3.1]{SV}, \cite[Theorem 6]{Delorme-Plancherel} on the Plancherel decomposition of $L^2(X)$ states:

\begin{theorem}\label{L2theorem}
 Let $\iota_{\Theta,\disc}^*$ denote the map $\iota_\Theta^*$ composed with projection to the discrete spectrum. The sum:
 \begin{equation}\label{L2} \iota^*:= \sum_\Theta \frac{\iota_{\Theta,\disc}^*}{\sqrt{c(\Theta)}}: L^2(X) \to \bigoplus_\Theta L^2(X_\Theta)_\disc,
 \end{equation}
 where $c(\Theta)$ is the number of ``Weyl chambers'' associated to $\Theta$ ($= \#\{w\in W_X| w\Theta\subset\Delta_X\}$), is an isometric isomorphism of $L^2(X)$ onto 
 $$\left(\bigoplus_\Theta L^2(X_\Theta)_\disc\right)^\inv,$$ the subspace consisting of collections $(f_\Theta)_\Theta$ such that for all triples $(\Theta,\Omega, w\in W_X(\Omega,\Theta))$ we have: $S_w f_\Theta = f_\Omega$. 
\end{theorem}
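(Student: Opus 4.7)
My plan is to deduce the theorem from two ingredients forming the substance of the theory developed in \cite{SV, Delorme-Plancherel}. The first is a Plancherel-type formula expressing $\Id_{L^2(X)}$ as a weighted sum of the positive operators $\iota_{\Theta,\disc}\iota_{\Theta,\disc}^*$, with weights arranged so that contributions of $W_X$-conjugate $\Theta$'s combine correctly. The second is a compatibility of scattering with the adjoint Bernstein map, namely $S_w\circ\iota_{\Theta,\disc}^*=\iota_{\Omega,\disc}^*$ for $w\in W_X(\Omega,\Theta)$, which I would extract by dualizing a companion identity $\iota_\Omega\circ S_{w^{-1}}=\iota_\Theta$ (itself a consequence of the asymptotic characterization of Bernstein maps) together with the unitarity $S_w^*=S_{w^{-1}}$. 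The essential work lies in the Plancherel formula; the remainder is formal bookkeeping.

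Granting these, the three claims proceed in sequence. First, $\iota^* f$ lies in the invariant subspace: I observe that $c(\Theta)=c(\Omega)$ whenever $W_X(\Omega,\Theta)\neq\emptyset$, since the two counting sets are put into bijection by right multiplication by any fixed $w\in W_X(\Omega,\Theta)$; the compatibility identity then yields $S_w\bigl(c(\Theta)^{-1}\iota_{\Theta,\disc}^* f\bigr)=c(\Omega)^{-1}\iota_{\Omega,\disc}^* f$. Second, the isometry translates directly into the Plancherel identity once one expands
\[
\|\iota^* f\|^2 \;=\; \Bigl\langle\,\sum_\Theta c(\Theta)^{-2}\,\iota_{\Theta,\disc}\,\iota_{\Theta,\disc}^* f,\;f\,\Bigr\rangle,
\]
so that the Plancherel formula in this normalization collapses the right-hand side to $\|f\|^2$. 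Third, for surjectivity onto invariants I would construct an explicit preimage of a given invariant $(g_\Theta)$ as a suitable linear combination of the $\iota_{\Theta,\disc}g_\Theta$, then use \eqref{ii} together with $S_w g_\Theta=g_\Omega$ and the bookkeeping identity $\sum_\Theta|W_X(\Omega,\Theta)|=c(\Omega)$ --- obtained from the bijection $v=w^{-1}$ between pairs $(w,\Theta)$ with $w\Theta=\Omega$ and standard $\Theta$, and the set $\{v\in W_X:v\Omega\subset\Delta_X\}$ --- to verify that the image under $\iota^*$ recovers $(g_\Theta)$.

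The hard step is of course the Plancherel formula itself, asserting that the images of the $\iota_{\Theta,\disc}$ span a dense subspace of $L^2(X)$ while their overlaps are controlled exactly by the scattering maps, with the weights absorbing the over-counting across Weyl orbits. Establishing it is the main content of \cite{SV, Delorme-Plancherel}, proved via the construction of wave packets from cuspidal data on boundary degenerations, careful analysis of their asymptotic constants, and an inductive argument comparing contributions from different $\Theta$; in the present plan I would import the Plancherel formula as a black box rather than rederive it.
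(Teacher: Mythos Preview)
This theorem is not proved in the paper; it is quoted as the main result of \cite[Theorem 7.3.1]{SV} and \cite[Theorem 6]{Delorme-Plancherel} and serves as input to everything that follows. Your plan---import the Plancherel formula from those references and check the remaining bookkeeping---is therefore exactly the paper's own stance on this statement.

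One small slip in your write-up: the companion identity should read $\iota_\Theta\circ S_{w^{-1}}=\iota_\Omega$ for $w\in W_X(\Omega,\Theta)$ (equivalently $\iota_\Omega\circ S_w=\iota_\Theta$, as used in \S\ref{ssexplication}); as you wrote it the composition does not typecheck, but dualizing the correct version via $S_w^*=S_{w^{-1}}$ does yield the relation $S_w\circ\iota_{\Theta,\disc}^*=\iota_{\Omega,\disc}^*$ that you need.
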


Our first version of the Paley--Wiener theorem for the Harish-Chandra Schwartz space reads:

\begin{theorem}[cf.\ Theorem \ref{HCtheorem}] \label{HCPW1}
The scattering maps $S_w$ restrict to $\mathfrak z^\disc(X_\Theta^L)$-equivariant isomorphisms (of LF-spaces) on the discrete part of the Harish-Chandra Schwartz spaces:\footnote{In Theorem \ref{unitaryextension} we extend this statement to the whole Harish-Chandra Schwartz space, but this is not necessary for formulating the Paley--Wiener theorem and only comes as a corollary of it.}
\begin{equation}\label{HCPW1eq} S_w: \mathscr C(X_\Theta)_\disc\xrightarrow{\sim} \mathscr C(X_\Omega)_\disc,\end{equation}
where $\mathfrak z^\disc(X_\Theta^L)$ acts on $\mathscr C(X_\Omega)_\disc$ via the isomorphism: 
\begin{equation}\label{wisomdisc}
\mathfrak z^\disc(X_\Theta^L)\xrightarrow\sim \mathfrak z^\disc(X_\Omega^L)
\end{equation} induced by $w$.

The sum \eqref{L2} restricts to an isomorphism of topological $G$-modules:
\begin{equation} \iota^*: \mathscr C(X) \xrightarrow\sim \left(\bigoplus_\Theta \mathscr C(X_\Theta)_\disc\right)^\inv.
\end{equation}
\end{theorem}

There is also a more explicit version of this theorem, in terms of ``normalized Eisenstein integrals'' and ``normalized constant terms''. Let $\pi$ be an irreducible smooth representation of $G$. 
 One defines the space of $\pi$-coinvariants $\mathcal S(X)_\pi$, the largest $\pi$-isotypic quotient of $\mathcal S(X)$; equivalently:
 $$\mathcal S(X)_\pi = \Hom_G( S(X),\pi)^*\otimes \pi.$$
 Its smooth dual can be identified with a canonical submodule $C^\infty(X)^\pi$ of  $C^\infty (X)$. One defines various subspaces $C^{\infty}_{\disc}(X)^\pi$,  $C^{\infty}_{\cusp}(X)^\pi $ corresponding to the condition of square integrability, resp.\ compact support modulo center, and denotes the sets of unitary irreducible representations which appear discretely, resp.\ cuspidally, by $\hat X^\disc$, resp.\ $\hat X^\cusp$. 
  Dually, we have the corresponding quotients: 
$$\mathcal S(X)_\pi \twoheadrightarrow \mathcal S(X)_{\pi, \disc} \twoheadrightarrow \mathcal S(X)_{\pi, \cusp}.$$

The same definitions can be given for any boundary degeneration $X_\Theta$, but taking into account that this space is ``parabolically induced'' from a ``Levi spherical variety'' $X_\Theta^L$ for a Levi subgroup $L_\Theta$, i.e.:
$$ X_\Theta \simeq X_\Theta^L\times^{P_\Theta^-} G,$$
The corresponding coinvariants are also parabolically induced, and indexed by representations of $L_\Theta$. 

As $\sigma$ varies over the set $\widehat {X_\Theta^L}^\disc$ of those representations which appear discretely-mod-center, the spaces $\mathscr L_{\Theta,\sigma}:=\mathcal S(X_\Theta)_{\sigma, \disc}$ are the fibers of a \emph{complex algebraic vector bundle} (actually, a countable direct limit of such) $\mathscr L_\Theta$ over the complexification of $\widehat {X_\Theta^L}^\disc$, and the canonical quotient maps give rise to a surjective morphism:
\begin{equation}\label{surjective} \mathcal S(X_\Theta) \twoheadrightarrow \CC[\widehat{X_\Theta^L}^\disc, \mathscr L_\Theta],\end{equation}
where $\CC[\bullet,\bullet]$ denotes regular (polynomial) sections of the given vector bundle.
In Theorem \ref{thmdiscrete} we show that this extends continuously to an isomorphism (via the orthogonal quotient map $\mathscr C(X)\to \mathscr C(X)_\disc$) of LF spaces:
\beq\label{isomdiscrete}\mathscr{C}(X_\Theta)_{\disc} \xrightarrow\sim C^\infty(\widehat{X_\Theta^L}^{\disc}, \mathscr L_\Theta)\eeq
(smooth sections).
The aforementioned action of the discrete center $\mathfrak z^\disc(X_\Theta^L)$ on the left is, by definition, the natural action of $C^\infty(\widehat{X_\Theta^L}^\disc)$ on the right.

It follows from their $\mathfrak z^\disc(X_\Theta^L)$-equivariance that the operators $S_w$ act fiberwise on these vector bundles; more precisely, it turns out that there are elements:
$$\mathscr S_w \in \Gamma\left(\widehat{X_\Theta^L}^\disc, \Hom_G(\mathscr L_\Theta,w^*\mathscr L_\Omega)\right),$$
where $\Gamma$ denotes \emph{rational} sections whose poles do not meet the unitary set (cf.\ \S \ref{sssections}), such that the following diagram of isomorphisms commutes:
$$\xymatrix{\mathscr C(X_\Theta)_\disc \ar[r]^\sim\ar[d]_{\mathcal S_w} & C^\infty(\widehat{X_\Theta^L}^\disc, \mathscr L_\Theta) \ar[d]^{\mathscr S_w}\\
\mathscr C(X_\Omega)_\disc \ar[r]^\sim  & C^\infty(\widehat{X_\Omega^L}^\disc, \mathscr L_\Omega).
}$$

Similarly, the Bernstein maps $\iota_\Theta$ are explicitly given by \emph{normalized Eisenstein integrals} associated to discrete data, which are explicitly defined maps: 
$$ E_{\Theta, \sigma, \disc}: \widetilde{\mathscr{L}_{\Theta, \sigma}}= C^\infty (X_\Theta)^{\tilde{\sigma}}_{\disc}\to C^\infty(X)$$ 
(where $\tilde ~$ denotes smooth dual), varying rationally with $\sigma$. If $f \in L^2 (X_\Theta)_{\disc}^\infty $ admits the decomposition: 
$$f = \int_{ \widehat{X_{\Theta}^{L}}^{\disc} }  f^{\tilde{\sigma}} (x) d\sigma$$ with $f^{\tilde{\sigma}}\in  C^\infty(X_\Theta)^{\tilde{\sigma}}_{\disc}$, 
then its image under the Bernstein map is the \emph{wave packet}:
\begin{equation}\label{explicitiota}\iota_\Theta f= \int_{ \widehat{X_\Theta^L}^{\disc}} E_{\Theta, \sigma, \disc} f ^{\tilde{\sigma}}d \sigma,
\end{equation}
cf.\ \cite[Theorem 15.6.1]{SV}, \cite[Theorem 7]{Delorme-Plancherel}. We use the $L^2$-continuity of $\iota_\Theta$ to prove that the normalized Eisenstein integrals (which are a priori rational in $\sigma$) have no poles on the imaginary axis, thus dually we get \emph{normalized constant terms} (often called \emph{normalized Fourier transforms} in the literature on symmeric spaces): 
 
\begin{equation}\label{Fourier} E_{\Theta,\disc}^*: \mathcal S(X)\to \Gamma(\widehat{X_\Theta^L}^\disc, \mathscr L_\Theta),\end{equation}
representing $\iota^*_{\Theta,\disc}$, where by $\Gamma(\bullet,\bullet)$ we denote again rational sections whose poles do not meet the unitary set. Combining all of this with Theorem \ref{HCPW1} we get the following explicit Paley--Wiener theorem for the Harish-Chandra Schwartz space:

\begin{theorem}[cf.\ Theorem \ref{HCtheorem2}] \label{HCPW2}
 The normalized constant terms \eqref{Fourier} extend to an isomorphism of LF-spaces:
\begin{equation} \label{HCPW2eq}\mathscr C(X) \xrightarrow\sim \left(\bigoplus_\Theta C^\infty(\widehat{X_\Theta^L}^\disc, \mathscr L_\Theta)\right)^\inv,\end{equation}
where $~^\inv$ here denotes $\mathscr S_w$-invariants, i.e.\ collections of sections $(f_\Theta)_\Theta$ such that for all triples $(\Theta,\Omega, w\in W_X(\Omega,\Theta))$ we have: $\mathscr S_w f_\Theta = f_\Omega$. 
\end{theorem}
In the group case, this theorem is part of the Plancherel formula of Harish-Chandra, appearing in Waldspurger \cite{Waldspurger-Plancherel}. { However, our proof is new, starting from \emph{a priori} knowledge of the $L^2$-maps \eqref{L2} and their properties.}

We remark that \eqref{explicitiota}, in combination with the fact that $\iota^* \iota$ is the identity on $\mathcal S_w$-invariants, provide an explicit way to invert this map by means of normalized Eisenstein integrals. Notice that we do not explicitly identify the scattering maps; this can be the object of further research, with a number-theoretic flavor since their poles are often related to $L$-functions. We only describe their relation to normalized Eisenstein integrals in \eqref{Bmatrices}, and give a few examples of those scattering operators in \S \ref{sec:examples}.

A corollary of this theorem (or its previous version \ref{HCPW1}) is the existence of a ring of \emph{multipliers} on $\mathscr C(X)$. Notice that each $w\in W_X(\Omega,\Theta)$ induces the isomorphism \eqref{wisomdisc} between discrete centers.
Let:
\begin{equation}\mathfrak z^\temp(X) = \left(\bigoplus_\Theta \mathfrak z^\disc(X_\Theta^L)\right)^\inv\end{equation}
denote the invariants of these isomorphisms, for all triples $(\Theta,\Omega, w\in W_X(\Omega,\Theta))$. One can call this ring the \emph{tempered center} of $X$ --- it is the relative analog of the tempered center of Schneider and Zink \cite{SZ2} (whose structure can also be inferred directly from the Plancherel theorem of Harish-Chandra \cite{Waldspurger-Plancherel}). 

\begin{corollary}[s.\ Corollary \ref{mult-HC2}]\label{mult-HC}
 There is a canonical action of $\mathfrak z^\temp(X)$ by continuous $G$-endomorphisms on $\mathscr C(X)$. 
\end{corollary}

This action, by definition, corresponds to the obvious action of $\mathfrak z^\temp(X)$ on the right hand sides of \eqref{HCPW1eq}, \eqref{HCPW2eq}.

\subsection{Paley--Wiener for the Schwartz space}

We now come to a Paley--Wiener theorem for the Schwartz space $\mathcal S(X)$ of compactly supported smooth functions on $X$. In analogy with the previous case, this has a distinguished direct summand $\mathcal S(X)_\cusp$, its ``cuspidal part'', consisting of those functions $f\in \mathcal S(X)$ such that for any open compact subgroup $J$, the Hecke module $\mathcal H(G,J)\cdot f$ is a finitely generated module under $\mathcal Z(X)$ (s.\ section \ref{sec:cuspidal}). The (orthogonal) complement of $\mathcal S(X)_\cusp$ in $\mathcal S(X)$ consists of those functions which are orthogonal to any of the spaces $C^{\infty}_\cusp(X)^\pi$ introduced before. 

The same definitions hold for the boundary degenerations $X_\Theta$, and the space $\mathcal S(X_\Theta)_\cusp$ comes equipped with the action of a ``cuspidal center'' $\mathfrak z^\cusp(X_\Theta^L)$, identified with the ring of polynomial functions on the subset $\widehat{X_\Theta^L}^\cusp \subset \widehat{X_\Theta^L}^\disc$:
\begin{equation}
\mathfrak z^\cusp(X):= \CC[\widehat{X_\Theta^L}^\cusp].
\end{equation}

Here we have the ``equivariant exponential maps'':
\begin{equation}
 e_\Theta: \mathcal S(X_\Theta) \to \mathcal S(X),
\end{equation}
whose transposes:
$$ e_\Theta^* : C^\infty(X)\to C^\infty(X_\Theta)$$
are a convenient way to generalize the classical theory of asymptotics of matrix coefficients (see \cite[\S 5]{SV}). The name ``exponential maps'' is due to the fact that the space $X_\Theta$ can be identified with the open $G$-orbit in a normal bundle to some orbit $\infty_\Theta$ in a compactification of $X$, and on characteristic functions of sets close to $\infty_\Theta$ the map $e_\Theta$ coincides with a physical ``exponential'' map, that is, a $p$-adic analytic map whose differential is the identity, cf.\ \cite[\S 5]{SV}. For an explicit formula for the maps $e_\Theta$, cf.\ \eqref{explicite} below. The space $\mathcal S(X)$ is the sum of all $e_\Theta \mathcal S(X_\Theta)_\cusp$:

\begin{theorem}[s.\ Theorem \ref{propbasicdecomp}] \label{pbd}
We have:
$$ \mathcal S(X) = \sum_{\Theta\subset\Delta_X} e_\Theta \mathcal S(X_\Theta)_\cusp.$$
\end{theorem}
We note that this \emph{fails} to be true without the assumption that $X$ is strongly factorizable, cf.\ Remark \ref{falseifnotfactorizable}; interesting phenomena await the researcher who will work on the general case!

A basic element in our analysis is a similar to the unitary case decomposition into ``smooth scattering maps'' when $\Theta$ and $\Omega$ are conjugate:\footnote{Again, these maps are initially defined only on cuspidal summands, but a posteriori extended to the whole space, cf.\ Theorem \ref{smoothextension}.}
\begin{equation} \label{ee} 
e^*_\Omega  {e_\Theta}|_{\mathcal S(X_\Theta)_\cusp} = \sum_{w\in W_X(\Omega,\Theta) } \mathfrak{S}_w, 
\end{equation}
where the maps $\mathfrak{S}_w: \mathcal S(X_\Theta)_\cusp\to C^\infty(X_\Omega)_\cusp$ are $\mathfrak z^\cusp(X_\Theta^L)$-equivariant when this ring acts on $C^\infty(X_\Omega)_\cusp$ via the isomorphism:
\begin{equation}\label{wisomcusp}
\mathfrak z^\cusp(X_\Theta^L)\xrightarrow\sim \mathfrak z^\cusp(X_\Omega^L)
\end{equation}
induced by $w$. Note that in this case neither the scattering maps nor the isomorphisms between cuspidal spectra of $X_\Theta^L$ and $X_\Omega^L$ are provided by the $L^2$-theory: all these are results that we need to establish.

The adjoint $e^*_\Omega$ (``smooth asymptotics map'') of $e_\Omega$ does not preserve compact support, therefore the maps $\textswab S_w$ have image in some subspace of  $C^\infty(X_\Omega)$. If we let $\mathcal S^+(X_\Omega)$ denote the space generated by the images of those $\textswab S_w$ (for all associates $\Theta$ of $\Omega$ and all $w\in W_X(\Omega,\Theta)$), then (cf.\ Theorem \ref{smoothscattering}) each scattering map $\textswab S_w$ extends canonically to an isomorphism:
$$\textswab S_w: \mathcal S^+(X_\Theta) \xrightarrow\sim \mathcal S^+(X_\Omega).$$

The first version of our Paley--Wiener theorem for the Schwartz space reads:

\begin{theorem}[cf.\ Theorem \ref{Schwartztheorem}]\label{PW1}
 Let $e_{\Theta,\cusp}^*$ denote the map $e_\Theta^*$ composed with projection to the cuspidal summand. The sum:
\begin{equation}\label{PW1eq}
e^*:= \sum_\Theta e_{\Theta,\cusp}^*: \mathcal S(X) \to \bigoplus_\Theta \mathcal S^+(X_\Theta)_\cusp
\end{equation} 
is an isomorphism into the $(\textswab S_w)_w$-invariants of the space on the right, i.e.\ the subspace consisting of collections $(f_\Theta)_\Theta$ such that for all triples $(\Theta,\Omega, w\in W_X(\Omega,\Theta))$ we have: $\textswab S_w f_\Theta = f_\Omega$. 
\end{theorem}

Again there is a more explicit version of this theorem. 
Consider the bundle $\mathcal L_\Theta$ whose fibers are the cuspidal coinvariants $\mathcal L_{\Theta,\sigma}:=\mathcal S(X_\Theta)_{\sigma,\cusp}$; it is a (countable direct limit of) complex algebraic vector bundle(s) over the complexification of the subset $\widehat{X_\Theta^L}^\cusp \subset \widehat{X_\Theta^L}^\disc$ where these spaces are non-zero.

In analogy to \eqref{isomdiscrete}, the canonical quotient maps give rise to isomorphisms:
\begin{equation}\label{isomcuspidal} \mathcal S(X_\Theta)_\cusp \xrightarrow\sim \CC[\widehat{X_\Theta^L}^\cusp, \mathcal L_\Theta].\end{equation}
The action of the cuspidal center $\mathfrak z^\cusp(X_\Theta^L)$ is nothing but the action of $\CC[\widehat{X_\Theta^L}^\cusp]$ on the right hand side. 

For the space $\mathcal S^+(X_\Theta)_\cusp$ this extends to an identification with a ``fractional ideal'' (i.e.\ a subspace of the space of rational sections which, when multiplied by a suitable element of $\CC[\widehat{X_\Theta^L}^\cusp]$, becomes regular):
\begin{equation}\label{spectral-Splus}\mathcal S^+(X_\Theta)_\cusp \xrightarrow\sim \CC^+[\widehat{X_\Theta^L}^\cusp, \mathcal L_\Theta]\subset \CC(\widehat{X_\Theta^L}^\cusp,\mathcal L_\Theta),
\end{equation}
but this identification needs some explanation. The ``fractional ideal'' will not be identified (except in specific examples); this seems to be a number-theoretic question, as in all known examples it involves $L$-functions. We only know that it is obtained by inverting ``linear polynomials'' (see \S \ref{sslinearpoles} for the definition of ``linear''). Despite the notation, it does not only depend on the isomorphism class of $X_\Theta$, but it actually depends on $X$ itself. It can, in principle, be computed by \eqref{Bmatrices} whenever the normalized Eisenstein integrals can.

Using the isomorphism \eqref{spectral-Splus}, the smooth scattering maps $\textswab S_w$ can be expressed in terms of the \emph{same} fiberwise scattering maps $\mathscr S_w$ as before (but restricted, of course, to the subbundle $\mathcal L_\Theta$ of $\mathscr L_\Theta$ which they turn out to preserve). Namely, the isomorphism \eqref{spectral-Splus} fits into a commuting diagram:

$$\xymatrix{\mathcal S^+(X_\Theta)_\cusp \ar[r]^\sim\ar[d]_{\mathcal S_w} & \CC^+[\widehat{X_\Theta^L}^\cusp, \mathcal L_\Theta] \ar[d]^{\mathscr S_w}\\
\mathcal S^+(X_\Omega)_\cusp \ar[r]^\sim  & \CC^+[\widehat{X_\Omega^L}^\cusp, \mathcal L_\Omega].
}$$

Although the fiberwise scattering maps $\mathscr S_w$ are the same as before, the inversion of \eqref{spectral-Splus} is \emph{not} given by the same fiberwise formula as the inversion of \eqref{isomdiscrete}: the latter is inverted by an integral over the unitary spectrum, and the former as an integral over a translate of the unitary spectrum, cf.\ \eqref{smoothSw} and \eqref{unitarySw}. Thus, the smooth scattering maps $\textswab S_w$ do not coincide, as maps between spaces of functions, with the unitary scattering maps $S_w$, despite the fact that their spectral transforms are expressed in terms of the same operators $\mathscr S_w$.

Similarly,  
the explicit version of the equivariant exponential map $e_\Theta$ is given by normalized Eisenstein integrals (as was the case for the Bernstein map $\iota_\Theta$), but using \emph{shifted} wave packets this time. More precisely, if we fix a Haar-Plancherel measure $d\sigma$ on $\widehat{X_\Theta^L}^\cusp$ and use it to write $f\in\mathcal S(X_\Theta)_\cusp$ as:
$$ f = \int_{ \widehat{X_{\Theta}^{L}}^{\cusp} }  f^{\tilde{\sigma}} (x) d\sigma$$ with $f^{\tilde{\sigma}}\in  C^\infty(X_\Theta)^{\tilde{\sigma}}_{\cusp}= \widetilde{\mathcal L_{\Theta,\sigma}}$, then by \eqref{isomcuspidal}  $f^{\tilde{\sigma}}$ extends polynomially to non-unitary $\sigma$'s and we have:
\begin{equation}\label{explicite}
  e_\Theta f (x) = \int_{\omega^{-1}\widehat{X_\Theta^L}^\cusp} E_{\Theta,\sigma,\cusp} f^{\tilde\sigma}(x) d\sigma.
\end{equation}
for every ``sufficiently positive'' character $\omega$, cf.\ Theorem \ref{explicitsmooth}. (For symmetric spaces, the fact that  shifted wave packets  are compactly supported can also be   proved using the results of \cite{CD} and a technique due to Heiermann in the group case \cite{Heiermann}.)

Dually, this gives an expression of $e^*_{\Theta,\cusp}$ as a \emph{normalized constant term}:
\begin{equation}\label{cuspFourier} \mathcal S(X)\to \Gamma(\widehat{X_\Theta^L}^\cusp, \mathcal L_\Theta),\end{equation}
which is the same map as \eqref{Fourier} composed with the natural quotient $\mathscr L_\Theta\to \mathcal L_\Theta$.

The explicit version of our Paley--Wiener theorem for the Schwartz space reads: 

\begin{theorem}[cf.\ Theorem \ref{Schwartztheorem2}] \label{PW2}
 The morphisms \eqref{cuspFourier} give rise to an isomorphism:
\begin{equation}\label{PW2eq}
\mathcal S(X) \xrightarrow\sim \left(\bigoplus_\Theta \CC^+[\widehat{X_\Theta^L}^\cusp, \mathcal L_\Theta]\right)^\inv,
\end{equation}
where $~^\inv$ here denotes $\mathscr S_w$-invariants.
\end{theorem}

A corollary of this theorem (or its previous version \eqref{PW1}) is the existence of a ring of \emph{multipliers} on $\mathcal S(X)$. Notice that each $w\in W_X(\Omega,\Theta)$ induces the isomorphism \eqref{wisomcusp} between cuspidal centers.
Let:
$$\mathfrak z^\sm(X) = \left(\bigoplus_\Theta \mathfrak z^\cusp(X_\Theta^L)\right)^\inv$$
denote the invariants of these isomorphisms, for all triples $(\Theta,\Omega, w\in W_X(\Omega,\Theta))$. One can call this ring the \emph{smooth center} of $X$ --- it is the relative analog of the Bernstein center (cf.\ \S \ref{ssBcenter}). Then:

\begin{corollary}[s.\ Corollary \ref{mult-sm2}]\label{mult-sm}
 There is a canonical action of $\mathfrak z^\sm(X)$ by $G$-endomorphisms on $\mathcal S(X)$. 
\end{corollary}

This action, by definition, corresponds to the obvious action of $\mathfrak z^\sm(X)$ on the right hand sides of \eqref{PW1eq}, \eqref{PW2eq}. { Note that this ring of multipliers is, in general, larger than the ring induced by the Bernstein center. Indeed, there are many known examples of relatively cuspidal representations which are not cuspidal for the group, cf.\ \cite{Murnaghan}; the simplest example is the Steinberg representation for the variety $X=T\backslash\PGL_2$, where $T$ is a split torus.}

In section \ref{sec:group} we discuss the example of $X=$ a reductive group $H$ under the $G=H\times H$-action by left and right multiplication. We show that the multiplier ring $\mathfrak z^\sm(X)$ that we described above provides an alternative proof for the structure of the Bernstein center as the algebra of polynomials on the ``space'' of cuspidal supports.
We also discuss the relationship of our Paley--Wiener theorem with those of Bernstein \cite{BePadic} and Heiermann \cite{Heiermann}: in this case, our work is analogous to part A of \cite{Heiermann}, and one needs to apply part B, which is the hardest part of that paper, to obtain the usual Paley--Wiener theorem. This is a good point to reflect on what our theorem really represents: It represents a reduction of the study of smooth functions on $X$ to (relatively) cuspidal spectra plus the study of scattering operators; it does not, however, reveal much about the nature of these operators, which can be the object of further research.

{ However, this reduction is not straightforward, as there are facts that are ``obvious'' in the case of a group, but not in the relative case. The most important of those is to show why a relatively supercuspidal representation for a boundary degeneration $X_\Theta$ ``scatters'' as a relatively supercuspidal in an associate direction $X_\Omega$. (In the group this is obvious by the description of cuspidality in terms of coinvariants under unipotent subgroups.) This is one of the goals of the scattering theorems described in Section \ref{sec:scatteringgoals}, and its proof is based on one of the main technical results of the paper, Proposition \ref{fibercuspidal}. If we may try to encode its proof in one sentence, we would say that ``a priori knowledge that the asymptotics maps $e_\Theta$ preserve compact support does not allow the scattering maps to break the cuspidality condition''. This proposition generalizes results of Carmona and Delorme \cite{CD} in the symmetric case, which used a completely different proof exploiting the structure of symmetric spaces.

We now come a more detailed description of the contents of this paper, and the main steps in our proofs.}

\subsection{Proofs}

After introducing the necessary structure theory of spherical varieties in section \ref{sec:definitions} and the bundles of discrete and cuspidal coinvariants in sections \ref{sec:generalities} and \ref{sec:coinvariants}, the first step is to show that the discrete, resp.\ cuspidal summand of $\mathscr C(X)$, resp.\ $\mathcal S(X)$, is a direct summand. This is relatively easy to do, and is done in sections \ref{sec:discreteHC} and \ref{sec:cuspidal}.

The spectral characterization of $\mathscr C(X)_\disc$ \eqref{isomdiscrete} and $\mathcal S(X)_\cusp$ \eqref{isomcuspidal} is the next step, and the basis for those is the surjection \eqref{surjective}; this follows from the definition of the bundle $\mathscr L_\Theta$, and an application of Nakayama's lemma (Proposition \ref{discreteSchwartz}). After this, \eqref{isomdiscrete} follows from the analogous statement for abelian groups (we use here the assumption that $X$, and later $X_\Theta^L$, are all factorizable, cf.\ \S \ref{sec:definitions}), and \eqref{isomcuspidal} is immediate by projection from discrete to cuspidal.

The unitary scattering operators $S_w$ were introduced in \cite{SV}, but here we need to prove that they preserve Harish-Chandra Schwartz spaces (at least their discrete summands). The explicit expression \eqref{explicitiota} for $\iota_\Theta$ allows us to relate the fiberwise versions $\mathscr S_w$ of the scattering maps to the asymptotics of normalized Eisenstein integrals and normalized constant terms, hence deducing their rationality in the parameter by a linear algebra argument, Proposition \ref{decompEisenstein}. Essentially, the operator $\mathscr S_w$, for $w\in W_X(\Omega,\Theta)$, 
is the ``$w$-equivariant part'' of the asymptotics $e_\Omega^*$ of the normalized Eisenstein integral $E_{\Theta, \disc}$. A priori knowledge of $L^2$-boundedness of the operators $S_w$, together with the ``linear'' form of the poles of Eisenstein integrals \ref{regularEisenstein}, cf.\ also \cite{BD}, allow us to show that their poles do not meet the unitary spectrum, Theorem \ref{fiberwisescattering}, and since they are unitary it follows from \eqref{isomdiscrete} that $S_w$, for $w\in W_X(\Omega,\Theta)$, maps $\mathscr C(X_\Theta)_\disc$ isomorphically onto $\mathscr C(X_\Omega)_\disc$. 

Using this fact, and a characterization of the Bernstein maps $\iota_\Theta$ from \cite{SV}, we are able to prove that $\iota_\Theta$ maps $\mathscr C(X_\Theta)_\disc$ into $\mathscr C(X)$ (Proposition \ref{Theta to X}). This is essentially the fact that some wave packets are in the Harish-Chandra Schwartz space 
(cf.\ \cite{DH} for a result of this type for symmetric spaces). Vice versa, the description of $\iota_\Theta$ in terms of normalized Eisenstein integrals \eqref{explicitiota}, together with the regularity of normalized Eisenstein integrals on the unitary spectrum, proves that $\iota_{\Theta,\disc}^*$ continuously sends the space $\mathscr C(X)$ into $\mathscr C(X_\Theta)_\disc$ (Proposition \ref{X to Theta}), and this is enough to prove the Paley--Wiener theorems \ref{HCPW1}, \ref{HCPW2} for the Harish-Chandra Schwartz space. 

To construct the smooth scattering operators $\textswab S_w$ one needs to study properties of the compositions $e_\Omega^* e_\Theta$ restricted to cuspidal summands, and more precisely that the restriction of this composition to $\mathcal S(X_\Theta)_\cusp$ is zero if $\Omega$ does not contain an associate of $\Theta$, has cuspidal image if $\Omega$ is associate to $\Theta$ and has image in the orthogonal complement of the cuspidal summand if $\Omega$ strictly contains an associate of $\Theta$, Theorem \ref{smoothscattering}. The proofs of these facts are accomplished in section \ref{sec:smooth}. 
The proof relies  in a crucial way on a theorem in \cite{SV} (which in turn was based on a theorem of Bezrukavnikov and Kazhdan \cite{BezK}) which says that the support of $e^*_\Omega f$ for $f \in \mathcal S(X)$, is \emph{bounded}, i.e.\ of compact closure in a (fixed) affine embedding of $X_\Theta$; this allows to prove the vanishing of certain ``exponents'' of the normalized Eisenstein integrals which by \eqref{explicite} spectrally decompose the maps $e_\Theta$. By totally different methods these results were obtained by Carmona-Delorme \cite{CD} for symmetric spaces, via an explicit description of the constant term of Eisenstein integrals, starting from cuspidal data, in terms of ``$C$-functions''.  

As was mentioned in Proposition \ref{pbd}, the space $\mathcal S(X)$ is the sum of all ``shifted cuspidal wave packets'', i.e.\ the sum of all $e_\Theta\mathcal S(X_\Theta)_\cusp$. Then (\ref{ee}) can be understood as a decomposition of the asymptotics of shifted wave packets. The proof of Theorem  \ref{PW2} rests  mainly on (\ref{ee}).  

\subsection{Acknowledgments} We are very grateful to the referee for numerous corrections and suggestions. The first author  has been supported by a grant of Agence Nationale de la Recherche with reference
ANR-13-BS01-0012 FERPLAY, and by the Institut Universitaire de France. The third author was supported by NSF grants
DMS-1101471 and DMS-1502270. He would like to thank David Kazhdan for a very motivating conversation.

\part{Structure, notation and preliminaries}

\section{Boundary degenerations, exponents, Schwartz and Harish-Chandra Schwartz spaces} \label{sec:definitions}

\subsection{Assumptions} \label{ssassumptions}

We let $X$ be a homogeneous, quasi-affine spherical variety for a reductive group $G$ over a non-Archimedean local field $F$ in characteristic zero. We will generally denote the points of a variety $Y$ over our fixed non-Archimedean field $F$ simply by $Y$, when this creates no confusion. 
 The assumption on characteristic is in order to use the results of \cite{SV} which freely applied the structure theory of spherical varieties in characteristic zero. With minor modifications, those results should work in positive characteristic, and then the results of the current paper will directly extend. We notice that for symmetric spaces, \cite{Delorme-Plancherel} only required that the characteristic of the field be different than $2$; thus, we can already relax the assumption on the characteristic in that case.

We will make the following assumptions on $X$:
\begin{description}

\item[] If $G$ is not split, $X$ is symmetric. The symmetric condition (whether $G$ is split or not) subsumes all the conditions that follow, but should be considered as a provisional assumption in order to use the non-split analogs of spherical root systems used in \cite{Delorme-Plancherel} (cf.\ \S \ref{generalsymmetric}). Our methods do not depend otherwise on the structure of symmetric spaces, and once the analogs of \cite{Delorme-Plancherel} are extended to the broader setting of spherical varieties satisfying the assumptions below, our results immediately extend.

If $G$ is split, we assume:
\item[(wf)] $X$ is \emph{wavefront};
\item [(sf)] $X$ is \emph{strongly factorizable}  (cf.\ below for both of these notions);
\item [(gi)] $X$ satisfies a strong version of the ``generic injectivity'' condition (cf.\ \S \ref{ssgenericinjectivity}).

Up to now, our assumptions guarantee the validity of the full Plancherel decomposition of \cite[Theorem 7.3.1]{SV}, \cite[Theorem 6]{Delorme-Plancherel}. Finally, we require the validity of the explicit Plancherel formula in terms of normalized Eisenstein integrals:

\item [(ep)] The explicit Plancherel formula of \cite[Theorem 15.6.2]{SV}, \cite[Theorem 8]{Delorme-Plancherel} holds; this is the case, for instance, if the ``small Mackey restriction'' of \cite[\S 15.5]{SV} is generically injective. 

\end{description}

We repeat that all these conditions are satisfied if $X$ is symmetric; for the strong version of the generic injectivity assumption in the symmetric case, which was not used in the aforementioned references, we prove this in \S \ref{ssgenericinjectivity}. We expect condition (sf) to be the only crucial condition for the methods of this paper to work. (Without it, results for the Schwartz space have to be modified, cf.\ Remark \ref{falseifnotfactorizable}.) Condition (wf) is used because we need the theory of asymptotics of \cite{SV} (which should hold without this condition), and we expect conditions (gi) and (ep) to hold in general (but for now they have to be checked ``by hand'' in any non-symmetric case that one is interested in). In Appendix \ref{app:factorizable} we check those assumptions for a couple of non-symmetric examples.

\subsection{Whittaker-induction}

Our results also hold for a ``variety'' that is ``Whittaker-induced'' from one as above, at least when $G$ is split, where the necessary results on which this paper is based have been proven in \cite{SV}. That is, in a certain setting one can consider, instead of the spaces of functions that we will encounter, also spaces of sections of a line bundle defined by a character of a unipotent group. The precise setting was explained in \emph{loc.cit}. \S 2.6, and we repeat it here: 

Let $P^-$ be a parabolic subgroup of $G$, with a Levi decomposition $P^-=L \ltimes U_{P^-}$. Suppose that $X^L$ is a spherical $L$-variety which, for the purposes of this paper, we will assume to satisfy the assumptions of the previous subsection. Let $V = \Hom(U_{P^-},\Ga)$, and assume that we are given an $L$-equivariant morphism $\Lambda:X^L\to V$ with Zariski \emph{open} image. Finally, let $\psi: F\to \CC^\times$ be a non-trivial unitary character. 

We let $X = X^L\times^{P^-} G$ be the corresponding ``parabolically induced'' variety, and denote by $C^\infty(X,\mathcal L_\Psi)$ the space of smooth functions on the $F$-points of $X^L\times^L G$ which satisfy $f(x, ug) = \Psi_x(u) f(x,g)$ for every $u\in U_{P^-}$, where $\Psi_x$ is the composition of $\psi$ with $\Lambda(x)$. 

The contents of the present paper apply to the space $C^\infty(X,\mathcal L_\Psi)$ (and the subspaces $\mathcal S(X,\mathcal L_\psi)$, $\mathscr C(X,\mathcal L_\psi)$) without modification, once one has the correct notion of ``Weyl group''. This Weyl group is explained in \cite[\S 2.6]{SV}, and is different from the Weyl group of $X$ viewed as a $G$-variety; for example, for the Whittaker model of a split group $G$ this is the \emph{full} Weyl group of $G$.  For notational simplicity, we will not be writing $C^\infty(X,\mathcal L_\Psi)$ anywhere --- the notation in the paper is referring to sections of the trivial line bundle, and the immediate reformulations necessary to cover this case are left to the reader.

\subsection{The split case} \label{sssplitdefs}

We start by giving definitions when the group $G$ is split. We will then modify them for non-split $G$, when the space $X$ is symmetric (following \cite{Delorme-Plancherel}). 

Given a spherical variety $X$ for a group $G$, we define the (connected) \emph{center} of $X$ as the connected component of its $G$-automorphism group:
$$ \mathcal Z(X):= \Aut_G(X)^0.$$
It is known to be a torus, and we assume throughout (as we may, without loss of generality, by enlarging $G$ if necessary), that the natural map is surjective:
\begin{equation}\label{wholecenter}\mathcal Z(G)^0 \twoheadrightarrow \mathcal Z(X),\end{equation} 
where $\mathcal Z(G)^0$ denotes the connected center of $G$.
For any fixed Borel subgroup, we denote by $\mathring X$ the open Borel orbit on $X$.

Our varieties will be homogeneous, $X=H\backslash G$, and we let $X^\ab$ be the homogeneous variety under the abelianization $G^\ab$ of $G$ which is obtained by dividing $X$ by the action of the commutator group $[G,G]$. If we choose a point $x\in X$ with stabilizer $H$ and let $\overline{H^\ab}$ be the image of $H$ in $G^\ab$, then \emph{as algebraic varieties}: $X^\ab= G^\ab/\overline{H^\ab}$. 

We call $X$ \emph{factorizable} if $\dim X^\ab=\dim \mathcal Z(X)$; all symmetric varieties have this property. 
 If $X$ is factorizable then \emph{as algebraic varieties} (but not necessarily in terms of their $F$-points):
$$ X \simeq \mathcal Z(X)\cdot X',$$
where $X' = H\cap [G,G]\backslash [G,G]$. This, of course, depends on the choice of base point definining the isomorphism $X\simeq H\backslash G$, and if we choose different such points $x_1,x_2,\dots$ we get different subvarieties $X_1',X_2',\dots$. 

Then, \emph{at the level of $F$-points}, there are a finite number of points $x_i$ such that $X$ is the disjoint union of open-closed subsets:
\begin{equation}\label{disjoint}
 X(F) = \bigsqcup_{i=1}^n \mathcal Z(X)(F)\cdot X_i'(F).
\end{equation}
The non-canonical subvarieties $X_i'$ will never appear in the statements, but will sometimes be used in the proofs.

The group of unitary complex characters of the $F$-points of the torus $X^\ab$ will be denoted by $\hat X^\ab$, and its complexification (which can be identified with the group of not necessarily unitary characters) by $\hat X^\ab_\CC$. The identity component of $\hat X^\ab$, i.e.\ the group of unramified unitary characters, will be denoted by $\hat X^\unr$, and this notation ($~^\unr$) will be used more generally to denote groups of unramified characters.

To every spherical variety $X$ one associates its set of (simple) spherical roots $\Delta_X$ and the ``little Weyl group'' $W_X$, cf.\ \cite[\S 2]{SV}. The spherical roots live in the lattice $\varchi (X)$
 of characters of a Borel subgroup which are trivial on stabilizers of generic points, and $W_X$ acts by automorphisms on $\varchi(X)$. 
There are actually various normalizations for the spherical roots, depending on the application that one has in mind; for a certain normalization, they are part of the root data of the ``dual group'' $\check G_X$ of $X$; for another (the standard one in the theory of spherical varieties), they determine the structure of certain compactifications. These two normalizations were referred to as ``normalized'' and ``unnormalized'' roots in \cite[\S 2.1, 2.2]{SV}, and both of them define a root system in the usual sense. The precise choice of normalization will not be of particular concern to us, in general, and when does matters we will clarify which definition we are referring to. In any case, the action of $W_X$ on $\varchi(X)$, together with the dominant chamber determined by those sets of simple spherical roots, is independent of the chosen normalization of their lengths.

What is important for us is that one has the following set of data:
\begin{itemize}
 \item \emph{Boundary degenerations:} For every subset $\Theta\subset\Delta_X$, a spherical $G$-variety $X_\Theta$ of the same dimension, with the property that $$\dim\left(\mathcal Z(X_\Theta)\right) = \dim\left(\mathcal Z(X)\right) + |\Delta_X\smallsetminus\Theta|.$$ We interchangeably denote: $$A_{X,\Theta}:= \mathcal Z(X_\Theta).$$
 
Under the convention that $\mathcal Z(G)^0\twoheadrightarrow \mathcal Z(X)$ that we are using, $X$ is called \emph{wavefront} if for every $\Theta$ the variety $X_\Theta$ is parabolically induced from a spherical variety $X_\Theta^L$ (called \emph{Levi variety}) for the Levi quotient $L_\Theta$ of a parabolic $P_\Theta^-$:
\begin{equation}\label{Leviinduced} X_\Theta \simeq X_\Theta^L \times^{P_\Theta^-} G
\end{equation}
such that the action of $\mathcal Z(X_\Theta)$ is induced from the action of the connected center of $L_\Theta$ on $X_\Theta^L$. Only the conjugacy class of $P_\Theta^-$ is canonically defined (and then $X_\Theta^L$ is the fixed point set of its unipotent radical on $X_\Theta$); thus, whenever we use those Levi varieties we will be careful that no non-canonical choice of a representative for $P_\Theta^-$ affects our statements. The isomorphism \eqref{Leviinduced} shows that $X_\Theta^L$ can also be identified as the quotient of the open $P_\Theta$-orbit on $X_\Theta$ (where $P_\Theta$ is in the class of parabolics opposite to $P_\Theta^-$) by the (free) action of the unipotent radical $U_\Theta$ of $P_\Theta$. Since $\mathring X P_\Theta/U_\Theta \simeq \mathring X_\Theta P_\Theta/U_\Theta$ canonically \cite[Lemma 2.8.1]{SV}, the Levi variety is also identified with the analogous quotient for $X$ (the quotient of its open $P_\Theta$-orbit by the $U_\Theta$-action). 

A wavefront spherical variety is called \emph{strongly factorizable} if all of its Levi varieties are factorizable. Symmetric varieties are strongly factorizable \cite[Proposition 9.4.2]{SV}, and these are the main source of examples. In Appendix \ref{app:factorizable} we characterize strongly factorizable varieties in terms of combinatorial data attached to the spherical variety, and give a few examples of non-symmetric, strongly factorizable spherical varieties.

\begin{remark}\label{sfvsf} 
Note that the Levi variety attached to the whole set of spherical roots is not equal to $X$, if $X$ is parabolically induced. For example, if $X=N_1\backslash G_1$, where $N_1$ is maximal unipotent in $G_1$, under the $G=A_1\times G_1$-action (where $A_1=B_1/N_1$, with $B_1$ the Borel subgroup normalizing $N_1$), then $X$ is wavefront, $\Delta_X = \emptyset$, but $X_{\Delta_X}^L=X_\emptyset^L = A_1$ under an $A_1\times A_1$-action. This creates the paradox that some varieties (such as this example) are ``strongly factorizable'' without being ``factorizable'', but this is only a minor nuisance, since for a parabolically induced variety all spaces of functions that we are interested in are parabolically induced, s.\ \S \ref{sseigenmeasures} --- thus, one can work directly with the Levi variety $X_{\Delta_X}^L$, which is factorizable.  To avoid extra notation, however, instead of writing $X_{\Delta_X}^L$ we will at several points in this paper assume, implicitly, that $X$ is factorizable.
\end{remark}

For $\Theta=\emptyset$ the variety $X_\Theta$ is \emph{horospherical}, i.e.\ stabilizers contain maximal unipotent subgroups of $G$. More precisely, stabilizers contain the commutator subgroup of a parabolic in the class of $P_\emptyset^-$, which in this case we denote by $P(X)^-$. Its opposite $P(X)$ is the parabolic which stabilizes the open Borel orbit on $X$. (Again, of course, only its class is defined.) We denote $A_{X,\emptyset}$ simply by $A_X$ --- it is the ``universal Cartan'' of $X$; its character group has a canonical identification with $\varchi(X)$. For every $\Theta$, $A_{X,\Theta}$ is canonically identified with the connected kernel of $\Theta$ in $A_X$, and we denote by $A_{X,\Theta}^+$ the monoid of elements $a\in A_{X,\Theta}(F)$ with the property that $|e^\gamma(a)|\le 1$ for all $\gamma\in \Delta_X$, and by $\mathring A_{X,\Theta}^+$ the subset of those elements with $|e^\gamma(a)|< 1$ for all $\gamma\in \Delta_X\smallsetminus\Theta$. (We use the exponential symbol in order to use additive notation for the group $\varchi(X)$). 
 
 \item \emph{Exponential map:} For every open compact subgroup $J$ of $G$, a system of $J$-stable subsets $N_\Theta$ of $X=X(F)$, with $N_\Theta\subset N_\Omega$ if $\Theta\subset\Omega$, and for each $\Theta$ a $J\times A_{X,\Theta}^+$-stable subset $N_\Theta^\Theta$ of $X_\Theta$, which generates all $X_\Theta$ under the action of $A_{X,\Theta}$, together with identifications:
 \begin{equation}\label{Nident} N_\Theta/J = N_\Theta^\Theta/J,\end{equation}
 characterized by their compatibility with certain $p$-adic analytic ``exponential maps'' (s.\ \cite[\S 5]{SV}) and by the fact that the induced map on characteristic functions extends to an equivariant map, that will be explained in \S \ref{sec:asymptotics}. Such a set $N_\Theta$ will be called a ``$J$-good neighborhood of $\Theta$-infinity'', and from now on we will not distinguish in notation between $N_\Theta$ and $N_\Theta^\Theta$, i.e.\ we will be denoting the latter also by $N_\Theta$. (This constitutes abuse of notation, since only $J$-orbits on these sets are identified, but it will only be used for statements that depend only on the identification of the $J$-orbits, not the sets themselves.) The above identifications clearly also identify $N_\Omega/J$, for all $\Omega\subset\Theta$, with subsets of $X_\Theta/J$, and the set: 
 $$N_\Theta'=N_\Theta\smallsetminus \bigcup_{\Omega\subsetneq\Theta} N_\Omega$$
 is stable under the action of $J\times A_{X,\Theta}^+$ and 
 has compact image in $X_\Theta/A_{X,\Theta}$. We note the decomposition:
\begin{equation}\label{Xdecomp}
X = \bigsqcup_\Theta N_\Theta'.
\end{equation} 
 We have $N_{\Delta_X} = X$, hence the complement of $\bigcup_{\Theta\subsetneq \Delta_X} N_\Theta$ is compact modulo the action of $\mathcal Z(X)$.
\end{itemize}

The modular character of $P_\Theta$, i.e.\ the inverse of the modular character of $P_\Theta^-$, will be denoted by $\delta_\Theta$. (Our convention is that a modular character is the quotient of right by left Haar measure.) The functor of \emph{normalized} induction from $P_\Theta$, resp.\ $P_\Theta^-$, will be denoted by $I_\Theta$, resp.\ $I_{\Theta^-}$: 
$$I_{\Theta}V:= \{ f: G\to V \mbox{ smooth}| f(pg)=\delta_\Theta^{\frac{1}{2}} f(g)\mbox{ for all }p\in P_\Theta\},$$
$$I_{\Theta^-}V:= \{ f: G\to V \mbox{ smooth}| f(pg)=\delta_\Theta^{-\frac{1}{2}} f(g)\mbox{ for all }p\in P_\Theta^-\}.$$

We similarly denote, for every representation $\pi$ of $G$, the \emph{normalized} Jacquet modules with respect to $P_\Theta$, resp.\ $P_\Theta^-$, by $\pi_\Theta$, resp. $\pi_{\Theta^-}$. These are, by definition, the coinvariants of the corresponding unipotent radicals, tensored by the inverse square root of the corresponding modular character, so that we have canonical $L_\Theta$-morphisms:
$$\left(I_{\Theta}V\right)_\Theta\twoheadrightarrow V, \,\,\,\, \left(I_{\Theta^-}V\right)_{\Theta^-} \twoheadrightarrow V.$$
 
Actions of Weyl groups will always be defined to be \emph{left} actions. We consider the Weyl group $W$ of $G$ as an automorphism group of its \emph{universal Cartan} $A = B/N$ (where $B$ is any Borel subgroup, with unipotent radical $N$, so that the universal Cartan is a unique torus up to unique isomorphism). For subset $S$ of the positive simple roots of $A$ in $G$, corresponding to a class of parabolics $P_S$, any element $w$ which maps $S$ into the positive simple roots gives rise to an isomorphism between the Levi quotients $L_S$ and $L_{wS}$ of the corresponding parabolics, unique up to inner conjugacy. In particular, this is true for the Levi quotients $L_\Theta$, $L_\Omega$ (where $\Theta,\Omega\subset \Delta_X$) and an element 
$$w\in W_X(\Omega,\Theta):= \{ w\in W_X| w\Theta =\Omega\} \subset W_X\subset W.$$ 
Finally, the notation $\Theta\sim\Omega$ will mean that $\Theta$ and $\Omega$ are associates, i.e.\ $W_X(\Omega,\Theta)\ne \emptyset$.

\subsection{The general symmetric case}\label{generalsymmetric}

In the general symmetric case (when $G$ is not necessarily split), the boundary degenerations $X_\Theta$ are defined in \cite[\S 3.1]{Delorme-Plancherel}. They are denoted there by $X_P$, while the Levi varieties $X_\Theta^L$ are denoted by $X_M$.

For consistency of notation with the split case, we will make a small modification to the definitions of \cite{Delorme-Plancherel}. Namely, in \S 2 of \emph{loc.cit.} the tori $A_P$ are defined as certain central split subtori of Levi subgroups; thus, they do not need to act faithfully on the boundary degenerations $X_P$. (More precisely, their action might have finite kernel.) Here, we will denote by $A_P$ (or, rather, $A_{X,\Theta}$) the quotient by which these tori act on $X_P$; equivalently, $A_P$ for us will be a \emph{quotient} of the identity component $\mathcal Z(M)^0$ of the maximal split torus in the center of the Levi quotient $M$.  (For a group $M$, we will use the notation $\mathcal Z(M)^0$ for the maximal split torus in its center; the notation $\mathcal Z(M)$, without the exponent $~^0$, will not be used, again for consistency with the split case.)

These tori correspond to the \emph{maximal split tori} of what, over the algebraic closure, is $\mathcal Z(X)$ or $\mathcal Z(X_\Theta)$ under the definitions of the previous subsection. While it is not very good to have notation which is not stable under base change, it is convenient here that the emphasis is not on geometry but on harmonic analysis, and we will adopt it. Similarly, for the definition of $W_X$ in the general symmetric case, cf.\ \cite[\S 7.5]{Delorme-Plancherel}, denoted there $W(A_\emptyset)$.

\subsection{Normalized action and the various Schwartz spaces} \label{sseigenmeasures}

We assume that $X(F)$ carries a $G(F)$-eigenmeasure\footnote{In fact, under our assumption of factorizability it is possible to twist such a measure and make it invariant; however, even if we do this for $X$ it will not be the natural choice for the Levi varieties $X_\Theta^L$, as we will see, so one ends up working with eigenmeasures anyway.} with eigencharacter $\eta$, and any choice of such measure endows all the spaces $X_\Theta(F)$ with $G(F)$-eigenmeasures with the same eigencharacter which make the identifications \eqref{Nident} of neighborhoods of the form $N_\Theta/J$ measure-preserving, cf.\ \cite[\S 4.1]{SV}, \cite[Theorem 2]{Delorme-Plancherel}. This measure on $X_\Theta(F)$ is also an $A_{X,\Theta}(F)$-eigenmeasure, and whenever a group acts on a space $Y$ endowed with an eigenmeasure with eigencharacter $\chi$, we \emph{normalize} the action of the group on functions on $Y$ so that it is an $L^2$-isometry:
\begin{equation} \label{normalizedaction} (g\cdot f)(y) = \sqrt{\chi(g)} f(yg).\end{equation}
This also identifies the space $C^\infty(Y)$ (\emph{uniformly} locally constant functions on $Y$) with the smooth dual of $\mathcal S(Y):=C_c^\infty(Y)$.

On the Levi varieties $X_\Theta^L = \mathring X P_\Theta/ U_\Theta$ the measure on $X$ gives rise to an $L_\Theta$-eigenmeasure for which the following is true:
$$\int_{\mathring X P_\Theta} f(x) dx = \int_{X_\Theta^L} \int_{U_\Theta} f(ux) du dx.$$
This depends on the choice of Haar measure on $U_\Theta$. The character by which $L_\Theta$ acts on this measure is $\delta_\Theta \eta$ (recall that $\eta$ is the eigencharacter of the measure on $X$). Thus, we need to twist the unnormalized action of $L_\Theta$ on functions by $(\eta\delta_\Theta)^{\frac{1}{2}}$ in order to obtain a unitary representation.

Another way to describe this twisting is the following: if we identify $X_\Theta^L$ as a subvariety of $X_\Theta$ fixed by the parabolic $P_\Theta^-$, and $g\in P_\Theta^-$ with image $l\in L_\Theta$, then for a function $f$ a function on $X_\Theta$ we have:
\begin{equation} \label{Ltwist}  l\cdot (f|_{X_\Theta^L}) := \delta_\Theta^{\frac{1}{2}}(l) (g\cdot f)|_{X_\Theta^L}.\end{equation} 
(The twist by $\sqrt\eta$ is already contained in the $G$-action on $X_\Theta$.) An important observation is that, by introducting this twisted action for $L_\Theta$, the action of the connected center of $L_\Theta$ on $f|_{X_\Theta^L}$ coincides with the action of $\mathcal Z(X_\Theta)$ on $C^\infty(X_\Theta)$, under the identification of $\mathcal Z(X_\Theta) = A_{X,\Theta}$ as a quotient of $\mathcal Z(L_\Theta)^0$. Indeed, the twist by $\delta_\Theta^\frac{1}{2}$ is contained in \eqref{normalizedaction}, by taking into account the eigencharacter of the measure under the action of $\mathcal Z(X_\Theta)$.

 \emph{We caution the reader that this may not be the most natural-looking action}; for instance, if $X$ has a $G$-invariant measure and we consider the Levi variety $X_\emptyset^L\simeq A_X$, the usual action of $A$ on $C^\infty(A_X)$ is twisted by the square root of the modular character of $P(X)$. However, this definition is such that the space of $L^2$-functions on $X_\Theta$ is \emph{unitarily induced} from the analogous space on $X_\Theta^L$:
\begin{equation}\label{L2induced} L^2(X_\Theta) = I_{\Theta^-} L^2(X_\Theta^L),
\end{equation}

The \emph{Schwartz space} $\mathcal S(X)$ is, by definition, the space $C_c^\infty(X)$ of compactly supported smooth functions on $X$ (and similarly for any homogeneous space). The twist \eqref{Ltwist} on functions on $X_\Theta^L$ allows us to write, using again the functor of normalized induction from $P_\Theta^-$:
\begin{equation}\label{Schwartzinduced} \mathcal S(X_\Theta) = I_{\Theta^-} \mathcal S(X_\Theta^L),
\end{equation}

Moreover, if $X$ is a direct product:
$$ X = \mathcal Z(X) \times X',$$
where $X'$ is a $[G,G]$-spherical variety, we clearly have a decomposition:
$$ \mathcal S(X) = \mathcal S(\mathcal Z(X))\otimes \mathcal S(X').$$
In the general factorizable case, using a decomposition such as \eqref{disjoint} and pulling back functions by the action map:
$$\mathcal Z(X)\times X_i'\to \mathcal Z(X)\cdot X_i',$$
it is immediate to identify $\mathcal S(X)$ with:
\begin{equation}\label{Sfactorizable}
 \bigoplus_i \left( \mathcal S(\mathcal Z(X))\otimes \mathcal S(X_i')\right)^{(\mathcal Z(G)^0\cap [G,G])^\diag},
\end{equation}
i.e.\ invariants under the simultaneous action of the finite subgroup $\mathcal Z(G)^0\cap [G,G]$ on both factors. (Recall that $\mathcal Z(G)^0\twoheadrightarrow \mathcal Z(X)$ under our conventions.)

For any function on $X_\Theta$ which is $A_{X,\Theta}$-finite (i.e.\ its translates under the normalized action \eqref{normalizedaction} of $A_{X,\Theta}$ span a finite-dimensional space) we call \emph{exponents} its generalized $A_{X,\Theta}$-characters, considered as a multiset (i.e.\ each character appears with a certain multiplicity).

We say that a function $f\in C^\infty(X)$ (invariant, say, by an open compact subgroup $J$) is \emph{tempered} if for every $\Theta\subset\Delta_X$ there is a $J$-good neighborhood of $\Theta$-infinity where $|f|$ is bounded by \emph{an $A_{X,\Theta}$-finite function with trivial exponents} (equivalently: by the absolute value of an $A_{X,\Theta}$-finite function with \emph{unitary} exponents).

The \emph{Harish-Chandra Schwartz space} $\mathscr C(X)$ is the space of those functions $f\in C^\infty(X)$ such that for every tempered function $F$ we have:
\begin{equation}\label{HCcondition}   \rho_F(f) := \int_X |f\cdot F| dx <\infty.
\end{equation}
    
For example, in the abelian case $X=\mathcal Z(X)$ (by choosing a base point), any smooth function descends to a function on a finitely generated abelian group $\simeq R \mbox{ (torsion) }\times \mathbb Z^r$, and it is in the Harish-Chandra Schwartz space iff its restriction to any $\mathbb Z^r$-orbit is bounded by the multiple of the inverse of any polynomial in the coordinates $n_1, n_2, \dots, n_r$.
 
 We similarly define this notion for the spaces $X_\Theta$. Again, the twisted action \eqref{Ltwist} allows us to write the Harish-Chandra Schwartz space of $X_\Theta$ as the normalized induction of the Harish-Chandra Schwartz space of $X_\Theta^L$:
\begin{equation}\label{HCinduced} \mathscr C(X_\Theta) = I_{\Theta^-} \mathscr C(X_\Theta^L),
\end{equation}
Indeed, the action of $G$ is clearly the correct one as was the case for $L^2(X_\Theta)$ and $\mathcal S(X_\Theta$); and the notion of ``unitary exponents'' used to define tempered functions and, by duality, the Harish-Chandra Schwartz space coincides for the action of $A_{X,\Theta}$ on functions on $X_\Theta$ and $X_\Theta^L$. 
 
The $J$-invariants of each of those Harish-Chandra Schwartz spaces have a natural   Fr\'echet space structure, defined by a system of seminorms $\rho_F$ as above for $F$ belonging in any sequence $(F_n)_n$ of tempered, $J$-invariant functions with the property: for every tempered function $ F'$ there is an $n$ and a positive scalar $c$ such that $|F'|\le c\cdot |F_n|$. (In fact, this is a \emph{nuclear} Fr\'echet space.) Thus, the space $\mathscr C(X)$ is an LF-space, i.e.\ a countable strict inductive limit of Fr\'echet spaces.

In case $X$ is a direct product:
$$ X = \mathcal Z(X) \times X',$$
where $X'$ is a $[G,G]$-spherical variety, we have a decomposition:
$$ \mathscr C(X) = \mathscr C(\mathcal Z(X))\hat\otimes \mathscr C(X'),$$
where the completed tensor product is defined as a strict inductive limit over the corresponding spaces of invariants under compact open subgroups, and for each such subgroup it is uniquely defined by nuclearity. In simple terms, this means the following: We may choose the sequence as above of tempered functions $F_n$ to consist of product functions: $F_{ij} = F_i^{(1)} \otimes F_j^{(2)}$, where $F_i^{(1)}$ and $F_j^{(2)}$ denote, respectively, similar sequences on $\mathcal Z(X)$ and $X'$. Then $\mathscr C(X)^J$ is the completion of $\mathcal S(X)^J = \mathcal S(\mathcal Z(X))^{J\cap \mathcal Z(G)^0}\otimes \mathcal S(X')^{J\cap [G,G]}$ with respect to the corresponding seminorms.

In the general factorizable case, using a decomposition such as \eqref{disjoint} and pulling back functions by the action map:
$$\mathcal Z(X)\times X_i'\to \mathcal Z(X)\cdot X_i',$$
it is immediate to identify: 
\begin{equation}\label{Cfactorizable}
\mathscr C(X) \simeq \bigoplus_i \left( \mathscr C(\mathcal Z(X))\hat\otimes \mathscr C(X_i')\right)^{(\mathcal Z(G)^0\cap [G,G])^\diag}.
\end{equation}

Notice that we also have:
\begin{equation}\label{Lfactorizable} L^2(X) \simeq \bigoplus_i \left( L^2(\mathcal Z(X))\hat\otimes L^2(X_i')\right)^{(\mathcal Z(G)^0\cap [G,G])^\diag},
\end{equation}
where the completed tensor product here is the Hilbert space tensor product.

\subsubsection{Comparison with alternative definitions} 

Since the definition of Harish-Chandra Schwartz space is sometimes phrased differently in the literature, we would like to verify that the one we gave coincides with other versions. We start from the general definition given in \cite[\S 3.5]{BePl}; according to it, the Harish-Chandra Schwartz subspace of $C^\infty(X)^J$ is the one defined by the norms of $L^2(X, (1+r)^d \mu_X)^J$, for all $d\ge 1$. Here $r$ is a \emph{radial function} on $X$, and the measure $\mu_X$ is a $G$-invariant measure. (We leave the case of an eigenmeasure to the reader --- cf.\ \S 3.7 of \emph{loc.cit.})

We remind that a radial function $r:X\to \mathbb R^+$ is a locally bounded proper function such that for every compact subset $B\subset G$ there is a constant $C>0$ with 
\begin{equation}\label{radialdef}|r(gx)-r(x)|<C
\end{equation}
 for all $g\in B, x\in X$. The definition of the Harish-Chandra Schwartz space using radial functions generally depends on the radial function chosen up to the equivalence relation:
$$ r\sim r' \iff \exists C>0 \mbox{ s.t. } C^{-1} (1+r) \le 1+r' \le C(1+r).$$

However, for a homogeneous space $X$ there is a ``natural'' class of radial functions on $X$, described in \cite[\S 4.2]{BePl}. It admits the following explicit description, whose verification we leave to the reader, using a (weak) Cartan decomposition for $X$.

By a (weak) Cartan decomposition for $X$ we mean that there exists a subvariety $Y\subset X$, which (over the algebraic closure) is an orbit of a Cartan subgroup $T$ of $G$, such that:
\begin{equation}\label{weakCartan} X(F)=Y^+ U
\end{equation}
 for some large enough compact subset $U$ of $G(F)$, where $Y^+$ denotes a certain notion of ``dominant'' elements of $Y(F)$, cf.\ \cite{BO,DS} for the symmetric case and \cite[Lemma 5.3.1]{SV} for the general split case.  
 If we fix a 
 natural radial function $R$ on $Y(F)$ of the form $R=\Vert \omega \Vert$, where we choose a base point to identify $Y$ as a quotient of the Cartan subgroup $T$ and $\omega: Y(F)\to V$ is a homomorphism with compact kernel to a finite-dimensional real normed space $V$, 
  the following is a radial function on $X$, representing the natural class of radial functions:
$$ r(x) := \min\{R(a)| a\in Y^+, x \in aU\}.$$

In fact, as the proof of \cite[Lemma 5.3.1]{SV} shows, the subvariety $Y$ of the above decomposition can be identified with the torus $A_X$ defined in \S \ref{sssplitdefs}, in a way compatible with the $A_{X,\Theta}^+$-actions on good neighborhoods of infinity, i.e.: Considering a decomposition $X = \bigsqcup_\Theta N_\Theta'$ as in \eqref{Xdecomp}, the action of $A_{X,\Theta}^+\subset \mathcal Z(X_\Theta)$ on $Y^+\cap N_\Theta'$ coincides with its action on $A_X\simeq Y$, restricted to $Y^+\cap N_\Theta'$.

Thus, the above radial function is equivalent to the following one: Fix, for every $\Theta$, a $J$-invariant compact subset $M_\Theta\subset N_\Theta$ such that $A_{X,\Theta}^+ M_\Theta = N_\Theta$, and let, for each $x\in N_\Theta$: 
$$r'(x) = \min\{ R(a)| a\in A_{X,\Theta}^+, x\in a M_\Theta\},$$
where $R(a)$ is the fixed radial function on $A_{X,\Theta}\subset A_X$.

Returning to functions, observe that the functions 
$$F_d(x) = (1+r'(x))^\frac{d}{2} \mu_X(xJ)^{-\frac{1}{2}}$$ form a basis of tempered functions such as the ones used in our present definition \eqref{HCcondition} of the Fr\'echet structure of the Harish-Chandra Schwartz space. Thus, the system of norms of the spaces $L^2(X, (1+r)^d \mu_X)^J$ is equivalent to the system of norms $\rho_{F_d}$ defined using those functions. This shows the equivalence of our definition with Bernstein's.

Finally, we also have \cite[Definition 3]{DH} in the case of a symmetric space, which defines the Harish-Chandra Schwartz space in terms of bounds of the form:
$$ |f(x)| \ll \Theta_G(x) (N_d(x))^{-1}$$
with $d>0$. The function $N_d$ is of the form $(1+r)^d$, for an \emph{algebraic} radial function $r$; namely, $X$ is realized as a closed subvariety of affine space, and the function $r$ is the maximum of the absolute values of the coordinates. Such a radial function can easily be seen to be equivalent to the ones used above, cf.\ also \cite[\S 4.5]{BePl}. The function $\Theta_G$ is a nonvanishing positive smooth function, which up to a power of $(1+r)$ and a constant coincides with the function $x\mapsto \mu_X(xJ)^{-\frac{1}{2}}$,  by a result of Lagier \cite[(2.27)]{DH} and an estimate of the volumes in \cite[Proposition 2.6]{KatoTakano}. Thus, this definition of $\mathscr C(X)$ also coincides with the above ones.

\section{Bundles over tori} \label{sec:generalities}

\subsection{Bundles with flat connections over complex tori} \label{ssgeneralities}

Let $T$ be a complex algebraic torus, and let $V$ be a finite-dimensional complex vector space. Let $\Gamma\subset T$ be a finite subgroup, and let $\rho:\Gamma\to \GL(V)$ be a representation. Thus, $\Gamma$ acts on the total space of the vector bundle $T\times V$, and the quotient $V_\rho$ is a vector bundle over the quotient torus $Y=T/\Gamma$. 

By the following argument, one can see that this vector bundle is trivializable; however, we will not fix such a trivialization. The representation $\rho$ always extends to a complex algebraic homomorphism $\tilde\rho: T \to \GL(V)$. Indeed, $\rho$ decomposes into a finite sum of characters of $\Gamma$; viewing $\Gamma$ as the points of a finite algebraic group, each character is algebraic. The coordinate ring $\CC[T]$ of $T$, which is spanned by its characters, surjects onto the coordinate ring of $\Gamma$, and hence for every character $\chi$ of $\Gamma$, the $(\Gamma,\chi)$-equivariant part of $\CC[T]$ is non-zero. Thus, $\chi$ extends to a complex algebraic character of $T$, and $\rho$ extends to $\tilde\rho$.
Then, once we choose a basis $(v_1,\dots,v_n)$ of $V$, the trivialization of the bundle $V_\rho$ is given by the sections: $(t, v_i(t))$, where $v_i(t) = \tilde\rho(t)v_i$. 

In this paper, we will apply this construction to $T=\hat X^\unr_\CC$, $T/\Gamma=$a connected component of $\hat X^\disc_\CC$ (and the corresponding tori for ``boundary degenerations'' --- see \S \ref{sec:coinvariants} for the definitions). The vector bundle will come from certain spaces of coinvariants of $\mathcal S(X)$.

We want to endow the vector bundle $V_\rho$ with a flat connection, hence an action (on its sections) of the ring $\mathcal D(Y)$ of differential operators on $Y=T/\Gamma$. There are two obvious choices for doing that: One is to choose a trivialization by sections $v_i(t)$ as before and require that the $v_i(t)$'s are flat sections;  
 
this is \emph{not} the action that we will use. Rather, we consider the natural connection on the trivial vector bundle $T\times V$:
$$ D(\sum_i c_i(t) v_i) = \sum_i (Dc_i(t)) v_i \, \, (D\in \mathcal D(T)).$$
This \emph{descends} to a connection on the quotient vector bundle $(T\times V)/\Gamma$ over $Y$. Indeed, we have 
$$\mathcal D(Y) = \mathcal D(T)^\Gamma,$$ 
and moreover the action of $\mathcal D(T)$ on sections of $T\times V$ commutes with the action of $\GL(V)$. Thus, the subring $\mathcal D(Y)$ preserves $\Gamma$-invariant sections over $T$, which are precisely the sections of $V_\rho$ over $Y$. \emph{This is the action that we will be using.}

For convenience we introduce a notion of \emph{flat functional} on the vector bundle with total space $E=(T\times V)/\Gamma$. A flat functional will be an element of the dual vector space $V^*$, thought of as a \emph{flat section of the dual of the pull-back of $E$ to $T$}. It is by abuse of language that we call it a ``flat functional on $E$'' since it is really a flat section of the dual vector bundle over the \'etale cover $T$ of $Y=T/\Gamma$, \emph{not} a section over $Y$. Any section $y\mapsto f_y$ of $V_\rho$, together with a flat functional $v^*$, give rise to a function $F(f_y,v^*): t\mapsto \left<f_t,v^*\right>$ on $T$ (\emph{not} on $Y=T/\Gamma$). The action of differential operators was defined in such a way that for every section $f_y$, every flat section $v^*$ and every differential operator $D\in \mathcal D(Y)$ we have:
$$ F(Df_y,v^*) = DF(f_y,v^*).$$

\subsection{Various spaces of sections} \label{sssections}

Let $T$ now denote the maximal compact subtorus (considered as a real form) of a complex torus $T_\CC$; or, more generally, let $T$ be a \emph{torsor} (principal homogeneous space) for a compact real torus, and $T_\CC$ its complexification. Let $L$ be a finite dimensional, complex algebraic vector bundle over $T_\CC$.
We introduce the following notation for sections of $L$:

\begin{itemize}
 \item We denote by $\CC[T,L]$ the regular sections of $L$ over $T$ --- that is, over $T_\CC$.
 \item We denote by $\CC(T,L)$ the \emph{rational} sections. 
 \item We denote by $\Gamma(T,L)$ the \emph{rational} sections which are \emph{regular on the real subset} $T$; by ``regular'' we mean that their polar divisors do not intersect $T$; however, with an extra restriction on the poles which we will introduce, this will turn out to be equivalent to the weaker condition that they extend to $C^\infty$, or even $L^2$, sections (see Lemma \ref{lemmalinearpoles}).
 \item Thinking of (the set of real points of) $T$ as a smooth manifold and of $L$ as a smooth vector bundle over $T$, we denote by $C^\infty(T,L)$ the \emph{smooth} sections over $T$; it carries a canonical structure of a Fr\'echet space.
 
 If $L$ is trivializable, we have a canonical isomorphism:
$$ C^\infty(T,L) = \CC[T,L]\otimes_{\CC[T]} C^\infty(T).$$

 \item We now come to hermitian forms. The bundle of sesquilinear forms on  $L$  is the (smooth) complex vector bundle $L^*\otimes \bar L^*$ over $T_\CC$. However, for the purposes of this paper, where $T$ will parametrize unitary representations, it is more meaningful to start with sesquilinear forms over $T$, view them as bilinear pairings between a representation $\pi$ and its dual $\tilde\pi$, by identifying $\tilde\pi$ with $\bar\pi$, and extend them as such to $T_\CC$. 
 
Therefore, we will not adopt the common notation where $\bar L$ denotes the complex conjugate of $L$, but $\bar L$ will denote the complex algebraic bundle which is obtained by $L$ via base change by complex conjugation with respect to the compact real form $T$: 
$$\Res_{\CC/\RR}(T_\CC) \to \Res_{\CC/\RR}(T_\CC).$$
In other words, the vector space of sections of $\bar L$ over an open $U\subset T_\CC$ will coincide with the conjugate vector space of sections of $L$ over the complex conjugate $\bar U$, and the coordinate ring of $\bar U$ will act on them via complex conjugation:
$$ \CC[\bar U]\to \CC[U].$$

Of course, over the real form $T$ this canonically induces the same smooth complex bundle as the complex conjugate of $L$. But, one of the benefits of our definition of $\bar L$ is that now $L^*\otimes \bar L^*$ is a complex algebraic vector bundle over $T_\CC$.
 
A \emph{hermitian metric} on $L$ (over $T$) is a smooth section of $L^*\otimes \bar L^*$  over $T$ which corresponds to a positive definite hermitian form on every fiber. A hermitian metric, together with a Haar measure on $T$, give rise to the Hilbert space of $L^2$ sections of $L$. Since $T$ is compact, all hermitian metrics and Haar measures give isomorphic topological vector spaces of $L^2$-sections, although of course the Hilbert norm will depend on the choices. 
\end{itemize}

The following easy lemma will be useful:
\begin{lemma}\label{regularsmooth}
 Let $T$ be a real torus and $\Gamma$ a finite subgroup. The natural map: 
$$ \CC[T]\otimes_{\CC[T/\Gamma]} C^\infty(T/\Gamma) \to C^\infty(T)$$
is an isomorphism.
\end{lemma}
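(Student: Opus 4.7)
The plan is to exploit the finite Galois covering $T\to T/\Gamma$ by decomposing both sides into $\Gamma$-isotypic components. As was already invoked in \S\ref{ssgeneralities}, every character $\chi\in\widehat\Gamma$ extends to an algebraic character $\tilde\chi:T_\CC\to \Gm$; equivalently, the restriction map $X^*(T_\CC)\twoheadrightarrow \widehat\Gamma$ is surjective, as one sees by embedding $\Gamma$ into the $N$-torsion of $T_\CC=(\Gm)^n$ for a suitable $N$ and extending coordinate-wise. Since $X^*(T_\CC)$ is free abelian of finite rank and $\widehat\Gamma$ is finite, I fix once and for all a set-theoretic section of this restriction, producing a family $\{\tilde\chi\}_{\chi\in\widehat\Gamma}$ of extensions.

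This choice yields a direct sum decomposition
$$\CC[T] = \bigoplus_{\chi\in\widehat\Gamma} \tilde\chi\cdot \CC[T/\Gamma]$$
of $\CC[T/\Gamma]$-modules; in other words, $\{\tilde\chi\}_\chi$ is a free basis of $\CC[T]$ over $\CC[T/\Gamma]$. Symmetrically, averaging over $\Gamma$ via the projectors $P_\chi f = |\Gamma|^{-1} \sum_{\gamma\in\Gamma} \chi(\gamma)^{-1} \gamma^* f$ decomposes $C^\infty(T) = \bigoplus_{\chi\in\widehat\Gamma} C^\infty(T)_\chi$ into $\chi$-isotypic subspaces. Because $\tilde\chi$ is nowhere vanishing on $T$ and transforms by $\chi$ under $\Gamma$, multiplication by $\tilde\chi$ is an isomorphism $C^\infty(T/\Gamma) = C^\infty(T)_{\mathbf 1} \xrightarrow{\sim} C^\infty(T)_\chi$.

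Combining the two decompositions, both sides of the natural map become identified with $\bigoplus_{\chi\in\widehat\Gamma} C^\infty(T/\Gamma)$, and under these identifications the map itself is the sum of the identity maps on each summand; hence it is an isomorphism. There is no real obstacle here: the content is a standard application of the representation theory of the finite abelian group $\Gamma$, combined with the extension of its characters to algebraic characters of the ambient torus. The argument is formally identical if $C^\infty$ is replaced by any sheaf of $\CC[T/\Gamma]$-modules on $T/\Gamma$ carrying a compatible $\Gamma$-action, which is why the lemma is stated in this abstract form and will be invoked in that generality in the sequel.
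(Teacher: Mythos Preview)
Your proof is correct and follows essentially the same approach as the paper's: both decompose $C^\infty(T)$ into $\Gamma$-isotypic pieces and use an extension of each character of $\Gamma$ to an algebraic character $\tilde\chi$ of $T$ to identify each isotypic piece with $C^\infty(T/\Gamma)$. You are slightly more explicit in packaging this as the freeness of $\CC[T]$ over $\CC[T/\Gamma]$ with basis $\{\tilde\chi\}$, which makes injectivity transparent, whereas the paper simply says ``injectivity is easy''; otherwise the arguments coincide.
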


\begin{proof}
As in the beginning of \S \ref{ssgeneralities}, we can extend each complex character $\chi$ of $\Gamma$ to a character $\tilde \chi$ of $T$ ($\tilde\chi\in \CC[T]$), then write each $f\in C^\infty(T)$ as a linear combination of its $\chi$-equivariant parts:
$$ f = \sum_\chi f_\chi, \mbox{ where } f_\chi(t) = \frac{1}{|\Gamma|} \sum_\gamma \chi^{-1}(\gamma) f(\gamma t),$$
and finally $f_\chi = \tilde\chi \cdot \frac{f_\chi}{\tilde \chi}$, with the last factor an element of $C^\infty(T/\Gamma)$. This shows surjectivity. Vice versa, for any sum $\sum_i P_i \otimes f_i \in \CC[T]\otimes_{\CC[T/\Gamma]} C^\infty(T/\Gamma)$, we can similarly decompose the $P_i$'s in terms of characters of $\Gamma$, and then the $\chi$-equivariant part of the sum can be written as $\tilde\chi \otimes f_\chi$, with $f_\chi \in C^\infty(T/\Gamma)$. For the sum $\sum_\chi \tilde\chi \otimes f_\chi$ to be zero, each $f_\chi$ has to be zero, proving injectivity.
\end{proof}

\subsection{Linear poles} \label{sslinearpoles}

We continue assuming that $L$ is a complex algebraic vector bundle over a complex torus $T_\CC$, whose compact real form we denote by $T$ or, more generally, over a torsor $T$ for a compact real torus. A \emph{linear divisor} on $T$ will be the scheme-theoretic zero set of a polynomial of the form: 
\begin{equation}\label{lineardivisor}\prod_i (\chi_i - r_i)
\end{equation}
 where:
\begin{itemize}
 \item the $r_i$'s are non-zero scalars;
 \item the $\chi_i$'s are ``characters'' of $T_\CC$ --- more precisely: non-zero eigenfunctions for the torus acting on $T_\CC$.
\end{itemize}
In particular, a linear divisor is always principal. The word ``linear'' stems from the fact that under an exponential map: $\mathfrak t \to T$ (and its complexification $\mathfrak t_\CC \to T_\CC$) their preimages are unions of affine hyperplanes --- in fact, affine hyperplanes associated to the \emph{real} functionals $\sqrt{-1}\cdot d\chi_i$. 

We say that a rational section $f\in \CC(T,L)$ has linear poles if: $$\prod_i (\chi_i - r_i) f \in \CC[T,L] \,\, \mbox{ (regular sections)}$$ for a finite set of characters $\chi_i$ and complex numbers $r_i$ as above. A very crucial lemma will be the following:

\begin{lemma} \label{lemmalinearpoles}
 If $f\in \CC(T)$ has linear poles and belongs to $L^1(T)$, then it belongs to $\Gamma(T)$, i.e.\ its poles do not meet the real locus $T$.
\end{lemma}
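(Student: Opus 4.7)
My plan is to reduce to a one-dimensional problem by restricting $f$ along a generic one-parameter subgroup and invoking Fubini, then use the fact that a rational function of one variable with a genuine pole on the unit circle cannot be $L^1$ there.

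First I would write $f = g/D$ with $g \in \CC[T_\CC]$ and $D$ a product of factors $\chi_i - r_i$ as in \eqref{lineardivisor}. Since each $\chi_i - r_i$ factors in $\CC[T_\CC]$ as a product of irreducible ``primitive'' factors of the same form (and with $|r_i|$ preserved, since the roots of $\zeta^k = r_i$ have modulus $|r_i|^{1/k}$), I may assume after cancellation that $g$ and $D$ share no irreducible factor. The goal then becomes showing that no remaining factor $\chi_{i_0} - r_{i_0}$ has $|r_{i_0}| = 1$.

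Suppose for contradiction that some factor has $|r_{i_0}|=1$. I would choose a cocharacter $\lambda : \Gm \to T_\CC$ subject to the generic (and each codim-$1$) conditions that $\langle \chi_i, \lambda\rangle \neq 0$ for every nontrivial $\chi_i$ appearing in $D$, and that $\lambda(\Gm)$ is not contained in any irreducible component of $\{g=0\}$ or $\{D=0\}$. Picking a complementary subtorus $T_2\subset T_\CC$ so that $T_\CC = \lambda(\Gm)\cdot T_2$ with finite overlap, Haar measure on $T$ splits (up to a finite factor) as $\frac{dz}{iz}\, dt_2$ over $S^1 \times (T_2\cap T)$. Fubini, applied to the hypothesis $f \in L^1(T)$, would then give that $z\mapsto f(\lambda(z)t_2)$ lies in $L^1(S^1)$ for almost every $t_2\in T_2\cap T$.

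The key observation is that $F_{t_2}(z) := f(\lambda(z)t_2)$ is a rational function of $z$ on $\Gm$ whose pole divisor is generically exactly $\{D(\lambda(z)t_2)=0\}$: outside a proper Zariski-closed subset of $T_2$, $g$ and $D$ restricted to $\lambda(\Gm)t_2$ remain coprime (since $\gcd(g,D)=1$ and the restriction map is generically injective on prime divisors not containing $\lambda(\Gm)$). For the factor $\chi_{i_0}-r_{i_0}$, writing $k=\langle \chi_{i_0},\lambda\rangle\neq 0$, the equation becomes $z^k = r_{i_0}/\chi_{i_0}(t_2)$, whose solutions lie on $S^1$ because $|r_{i_0}|=1$ and $|\chi_{i_0}(t_2)|=1$ for $t_2\in T$. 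Thus for generic $t_2\in T_2\cap T$ the function $F_{t_2}$ has a genuine pole at some point of $S^1$, and such a rational function is not integrable near that pole (its modulus behaves like $c/|z-z_0|^m$ with $m\geq 1$). Since a Zariski-open subset of $T_2\cap T$ has full Haar measure, this contradicts the a.e.\ $L^1$ statement from Fubini. Hence $D$ has no zero on $T$, so $f\in \Gamma(T)$.

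The one step requiring care, which I view as the main technical point, is ensuring that the cancellation of $g$ with $D$ along $\lambda(\Gm)t_2$ is genuinely generic in $t_2$; this rests on the fact that $\gcd(g,D)=1$ together with an irreducibility argument for the primitive factors $\chi_i - r_i$ in $\CC[T_\CC]$, so that ``no common factor'' survives restriction to a generic translate of a one-parameter subgroup.
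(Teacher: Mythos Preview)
Your argument is correct. The approach, however, differs from the paper's. The paper pulls back $f$ via the exponential map $\mathfrak t_\CC \to T_\CC$, so that linear divisors become honest affine hyperplanes $\{l_i = c_i\}$ with \emph{real} linear parts $l_i$; it then works locally near a putative real pole, writing the pullback as $h\cdot\prod_i l_i^{-r_i}$ with $h$ holomorphic and not divisible by any $l_i$, picks a nearby real point where $l_1=0$ but $h\neq 0$ and the other $l_j\neq 0$, and observes that $|f|\gtrsim |l_1|^{-1}$ there, which is not locally integrable on $\mathfrak t$. Your argument instead slices globally by a generic one-parameter subgroup and invokes Fubini to reduce to the one-variable fact that a rational function with a pole on $S^1$ is not in $L^1(S^1)$.

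Both routes are sound. The paper's is shorter and entirely local; it sidesteps the genericity bookkeeping you correctly flag as the main technical point (that $\gcd(g,D)=1$ survives restriction to a generic translate of $\lambda(\Gm)$, and that the bad Zariski-closed set in $T_2$ meets the compact real form in a null set). Your slicing argument is a perfectly natural alternative and has the mild advantage of staying on the torus rather than passing to the Lie algebra; the cost is the extra care with coprimality under restriction and the choice of complementary subtorus, which you have handled.
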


The notion of $L^1(T)$ is defined with respect to any Haar measure on $T$.

\begin{proof}
Using the exponential map, we can pull back the function to a holomorphic function $F$ on the complexification $\mathfrak t_\CC$ of the Lie algebra, with poles along complex hyperplanes and locally integrable on the real subspace $\mathfrak t$. Thus, locally around any point on $\mathfrak t$ which without loss of generality we may assume to be the point $0$, the pullback is equal to:
$$ h\cdot \prod_i l_i^{-n_i},$$
where the $l_i$'s are real linear functionals, the $n_i$'s are positive integers, and $h$ is a holomorphic function which does not identically vanish on the zero set of any of the $l_i$'s. If such a function is locally integrable, the same is true a fortiori when the denominator is replaced by a single linear functional $l_1$, thus we may assume that $F=h\cdot l_1^{-1}$, where $h$ is not divisible by $l_1$. 

There is a (real) point of $\mathfrak t$ in any neighborhood of zero which is in the kernel of $l_1$ but not on the zero set of $h$ (otherwise, $h$ would be divisible by $l_1$). Thus, in a neighborhood of that point the function is bounded by a constant times $l_1^{-1}$, and cannot be integrable.
\end{proof}

Finally, if $L_1, L_2$ are two vector bundles as above, then $\Hom(L_1,L_2)$ is also such a vector bundle, and we can talk about its rational sections, and linear poles for those sections. In particular, we have the following easy corollary of the previous lemma:

\begin{corollary}\label{linearpolesmap}
Suppose that $M \in \CC(T, \Hom(L_1,L_2))$ has linear poles and induces a bounded map:
$$L^2(T,L_1) \to L^2(T,L_2)$$
(with respect to hermitian metrics on $L_1$, $L_2$ and a Haar measure on $T$ --- as remarked, all choices give isomorphic spaces of $L^2$ sections).
Then $M\in \Gamma(T,\Hom(L_1,L_2))$, i.e. its poles do not meet the real locus.
\end{corollary}

\begin{proof}
Locally on $T$ we may trivialize the bundles and bound the hermitian metric from below by a constant hermitian metric (with respect to the trivialization). Thus, the square of the absolute value of the fiberwise Hilbert-Schmidt norms:
 $$T\ni t\mapsto \Vert M_t\Vert^2 $$
is bounded below, locally, by a rational function with linear poles, no fewer than those of $M$. The norm of $M$ as a bounded map: $L^2(T,L_1) \to L^2(T,L_2)$ is the $L^1$-norm of this function, and the previous lemma (or rather, its proof) shows that the poles cannot meet the real locus.
\end{proof}

\section{Coinvariants and the bundles of $X$-discrete and $X$-cuspidal representations}\label{sec:coinvariants}

\subsection{Coinvariants}\label{sscoinvariants} 

For an irreducible representation $\pi$ of $G$, the space of \emph{$\pi$-coinvariants} of $\mathcal S(X)$ is the quotient of $\mathcal S(X)$ by the common kernel of all morphisms: $\mathcal S(X)\to \pi$. They can be canonically identified with:
\begin{equation} \mathcal S(X)_\pi = \left(\Hom_G(\mathcal S(X),\pi)\right)^* \otimes \pi.\end{equation}
This is a finite direct sum of copies of $\pi$, by \cite[Theorem 9.2.1]{SV}, \cite[Theorem 4]{Delorme-Plancherel}. 

A subspace of $\Hom_G(\mathcal S(X),\pi)$ corresponds to a quotient of the space $\mathcal S(X)_\pi$ of $X$-coinvariants. Let $\pi$ have unitary central character $\chi_\pi$; recall here that by \eqref{wholecenter} (cf.\ also \S \ref{generalsymmetric} for the meaning of $\mathcal Z(G)^0$ in the general case) we assume that the maximal split torus in the center of $G$ surjects onto the ``center of $X$''.
We call an element of $\Hom_G(\mathcal S(X),\pi)$ ``cuspidal'' if $\pi$ has unitary central character and the dual: 
$$ \tilde \pi \to C^\infty(X)$$
has image in the space of compactly supported functions modulo the center. We call it ``discrete'' if the dual has image in $L^2(X/\mathcal Z(X), \chi_{\tilde\pi})$, where $\chi_{\tilde\pi}$ is the central character. We call it ``tempered'' if the dual has image in the space of tempered functions or, equivalently, if the morphism extends continuously to the Harish-Chandra Schwartz space $\mathscr C(X)$. The ``continuous'' assumption will be implicit whenever we write homomorphisms from $\mathscr C(X)$.

Thus, we have natural surjections:
\begin{equation}\label{coinvtsseq}
  \mathcal S(X)_\pi \twoheadrightarrow \mathcal S(X)_{\pi,\temp} \twoheadrightarrow \mathcal S(X)_{\pi,\disc} \twoheadrightarrow \mathcal S(X)_{\pi,\cusp},
\end{equation}
where the second corresponds to tempered morphisms. The second, third and fourth are also coinvariants for the Harish-Chandra Schwartz space, i.e.\ the canonical quotient map from $\mathcal S(X)$ extends continuously to: 
\begin{equation}
 \mathscr C(X) \twoheadrightarrow \mathcal S(X)_{\pi,\temp}.
\end{equation}

If $\pi$ does not have unitary central character, we will still be using the notation $\mathcal S(X)_\pi$, $\mathcal S(X)_{\pi,\temp}$, $\mathcal S(X)_{\pi,\disc}$ and $\mathcal S(X)_{\pi,\cusp}$ for quotients of $\mathcal S(X)$ such that the corresponding morphisms: 
$$\mathcal S(X)\to \pi \,\, \mbox{ or } \,\, \tilde \pi \to C^\infty(X)$$
have the aforementioned properties up to a twist by a character of the group. 

\subsection{$X$-discrete and $X$-cuspidal components} 

Assume $X$ to be factorizable. 
We let $\hat X^\cusp$ denote the set of irreducible representations $\pi$ with unitary central character such that $\mathcal S(X)_{\pi,\cusp}\ne 0$; we let $\hat X^\disc$ denote the set of irreducible representations $\pi$ with unitary central character such that $\mathcal S(X)_{\pi,\disc}\ne 0$. Thus, $\hat X^\cusp\subset \hat X^\disc$.

Both sets have a natural topology, and split into disjoint, possibly infinite, unions of compact components which can naturally be identified with the real points of real algebraic varieties of the same dimension, each of which is a principal homogeneous space for a torus. This structure arises as follows: 

Recall from \S \ref{sec:definitions} that $X^\ab$ is a quotient variety of $X$, which is a torsor for the torus $G^\ab/\overline{H^\ab}$, and that we denote by $\hat X^\unr$ the real torus of unitary \emph{unramified} characters of this torus.

By our assumption that $X$ is factorizable, the torus $\hat X^\unr$ acts (with finite stabilizers) on $\hat X^\disc$ and $\hat X^\cusp$. Indeed, any morphism $M:\mathcal S(X)\to \pi$ can be ``twisted'' by any element  $\omega\in \hat X^\unr$ by fixing a base point $x_0\in X^\ab$ to identify this space with the abelian quotient of $G$ of which it is a torsor, and considering $\omega$ as a function on $X$. We then define $M_\omega \in \Hom_G(\mathcal S(X),\pi\otimes \omega)$ by:
\begin{equation}\label{Momega} M_\omega (\Phi) = M(\Phi \cdot \omega).
\end{equation}
It is clear that $M_\omega$ is discrete (resp.\ cuspidal) iff $M$ is.

By \cite[Theorem 9.2.1]{SV}, \cite[Theorem 4]{Delorme-Plancherel}, we have: 
\begin{proposition}\label{finiteds}
 For each open compact subgroup $J$ of $G$, the set of $\hat X^\unr$-orbits on elements of $\hat X^\disc$ with non-zero $J$-fixed vectors is finite.
\end{proposition}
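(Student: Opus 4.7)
The plan is to reduce the finiteness of $\hat X^\ab$-orbits on $\{\pi \in \hat X^\disc : \pi^J \ne 0\}$ to a finite-dimensionality statement for a single spectral fiber, and then handle that via uniform admissibility together with the Plancherel decomposition.

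First, I would track the central character map $\pi \mapsto \chi_\pi \in \widehat{\mathcal Z(X)}$. The twisting action of $\hat X^\ab$ on $\hat X^\disc$ is compatible with its action on central characters via the natural composition $\hat X^\ab \to \widehat{\mathcal Z(X)}$ coming from \eqref{wholecenter}; by the factorizability hypothesis this composition is an isogeny onto the identity component $\widehat{\mathcal Z(X)}^0$. Now $\pi^J\ne 0$ forces $\chi_\pi$ to be trivial on the compact-open subgroup $J\cap \mathcal Z(X)$, so the possible $\chi_\pi$ lie in only finitely many connected components of $\widehat{\mathcal Z(X)}$, each of which is either disjoint from the image of $\hat X^\ab$ or a single $\hat X^\ab$-orbit. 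Up to the $\hat X^\ab$-action we may therefore fix a unitary $\chi$ and reduce to showing that
\[
\{\pi \in \hat X^\disc : \chi_\pi = \chi,\ \pi^J \ne 0\}
\]
is finite, equivalently that the Hilbert space
\[
L^2(X,\chi)_\disc^J \;=\; \bigoplus_{\pi \in \hat X^\disc,\ \chi_\pi = \chi} \pi^J \otimes \Hom_G\!\bigl(\pi,\, L^2(X,\chi)_\disc\bigr)
\]
is finite-dimensional.

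For the finite-dimensionality, I would combine Bernstein's uniform admissibility (which bounds $\dim \pi^J$ on each Bernstein component and ensures that only finitely many Bernstein components support any $\pi$ with $\pi^J \ne 0$) with the fact that, inside a single Bernstein component, the set of $X$-discrete representations with fixed central character is finite and of bounded discrete multiplicity. This latter point is the substantive one: a Bernstein component is a complex torus of cuspidal supports, and inside it the $X$-discrete spectrum appears as a discrete locus coming from poles of the scattering operators $S_w$ of Theorem \ref{L2theorem}, analogously to residues of Eisenstein series in Harish-Chandra's theory for groups; the central character map from this locus to $\widehat{\mathcal Z(X)}$ has finite fibers on compact subtori, so intersecting with a single $\chi$ gives a finite set.

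The main obstacle is making this last step rigorous: one must control the poles of the scattering operators on compact subsets of the unitary parameter space and show that the corresponding discrete locus projects finitely-to-one to $\widehat{\mathcal Z(X)}$. This is precisely the point where one invokes the detailed analysis of the Plancherel measure carried out in \cite{SV, Delorme-Plancherel}; once it is available, the two reductions above assemble into the claimed finiteness.
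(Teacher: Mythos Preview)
The paper does not give its own proof of this proposition; it is quoted directly as \cite[Theorem 9.2.1]{SV}, \cite[Theorem 4]{Delorme-Plancherel}, and is used thereafter as a black-box input to everything in sections \ref{sec:coinvariants}--\ref{sec:discreteHC} and, ultimately, to the Plancherel theorem itself. So there is no ``paper's proof'' to compare against beyond that citation.

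Your sketch, however, has a genuine circularity. In the last paragraph you propose to control $\{\pi\in\hat X^\disc:\chi_\pi=\chi,\ \pi^J\ne 0\}$ by saying that the $X$-discrete spectrum inside a Bernstein component ``appears as a discrete locus coming from poles of the scattering operators $S_w$ of Theorem \ref{L2theorem}''. But Theorem \ref{L2theorem} and the very existence of the scattering operators $S_w$ are established in \cite{SV,Delorme-Plancherel} \emph{after}, and using, Proposition \ref{finiteds}; in this paper the bundle $\mathscr L^J$, the isomorphism \eqref{L2discrete}, and Theorem \ref{thmdiscrete} all rest on Proposition \ref{finiteds}. So you are invoking the output of the Plancherel machine to justify one of its inputs. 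Moreover, the description of relative discrete series as ``poles of the $S_w$'' is not how they arise here: the $S_w$ are isometries between the spaces $L^2(X_\Theta)_\disc$ for associate $\Theta$, not objects whose poles produce $L^2(X)_\disc$.

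The reduction in your first two paragraphs (pass to a fixed central character via factorizability and show $L^2(X,\chi)_\disc^J$ is finite-dimensional) is fine and is indeed the shape of the statement one wants; but the finite-dimensionality of $L^2(X,\chi)_\disc^J$ is essentially the content of \cite[Theorem 9.2.1]{SV}, and its proof there proceeds by direct estimates using the asymptotic maps $e_\Theta$, the Casselman-type subunitarity criterion for relative discrete series, and the finite generation of $\mathcal S(X)^J$ over $\mathcal H(G,J)$ --- not by any residue or scattering argument. If you want an independent proof, that is the toolkit to reach for.
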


In particular, the set of $\hat X^\unr$-orbits on $\hat X^\disc$ is countable, and the action endows the latter with a real algebraic structure. 
We denote by $\hat X^\cusp_\CC$, $\hat X^\disc_\CC$, $\hat X^\unr_\CC$ the complex points of these varieties.

\subsection{The bundles $\mathcal L_\pi$, $\mathscr L_\pi$} \label{ssthebundles}

Consider the associations: 
$$\hat X^\cusp_\CC \ni \pi\mapsto  \mathcal L_\pi:=\mathcal S(X)_{\pi,\cusp},$$
$$\hat X^\disc_\CC \ni \pi\mapsto  \mathscr L_\pi:=\mathcal S(X)_{\pi,\disc}.$$

The twisting \eqref{Momega} allows us to consider these spaces as fibers of complex algebraic vector bundles $\mathcal L$, $\mathscr L$ over $\hat X^\cusp_\CC$, resp.\ $\hat X^\disc_\CC$ endowed with (slightly noncanonical) flat connections, following the formalism of \S \ref{ssgeneralities}. 
More precisely, we will use this formalism for the $J$-coinvariants $\mathcal L^J$, $\mathscr L^J$ (where $J$ is any compact open subgroup), which will be vector bundles supported over a finite number of connected components by Proposition \ref{finiteds} and with finite-dimensional fibers, and then we will define the space of sections of $\mathcal L$, $\mathscr L$ to be direct limits over all $J$ of $J$-invariant sections.

For notational simplicity, we only discuss the case of $\mathscr L^J$ ($X$-discrete series); the other is identical and, after all, it is just a quotient of $\mathscr L^J$ (and, as we shall see later, also a direct summand). 

We will exhibit the vector bundle $\mathscr L^J$ over $\hat X^\disc$ as a vector bundle of the form $V_\rho$ in the notation of \S \ref{ssgeneralities}, with $T = \hat X^\unr_\CC$ and $Y = $ a connected component of $\hat X^\disc_\CC$. To identify $Y$ with a quotient of $T$, we need to fix a base point $\pi$ in this connected component, which we take in the unitary set $\hat X^\disc$. 

The vector space $V$ will be the coinvariant space $\mathcal S(X)_{\pi,\disc}^J$, identified as a linear space (not equivariantly) with $V_\omega:=\mathcal S(X)_{\pi\otimes\omega,\disc}^J$, for \emph{any} $\omega\in \hat X^\unr_\CC$.

The way to perform this linear identification: 
\begin{equation}\label{beta}
\beta_\omega: V = \mathcal S(X)_{\pi,\disc}^J \xrightarrow\sim \mathcal S(X)_{\pi\otimes\omega,\disc}^J = V_\omega
\end{equation}
is by fixing a point $x_0\in X^\ab$, as before, giving rise to the twisting $M \mapsto M_\omega$ of \eqref{Momega}, viewed here as a linear isomorphism:
$$\Hom_G(\mathcal S(X),\pi)_\disc \to \Hom_G(\mathcal S(X),\pi\otimes \omega)_\disc.$$ 
Moreover, the underlying vector space of the representation $\pi\otimes\omega$ is naturally identified with the vector space of $\pi$, thus we get linear isomorphisms between the spaces of discrete coinvariants:
\begin{equation}\label{coinvts} \mathcal S(X)_{\pi,\disc} = \left(\Hom_G(\mathcal S(X),\pi)_\disc\right)^* \otimes \pi.
 \end{equation}

This shows that the association:
\begin{equation}\label{sheafonchars}\hat X^\unr_\CC \ni \omega\mapsto \mathcal S(X)_{\pi\otimes\omega,\disc}^J,
\end{equation}
has the structure of an almost canonically (depending on the choice of $x_0$) trivial vector bundle, thus also an algebraic vector bundle (independently of $x_0$) with total space $T\times V$ over $T=\hat X^\unr_\CC$. (We remind that $V=\mathcal S(X)_{\pi,\disc}^J$.)

Now notice that
$\mathcal S(X)_{\pi,\disc}$ is a \emph{canonical} quotient of $\mathcal S(X)$ which depends only on the isomorphism class of $\pi$ and not on its realization. In particular, if $\pi\simeq\pi\otimes\omega$ for some character $\omega$ of $G$ then we have a \emph{canonical} isomorphism: 
\begin{equation}\label{coinvisom}\alpha_\omega: V=\mathcal S(X)^J_{\pi,\disc} \simeq \mathcal S(X)_{\pi\otimes \omega,\disc}^J=V_\omega.
\end{equation}
(This is obvious if we think of $\mathcal S(X)$ as the quotient by the common kernel of all morphisms $\mathcal S(X)\to \pi$; to explain it in terms of the isomorphism \eqref{coinvts},
we notice that the difference between any two choices of isomorphisms: $\pi\xrightarrow{\sim} \pi\otimes \omega$ is a scalar which gets cancelled when we tensor $\pi$ with the linear dual of $\Hom_G(\mathcal S(X),\pi)_\disc$.) This isomorphism, in general, is not the same as the linear isomorphism $\beta_\omega$ defined above for every $\omega$. The composition $\beta_\omega^{-1}\circ \alpha_\omega$ defines a representation $\rho$ of the stabilizer in $\hat X^\unr$ of $\pi$ on $V$.

Now we endow $\mathscr L^J$ with the structure of a complex algebraic vector bundle over $\hat X_\CC^\disc$, by declaring that the bundle \eqref{sheafonchars} is simply its pull-back under the orbit map $\omega\mapsto \pi\otimes\omega$. In the notation of \S \ref{ssgeneralities} we have $\mathscr L^J = V_\rho$, where $V=\mathcal S(X)_{\pi,\disc}^J$, $T=\hat X_\CC^\unr$, $\Gamma=$ is the subgroup of those $\omega$ such that $\pi\otimes\omega \simeq \pi$,
and $\rho(\omega)=\beta_\omega^{-1}\circ \alpha_\omega$.

Hence, $\mathscr L^J$ is the vector bundle over $Y=T/\Gamma$ with total space $(T\times V)/\Gamma$, and, by repeating the same process for each connected component, a vector bundle over $\hat X^\disc_\CC$. 
The algebraic structure does not depend on the choice of basepoint $\pi$ for our representations, or base point $x_0$ on $X$. However, the corresponding flat connection, and hence the notion of \emph{flat functionals} of \S \ref{ssgeneralities} depends on the choice of base point $x_0$ up to a character of the torus $\hat X^\unr_\CC$. More precisely, the ``flat functionals'' are the functionals $f_\omega\mapsto \left<(M_\omega) (f_\omega), w^*\right> $, with $w^*\in V^*$. This dependence will not play any role in our statements.

It is clear from the definitions that the natural map:
$$ \mathcal S(X)^J \to \mathcal S(X)_{\pi,\disc}^J$$
gives rise to regular sections of $\mathscr L^J$, as $\pi$ varies, i.e.\ it gives a canonical map:
\begin{equation}\label{canonicalmap-disc}
 \mathcal S(X)\to \CC[\hat X^\disc,\mathscr L].
\end{equation}
(Similarly, by composing with the quotient map $\mathscr L\to \mathcal L$ we get a canonical map:
\begin{equation}\label{canonicalmap-smooth}
 \mathcal S(X)\to \CC[\hat X^\cusp,\mathcal L].)
\end{equation}

We have:
\begin{proposition} \label{discreteSchwartz}
 The vector bundles $\mathscr L^J$, $\mathcal L^J$ over $\hat X^\disc_\CC$, resp.\ $\hat X^\cusp_\CC$, are trivializable (over each connected component).
 
The global (regular) sections of $\mathscr L^J$ over $\hat X^\disc_\CC$ (resp.\ $\mathcal L^J$ over $\hat X^\cusp_\CC$) are \emph{precisely} the images of elements of $\mathcal S(X)^J$ under \eqref{canonicalmap-disc}.
\end{proposition}
 
\begin{proof}
The two cases are identical, so we work with $\mathscr L^J$.
The fact that it is trivializable follows from the generalities discussed in \S \ref{sec:generalities}, but it will also be seen explicitly by the argument that follows. 

The map $\mathcal S(X)^J\to \mathcal S(X)^J_{\pi,\disc}$ is surjective for any irreducible $\pi$, hence \eqref{canonicalmap-disc}, composed with evaluation at each fiber, is surjective. Choose a finite number $f_i$ of characteristic functions on $J$-orbits $x_iJ$ on $X$ such that their images form a basis of $\mathcal S(X)^J_{\pi}$, for some fixed $\pi$. If $\hat f_i(\omega)$ denotes the image of $f_i$ in $\mathcal S(X)_{\pi\otimes\omega}$, with all those vector spaces identified with the same vector space $V$ as above, it is immediate from the above definitions that, in this common vector space, $\hat f_i(\omega) = \omega(x_i)\cdot \hat f_i(1)$. Hence, the images of the $f_i$'s form a basis for every fiber of \eqref{sheafonchars}. Since $\hat X_\CC^\disc$ is affine, these global sections trivialize the bundle.

To see that all global sections come from $\mathcal S(X)^J$, we can use Nakayama's lemma. Let $Z$ be a finitely generated subgroup of $\mathcal Z(X)$ that surjects onto $\mathcal Z(X)/(\mathcal Z(X)\cap J)$. Its group ring can be identified with $\CC[\hat Z]$, the ring of regular functions on the character group of $Z$. We have a restriction map $\hat X^\disc\to \hat Z$, hence both sides of \eqref{canonicalmap-disc} (restricted to $J$-invariants) are $\CC[\hat Z]$-modules. For every point $\chi$ of $\hat Z_\CC$ (i.e.\ for every maximal ideal of $\CC[\hat Z]$) the fiber of $\mathscr L^J$ over $\chi$ is:
\begin{equation}\label{pushforwardfiber}\oplus_{\pi \mapsto \chi} \mathcal S(X)_{\pi,\disc}^J,
\end{equation}
where the map $\pi\mapsto \chi$ is the restriction of the central character. This sum is finite (there are finitely many $X$-discrete series with given central character and non-zero $J$-fixed vectors), and $\mathcal S(X)^J$ surjects on it, because it surjects on every summand and the representations indexing the sum are irreducible and non-isomorphic. 

By Nakayama's lemma, the $\CC[\hat Z]$-modules $\mathcal S(X)^J$ and $\CC[\hat X_\CC^\disc,\mathscr L]$ coincide.

\end{proof}

We let:
$$\mathscr L = \lim_{\underset{J}\to}\mathscr L^J,$$
$$\mathcal L = \lim_{\underset{J}\to}\mathcal L^J$$
as direct limits of sheaves, i.e.\ the corresponding sections will be, by definition, sections of the finite-dimensional vector bundle of $J$-invariants for some open compact subgroup $J$. 

Now we will endow the bundles $\mathscr L$, $\mathcal L$ with hermitian structures, coming from the Plancherel formula for $X$. 

The Hilbert space $L^2(X)$ has an orthogonal direct sum decomposition $L^2(X)=L^2(X)_\disc \oplus L^2(X)_\cont$, where $L^2(X)_\disc$ has a Plancherel decomposition in terms of discrete morphisms from irreducible representations: $\pi\to C^\infty(X)$, in the sense of \S \ref{sscoinvariants}, i.e.\ with unitary central characters and in $L^2$ modulo the center.

The hermitian structure on $\mathscr L$ 
is a \emph{canonical measure on $\hat X^\disc$ valued in the space of hermitian forms on $\mathscr C(X)$}, that will be denoted 
\begin{equation}\label{Plmeasure}\left<\,\, , \,\, \right>_\pi d\pi,
 \end{equation}
characterized by the following properties:

\begin{enumerate}
\item for almost every $\pi$, the hermitian form $\left<\,\, , \,\, \right>_\pi$ is $G$-invariant, positive semi-definite, and factors through $\mathscr C(X)_{\pi,\disc} = \mathcal S(X)_{\pi,\disc} = \mathscr L_\pi$;
\item for $\Phi_1,\Phi_2\in\mathscr C(X)$, 
\begin{equation}\label{Pllocal} \left<\Phi_1,\Phi_2\right>_{L^2(X)_\disc} = \int_{\hat X^\disc} \left<\Phi_1 , \Phi_2 \right>_\pi d\pi.\end{equation}
\end{enumerate}

Of course, this measure is absolutely continuous with respect to Haar measure on $\hat X^\disc$. Choosing $d\pi$ to be a Haar measure we obtain $G$-invariant hermitian forms $\left<\,\, , \,\, \right>_\pi$ on the fibers of $\mathscr L$ over $\hat X^\disc$. These forms are actually positive definite, and ``flat'' in the following sense: 

Recall the conventions of \S \ref{sssections} for the vector bundle $\overline{\mathscr L}$; it is a complex vector bundle over $\hat X^\disc_\CC$, that only over $\hat X^\disc$ is equal to the complex dual of $\mathscr L$. Since $\bar\pi\simeq\tilde\pi$ (the smooth dual) over $\hat X^\disc$, the fiber of $\overline{\mathscr L}$ over an arbitrary $\pi \in \hat X^\disc_\CC$ can be identified with $\mathscr L_{\tilde\pi}$, the ``discrete'' $\tilde\pi$-coinvariants. The hermitian forms $\left<\,\, , \,\, \right>_\pi$ can be seen as linear functionals:
\begin{equation}\label{forms} \mathscr L_\pi \otimes \overline{\mathscr L}_\pi \to \CC,\end{equation}
for $\pi\in \hat X^\disc$. We claim that these are restrictions to $\hat X^\disc$ of \emph{flat functionals} in the sense of \S \ref{ssgeneralities}. 

Indeed, this is just another way to say the following: Fix a base point $\pi\in \hat X^\disc$, and consider the non-equivariant isomorphisms $\beta_\omega$ of \eqref{beta}.
The claim is that, with respect to these isomorphisms, the hermitian form $\left<\,\, , \,\,\right>_{\pi\otimes\omega}$ pulls back to the hermitian form $\left<\,\, , \,\, \right>_\pi$, for any fixed $\pi \in \hat X^\disc$. This is easy to see, since $X$ is factorizable; indeed, for every representation $\pi$ appearing discreetly mod center, the contribution of the family $\pi\otimes\omega$, $\omega\in \hat X^\unr$, to $L^2(X)$ is given by:
\begin{equation} \left<\Phi_1,\Phi_2\right>_{\{\pi\otimes\omega\}_\omega} = \int_{\hat X^\unr} \left< \Phi_1\cdot \omega^{-1},\Phi_2 \cdot \omega^{-1}\right>_\pi d\omega,
\end{equation}
for the Haar measure $d\omega$ whose push-forward to (the given connected component of) $\hat X^\disc$ is the measure $d\pi$. Notice that we have identified $\omega$ with a function on $X$, depending on the choice of a base point, as in the construction of $\beta_\omega$.

The forms $ (\Phi_1,\Phi_2)\mapsto \left< \Phi_1\cdot \omega^{-1},\Phi_2 \cdot \omega^{-1}\right>_\pi$ are the forms $\left<\Phi_1,\Phi_2\right>_{\pi\otimes\omega}$ appearing in \eqref{Pllocal}. To view them as restrictions to $\hat X^\disc$ of flat functionals of the form \eqref{forms} (defined for arbitrary $\pi\in \hat X^\disc_\CC$), we need to view them as bilinear forms on $\mathcal S(X)$:
$$ \Phi_1\otimes\Phi_2 \mapsto \left< \Phi_1\cdot \omega^{-1}, \overline{\Phi_2 \cdot \omega}\right>_\pi,$$
a formula which makes sense (for $\Phi_1,\Phi_2\in\mathcal S(X)$) even when $\omega$ is not unitary. It is clear from the definitions that these are indeed flat functionals in the sense of \S \ref{ssgeneralities}. Moreover, our ability to extend them off the tempered spectrum means that we can view the product 
\begin{equation}\label{volumeform}
\left<\,\, , \, \, \right> d\pi
\end{equation}
as a \emph{volume form on $\hat X^\disc_\CC$ valued in the dual of the complex vector bundle $\mathscr L_\pi \otimes \overline{\mathscr L}_\pi$}. It is completely canonical once the measure on $X$ is fixed. This will be a useful point of view in order to shift integrals such as \eqref{Pllocal} off the unitary locus.

Finally, the hermitian forms for $\pi$ unitary induce a splitting of the canonical quotient from discrete to cuspidal: $\mathscr L_\pi \to \mathcal L_\pi$. This is, of course, just the orthogonal projection to the cuspidal subspace of $L^2(X/\mathcal Z(X),\chi)_\disc$, for every unitary character $\chi$. The flatness of the hermitian forms with respect to the vector space identifications $\beta_\omega$ shows that is induced by an injection of \emph{algebraic vector bundles} over $\hat X^\disc_\CC$:
\begin{equation}\label{splitting}
\mathcal L\hookrightarrow \mathscr L
\end{equation}
splitting the canonical quotient map $\mathscr L\to \mathcal L$.

\subsection{The case of boundary degenerations}

We have a similar decomposition for the analogous spaces of $X_\Theta$, not in terms of representations of $G$ but in terms of representations of a Levi subgroup. Recall the isomorphisms \eqref{Schwartzinduced}, \eqref{HCinduced}:
$$\mathcal S(X_\Theta) = I_{\Theta^-} \mathcal S(X_\Theta^L),$$
$$\mathscr C(X_\Theta) = I_{\Theta^-} \mathscr C(X_\Theta^L).$$

For each irreducible representation $\sigma$ of $L_\Theta$, by inducing the quotient $\mathcal S(X_\Theta^L)_{\sigma,\disc}$ of $\mathcal S(X_\Theta^L)$ we get a representation: 
$$\mathcal S(X_\Theta)_{\sigma,\disc}:= \left(\Hom_{L_\Theta} (\tilde\sigma,C^\infty(X_\Theta^L))_\disc\right)^*\otimes I_{\Theta^-} \sigma,$$
together with a canonical map:
$$\mathcal S(X_\Theta) \to \mathcal S(X_\Theta)_{\sigma,\disc}.$$
(This is the ``discrete'' quotient of the space $\mathcal S(X_\Theta)_\sigma$ defined in \S 15.2.6 of \cite{SV}.) Similarly, we define the cuspidal $\sigma$-coinvariants: $\mathcal S(X_\Theta)_{\sigma,\cusp}$. In other words, the spaces of discrete and cuspidal $\sigma$-coinvariants are the quotients corresponding to morphisms:
$$I_{\Theta^-}\tilde\sigma\to C^\infty(X_\Theta^L)$$ 
which are \emph{induced by Frobenius reciprocity} from morphisms  (of the respective type):
$$\tilde \sigma \to C^\infty(X_\Theta^L).$$

The spaces $\mathcal S(X_\Theta)_{\sigma,\disc}$ form a trivializable complex vector bundle over $\widehat{X_\Theta^L}^\disc$, which we will denote by $\mathscr L_\Theta$ (again as a direct limit over $J$-invariants, to be precise). The spaces $\mathcal S(X_\Theta)_{\sigma,\cusp}$ form a trivializable complex vector bundle over $\widehat{X_\Theta^L}^\cusp$, which we will denote by $\mathcal L_\Theta$.
Again, the definition of the algebraic structure of these vector bundles is obtained by pulling back to $\widehat{X_\Theta^L}^\unr$, and they are endowed with the flat connections described in \S \ref{sec:generalities} (depending on the choice of a base point on $X_\Theta^L$).

Although the isomorphism \eqref{Leviinduced}, and the subsequent isomorphisms \eqref{HCinduced}, \eqref{Schwartzinduced}, depend on the choice of parabolic $P_\Theta^-$ in its class, it is clear that the spaces $\mathcal S(X_\Theta)_{\sigma,\disc}$, $\mathcal S(X_\Theta)_{\sigma,\cusp}$ can be considered as \emph{canonical quotients} of $\mathcal S(X_\Theta)$, and hence the vector bundles $\mathcal L_\Theta, \mathscr L_\Theta$ do not depend on choices. Indeed, the kernels of the maps $\mathcal S(X_\Theta)\to \mathcal S(X_\Theta)_{\sigma,\disc}$, $\mathcal S(X_\Theta)\to \mathcal S(X_\Theta)_{\sigma,\cusp}$ do not depend on the choice of parabolic. As in \eqref{canonicalmap-disc}, \eqref{canonicalmap-smooth}, these quotient maps give rise to canonical surjections:

\begin{equation}\label{canonicalmap-theta}
 \mathcal S(X_\Theta)\twoheadrightarrow \CC[\widehat{X_\Theta^L}^\disc,\mathscr L] \twoheadrightarrow \CC[\widehat{X_\Theta^L}^\cusp,\mathcal L_\Theta].
\end{equation}

As in the previous subsection, the Plancherel decomposition for $L^2(X_\Theta)_\disc$ gives a canonical volume form  on $\widehat{X_\Theta^L}^\disc_\CC$ valued in ($G^\diag$-invariant) linear functionals on $\mathscr L_\Theta \otimes \overline{\mathscr L_\Theta}$:
\begin{equation}\label{Plmeasure-Theta}
 \left<\,\, , \,\, \right>_\sigma d\sigma,
\end{equation}
and a splitting $\mathcal L_\Theta\hookrightarrow \mathscr L_\Theta$ of the canonical quotient map of vector bundles.

\part{Discrete and cuspidal summands}

\section{Discrete summand of the Harish-Chandra Schwartz space}\label{sec:discreteHC}

The Hilbert space $L^2(X)$ has an orthogonal direct sum decomposition $L^2(X)=L^2(X)_\disc \oplus L^2(X)_\cont$, where $L^2(X)_\disc$ has a Plancherel decomposition in terms of discrete morphisms from irreducible representations: $\pi\to C^\infty(X)$, in the sense of \S \ref{sscoinvariants}, i.e.\ with unitary central characters and in $L^2$ modulo the center. We let $\mathscr C(X)_\disc= \mathscr C(X)\cap L^2(X)_\disc$, and similarly for the spaces $X_\Theta$. 

\begin{proposition}\label{propdecomp}
Let $Y$ be a connected component of $\hat X^\disc$; it corresponds to a direct summand $L^2(X)_Y$ of $L^2(X)_\disc$ by restriction of the Plancherel measure to $Y$. The orthogonal projection of an element of $\mathscr C(X)$ to $L^2(X)_Y$ lies in $\mathscr C(X)$. In particular, the orthogonal projection of an element of $\mathscr C(X)$ to $L^2(X)_\disc$ lies in $\mathscr C(X)$, and we have a direct sum decomposition:
$$\mathscr C(X) = \mathscr C(X)_\disc \oplus \mathscr C(X)_\cont,$$
where $\mathscr C(X)_\disc = \mathscr C(X) \cap L^2(X)_\disc$ and $\mathscr C(X)_\cont = \mathscr C(X) \cap L^2(X)_\cont$.
\end{proposition}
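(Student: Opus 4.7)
My plan is to prove the first assertion of the proposition (projection to a single component $Y$) directly; the global decomposition $\mathscr C(X)=\mathscr C(X)_\disc\oplus\mathscr C(X)_\cont$ then follows because $L^2(X)_\disc$ and $L^2(X)_\cont$ are orthogonal.

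First I would fix an open compact subgroup $J$ and work on $\mathscr C(X)^J$. By Proposition \ref{finiteds}, only finitely many connected components $Y_1,\dots,Y_N$ of $\hat X^\disc$ contain representations with nonzero $J$-fixed vectors; hence on $\mathscr C(X)^J$ the global discrete projection is the \emph{finite} sum $\sum_i P_{Y_i}$, and it suffices to treat each $P_Y$ separately. A single $Y$ is of the form $(\hat X^\ab\cdot\pi_0)/\Gamma$ for some finite stabilizer $\Gamma$, hence a compact real torus. The bundle $\mathscr L^J|_Y$ is a finite-dimensional algebraic vector bundle over this compact torus, endowed by Lemma \ref{isflat} with a canonical flat $L^2$-structure coming from the Plancherel measure \eqref{Plmeasure}.

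Next I would write $P_Y$ as an explicit operator. Choosing an orthonormal frame $\{\phi_i^\pi\}$ of realizations of $\pi\in Y$ inside $L^2(X)^J$, varying smoothly in $\pi$ (possible since $\mathscr L^J|_Y$ is trivializable over the compact torus $Y$), the Plancherel decomposition restricted to $Y$ yields for $f\in\mathcal S(X)^J$ the wave packet
\begin{equation*}
P_Y f(x)=\int_Y \sum_i \langle f,\phi_i^\pi\rangle_{L^2(X)}\,\phi_i^\pi(x)\,d\pi.
\end{equation*}
The key point is that each $\phi_i^\pi$, being a matrix coefficient attached to a discrete morphism $\pi\to L^2(X/\mathcal Z(X),\chi_\pi)$, lies in $\mathscr C(X)$ with Harish-Chandra type bounds that are \emph{uniform} for $\pi$ in the compact set $Y$. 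Given such uniform bounds, Cauchy--Schwarz in the inner integrand together with the compactness of $Y$ produce, for every tempered function $F$, an estimate $\rho_F(P_Y f)\le C_F\,\rho_{F'}(f)$ for some other tempered $F'$, which extends $P_Y$ by continuity from $\mathcal S(X)^J$ to $\mathscr C(X)^J$ and shows it lands in $\mathscr C(X)^J$.

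The main obstacle is precisely the uniform Harish-Chandra decay of the matrix coefficients $\phi_i^\pi$ as $\pi$ ranges over $Y$: this is the spherical analogue of Harish-Chandra's classical theorem that matrix coefficients of (relative) discrete series lie in the Schwartz space. I would prove it by combining the weak Cartan decomposition \eqref{weakCartan}, the description in \S\ref{sseigenmeasures} of tempered majorants in terms of $A_{X,\Theta}^+$-translates of compact sets, and the fact that a discrete realization has finite $L^2$-mass modulo $\mathcal Z(X)$; smooth dependence of the frame on $\pi\in Y$ together with compactness of $Y$ then upgrades the pointwise square-integrability-mod-center bound into the required uniform Harish-Chandra bound. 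Once this is in place, summing the (finitely many) $P_{Y_i}$ gives $P_\disc(\mathscr C(X)^J)\subset\mathscr C(X)^J$, and orthogonality in $L^2(X)$ automatically produces the direct sum decomposition $\mathscr C(X)=\mathscr C(X)_\disc\oplus\mathscr C(X)_\cont$.
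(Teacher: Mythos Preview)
Your approach has a genuine gap in the ``key point.'' You claim that each $\phi_i^\pi$, being a matrix coefficient of a discrete morphism $\pi\to L^2(X/\mathcal Z(X),\chi_\pi)$, lies in $\mathscr C(X)$. This is false whenever $\mathcal Z(X)$ is noncompact: such a $\phi_i^\pi$ transforms by the unitary character $\chi_\pi$ under $\mathcal Z(X)$, so $|\phi_i^\pi|$ is \emph{constant} along $\mathcal Z(X)$-orbits. These functions are tempered, but they are never in $\mathscr C(X)$ (just as, in the group case, matrix coefficients of discrete-mod-center representations are not in $\mathscr C(H)$ unless the center is compact). Consequently the Cauchy--Schwarz estimate you sketch, $|P_Yf(x)|\le \operatorname{Vol}(Y)\cdot\sup_{\pi,i}|\langle f,\phi_i^\pi\rangle|\cdot\sup_{\pi,i}|\phi_i^\pi(x)|$, only gives a tempered majorant, not a Schwartz one; the decay of $P_Yf$ in the $\mathcal Z(X)$-direction has to come from the \emph{integral} over $Y$ (a Fourier-type argument in the center variable), and your outline does not supply that step.

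The paper avoids exactly this difficulty by invoking factorizability \eqref{Cfactorizable} to reduce to the case $\mathcal Z(X)=1$. In that case there is no center direction to worry about: by Proposition~\ref{finiteds} the set of $\pi\in\hat X^\disc$ with $\pi^J\ne 0$ is \emph{finite} (not a torus), and each matrix coefficient has subunitary exponents in every direction by the Casselman/Kato--Takano criterion, hence lies in $\mathscr C(X)$. The projection $P_Y$ is then a finite sum of projections onto finite-dimensional subspaces spanned by genuine Schwartz functions, and the conclusion is immediate. To repair your argument you would either have to make this same reduction, or else prove directly that $\pi\mapsto\langle f,\phi_i^\pi\rangle$ is smooth on $Y$ and use abelian Paley--Wiener in the central variable to extract decay along $\mathcal Z(X)$; the former is what the paper does and is considerably shorter.
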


Since for every open compact subgroup $J$ there is only a finite number of connected components $Y$ with $L^2(X)_Y^J\ne 0$, the proposition actually gives a finer decomposition of $\mathscr C(X)_\disc$:
\begin{equation}\label{finerdiscrete}
\mathscr C(X)_\disc = \oplus_Y \mathscr C(X)_Y, 
\end{equation}
where $Y$ ranges over all connected components of $\hat X^\disc$.

\begin{proof}
Since $X$ is assumed to be factorizable (cf.\ Remark \ref{sfvsf}), we may represent $\mathscr C(X)$ as in \eqref{Cfactorizable}. Clearly, ``projection to discrete'' can be defined only with respect to the action of $[G,G]$, which reduces the statement to the spaces $\mathscr C(X_i')$ in the notation of \eqref{Cfactorizable}, i.e.\ reduces the problem to the case: $\mathcal Z(X)=1$.

In this case, we recall that the space $L^2(X)_\disc^J$ is finite dimensional \cite[Theorem 9.2.1]{SV}, and its elements are $A_{X,\Theta}$-finite in a $J$-good neighborhood of $\Theta$-infinity, with \emph{strictly subunitary} exponents (i.e.\ $A_{X,\Theta}$-eigencharacters which are $<1$ in absolute value in $\mathring A_{X,\Theta}^+$). Thus, the elements of $L^2(X)_\disc^J$ belong to $\mathscr C(X)$, and the projection map: $\mathscr C(X)\to \mathscr C(X)_\disc$ is continuous. 

Notice that this argument is a generalization of the usual criterion of Casselman characterizing discrete series as those representations which appear with subunitary exponents in all directions, s.\ Kato-Takano \cite{KatoTakano} for the symmetric case.
\end{proof}

Similar decompositions hold for all the boundary degenerations $X_\Theta$; this is seen simply by inducing from the Levi varieties $X_\Theta^L$, i.e.\ it follows from \eqref{HCinduced}.

Now recall the vector bundle of $X_\Theta$-discrete series $\mathscr L_\Theta$. We have seen (\eqref{canonicalmap-theta} and  Proposition \ref{discreteSchwartz}) that, through the canonical quotient maps to coinvariants, elements of $\mathcal S(X_\Theta)$ give all regular sections of $\mathscr L_\Theta$, i.e.\ all elements of $\CC[\widehat{X_\Theta^L}^\disc,\mathscr L_\Theta]$.

Moreover, the Plancherel decomposition for $L^2(X_\Theta)_\disc$ endows the complex vector bundle $\mathscr L_\Theta$ over $\widehat{X_\Theta^L}^\disc$ with the hermitian structure that was discussed in \S \ref{ssthebundles}, extending the above map to a canonical isomorphism:
\begin{equation}\label{L2discrete}
 L^2(X_\Theta)_\disc\xrightarrow{\sim} L^2(\widehat{X_\Theta^L}^\disc,\mathscr L_\Theta).
\end{equation}

The spectral description of $\mathscr C(X_\Theta)_\disc$ is as follows: 

\begin{theorem} \label{thmdiscrete}
For every $\Theta$, the canonical quotient maps: $\mathscr C(X_\Theta)\to \mathscr C(X_\Theta)_{\sigma,\disc}$ give rise to a canonical isomorphism:
\begin{equation}\label{mapdiscreteisom} \mathscr C(X_\Theta)_\disc \simeq C^\infty(\widehat{X_\Theta^L}^\disc, \mathscr L_\Theta).
\end{equation}
\end{theorem}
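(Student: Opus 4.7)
My plan is to reduce to the Levi variety $X_\Theta^L$, then use factorizability to further reduce to a product of an abelian torus and a spherical variety with essentially compact spectrum, where the statement becomes either classical abelian Fourier analysis or a trivial assembly of finite-dimensional coinvariants.

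\textbf{Step 1 (Parabolic induction).} By \eqref{HCinduced}, $\mathscr C(X_\Theta) = I_{\Theta^-} \mathscr C(X_\Theta^L)$, and this decomposition is $G$-equivariant and respects projection to the discrete spectrum, because the bundle $\mathscr L_\Theta$ is by construction induced from the bundle on $X_\Theta^L$ via the association $\sigma \mapsto \mathcal S(X_\Theta)_{\sigma,\disc}$ involving $I_{\Theta^-}\sigma$, and because projection to the $L^2$-discrete part on $X_\Theta$ is the parabolic induction of the analogous projection on $X_\Theta^L$ via \eqref{L2induced}. It therefore suffices to prove $\mathscr C(Y)_\disc \simeq C^\infty(\hat Y^\disc, \mathscr L_Y)$ for the spherical $L_\Theta$-variety $Y = X_\Theta^L$ on its own (which is still factorizable by our standing assumption).

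\textbf{Step 2 (Factorization).} Using \eqref{disjoint} and \eqref{Cfactorizable}, write $\mathscr C(Y)$ as an invariant subspace of a finite direct sum of completed tensor products $\mathscr C(\mathcal Z(Y)) \hat\otimes \mathscr C(Y_i')$, where $Y_i'$ is a $[L_\Theta,L_\Theta]$-spherical variety with finite center. The discrete spectrum $\hat Y^\disc$ splits compatibly as a quotient by the finite diagonal action of $\mathcal Z(Y)\cap[L_\Theta,L_\Theta]$ of products $\widehat{\mathcal Z(Y)} \times \widehat{Y_i'}^\disc$, and since $Y_i'$ has finite center Proposition \ref{finiteds} tells us that $\widehat{Y_i'}^\disc$ is a \emph{countable discrete} set on which, for each open compact subgroup $J$, only finitely many points carry nonzero $J$-invariants. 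Correspondingly, $\mathscr L_Y$ pulls back to the external tensor product of the trivial line bundle on $\widehat{\mathcal Z(Y)}$ with the bundle $\mathscr L_{Y_i'}$ on $\widehat{Y_i'}^\disc$. The $C^\infty$-sections side splits by Lemma \ref{regularsmooth} applied to the finite cover, and the invariants match on both sides, so it is enough to establish the theorem separately in the two factors and then take the tensor product using nuclearity of the Fr\'echet seminorm system on each.

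\textbf{Step 3 (The two base cases).} For $Y = \mathcal Z(Y)$ abelian, the claim $\mathscr C(\mathcal Z(Y)) \simeq C^\infty(\widehat{\mathcal Z(Y)})$ is the classical Fourier isomorphism for the topological group $\mathbb Z^r \times (\mathrm{compact})$: rapid decay in the $\mathbb Z^r$ coordinates of a function on the source is equivalent to smoothness of its Fourier transform on the dual compact torus. For $Y = Y_i'$ with finite center, $C^\infty(\widehat{Y_i'}^\disc, \mathscr L_{Y_i'})^J$ is simply the finite direct sum $\bigoplus_\pi \mathscr L_{\pi}^J$ over the finitely many $\pi$ with $J$-fixed vectors, and $\mathscr C(Y_i')_\disc^J$ equals the same finite direct sum by the definition of projection to discrete series (\emph{via} the surjectivity of the canonical maps $\mathcal S(Y_i') \twoheadrightarrow \mathcal S(Y_i')_{\pi,\disc}$ globalized through Proposition \ref{discreteSchwartz}, i.e. the Nakayama lemma argument underlying \eqref{surjective}). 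Combining the two base cases by completed tensor product and taking invariants under the finite diagonal group assembles the desired isomorphism.

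\textbf{Main obstacle.} The genuine analytic content lies entirely in Step 3's abelian case: one must verify that the Harish-Chandra Schwartz seminorms $\rho_F$ restrict, under partial Fourier transform in the $\mathcal Z(Y)$-variable and evaluation on the fibers of $\mathscr L$, to a defining family of $C^\infty$-seminorms on the compact dual torus, uniformly across the countable discrete index set indexing the other factor. This uniformity (so that the LF-structures match, not just the underlying vector spaces) is where the ``polynomial growth'' description of the Harish-Chandra Schwartz space given at the end of \S\ref{sseigenmeasures} will be essential, since it lets one bound all relevant seminorms in terms of the single radial function $r$, which in turn is compatible with the $A_{X,\Theta}$-action on $\mathcal Z(Y)$ by its construction via the Cartan decomposition \eqref{weakCartan}.
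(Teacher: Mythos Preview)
Your approach is correct and shares its skeleton with the paper's: both reduce to the Levi variety via \eqref{HCinduced}, exploit factorizability, and rest the analytic content on the abelian Paley--Wiener theorem together with Proposition \ref{discreteSchwartz}. The implementations diverge only in the surjectivity step. You fully split into the two base cases (abelian torus and finite-center piece) and then reassemble via completed tensor product and finite-group invariants; the paper instead keeps $X$ intact after the Levi reduction and argues more module-theoretically: it chooses a free abelian $Z\subset\mathcal Z(X)$ with compact quotient, lets the algebra $\mathscr C(Z)\simeq C^\infty(\hat Z)$ act on the already-established surjection $\mathcal S(X)\twoheadrightarrow \CC[\hat X^\disc,\mathscr L]$ of Proposition \ref{discreteSchwartz}, and then invokes Lemma \ref{regularsmooth} to conclude that $C^\infty(\hat Z)\otimes_{\CC[\hat Z]}\CC[\hat X^\disc,\mathscr L]\twoheadrightarrow C^\infty(\hat X^\disc,\mathscr L)$. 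The paper's route avoids having to track how $\hat X^\disc$ and $\mathscr L$ decompose under the product $\widehat{\mathcal Z(Y)}\times\widehat{Y_i'}^\disc$ modulo the finite diagonal group, which is where your Step 2 requires some bookkeeping; your route, on the other hand, makes the finite-center base case completely transparent (a finite sum of finite-dimensional fibers) and localizes the analysis cleanly in the abelian factor. Either way the ``main obstacle'' you flag---matching the LF seminorms---is handled by the same ingredient in both: the abelian Paley--Wiener isomorphism \eqref{PWabelian}.
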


The ``isomorphism'', here and throughout the paper, is in the category of $G$-representations on LF-spaces (countable strict inductive limits of Fr\'echet spaces).

\begin{proof}[Proof of Theorem \ref{thmdiscrete}]
The proof relies on the Payley-Wiener theorem for the Harish-Chandra Schwartz space of finitely generated abelian groups.

By the isomorphism \eqref{HCinduced}, it is enough to prove the theorem for $\mathscr C(X_\Theta^L)_\disc$, hence we are reduced to the case of $X_\Theta = X$, assumed factorizable. We need to show that the image of
$$\mathscr C(X)\to L^2(\hat X^\disc, \mathscr L)$$
lies in $C^\infty$, and that the resulting map
\begin{equation}\label{mapCdisc}\mathscr C(X)_\disc\to C^\infty(\hat X^\disc,\mathscr L)
\end{equation}
is an isomorphism of topological $G$-modules. 

We will first explain that it is enough to show this when $G$ is replaced by the group $G'=\mathcal Z(X)\times [G,G]$. Since the $F$-points of the latter map to a subgroup of finite index in $G(F)$, it is immediate from the definitions that the restriction map of representations is a finite covering $\hat X^\disc\to \hat X'^\disc$, the latter being the space of discrete coinvariants for $X$ under the action of $G'$. Moreover, for any $\pi'\in\hat X'^\disc$ the fiber of the corresponding bundle $\mathscr L'$ of discrete coinvariants over $\pi'$ is just the direct sum of the spaces $\mathscr L_\pi$, with $\pi$ ranging over the fiber of $\pi'$, in such a way that the inclusions $\mathscr L_\pi \to \mathscr L'_{\pi'}$ and the projections in the opposite direction are $C^\infty$, as $\pi$ varies in any small neighborhood in $\hat X^\disc$ locally isomorphic to its image in $\hat X'^\disc$ . Thus, there is an isomorphism of topological $G'$-modules
$$C^\infty(\hat X^\disc,\mathscr L) \simeq C^\infty(\hat X'^\disc,\mathscr L'),$$
and it is enough to consider the action of $G'$. 

But then, using the decomposition \eqref{Cfactorizable}, and the Paley--Wiener theorem for finitely generated abelian groups:
$$\mathscr C(\mathcal Z(X)) \simeq C^\infty(\widehat{\mathcal Z(X)})$$
(depending on a choice of Haar measure on $\mathcal Z(X)$), we get that $\mathscr C(X)_\disc$ is equal to the $\mathcal Z(G)^0\cap [G,G]$-invariant subspace of 
$$\bigoplus_i C^\infty(\widehat{\mathcal Z(X)}) \hat\otimes \mathscr C(X_i')_\disc.$$
The second factor is the direct limit over all open compact subgroups $J$ of its $J$-invariants, which are finite-dimensional, hence the completion of the tensor product here is immaterial. The space $\mathscr C(X_i')_\disc$ decomposes into a direct sum of isotypic components for the $[G,G]$-action, thus identifying $\mathscr C(\bigsqcup_i\mathcal Z(X) \times X_i)_\disc$, as a topological $G'$-module, with the analogous space 
$$ \bigoplus_i C^\infty(\widehat{\mathcal Z(X)} \times \hat X_i^\disc, \mathscr L_i),$$
of which $C^\infty(\hat X'^\disc,\mathscr L')$ is simply the space of invariants under the diagonal $\mathcal Z(G)^0\cap [G,G]$-action.

This completes the proof, but we would like to mention another way of showing that the map \eqref{mapCdisc} is onto, without appealing to the artificial decomposition \eqref{Cfactorizable}. Let $Z\subset \mathcal Z(X)$ be a free abelian subgroup such that $\mathcal Z(X)/Z$ is compact. Notice that the group ring of $Z$ is canonically isomorphic to the (complexification of the) coordinate ring of $\hat Z$, the torus of unitary characters of $Z$. Moreover, by elementary Fourier analysis, this extends to an isomorphism: 
\begin{equation}\label{PWabelian}\mathscr C(Z) \xrightarrow{\sim} C^\infty(\hat Z).
\end{equation}

By restriction of central characters we get embeddings:
$$\CC[\hat Z] \to \CC[\hat X^\unr].$$
and:
$$C^\infty(\hat Z) \to C^\infty(\hat X^\unr).$$

Recall the surjection of Proposition \ref{discreteSchwartz}:
\begin{equation}\label{dSeq} \mathcal S(X)\twoheadrightarrow \CC[\hat X^\disc,\mathscr L]
\end{equation}

The action of the Harish-Chandra Schwartz algebra of $Z$ on $\mathcal S(X)$:
$$ \mathscr C(Z)\otimes  \mathcal S(X) \to \mathscr C(X)$$
translates on the right hand side of \eqref{dSeq} as multiplication by $C^\infty(\hat Z)$. Finally, by Lemma \ref{regularsmooth} the multiplication map is surjective:

$$C^\infty(\hat Z) \otimes_{\CC[\hat Z]} \CC[\hat X^\disc,\mathscr L] \twoheadrightarrow C^\infty(\hat X^\disc, \mathscr L).$$

This shows surjectivity of \eqref{mapCdisc}. The kernel is, essentially by definition, the subspace $\mathscr C(X)_\cont$, and thus the map induces an isomorphism of $\mathscr C(X)_\disc$ with $C^\infty(\hat X^\disc,\mathscr L)$.
\end{proof}

\subsection{The discrete center of $X$}\label{ssdiscretecenter}

From Theorem \ref{thmdiscrete} we deduce that the ring $C^\infty(\widehat{X_\Theta^L}^\disc)$ of smooth functions on $\widehat{X_\Theta^L}^\disc$ acts $G$-equivariantly on the Harish-Chandra Schwartz space $\mathscr C(X_\Theta)_\disc$; we extend this action to the whole space $\mathscr C(X_\Theta)$ by demanding that it acts as zero on $\mathscr C(X_\Theta)_\cont$. We will call this ring \emph{the discrete center of $X_\Theta^L$}, and denote it by:
$$C^\infty(\widehat{X_\Theta^L}^\disc) =:\mathfrak z^\disc(X_\Theta^L).$$

In the case of $X_\Theta = X$, we can think of this as the relative analog of the discrete part of the center of the Harish-Chandra Schwartz algebra, i.e.\ the discrete part of the ``tempered Bernstein center'' of Schneider and Zink \cite{SZ2}. 

\begin{remark}\label{remarkLanglands} Maybe from the point of view of the ``relative Langlands program'' this is not quite the full ``center''. Notice that if, for $\pi\in \hat X^\disc$, the space $\mathcal S(X)_{\pi,\disc}$ has multiplicity $n>1$ as a $G$-representation, then there is a larger ring of $G$-automorphisms on the direct summand of $\mathscr C(X)$ corresponding to the connected component of $\pi$ in $\hat X^\disc$ (call $Y$ this connected component). However, this ring of $G$-automorphisms is non-commutative: it is, noncanonically, the ring $C^\infty(Y, \Mat_n)$, i.e.\ $\Mat_n$-valued smooth functions. 
The philosophy of the relative Langlands program proposed in \cite{SV} suggests that this multiplicity should be related to the number of lifts of the Langlands parameter of $\pi$ to a suitable ``$X$-distinguished parameter'' into the $L$-group ${^L G_X}$ of $X$; a more precise 
statement involves Arthur parameters and packets, and we won't get into that. That suggests that there might be a \emph{distinguished} decomposition of $\mathcal S(X)_{\pi,\disc}$ into a direct sum of multiplicity-free spaces, each corresponding to a lift of the Langlands parameter of $\pi$ to ${^L G_X}$. As $\pi$ varies in a family, this would give a decomposition of the corresponding direct summand of $\mathscr C(X)_\disc$, and the elements of the $G$-automorphism ring which preserve this decomposition would form a commutative ring, isomorphic to as many copies of $C^\infty(Y)$ as the multiplicity of $\pi$ in the discrete spectrum. This is not important for our analysis, but we mention it in order to relate the version of ``center'' that we are using here with that suggested by the Langlands picture.
\end{remark}

\section{Cuspidal part of the Schwartz space}\label{sec:cuspidal}

\subsection{Main result}

We have the following analog of Proposition \ref{propdecomp} and Theorem \ref{thmdiscrete}. We start by giving a definition for the cuspidal direct summand $\mathcal S(X)_\cusp$ of $\mathcal S(X)$.

Recall the canonical quotients: $\mathscr L_{\Theta,\sigma} = \mathcal S(X_\Theta)_{\sigma,\disc} \twoheadrightarrow  \mathcal L_{\Theta,\sigma}=\mathcal S(X_\Theta)_{\sigma,\cusp}$. We have seen that the Plancherel hermitian form-valued measure \eqref{Plmeasure-Theta} on $\widehat{X_\Theta^L}^\disc$ splits these quotients canonically; the resulting embedding of vector bundles $\mathcal L_\Theta\hookrightarrow \mathscr L_\Theta$ gives rise, by the Plancherel formula, to a subspace of $L^2(X_\Theta)_\cusp$ of $L^2(X_\Theta)_\disc$. 
 
 Let $\mathcal H(G,J)$ be the Hecke algebra of $J$-biinvariant measures on $G$.

\begin{proposition}\label{characterization1}
For a function $f\in \mathcal S(X)$, invariant under a compact open subgroup $J$, the following are equivalent:
\begin{enumerate}
\item $f\in L^2(X)_\cusp$;
\item the $\mathcal H(G,J)$-module generated by $f$ is finitely generated over $\mathcal Z(X)$;
\item the $\mathcal H(G,J)$-module generated by $f$ consists of functions that are zero on every $J$-good neighborhood of $\Theta$-infinity, for every $\Theta\ne \Delta_X$.
\end{enumerate}

\end{proposition} 

\begin{proof}
We first prove that the first statement implies the third.

An $f\in L^2(X)_\cusp^J$ has pointwise Plancherel decomposition:
\begin{equation}\label{ptw} f(x) = \int_{ \hat X^\cusp}  f^{\tilde\pi} (x) d\pi\end{equation} with $f^{\tilde\pi}\in  C^\infty(X)^{\tilde\pi}_{\cusp}$, the space spanned by the images of all those morphisms: $\tilde\pi\to C^\infty(X)$  with image in the space of functions that are compactly supported modulo $\mathcal Z(X)$. The theory of asymptotics, that we will recall in the next section, implies that all $f^{\tilde\pi}$ vanish in any $J$-good neighborhood of $\Theta$-infinity, for $\Theta\ne \Delta_X$, hence so does $f$.

The third statement implies the second, because the space of $J$-invariant, compactly supported functions that are supported in the complement of all those $J$-good neighborhoods is obviously finitely generated over $\mathcal Z(X)$, since this complement is compact modulo $\mathcal Z(X)$, and $\mathcal Z(X)$ is Noetherian.

To show that the second statement implies the first, we may without loss of generality assume that $J$ is ``good'', i.e.\ such that the functor of $J$-invariants is an equivalence of categories between representations with non-zero $J$-fixed vectors, and $\mathcal H(G,J)$-modules (cf.\ \cite[Corollaire 3.9]{BeCentre}). Indeed, if $f$ is invariant under some bigger subgroup $K$, and its $\mathcal H(G,K)$-module is finitely generated over $\mathcal Z(X)$, then the same holds for its $\mathcal H(G,J)$-module, which is of the form $\sum_i h_i \mathcal H(G,K)\cdot f$, for a finite number of elements $h_i\in\mathcal H(G,J)$.

Hence, we assume that $J$ is such, and that the space $S:=\mathcal H(G,J)\cdot f$ is finitely generated over $\mathcal Z(X)$. In the case where $\mathcal Z(X)=1$, this immediately implies that $S$ is of finite length as a $\mathcal H(G,J)$-module, hence that $f$ generates a $G$-module of finite length. Since this module belongs to $\mathcal S(X)\subset L^2(X)$, it is completely reducible, with its irreducible summands obviously in $L^2(X)_\cusp$.

In the general case, recalling that $X$ is factorizable, and writing it as in \eqref{disjoint}:
$$ X(F) = \bigsqcup_{i=1}^n \mathcal Z(X)(F)\cdot X_i'(F),$$
we notice first of all that the restriction of the space of functions $\mathcal  H(G,J) \cdot z \cdot f$ to (a certain) $X_i'$ is independent of the element $z\in \mathcal Z(X)$. Thus, this restriction is a finite-dimensional vector space which is an $\mathcal H([G,G], [G,G]\cap J)$-module, which implies by the above argument that it belongs to $L^2(X_i)_\cusp$ (the latter defined as before, replacing the group $G$ by $[G,G]$).  The restriction of the pointwise Plancherel decomposition \eqref{ptw} to $X_i$ is the pointwise Plancherel decomposition for $f|_{X_i}$. Hence, $z\cdot f^{\tilde\pi}|_{X_i}$ is supported on a fixed compact subset of $X_i$ for all $i$, $z$ and almost all $\pi$, which means that $f^{\tilde\pi}$ is compactly supported modulo $\mathcal Z(X)$, for almost all $\pi$. This proves that $f\in L^2(X)_\cusp$.

\end{proof}

The space $\mathcal S(X)\cap L^2(X)_\cusp$ of functions satisfying either of the above equivalent conditions is the cuspidal part of $\mathcal S(X)$ and will be denoted by $\mathcal S(X)_\cusp$. The same definitions hold for a Levi variety $X_\Theta^L$, and by \eqref{Schwartzinduced} this defines a subspace $\mathcal S(X_\Theta)_\cusp = I_{\Theta^-} \mathcal S(X_\Theta^L)_\cusp$ of $\mathcal S(X_\Theta)$.

\begin{theorem} \label{thmcuspidal}
For every connected component $Y$ of $\widehat{X_\Theta^L}^\cusp$, the orthogonal projection of an element of $\mathcal S(X_\Theta)$ to $L^2(X_\Theta)_{Y,\cusp}$ lies in $\mathcal S(X_\Theta)$. In particular, the orthogonal projection of an element of $\mathcal S(X_\Theta)$ to $L^2(X_\Theta)_\cusp$ lies in $\mathcal S(X_\Theta)$, and we have a direct sum decomposition:
$$\mathcal S(X_\Theta) = \mathcal S(X_\Theta)_\cusp \oplus \mathcal S(X_\Theta)_\noncusp,$$
where $\mathcal S(X_\Theta)_\cusp = \mathcal S(X_\Theta) \cap L^2(X_\Theta)_\cusp$ and $\mathcal S(X_\Theta)_\noncusp = \mathcal S(X_\Theta) \cap L^2(X_\Theta)_\cusp^\perp$.

Finally, the natural map \eqref{canonicalmap-smooth} from $\mathcal S(X_\Theta)$ to sections of $\mathcal L$ over $\widehat{X_\Theta^L}^\cusp$ is the composition of an isomorphism:
\begin{equation}\label{mapcuspidalisom}
\mathcal S(X_\Theta)_\cusp \xrightarrow{\sim} \CC[\widehat{X_\Theta^L}^\cusp,\mathcal L_\Theta],
\end{equation}
with the orthogonal projection from $\mathcal S(X_\Theta)$ to $\mathcal S(X_\Theta)_\cusp$.
\end{theorem}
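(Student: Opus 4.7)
The plan parallels the proof of Proposition~\ref{propdecomp} combined with Theorem~\ref{thmdiscrete}, but tightened to accommodate the compact-support requirement of the Schwartz space. First, parabolic induction \eqref{Schwartzinduced} together with $L^2(X_\Theta)_\cusp = I_{\Theta^-} L^2(X_\Theta^L)_\cusp$ and the definition of $\mathcal L_\Theta$ via coinvariants on $X_\Theta^L$ reduce all three assertions to the case $\Theta = \Delta_X$, i.e.\ to proving them for the factorizable spherical variety $X$ itself. The factorization \eqref{Sfactorizable}, combined with the fact that orthogonal projection onto $L^2(X)_{Y,\cusp}$ commutes with the $\mathcal Z(X)$-action, further reduces the projection statement to each factor $\mathcal S(X_i')$, where the acting center is trivial; in this reduced setting $\widehat{X_i'}^\cusp$ is a discrete subset of $\widehat{X_i'}^\disc$, and each connected component $Y$ corresponds to a single cuspidal representation $\pi$. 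For such a $\pi$, the orthogonal projection of $f\in\mathcal S(X_i')^J$ onto the $\pi$-isotypic subspace of $L^2(X_i')^J$ is a finite linear combination of $L^2$-pairings of $f$ with an orthonormal basis of that subspace, which is finite-dimensional by Proposition~\ref{finiteds}; since $\pi$ is cuspidal with trivial acting center, every such basis element is a compactly supported matrix coefficient, so the output lies in $\mathcal S(X_i')^J$. Summing over the finitely many cuspidal $\pi$ with nonzero $J$-fixed vectors in $Y$ yields the first two assertions, including the direct-sum decomposition $\mathcal S(X) = \mathcal S(X)_\cusp \oplus \mathcal S(X)_\noncusp$.

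For the spectral identification \eqref{mapcuspidalisom}, composing the canonical surjection of Proposition~\ref{discreteSchwartz} with the algebraic projection $\mathscr L \twoheadrightarrow \mathcal L$ (a splitting of vector bundles, by Lemma~\ref{isflat} and the discussion following it) yields a surjection $\mathcal S(X) \twoheadrightarrow \CC[\hat X^\cusp,\mathcal L]$. Its kernel consists of those $f$ whose image in $\mathcal L_\pi = \mathcal S(X)_{\pi,\cusp}$ vanishes for every $\pi \in \hat X^\cusp$; by the duality between $\mathcal L_\pi$ and cuspidal matrix coefficients, this is exactly the condition $\mathcal S(X) \cap L^2(X)_\cusp^\perp = \mathcal S(X)_\noncusp$. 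Thus the surjection factors through an isomorphism $\mathcal S(X)_\cusp \xrightarrow{\sim}\CC[\hat X^\cusp,\mathcal L]$, and by construction this isomorphism is the composition of the orthogonal projection $\mathcal S(X)\to\mathcal S(X)_\cusp$ with the natural map \eqref{canonicalmap-smooth}.

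The principal technical point is the ``compactly supported matrix coefficient'' assertion in the first paragraph; it rests on the standard characterization of cuspidal representations appearing in $C^\infty(X_i')$ via matrix coefficients of compact support, together with the finiteness of the multiplicity of any fixed cuspidal $\pi$ against $J$-invariants (which is automatic here because the acting center has been reduced to be trivial). Once this is in place, everything else is formal bookkeeping of the discrete-case results combined with the Nakayama-type argument used in Proposition~\ref{discreteSchwartz}.
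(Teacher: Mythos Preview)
Your proof is correct and follows essentially the same route as the paper: reduce to $X_\Theta=X$ via \eqref{Schwartzinduced}, then to $\mathcal Z(X)=1$ via \eqref{Sfactorizable}, observe that cuspidal morphisms then land in $C_c^\infty$ so that projection onto the finite-dimensional $J$-invariants preserves compact support, and finally obtain \eqref{mapcuspidalisom} from Proposition~\ref{discreteSchwartz} together with the splitting $\mathscr L\twoheadrightarrow\mathcal L$. The only cosmetic slip is the phrase ``summing over the finitely many cuspidal $\pi$ \ldots\ in $Y$'': once the center is trivial each component $Y$ is a single point, so there is nothing to sum over within $Y$ (the finite sum is over components, which is what you use anyway).
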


\begin{proof}
First of all, by \eqref{Schwartzinduced} and the analogous isomorphism for the bundle $\mathcal L_\Theta$, the theorem is reduced to the case $X_\Theta=X$, assumed factorizable.

 Since our space is assumed to be factorizable, by \eqref{Sfactorizable} the problem is reduced to the case $\mathcal Z(X)=1$, in which case cuspidal morphisms: $\pi\to C^\infty(X)$ have image in $\mathcal S(X)$. The component $L^2(X)_{Y,\cusp}$, when $\mathcal Z(X)=1$, is spanned by the images of all those morphisms for a given $\pi$, and therefore orthogonal projection to (the finite-dimensional space) $L^2(X)_{Y,\cusp}^J$, for any fixed open compact subgroup $J$, preserves compact support. The direct sum decomposition follows.

By the fact that $\mathcal L$ is a direct summand of $\mathscr L$ (both trivializable vector bundles), and by Proposition \ref{discreteSchwartz}, the image of the map $\mathcal S(X)$ into sections of $\mathcal L$ over $\hat X^\cusp$ is equal to $\CC[\hat X^\cusp, \mathcal L]$; and the kernel is the space $\mathcal S(X)_\noncusp = \mathcal S(X) \cap L^2(X)_\cusp^\perp$. This proves the last claim.
\end{proof}

For future reference, we note that since $C^\infty(X_\Theta)$ is the smooth dual of $\mathcal S(X_\Theta)$ using the eigenmeasure that we have fixed (\S \ref{sseigenmeasures}), there is a corresponding direct sum decomposition:
$$ C^\infty(X_\Theta) = C^\infty(X_\Theta)_\cusp \oplus C^\infty(X_\Theta)_\noncusp,$$
where $C^\infty(X_\Theta)_\noncusp$ is defined as the orthogonal complement of $\mathcal S(X_\Theta)_\cusp$ and vice versa.
Of course, $\mathcal S(X_\Theta)_\cusp$ belongs to $C^\infty(X_\Theta)_\cusp$.

\subsection{The cuspidal center of $X$}\label{sscuspidalcenter}

The cuspidal center of $X_\Theta^L$ is the ring:
$$\mathfrak z^\cusp(X_\Theta^L):= \CC[\widehat{X_\Theta^L}^\cusp].$$

By Theorem \ref{thmcuspidal}, it acts naturally on $\mathcal S(X_\Theta)$, namely via the isomorphism \eqref{mapcuspidalisom} on $\mathcal S(X)_\cusp$ and as zero on $\mathcal S(X_\Theta)_\noncusp$.

Again, as in Remark \ref{remarkLanglands}, we could have a larger, noncommutative ring acting on $\mathcal S(X)_\cusp$ by $G$-automorphisms, if we wanted to take into account the multiplicity of the spaces $\mathcal S(X)_{\pi,\disc}$, but we will not consider that.

\part{Eisenstein integrals}

\section{Smooth and unitary asymptotics} \label{sec:asymptotics}

The theory of asymptotics of smooth representations \cite[\S 4]{SV} provides us with canonical morphisms (which in this paper we will call ``equivariant exponential maps''):
\begin{equation}
 e_\Theta: \mathcal S(X_\Theta)\to \mathcal S(X),
\end{equation}
characterized by the property that for a $J$-good neighborhood $N_\Theta\subset X$ of $\Theta$-infinity (s.\ \eqref{Nident}) the map $e_\Theta$ restricts to the identification of characteristic functions of $J$-orbits on $N_\Theta$ induced by \eqref{Nident}.

On the other hand, the theory of unitary asymptotics \cite[\S 11]{SV} provides us with canonical morphisms (the ``Bernstein maps''):
\begin{equation}
 \iota_\Theta: L^2(X_\Theta)\to L^2(X),
\end{equation}
characterized by the fact that they are ``asymptotically equal to $e_\Theta$'' close to $\Theta$-infinity (cf.\ \emph{loc.cit. }for details).

We can characterize the spaces $L^2(X)_\disc$, $\mathcal S(X)_\cusp$ using these maps:
\begin{proposition}\label{characterization}
 We have:
 $$\mathcal S(X)_\cusp = \bigcap_{\Theta\ne\Delta_X} \ker \left.e_\Theta^*\right|_{\mathcal S(X)},$$
 $$L^2(X)_\disc = \bigcap_{\Theta\ne\Delta_X} \ker \iota_\Theta^*.$$
\end{proposition}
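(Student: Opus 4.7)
Plan:

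For the $L^2$ statement, I would apply the Plancherel decomposition (Theorem \ref{L2theorem}). The key observation is that each Bernstein map $\iota_\Theta$ factors through $L^2(X_\Theta)_\disc$: this is visible from the wave-packet formula \eqref{explicitiota}, whose integral is purely over $\widehat{X_\Theta^L}^\disc$. Hence $\iota_\Theta^* = \iota_{\Theta,\disc}^*$ as maps into $L^2(X_\Theta)$. The Plancherel isomorphism $\iota^* = \sum_\Theta c(\Theta)^{-1}\iota_{\Theta,\disc}^*: L^2(X)\xrightarrow{\sim}\bigl(\bigoplus_\Theta L^2(X_\Theta)_\disc\bigr)^{\inv}$ then delivers the claim: since $X_{\Delta_X} = X$ and $\iota_{\Delta_X,\disc}^*$ is (a scalar multiple of) the orthogonal projection $L^2(X)\twoheadrightarrow L^2(X)_\disc$, an element $f\in L^2(X)$ has $\iota^* f$ concentrated in the $\Theta = \Delta_X$ slot if and only if $f\in L^2(X)_\disc$, which is exactly the condition $\iota_\Theta^* f = 0$ for every $\Theta\neq\Delta_X$.

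For the Schwartz statement I would treat the two inclusions separately, using the decomposition $\mathcal S(X) = \mathcal S(X)_\cusp \oplus \mathcal S(X)_\noncusp$ of Theorem \ref{thmcuspidal}. For $\subseteq$: given $f\in\mathcal S(X)_\cusp$ and $\phi\in\mathcal S(X_\Theta)$ with $\Theta\neq\Delta_X$, the pairing $\langle e_\Theta^* f,\phi\rangle = \langle f,e_\Theta\phi\rangle$ vanishes provided $e_\Theta\phi\in\mathcal S(X)_\noncusp$. To establish the latter, I would use the identification $\mathcal S(X)_\cusp\xrightarrow{\sim}\CC[\hat X^\cusp,\mathcal L]$ and check that the image of $e_\Theta\phi$ in each cuspidal coinvariant space $\mathcal S(X)_{\pi,\cusp}$ ($\pi\in\hat X^\cusp$) vanishes. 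Since $\mathcal S(X_\Theta) = I_{\Theta^-}\mathcal S(X_\Theta^L)$ by \eqref{Schwartzinduced}, the composition $\mathcal S(X_\Theta)\xrightarrow{e_\Theta}\mathcal S(X)\to\mathcal S(X)_{\pi,\cusp}$ factors through a Jacquet-type coinvariant of $\pi$ along $P_\Theta^-$, which is zero by cuspidality of $\pi$.

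For $\supseteq$: suppose $f\in\mathcal S(X)$ has $e_\Theta^* f = 0$ for all $\Theta\neq\Delta_X$; writing $f = f_\cusp + f_\noncusp$, the first inclusion applied to $f_\cusp$ yields $e_\Theta^* f_\noncusp = 0$ for all proper $\Theta$, and the task is to force $f_\noncusp = 0$. Since $f_\noncusp\in L^2(X)_\cusp^\perp\cap\mathcal S(X)$, its $L^2$-Plancherel data consists of residual discrete-non-cuspidal and continuous pieces, both detected by $\iota_{\Theta'}^* f_\noncusp\neq 0$ for some $\Theta'\neq\Delta_X$ via Theorem \ref{L2theorem}. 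I would then transfer this non-vanishing to $e_{\Theta'}^* f_\noncusp\neq 0$, using the asymptotic agreement of $e_{\Theta'}$ and $\iota_{\Theta'}$ near $\Theta'$-infinity (from the construction in \cite{SV}) applied to the compactly supported $f_\noncusp$, producing a contradiction.

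The main obstacle is precisely this transfer step in the $\supseteq$ direction of the Schwartz statement: the maps $e_\Theta^*$ and $\iota_\Theta^*$ are globally distinct operators, and their asymptotic equivalence must be carefully leveraged on the compact support of $f_\noncusp$. This requires unpacking the construction of $\iota_\Theta$ as an ``asymptotic'' version of $e_\Theta$, and may depend on further properties of these maps beyond what is recalled in the excerpt.
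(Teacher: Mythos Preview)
For the $L^2$ statement your approach is essentially the paper's (which just cites the Plancherel theorem of \cite{SV}, \cite{Delorme-Plancherel}), but one intermediate claim is false: $\iota_\Theta$ does \emph{not} in general factor through $L^2(X_\Theta)_\disc$ --- the wave-packet formula \eqref{explicitiota} only describes $\iota_\Theta$ on the discrete summand. For one inclusion you only need $\iota_\Theta^* f=0\Rightarrow\iota_{\Theta,\disc}^* f=0$ (trivial), and for the other the vanishing statement $\iota_\Theta^*|_{L^2(X)_\disc}=0$ for $\Theta\ne\Delta_X$ from the scattering theorem (already in \cite{SV}).

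For the Schwartz statement there are two genuine problems. In your $\subseteq$ step you conclude that the composite $\mathcal S(X_\Theta)\to\mathcal S(X)_{\pi,\cusp}$ vanishes because ``cuspidality of $\pi$'' kills a Jacquet module. But $\pi\in\hat X^\cusp$ means only that $\pi$ is \emph{$X$-cuspidal} (the $X$-matrix coefficients have compact support modulo $\mathcal Z(X)$), not that $\pi$ is supercuspidal as a $G$-representation; the Jacquet module $\pi_{\Theta^-}$ is typically nonzero (think of the Steinberg representation in the group case). What actually makes the composite vanish is that the image of a cuspidal morphism $\tilde\pi\to C^\infty(X)$ vanishes on any sufficiently deep $J$-good neighborhood of $\Theta$-infinity, where $e_\Theta^*$ agrees with restriction; combined with $A_{X,\Theta}$-finiteness of the asymptotics this forces $e_\Theta^*$ of the image to be zero.

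In your $\supseteq$ step the reduction to the $L^2$ characterization via $\iota_{\Theta'}^*$ cannot close the gap: if $f_\noncusp$ lies in $\mathcal S(X)_\noncusp\cap L^2(X)_\disc$ (this space is nonzero whenever $X$ admits non-cuspidal relative discrete series), then by the $L^2$ half of the present proposition every $\iota_{\Theta'}^* f_\noncusp$ with $\Theta'\ne\Delta_X$ already vanishes, so no contradiction arises on the $\iota^*$-side. The asymptotic comparison you propose would have to directly show $e_{\Theta'}^* f_\noncusp\ne 0$ for some $\Theta'$, not route through $\iota^*$.

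The paper avoids both issues with a single uniform argument: disintegrate $f\in\mathcal S(X)$ over $\widehat{\mathcal Z(X)}$, writing $f=\int f^\chi\,d\chi$ with each $f^\chi$ compactly supported modulo center. Then $f\in\mathcal S(X)_\cusp$ iff for a.e.\ $\chi$ the $G$-module generated by $f^\chi$ in $L^2(X/\mathcal Z(X),\chi)^\infty$ is admissible, iff its $J$-invariants have compact support mod center, iff $e_\Theta^* f^\chi=0$ for all $\Theta\ne\Delta_X$; and one checks (by pairing against $e_\Theta\Phi$ and applying Plancherel for $\mathcal Z(X)$) that $e_\Theta^* f=0$ iff $e_\Theta^* f^\chi=0$ for a.e.\ $\chi$. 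This handles both inclusions simultaneously and never passes through the $\iota$-maps.
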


\begin{proof}
 For $L^2(X)_\disc$ this is part of the Plancherel formula of \cite{SV}, \cite{Delorme-Plancherel}.
 
 For $\mathcal S(X)_\cusp$, if an element $f\in \mathcal S(X)$ is in the kernel of $e_\Theta^*$, for all $\Theta\ne \Delta_X$, then the third condition of Proposition \ref{characterization1} is satisfied. Vice versa, if that condition is satisfied, then $e_\Theta^* f = 0$ for all $\Theta\ne\Delta_X$, because, by \cite[Lemma 5.2.7]{SV}, there is no $\mathcal H(G,J)$-stable subspace of $C^\infty(X_\Theta)$ whose elements are zero on a $J$-good neighborhood of infinity. 
 \end{proof}

Under the assumptions of the present paper (in particular, in the case of symmetric varieties), and conjecturally always, these smooth and unitary asympotics have spectral expansions in terms of \emph{normalized Eisenstein integrals}.

Recall the spaces of discrete and cuspidal $\sigma$-coinvariants defined in section \ref{sec:coinvariants}; the \emph{normalized constant terms} (restricted, here, to discrete and cuspidal spectra), whose definition will be recalled in the next subsection, are certain explicitly defined morphisms:
$$E_{\Theta,\sigma, \disc}^*: \mathcal S(X)\to \mathscr L_{\Theta,\sigma} = \mathcal S(X_\Theta)_{\sigma,\disc},$$
$$E_{\Theta,\sigma, \cusp}^*: \mathcal S(X)\to \mathcal L_{\Theta,\sigma} = \mathcal S(X_\Theta)_{\sigma,\cusp},$$
the latter obtained from the former via the natural quotient maps: $\mathscr L_{\Theta,\sigma}\to \mathcal L_{\Theta,\sigma}$, which vary \emph{rationally} in $\sigma$, i.e.\ they are really the pointwise evaluations of elements:
$$ E_{\Theta, \disc}^* \in \CC\left(\widehat{X_\Theta^L}^\disc, \Hom_G(\mathcal S(X), \mathscr L_\Theta)\right),$$
$$ E_{\Theta, \cusp}^* \in \CC\left(\widehat{X_\Theta^L}^\cusp, \Hom_G(\mathcal S(X), \mathcal L_\Theta)\right),$$

Here $\Hom_G(\mathcal S(X), \mathscr L_\Theta)$ denotes the sheaf over $\widehat{X_\Theta^L}_\CC^\disc$ whose sections over an open subset $U$ is the space of $G$-morphisms: $\mathcal S(X)\to \CC[U, \mathscr L_\Theta]$ (and similarly for $\mathcal L_\Theta$ over $\widehat{X_\Theta^L}^\cusp$). 

\begin{remark}\label{remarkdef} 
The fiber of this sheaf over $\sigma\in \widehat{X_\Theta^L}^\disc$ is a priori \emph{not} identical to $\Hom_G(\mathcal S(X), \mathscr L_{\Theta,\sigma})$, since, in principle, there may be morphisms that don't extend locally to an algebraic family.
\end{remark}

A priori this sheaf could be infinite-dimensional, but we claim:
\begin{lemma}\label{coherent}
$\Hom_G(\mathcal S(X), \mathscr L_\Theta)$ is a coherent, torsion-free sheaf over $\widehat{X_\Theta^L}^\disc$. 
\end{lemma}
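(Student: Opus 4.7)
The plan is to work locally on one component of $\widehat{X_\Theta^L}^\disc$ and exploit the Bernstein decomposition of $\mathcal S(X)$ to rewrite the Hom-sheaf as Hom between finitely generated modules over a Noetherian ring. Coherence is a local question, so I would fix a connected component $Y$; by the $X_\Theta$-analog of Proposition \ref{finiteds}, each open compact subgroup of $G$ contributes to only finitely many such components. All representations $\mathscr L_{\Theta,\sigma}$ for $\sigma\in Y$ are subquotients of $I_{\Theta^-}\sigma$, and their cuspidal supports form a single orbit under unramified twists, so they all lie in one Bernstein block $\mathfrak s$ of $G$; consequently every $\phi\in \Hom_G(\mathcal S(X),\mathscr L_\Theta|_Y)$ factors through the Bernstein projection $e_\mathfrak s\mathcal S(X)$.

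Next I would choose a compact open subgroup $J$ small enough that every object of $\mathfrak s$ is generated by its $J$-fixed vectors. Then a $G$-morphism out of $e_\mathfrak s\mathcal S(X)$ is determined by its restriction to $J$-invariants, giving
$$\Hom_G(\mathcal S(X),\mathscr L_\Theta|_Y) \;\simeq\; \Hom_{\mathcal H_\mathfrak s(G,J)}\bigl(e_\mathfrak s\mathcal S(X)^J,\,\mathscr L_\Theta^J|_Y\bigr).$$
The target $\mathscr L_\Theta^J|_Y$ is a finite-rank trivializable vector bundle over $Y$ by the construction in Section \ref{sec:coinvariants}. Granted finite generation of $e_\mathfrak s\mathcal S(X)^J$ as an $\mathcal H_\mathfrak s(G,J)$-module, this Hom space is a finitely generated module over the Bernstein center $\mathfrak Z(\mathfrak s)$; composing with the finite morphism $\Spec\CC[Y]\to \Spec\mathfrak Z(\mathfrak s)$ yields coherence as an $\mathcal O_Y$-module.

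For torsion-freeness, I would restrict to an affine open $U\subset Y$ over which $\mathscr L_\Theta|_U$ is $\CC[U]$-free. If $f\in\CC[U]$ is a non-zero-divisor and $f\phi=0$, then $f\phi(x)=0$ in $\mathscr L_\Theta|_U$ for each $x\in\mathcal S(X)$, and freeness forces $\phi(x)=0$, hence $\phi=0$. The existence of normalized constant terms $E^*_{\Theta,\disc}$ as rational sections then shows the sheaf is nonzero, so torsion-freeness is the statement that vanishing on a dense open implies vanishing.

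The hard part is the finite generation of $e_\mathfrak s\mathcal S(X)^J$ over $\mathcal H_\mathfrak s(G,J)$. In the group case $X=H$ with $G=H\times H$ this is classical Bernstein theory, but for spherical $X$ it has to be extracted from the structural assumptions of Section \ref{ssassumptions} — strong factorizability, wavefrontness, and the strong generic-injectivity hypothesis — together with the Plancherel-type finiteness results underlying \cite{SV, Delorme-Plancherel}; alternatively, one can bypass global finite generation by showing the sheaf is generated locally by the finitely many rational sections provided by normalized constant terms attached to a basis of the generic fiber of $\Hom_G(\mathcal S(X),\mathscr L_\Theta)$, and then clearing denominators to check finite generation on each affine patch.
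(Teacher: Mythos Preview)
Your approach is essentially the paper's, but you are missing the key input and therefore overcomplicate the argument. The finite generation you flag as ``the hard part'' is in fact a known result: for \emph{every} open compact $J$, the space $\mathcal S(X)^J$ is a finitely generated $\mathcal H(G,J)$-module; the paper simply cites \cite[Theorem A]{AAG} and \cite[Remark 5.1.7]{SV} for this. With that in hand there is no need for the Bernstein-block detour: one has directly
\[
\Hom_G(\mathcal S(X),\mathscr L_\Theta)=\varprojlim_J \Hom_{\mathcal H(G,J)}(\mathcal S(X)^J,\mathscr L_\Theta^J),
\]
and on a fixed component $Y$ the limit stabilizes at any $J$ small enough that $\mathscr L_\Theta^J|_Y$ is nonzero on all fibers; coherence of each term then follows from finite generation of the source. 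Your torsion-freeness argument is the same as the paper's, phrased slightly differently: the paper observes that choosing a finite generating set $S\subset\mathcal S(X)^J$ realizes the Hom-sheaf as a subsheaf of the locally free sheaf $\Hom(S,\mathscr L_\Theta^J)$.

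One further caution: your proposed bypass via ``the finitely many rational sections provided by normalized constant terms'' would be circular here. In the paper's logical order, Lemma~\ref{coherent} is invoked precisely to make sense of $E^*_{\Theta,\disc}$ as a rational section of $\Hom_G(\mathcal S(X),\mathscr L_\Theta)$, so you cannot appeal to those sections to establish the lemma.
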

\begin{proof}
 Indeed, for every open compact subgroup $J$ the space $\mathcal S(X)^J$ is a \emph{finitely generated} module for the Hecke algebra $\mathcal H(G,J)$ \cite[Theorem A]{AAG}, \cite[Remark 5.1.7]{SV}. Therefore, 
$$ \Hom_G(\mathcal S(X), \mathscr L_\Theta) = \lim_{\underset{J}{\from}} \Hom_{\mathcal H(G,J)} (\mathcal S(X)^J, \mathscr L_\Theta^J),$$
and the individual $\Hom$-spaces on the right are coherent sheaves over $\widehat{X_\Theta^L}^\disc$. Moreover, for every connected component $Y$ of $\widehat{X_\Theta^L}^\disc$ there is a compact open subgroup $J$ such that
$$ \Hom_G(\mathcal S(X), \mathscr L_\Theta|_Y) = \Hom_{\mathcal H(G,J)} (\mathcal S(X)^J, \mathscr L_\Theta^J|_Y)$$
\cite[Corollaire 3.9]{BeCentre}. 
Therefore, $\Hom_G(\mathcal S(X), \mathscr L_\Theta)$ is a coherent sheaf over $\widehat{X_\Theta^L}^\disc$, and similarly for $\mathcal L_\Theta$ over $\widehat{X_\Theta^L}^\cusp$. Moreover, for every $Y$ and $J$ as above, it is a subsheaf of the locally free sheaf $(\mathscr L_\Theta^J|_Y)^S$, where $S$ is a finite set of generators of $\mathcal S(X)^J$, and hence it is torsion-free.
\end{proof}

The definition of normalized constant terms and normalized Eisenstein integrals will be recalled in the next subsection, where we will also prove the important property of regularity on the unitary set. We will also recall there the notion of a character $\omega\in \widehat{X_\Theta^L}^\unr_\CC$ being \emph{large}, denoted $\omega\gg 0$. 
Here we will take them for granted, in order to recall how they are used to express smooth and unitary asymptotics.

The normalized constant terms are adjoint to ``normalized Eisenstein integrals'', which can be described as morphisms:
$$E_{\Theta,\sigma, \disc}: \widetilde{\mathscr L_{\Theta,\sigma}} \to C^\infty(X),$$
$$E_{\Theta,\sigma, \cusp}: \widetilde{\mathcal L_{\Theta,\sigma}} \to C^\infty(X),$$
varying rationally in $\sigma$. (By $\tilde ~$ we denote smooth duals.)

Now we recall the way in which Eisenstein integrals can be used to explicate smooth and unitary asymptotics. To formulate it, start from the Plancherel formula for $X_\Theta$, which canonically attaches to every $f\in L^2(X_\Theta)^\infty_\disc$ a $C^\infty(X_\Theta)$-valued measure $f^{\tilde\sigma} d\sigma$ on $\widehat{X_\Theta^L}^\disc$. Explicitly, $f^{\tilde\sigma}$ belongs to the ``discrete $\tilde\sigma$-equivariant eigenspace of $C^\infty(X_\Theta)$'' (i.e.\ the dual of $\mathscr L_{\Theta,\sigma}$), and is characterized by the property that for every $\Phi\in \mathcal S(X_\Theta)$ we have:
\begin{equation}\label{onaxis}
\left<f, \bar\Phi\right>_{L^2(X_\Theta)} = \int_{\widehat{X_\Theta^L}^\disc} \int_{X_\Theta}  f^{\tilde\sigma}(x) \Phi(x) dx  d\sigma.
\end{equation}

When $f\in \mathcal S(X_\Theta)$ the measure $f^{\tilde\sigma} d\sigma$ extends to a $C^\infty(X_\Theta)$-valued \emph{differential form} on $\widehat{X_\Theta^L}^\disc_\CC$, and another way to describe it is as follows. Recall the canonical map \eqref{canonicalmap-disc} (adapted to $X_\Theta$):
$$ \mathcal S(X_\Theta)\to \CC[\widehat{X_\Theta^L}^\disc,\mathscr L_\Theta],$$
$$ f\mapsto (\sigma\mapsto f_{\sigma,\disc})$$
(where $f_{\sigma,\disc}$ denotes the image of $f$ in the discrete $\sigma$-coinvariants), and the canonical volume form: $\left< \, , \,\right>_\sigma d\sigma$, valued in hermitian forms on $\mathscr L_\Theta$ obtained from the discrete part of the Plancherel formula of $X_\Theta$, s.\  \eqref{Plmeasure-Theta}. Then: 
$$f^{\tilde\sigma} d\sigma = \left< \bullet , \bar f_{\sigma,\disc}\right> d\sigma$$
as differential forms valued in the smooth dual of $\mathcal S(X_\Theta)$.

We are particularly interested in the case when $f \in \mathcal S(X_\Theta)_\cusp\subset L^2(X_\Theta)_\disc$, in which the form $f^{\tilde\sigma}d\sigma$ is valued in the dual of $\mathcal L_{\Theta,\sigma}$ and supported on $\widehat{X_\Theta^L}^\cusp_\CC$.
 
Since the integrand in \eqref{onaxis} is entire and supported on $\widehat{X_\Theta^L}^\cusp$, we can shift the contour of integration and write:
\begin{equation}\label{offaxis}
\left<f,\bar\Phi\right>_{L^2(X_\Theta)} = \int_{\omega^{-1}\widehat{X_\Theta^L}^\cusp} \int_{X_\Theta} f^{\tilde\sigma}(x) \Phi(x) dx  d\sigma
\end{equation}
for \emph{any} character $\omega$ of $\widehat{X_\Theta^L}_\CC^\unr$.

\begin{theorem}[{\cite[Theorem 15.4.2]{SV}}] \label{explicitsmooth}
 For any $\omega\gg 0$, if $f\in \mathcal S(X_\Theta)_\cusp$ admits the decomposition (\ref{offaxis}) then:
\begin{equation}
  e_\Theta f (x) = \int_{\omega^{-1}\widehat{X_\Theta^L}^\cusp} E_{\Theta,\sigma,\cusp} f^{\tilde\sigma}(x) d\sigma.
\end{equation}
\end{theorem}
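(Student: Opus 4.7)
The plan is to define
$$F_\omega(x) := \int_{\omega^{-1}\widehat{X_\Theta^L}^\cusp} E_{\Theta,\sigma,\cusp} f^{\tilde\sigma}(x)\, d\sigma$$
for $\omega^{-1}$ in a ``very positive'' chamber and to establish $F_\omega = e_\Theta f$. First I would check that the integral defining $F_\omega$ is well-posed: for $f\in \mathcal S(X_\Theta)_\cusp$ the section $\sigma\mapsto f^{\tilde\sigma}$ extends polynomially to $\widehat{X_\Theta^L}^\cusp_\CC$ by \eqref{isomcuspidal}, while $E_{\Theta,\sigma,\cusp}$ is rational with linear poles only. A sufficiently positive shift $\omega^{-1}$ lifts the contour off every polar hyperplane, so $F_\omega(x)$ converges absolutely on each relevant $J$-isotypic component, is smooth in $x$, $G$-equivariant, and is independent of $\omega$ within any connected component of the deep-positive region (no poles being crossed).

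The core step is to show $F_\omega \in \mathcal S(X)$, i.e.\ it has compact support modulo $\mathcal Z(X)$. One tests this direction-by-direction at infinity: for each $\Omega \ne \Delta_X$ one computes $e_\Omega^* F_\omega$ by substituting the normalized constant term of $E_{\Theta,\sigma,\cusp}$ along $\Omega$. For $\omega \gg 0$ all nontrivial exponents of this constant term lie in a fixed open cone dual to $\omega$, so a Cauchy-type contour deformation forces the integrals associated with those exponents to vanish. This is the ``shifted wave packets are compactly supported modulo center'' phenomenon. Given this, identifying $F_\omega$ with $e_\Theta f$ reduces, by the characterization of $e_\Theta$ via the local identifications $N_\Theta/J = N_\Theta'/J$ near $\Theta$-infinity, to verifying $e_\Theta^* F_\omega = f$. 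But the leading exponent of the $\Theta$-constant term of $E_{\Theta,\sigma,\cusp}$ is, by definition of the normalization, the identity on $\widetilde{\mathcal L_{\Theta,\sigma}}$, and all residual exponents vanish by the same cone argument; what remains is exactly the inverse Plancherel formula \eqref{offaxis} applied to $f$.

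The main obstacle is the Schwartz property in the middle step, which requires tight control of the polar divisors of normalized Eisenstein integrals (linearity of poles as in Lemma \ref{lemmalinearpoles}) and of the exponents that can appear in their constant terms. In the symmetric case one extracts this from the explicit $C$-function computation of Carmona--Delorme \cite{CD}, following Heiermann's strategy in the group case \cite{Heiermann}. In the general wavefront setting \cite{SV} the same control is instead drawn from the Bezrukavnikov--Kazhdan description of asymptotics via the wonderful compactification, combined with the boundedness of $e_\Omega^* f$ for $f \in \mathcal S(X)$.
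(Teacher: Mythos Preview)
Your sketch reverses the logical direction of the paper's argument and, in doing so, acquires a real gap in the identification step. The paper does not give an independent proof here: it quotes \cite[Theorem 15.4.2]{SV} and only extends the one ingredient needed in the non-split symmetric setting, namely Proposition~\ref{support} (the Bezrukavnikov--Kazhdan bound on the support of $e_\Omega^*\Phi$ for $\Phi\in\mathcal S(X)$). The \cite{SV} argument starts from the fact that $e_\Theta f$ is \emph{already known} to lie in $\mathcal S(X)$, then uses Proposition~\ref{support} and the moderate-growth bound (Proposition~\ref{moderategrowth}) to justify pairing $e_\Theta f$ against test functions and shifting the spectral contour; the formula then drops out. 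The alternative you outline --- define the wave packet $F_\omega$, prove compact support, then identify --- is indeed what Heiermann does for the group and what Carmona--Delorme make possible for symmetric spaces, and the paper acknowledges this route in the introduction; but it is not the route taken here.

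The concrete gap is in your last reduction. You claim that once $F_\omega\in\mathcal S(X)$, it suffices to check $e_\Theta^* F_\omega=f$, since ``the leading exponent of the $\Theta$-constant term is the identity and all residual exponents vanish by the same cone argument.'' Both parts fail. First, the residual exponents for $\Omega=\Theta$ are exactly the terms $\mathscr S_w$ for $w\ne 1$ in $W_X(\Theta,\Theta)$, and the later scattering theory (Theorem~\ref{smoothscattering}, Proposition~\ref{fibercuspidal}) shows these are nonzero; the integrals involving $\mathscr S_{w^{-1}}^* f^{\tilde\sigma}$ have rational (not polynomial) integrands, so the contour cannot be pushed to infinity without crossing poles, and no Cauchy argument makes them vanish. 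Consequently $e_\Theta^* F_\omega = \sum_{w\in W_X(\Theta,\Theta)}\textswab S_w f$, not $f$. Second, even if $e_\Theta^* F_\omega=f$ held, $e_\Theta^*$ is not injective on $\mathcal S(X)$ (its kernel contains $\mathcal S(X)_\cusp$, by Proposition~\ref{characterization}), so this would not pin down $F_\omega$. The same issue undermines your compact-support step: for $\Omega\sim\Theta$ the asymptotics $e_\Omega^* F_\omega$ are genuinely nonzero sums of scattering terms, so ``direction-by-direction vanishing of exponent contributions'' cannot be what establishes $F_\omega\in\mathcal S(X)$. In the Heiermann approach this step is considerably more delicate than a single contour shift; in the \cite{SV} approach it is bypassed entirely by starting from $e_\Theta f$.
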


\begin{theorem}[{\cite[Theorem 15.6.1]{SV}, \cite[Theorem 7]{Delorme-Plancherel}}]\label{explicitunitary}
If $f\in L^2(X_\Theta)^\infty_\disc$ admits the decomposition (\ref{onaxis}), then:
\begin{equation} \iota_\Theta f(x) = \int_{\widehat{X_\Theta^L}^\disc} E_{\Theta,\sigma,\disc}f^{\tilde\sigma} (x) d\sigma.\end{equation}
\end{theorem}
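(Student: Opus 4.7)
The plan is to reduce to the dense subspace $\mathcal S(X_\Theta)_\cusp$ and then extend spectrally to all of $L^2(X_\Theta)^\infty_\disc$. First I would check that the integral on the right-hand side makes sense: by the Plancherel isomorphism \eqref{L2discrete}, each $f\in L^2(X_\Theta)^\infty_\disc$ gives a section $\sigma\mapsto f^{\tilde\sigma}$ of $\widetilde{\mathscr L_\Theta}$ on $\widehat{X_\Theta^L}^\disc$, and after applying $E_{\Theta,\sigma,\disc}$ pointwise (once regularity on the unitary set is known) the integral produces an element of $L^2(X)$ with the correct Plancherel norm. The target is to identify this element with $\iota_\Theta f$.

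For $f\in \mathcal S(X_\Theta)_\cusp$ I would invoke Theorem \ref{explicitsmooth}, which gives the formula
\begin{equation*}
 e_\Theta f(x) = \int_{\omega^{-1}\widehat{X_\Theta^L}^\cusp} E_{\Theta,\sigma,\cusp} f^{\tilde\sigma}(x)\, d\sigma
\end{equation*}
for any $\omega\gg 0$, with the integrand regular on the shifted contour. The key move is to shift this contour back to the unitary set $\widehat{X_\Theta^L}^\cusp$. Provided the normalized Eisenstein integrals $E_{\Theta,\sigma,\cusp}$ are regular on a tube around the unitary set, the shift produces no residues and we obtain a wave packet $W(f)(x)=\int_{\widehat{X_\Theta^L}^\cusp} E_{\Theta,\sigma,\cusp} f^{\tilde\sigma}(x)\,d\sigma$. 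I would then compare $W(f)$ with $\iota_\Theta f$ via the defining property of the Bernstein map: both are $L^2$-functions on $X$ that are asymptotically equal to $e_\Theta f$ near $\Theta$-infinity, and this asymptotic-plus-$L^2$ characterization pins $\iota_\Theta f$ down uniquely. Since $f^{\tilde\sigma}$ is supported on $\widehat{X_\Theta^L}^\cusp\subset\widehat{X_\Theta^L}^\disc$ and $E_{\Theta,\sigma,\disc}$ factors through the cuspidal quotient along this subset, $W(f)$ is the same as the right-hand side of the theorem.

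To pass from $\mathcal S(X_\Theta)_\cusp$ to the whole of $L^2(X_\Theta)^\infty_\disc$, I would first extend to $\mathscr C(X_\Theta)_\cusp$ by continuity (using Theorem \ref{thmdiscrete} and the fact that both sides are continuous in the LF-topology on the Harish-Chandra Schwartz space), then pass to $L^2$ by density, and finally handle the part of $L^2(X_\Theta)^\infty_\disc$ sitting over $\widehat{X_\Theta^L}^\disc\setminus\widehat{X_\Theta^L}^\cusp$ by an induction on the rank of a Levi: such $\sigma$ are parabolically induced from cuspidal data on a smaller Levi spherical variety, and transitivity of the normalized Eisenstein integrals together with transitivity of Bernstein maps (so that $\iota_\Theta$ factors through an intermediate $\iota_{\Theta'}$) reduces us to a lower-rank instance of the theorem. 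The main obstacle I anticipate is justifying the contour shift, i.e.\ establishing that $E_{\Theta,\sigma,\disc}$ has no poles on the unitary set. My strategy would be to exhibit its poles as \emph{linear} in the sense of \S\ref{sslinearpoles} (their denominators have the form \eqref{lineardivisor}) and then combine Corollary \ref{linearpolesmap} with the $L^2$-boundedness of $\iota_\Theta$ applied to spectrally localized test functions: any unitary pole would force $\iota_\Theta$ to be unbounded in a neighborhood, contradicting the Plancherel decomposition. This is the technically hardest ingredient and is where the full strength of the assumptions listed in \S \ref{ssassumptions} enters.
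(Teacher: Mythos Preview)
The paper does not prove Theorem \ref{explicitunitary}; it is quoted from \cite{SV} and \cite{Delorme-Plancherel} and is in fact part of the standing assumption (5) in \S\ref{ssassumptions} (the ``explicit Plancherel formula''). So there is no proof in the paper to compare with, and in the paper's logical order this theorem is an \emph{input}: for instance, Proposition~\ref{regularEisenstein} (regularity of $E_{\Theta,\disc}^*$ on the unitary set) is deduced \emph{from} Theorem~\ref{explicitunitary}. Your plan to first establish regularity and then derive Theorem~\ref{explicitunitary} therefore runs backward relative to the paper and would be circular in this context.

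Independently of the circularity, the contour-shift step is wrong. What you need in order to shift from $\omega^{-1}\widehat{X_\Theta^L}^\cusp$ to $\widehat{X_\Theta^L}^\cusp$ without residues is regularity of $E_{\Theta,\sigma,\cusp}$ on the entire strip between the two contours, not just on the unitary set. This is generally false: the normalized Eisenstein integrals have (linear) poles in that strip, and those poles are precisely what distinguish $e_\Theta$ from $\iota_\Theta$. If the shift produced no residues you would get $W(f)=e_\Theta f$, and your uniqueness argument would then force $e_\Theta f=\iota_\Theta f$ on $\mathcal S(X_\Theta)_\cusp$, which is false. The actual characterization of $\iota_\Theta$ in \cite{SV} is that $\iota_\Theta$ and $e_\Theta$ are \emph{asymptotically} equal near $\Theta$-infinity, not equal; and the argument in \cite{SV,Delorme-Plancherel} proceeds by identifying the on-axis wave packet with $\iota_\Theta f$ directly via that asymptotic/spectral characterization, not by deforming the shifted integral that gives $e_\Theta f$.
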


 We need to extend the validity of Theorem \ref{explicitsmooth} to the cases considered in \cite{Delorme-Plancherel}. The proof of Theorem 15.4.2 in \cite{SV} carries over verbatim, up to Proposition 5.4.5 which we need to prove in the setting of \cite{Delorme-Plancherel}:

\begin{proposition}\label{support}
 There is an affine embedding $X_\Omega\hookrightarrow Y$ such that for every $\Phi\in \mathcal S(X)$, the support of $e_\Omega^*\Phi$ has compact closure in $Y$.
\end{proposition}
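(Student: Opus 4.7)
My plan is to follow the strategy of \cite[Proposition 5.4.5]{SV}, adapting it to the symmetric setting: the theorem of Bezrukavnikov-Kazhdan used there in the split spherical case will be replaced by the explicit Carmona-Delorme constant-term formulas \cite{CD} and the Kato-Takano analog of Casselman's criterion \cite{KatoTakano} for symmetric spaces.

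First I would construct $Y$. Using the classification of affine spherical embeddings (available in the symmetric case), there is an affine $G$-variety $Y$ containing $X_\Omega$ as an open dense $G$-orbit such that, for every $y \in X_\Omega$, any sequence $a_n \to \infty$ with $a_n \in A_{X,\Omega}^+$ has $a_n \cdot y$ convergent in $Y$. Concretely, $Y$ may be realized as the closure of the $G$-orbit of a suitable spherical vector in a finite-dimensional representation. Under this choice, the condition ``$\supp(e_\Omega^* \Phi)$ has compact closure in $Y$" unpacks to: the support, projected onto a fundamental domain for the $A_{X,\Omega}^+$-action, is bounded --- only finitely many ``chambers backward from $\Omega$-infinity'' are allowed to contribute.

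Fixing $\Phi \in \mathcal S(X)$ invariant under an open compact subgroup $J$, two ingredients then combine to give the required bound. On the one hand, the defining property of $e_\Omega^*$ on a $J$-good neighborhood $N_\Omega'$ of $\Omega$-infinity yields $e_\Omega^* \Phi(y) = \Phi(\bar y)$ for $y \in N_\Omega'$ (where $\bar y \in N_\Omega \subset X$ is the identified point), so that $\supp(e_\Omega^* \Phi) \cap N_\Omega'$ has compact closure in $N_\Omega'$ by compactness of $\supp \Phi$. On the other hand, away from $N_\Omega'$, the Carmona-Delorme description expresses $e_\Omega^* \Phi$ locally along $J$-orbits as an $A_{X,\Omega}$-finite function, a finite sum of terms $p(\log a)\,\chi(a)\,\psi(u)$ in the weak Cartan decomposition $y = a u \in A_{X,\Omega}^+ U$, whose exponents $\chi$ all satisfy $|\chi(a)| \leq 1$ on $A_{X,\Omega}^+$ (the symmetric-space Casselman criterion applied to the finite-length subrepresentation generated by $\Phi$). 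Combined with the weak Cartan decomposition $X_\Omega = A_{X,\Omega}^+\cdot U$ for some compact $U \subset G$, this implies that the support of $e_\Omega^* \Phi$ is bounded in the $-A_{X,\Omega}^+$ direction and propagates in the $A_{X,\Omega}^+$ direction only towards the compact piece of the boundary of $Y$, yielding the result.

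The main obstacle is the uniformity step: checking that the finite set of exponents $\chi$ controlling $e_\Omega^* \Phi$ via the Carmona-Delorme formulas depends only on $J$ (not on $\Phi$ itself), and that the coefficients are genuinely supported in a bounded region modulo $A_{X,\Omega}^+$. This is precisely what the Bezrukavnikov-Kazhdan input provides in the split case of \cite{SV}; in the symmetric case it should be extractable from \cite{CD} by analyzing the $C$-functions that arise, combined with the finiteness of the set of $X$-exponents attached to $\mathcal S(X)^J$ via the theory of asymptotics in \cite[\S 5]{SV} and \cite[\S 3]{Delorme-Plancherel}.
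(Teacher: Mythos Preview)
Your approach is genuinely different from the paper's and contains a real gap.

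The paper's proof is a short base-change argument: pass to a finite extension $E$ of $F$ over which $G$ splits, pull back the filtrations of \cite[\S 5.5]{SV} on $\bar X$, $\bar X_\Theta$ and on the Hecke algebra of $\bar G$ to the $F$-points, and observe that the Bezrukavnikov--Kazhdan argument (hence \cite[Lemma 5.5.5]{SV}) then goes through verbatim. No harmonic analysis, no exponent estimates, no Carmona--Delorme input is used; the statement is proved as a purely module-theoretic fact about $\mathcal S(X)^J$ and the filtration of the Hecke algebra.

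The gap in your proposal is the step where you assert that ``the Carmona--Delorme description expresses $e_\Omega^* \Phi$ locally along $J$-orbits as an $A_{X,\Omega}$-finite function'' whose exponents are controlled by the Kato--Takano criterion. Both of those tools apply to matrix coefficients of \emph{admissible, finite-length} representations: \cite{CD} computes constant terms of Eisenstein integrals, and the Casselman/Kato--Takano criterion bounds exponents of $X$-matrix coefficients of an irreducible $\pi$. A general $\Phi\in\mathcal S(X)$ does not generate a finite-length subrepresentation, and $e_\Omega^*\Phi$ is \emph{not} $A_{X,\Omega}$-finite; there is no ``finite set of exponents controlling $e_\Omega^*\Phi$'' to extract. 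You flag exactly this as ``the main obstacle,'' but the suggested remedy---reading it off from the $C$-functions in \cite{CD}---cannot work, because those formulas concern a specific family of functions, not the whole of $\mathcal S(X)^J$. What actually substitutes for this in the split case is precisely the Bezrukavnikov--Kazhdan input, and the paper's point is that this input survives base change unchanged.

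There is also a circularity hazard: in this paper Proposition~\ref{support} is the ingredient needed to extend Theorem~\ref{explicitsmooth} (the spectral expression of $e_\Theta$) to the non-split symmetric setting. Any route that, even implicitly, decomposes $\Phi$ spectrally before bounding the support would invert the logical order.
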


\begin{proof}

We choose a finite extension $E$ of our field over which $G$ splits, and we let $\bar X$, $\bar G$ etc.\ denote points over $E$. Then, in \cite[\S 5.5]{SV} there is a filtration of $\bar X$ defined by certain subsets $\bar X_{\succeq \mu}$ indexed by points $\mu$ in a rational vector space $\bar{\mathfrak a}_X$, and similarly for the spaces $\bar X_\Theta$. (We point the reader to \emph{loc.cit}.\ for definitions and the notation.) By taking intersections with $X$, $X_\Theta$, we have obtain filtrations for this space.
 
 Similarly, there is a filtration $\bar{\mathcal H}_{\ge \lambda}$ of the full Hecke algebra of $\bar G$ determined by the support of its elements, where $\lambda$ lies in a rational vector space $\bar{\mathfrak a}$ endowed with a surjective map: $\bar{\mathfrak a}\to \bar{\mathfrak a}_X$. We may analogously define a filtration of the full Hecke algebra of $G$, by imposing the same conditions on the support of its elements (considered as a subset of $\bar G$). 
 
The rest of the argument of \cite{SV} (namely, \cite[Lemma 5.5.5]{SV}, following \cite[Lemma 8.8]{BezK} and \cite[Proposition 5.4.5]{SV}) now follow verbatim, proving Proposition \ref{support}. 
\end{proof}

We complement this with a statement of moderate growth, that will be used later:

\begin{proposition}\label{moderategrowth}
 For any open compact subgroup $J$ the image of $\mathcal S(X)^J$ under $e_\Theta^*$ is a space of functions of \emph{uniformly moderate growth} on $X_\Theta$; i.e.\ there is a finite number of rational functions $F_i$, whose sets of definition cover $X_\Theta$, such that each $f \in e_\Theta^*\left(\mathcal S(X)^J\right)$ satisfies: 
 $$|f|\le C_{f} \cdot \min_i (1+ |F_i|)$$ on $X_\Theta$ (for some constant $C_{f}$ depending on $f$). 
\end{proposition}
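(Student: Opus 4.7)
The plan is to combine Proposition \ref{support} with a uniform control on the $A_{X,\Theta}$-asymptotic behavior of $e_\Theta^* f$ as $f$ ranges over $\mathcal S(X)^J$. First, I would fix the affine embedding $X_\Theta\hookrightarrow Y$ provided by Proposition \ref{support} and cover $Y$ by finitely many open affine charts $U_1,\ldots,U_n$. Restriction of coordinate functions on each $U_i$ produces rational functions on $X_\Theta$ whose sets of definition cover $X_\Theta$; these will be the candidate $F_i$'s, after possibly raising to a common power $N$. Near the boundary $Y\setminus X_\Theta$, a point of $X_\Theta$ moves to infinity under the $A_{X,\Theta}$-action in a prescribed ``direction'', and the $F_i$'s grow polynomially in the natural logarithmic coordinates on $A_{X,\Theta}$.

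The core claim would then be that the $A_{X,\Theta}$-module generated by the elements of $e_\Theta^*(\mathcal S(X)^J)$ inside $C^\infty(X_\Theta)^J$ is controlled by a finite set of generalized characters and by uniformly bounded Jordan block sizes, both depending only on $J$. Concretely, along any $A_{X,\Theta}^+$-ray approaching the boundary of $Y$, I would argue that $e_\Theta^* f$ admits an asymptotic expansion of the form $\sum_\chi \chi(a)\, p_\chi(a)\, \phi_\chi(x)$, where the generalized characters $\chi$ and the degrees of the polynomials $p_\chi$ are bounded uniformly as $f$ varies in $\mathcal S(X)^J$. This rests on the finite generation of $\mathcal S(X)^J$ over the Hecke algebra $\mathcal H(G,J)$ (\cite[Theorem A]{AAG}, \cite[Remark 5.1.7]{SV}) together with the Jacquet-module nature of $e_\Theta^*$ that underlies \cite[\S 5]{SV}.

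The polynomial-times-character terms in the expansion translate into polynomial bounds in the $F_i$'s on each chart $U_i$, giving $|e_\Theta^* f|\le C_f\,(1+|F_i|^N)$ on $U_i\cap X_\Theta$ for a uniform $N=N(J)$. Replacing each $F_i$ by $F_i^N$ (still a rational function on $X_\Theta$) then produces the bound in the form stated, with sets of definition that still cover $X_\Theta$.

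The main obstacle is precisely the uniform control on the $A_{X,\Theta}$-spectral data of $e_\Theta^*(\mathcal S(X)^J)$: one must verify that the same filtration apparatus that proves Proposition \ref{support} (the filtration of $\bar X_\Theta$ by the subsets $\bar X_{\Theta,\succeq\mu}$ and the compatible filtration of the full Hecke algebra) also yields uniformly bounded Jordan block sizes for the $A_{X,\Theta}$-action on the asymptotic expansion. In the symmetric case this should be accessible via the methods of \cite[\S 5]{SV} combined with the explicit constant-term computations of \cite{CD}; outside that setting one would have to extend the filtration arguments by hand, but the length of a Jordan chain is always bounded by the length of the filtration on the fixed-level space, and the latter is finite by the same finite-generation input.
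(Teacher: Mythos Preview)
The paper does not give a proof of this proposition at all: immediately after the statement it simply writes ``This is \cite[Proposition 15.4.3]{SV}, whose proof holds in the general case.'' So there is no argument here to compare against; the result is imported wholesale from \cite{SV}.

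Your sketch is a reasonable outline of how one would actually prove such a statement, and is presumably in the spirit of what \cite[Proposition 15.4.3]{SV} does: extract finitely many rational coordinates from an affine compactification, and then use the finite generation of $\mathcal S(X)^J$ over $\mathcal H(G,J)$ to obtain uniform bounds on the $A_{X,\Theta}$-exponents and nilpotency degrees appearing in the asymptotic expansion of $e_\Theta^* f$. One comment: you lean on Proposition~\ref{support} to furnish the affine embedding $Y$, but that proposition is about \emph{boundedness of support}, not about growth, and in fact moderate growth as stated here does not require compact closure of the support---any affine embedding of $X_\Theta$ would serve equally well to produce the rational $F_i$'s. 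The real content is entirely in your ``core claim'' about uniform spectral control, and you correctly identify that this rests on the same finite-generation and filtration machinery from \cite[\S 5]{SV} that the paper is already citing. In short, your proposal is a plausible reconstruction of a proof that the paper itself chooses not to reproduce.
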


This is \cite[Proposition 15.4.3]{SV}, whose proof holds in the general case.

\section{Definition and regularity of Eisenstein integrals}\label{sec:Eisenstein}

The normalized constant terms:
$$E_{\Theta,\sigma}^*: \mathcal S(X)\to \mathcal S(X_\Theta)_\sigma$$
are defined as the composition: $T_{\Theta,\sigma}^{-1} \circ R_{\Theta,\sigma}$, where $R_{\Theta,\sigma}$ and $T_{\Theta,\sigma}$ are operators --- essentially: spectral decompositions of Radon transforms --- fitting in a diagram:
\begin{equation}\label{Eisdiagram}
\xymatrix{
\mathcal S(X) \ar[dr]^{R_{\Theta,\sigma}} & \\
& \mathcal S(X_\Theta^h,\delta_\Theta)_\sigma \\
\mathcal S(X_\Theta)_\sigma \ar[ur]^{T_{\Theta,\sigma}}  &
}
\end{equation}

The space $X_\Theta^h$ is the space of (generic) \emph{$\Theta$-horocycles} of $X$, classifying pairs $(Q,\mathfrak O)$, where $Q$ is a parabolic in the conjugacy class opposite to that of $P_\Theta^-$ (defined in \S \ref{sec:generalities}) and $\mathfrak O$ is an orbit for its unipotent radical $U_Q$ on the open $Q$-orbit on $X$. Saying the same words about $X_\Theta$ would produce a \emph{canonically isomorphic} variety \cite[Lemma 2.8.1]{SV}, and the operators $R_\Theta$ and $T_\Theta$ are defined in completely analogous ways as operators from $\mathcal S(X)$, resp.\ $\mathcal S(X_\Theta)$, to $\mathcal S(X_\Theta^h,\delta_\Theta)_\sigma$. Thus, for notational simplicity, we only describe below the definition of the former. (From the definition it will be clear that $T_\Theta$ factors through the quotient $\mathcal S(X_\Theta)_\sigma$, which we noted in the diagram above in order to make sense of the inverse of $T_\Theta$. The operator $T_\Theta$ is essentially the \emph{standard intertwining 
operator} between induction from two opposite parabolics.)

Let $\Lambda\in \Hom_{L_\Theta} (\mathcal S(X_\Theta^L),\sigma)$, where $\sigma$ is an irreducible representation of $L_\Theta$. Recall from \S \ref{sec:generalities} that the Levi variety $X_\Theta^L$ can be identified with the quotient of the open $P_\Theta$-orbit on $X$ by its unipotent radical $U_\Theta$. We define a $P_\Theta$-morphism:
\begin{equation}\label{induced}\tilde\Lambda: \mathcal S(X)\to \sigma \otimes \delta_\Theta
\end{equation}
formally (at least) as:
$$ \tilde\Lambda(\Phi) = \Lambda\left(X_\Theta^L\ni x\mapsto \int_{U_\Theta} \Phi(xu) du\right),$$
i.e.\ by integrating over $U_\Theta$-orbits in the open $P_\Theta$-orbit and then applying the operator $\Lambda$. There are two difficulties here: First, integrating over $U_\Theta$-orbits requires fixing a measure on them; secondly the result of this integration will not be compactly supported on $X_\Theta^L$. 

Without fixing measures on $U_\Theta$-orbits, the operation (Radon transform) of integrating over them canonically takes values in a line bundle over $X_\Theta^L$ whose smooth sections we denote by $C^\infty(X_\Theta^L,\delta_\Theta)$, and admits a noncanonical isomorphism:
\begin{equation}\label{ncisom}C^\infty(X_\Theta^L,\delta_\Theta) \xrightarrow\sim C^\infty(X_\Theta^L)\otimes\delta_\Theta,\end{equation} cf.\ \cite[\S 5.4.1]{SV}. The image of ``integration over generic $U_\Theta$-orbits'' will be denoted by:
\begin{equation}\label{RTheta}\mathcal S(X) \overset{R_\Theta}{\twoheadrightarrow} C^\infty(X_\Theta^L,\delta_\Theta)_X\subset C^\infty(X_\Theta^L,\delta_\Theta).\end{equation}

Inducing this $P_\Theta$-functional to $G$, we get a $G$-morphism (denoted by the same symbol):
$$\mathcal S(X)\overset{R_\Theta}\twoheadrightarrow C^\infty(X_\Theta^h,\delta_\Theta)_X,$$
where this notation stands for the corresponding line bundle over $X_\Theta^h$.

For ease of presentation, let us now fix an isomorphism as in \eqref{ncisom}, and denote by $C^\infty(X_\Theta^L)_X$ the subspace of $C^\infty(X_\Theta^L)$ corresponding to $C^\infty(X_\Theta^L,\delta_\Theta)_X$:
\begin{equation}\label{fixiso}C^\infty(X_\Theta^L,\delta_\Theta)_X \simeq C^\infty(X_\Theta^L)_X\otimes\delta_\Theta.
\end{equation}
 
We will extend the morphism $\Lambda$ to $C^\infty(X_\Theta^L)_X$ by the usual method of meromorphic continuation: Let $\tilde v \in \tilde\sigma$, and consider the following distribution on $X_\Theta^L$:
$$\mathcal S(X_\Theta^L)\ni \Phi\mapsto \left<\Lambda(\Phi),\tilde v\right>.$$

Consider also the invariant-theoretic quotient $X_\Theta\sslash U_P = \spec k[X_\Theta]^{U_P}$. It can be shown (cf.\ \cite[Lemma 15.3.1]{SV}) that it contains $X_\Theta^L$ as an open orbit, whose preimage is precisely the open $P_\Theta$-orbit in $X$. For a character $\omega\in \widehat{X_\Theta^L}_\CC^\unr$, considered as a function on $X_\Theta^L$ (this requires fixing a base point), we write $\omega\gg 0$ if it vanishes sufficiently fast around the complement of the open orbit; the set of such characters contains an open subset of the whole character group.

Twisting by $\omega$ we get from $\Lambda$ and $\tilde v$ functionals:
\begin{equation}\label{distr}\mathcal S(X_\Theta^L)\ni \Phi\mapsto \left<\Lambda(\Phi\cdot \omega),\tilde v\right>
\end{equation}
factoring through $\sigma\otimes\omega^{-1}$-coinvariants.

Then, for $\omega\gg 0$ (not depending on the choice of $\tilde v$) and any $f\in C^\infty(X_\Theta^L)_X$, the distributions \eqref{distr} are in $L^1(X, f)$ --- i.e., they are represented by measures $\omega \cdot \Lambda^*(\tilde v)$  with $\int_{X_\Theta^L} |f| \cdot |\omega \cdot \Lambda^*(\tilde v)| <\infty$ (where $\Lambda^*$ denotes the adjoint of $\Lambda$ with image in the space of smooth measures on $X_\Theta^L$). 
That gives a natural way to extend them to $C^\infty(X_\Theta^L)_X$ as the integral 
\begin{equation}\label{tildeLambda}\tilde\Lambda (f) = \int_{X_\Theta^L} f \cdot \omega \cdot \Lambda^*(\tilde v).\end{equation}

\begin{lemma}\label{linearpoles1}
 For any $f\in C^\infty(X_\Theta^L)_X$ and $\tilde v\in \tilde \sigma$ the integral \eqref{tildeLambda}
 is rational in the variable $\omega\in \widehat{X_\Theta^L}^\unr$, with linear poles.
\end{lemma}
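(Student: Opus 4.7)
\smallskip

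The plan is to treat $\left<\Lambda(f\cdot\omega),\tilde v\right>$ as a Mellin transform in $\omega$ and extract its rationality with linear poles from the $A_{X,\Theta}$-equivariance of $\Lambda$. First, I would observe that the distribution $D_{\tilde v}(\Phi):=\left<\Lambda(\Phi),\tilde v\right>$ on $X_\Theta^L$ is an $A_{X,\Theta}$-eigendistribution with eigencharacter $\chi_\sigma^{-1}$, where $\chi_\sigma$ is the central character of $\sigma$ (relative to the normalized action of \S\ref{sseigenmeasures}). Correspondingly, $\omega$ is itself an $A_{X,\Theta}$-eigenfunction, so $\omega\mapsto D_{\tilde v}(\Phi\cdot\omega)$ is for each fixed $\Phi$ a translate of $D_{\tilde v}$ by a character, and the assertion $\omega\gg 0$ is exactly the condition that $f\cdot\omega$ vanishes to sufficient order on the complement of the open $P_\Theta$-orbit to make the naive integral definition of $D_{\tilde v}(f\omega)$ absolutely convergent.

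Next, I would invoke the factorizability of $X_\Theta^L$ (our assumption in \S\ref{sec:definitions}), which gives a Cartan-type decomposition $X_\Theta^L = A_{X,\Theta}^+\cdot K$ for a compact set $K$ (up to a finite disjoint union coming from components as in \eqref{disjoint}). After fixing a lattice $\Lambda_0\subset A_{X,\Theta}$ representing $A_{X,\Theta}^+$ modulo a compact subgroup, and a partition of unity subordinate to the translates $\{aK\}_{a\in\Lambda_0^+}$, the integral decomposes as
\[
\left<\Lambda(f\omega),\tilde v\right>
\;=\;\sum_{a\in\Lambda_0^+} I(f,a)\,\omega(a)\,\chi_\sigma^{-1}(a),
\]
where $I(f,a)$ is obtained by integrating $D_{\tilde v}$ against $f$ on $K$ after translating by $a^{-1}$ and using the equivariance of $D_{\tilde v}$ to strip the character factors. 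Proposition \ref{moderategrowth} (and the smoothness of $f$) controls $I(f,a)$ by a polynomial in the length of $a$, so the sum converges absolutely for $\omega\gg 0$.

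Finally, writing $\Lambda_0^+$ as a finitely generated submonoid and reorganising $I(f,a)$ as a finite linear combination (with coefficients depending on $f$) of characters of $\Lambda_0^+$, the sum is a finite sum of geometric-series expressions
\[
\sum_{a\in\Lambda_0^+}\omega(a)\,\chi_\sigma^{-1}(a)\,\mu(a)
\;=\;\prod_i\frac{1}{1-\omega(\alpha_i)c_i}\cdot(\text{regular}),
\]
for the generators $\alpha_i$ of the cone and certain scalars $c_i$; these are exactly the linear divisors of \S\ref{sslinearpoles}. The main obstacle is that $f$ is not $A_{X,\Theta}$-finite, so one cannot literally separate it from the character sum; the fix is to use that only the projection of $f|_K$ onto the (finite-dimensional) $\chi_\sigma\omega^{-1}$-isotypic summand of $K$-invariants interacts nontrivially with $D_{\tilde v}$ — equivalently, that $D_{\tilde v}$ factors through $\mathcal S(X_\Theta^L)_{\sigma\otimes\omega^{-1}}$, whose $J$-invariants form a coherent, torsion-free sheaf of finite rank (Lemma \ref{coherent} applied to $X_\Theta^L$). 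This finite-dimensional reduction together with Mellin inversion on $\Lambda_0$ produces the required rational expression, and the linearity of the poles comes from the one-dimensional (geometric-series) character of each summand in the cone.
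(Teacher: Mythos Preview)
Your approach is fundamentally different from the paper's, which proves the lemma geometrically via the theory of Igusa integrals: one writes the integral, locally over a blow-up of $X$, as $\int F\cdot|\Omega|\cdot\prod_i |f_i|^{s_i(\omega)}$ with the exponents $s_i(\omega)$ linear in $\omega$, and rationality with linear poles is then the standard output of that theory. Your attempt instead tries to extract rationality from the $A_{X,\Theta}$-equivariance of $\Lambda$ alone, and this is where it breaks down.

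First, the Cartan decomposition you claim is not what factorizability gives you. Factorizability says (roughly) $X_\Theta^L\simeq A_{X,\Theta}\times X'$ with $X'$ a $[L_\Theta,L_\Theta]$-spherical variety; $X'$ is not compact, and its own Cartan decomposition contributes non-trivially. So ``$X_\Theta^L=A_{X,\Theta}^+\cdot K$ with $K$ compact'' is simply false in general, and the decomposition $\sum_{a\in\Lambda_0^+} I(f,a)\,\omega(a)\chi_\sigma^{-1}(a)$ does not capture the full asymptotic geometry of $X_\Theta^L$.

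Second, and more seriously, your ``fix'' for the fact that $f$ is not $A_{X,\Theta}$-finite is circular. You propose to use that $D_{\tilde v}$ factors through the coinvariants $\mathcal S(X_\Theta^L)_{\sigma\otimes\omega^{-1}}$, appealing to coherence of that sheaf. But those are coinvariants of the \emph{Schwartz} space, and $f\in C^\infty(X_\Theta^L)_X$ is precisely \emph{not} in $\mathcal S(X_\Theta^L)$; the entire content of the lemma is to extend $\Lambda$ to such $f$. You cannot invoke the factorization of $\Lambda$ through a finite-dimensional quotient of $\mathcal S(X_\Theta^L)$ to control its pairing with a function outside $\mathcal S(X_\Theta^L)$. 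Moreover, the ``$\chi_\sigma\omega^{-1}$-isotypic projection'' you propose depends on $\omega$ itself, so it cannot be separated from the sum over $\omega$ in the way you suggest. Proposition~\ref{moderategrowth} is also misquoted: it concerns the image of $e_\Theta^*$, not of the Radon transform $R_\Theta$.

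What your argument is missing is exactly the geometric input the paper supplies: the asymptotic behaviour of $f=R_\Theta(\Phi)$ near the boundary of $X_\Theta^L$ in its compactification is what forces the integral to be rational with linear poles, and this is encoded in the Igusa-integral computation on the blow-up (or, in the symmetric case, in the Blanc--Delorme argument the paper cites). Equivariance under the one torus $A_{X,\Theta}$ does not see the full boundary and cannot by itself produce the result.
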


Recall that the notion of ``linear poles'' was defined in \S \ref{sslinearpoles}.

\begin{proof}
This follows from the theory of Igusa integrals and the proof of \cite[Proposition 15.3.6]{SV}, where it is shown that this integral has the form:
 $$ I(\omega) := \int  F \cdot |\Omega| \cdot \prod_{i} |f_i|^{s_i(\omega)},$$ 
where, denoting by $D$ the complement of the open $P_\Theta$-orbit in $X$: 
\begin{itemize}
\item[---]  the $f_i$'s are $P_\Theta$-eigenfunctions (hence regular and non-vanishing away from $D$), and the exponents $s_i(\omega)$ are such that the product $\prod_{i} |f_i|^{s_i(\omega)}$ has eigecharacter $\omega$;  
\item[---]  $F$ is the pull-back of a finite function (i.e., generalized eigenfunction) on $(F^\times)^r$, for some $r$, via an $r$-tuple of ``local coordinates'' $(g_1, \dots, g_r)$, which are rational functions whose divisor is contained in $D$;
\item[---]  $\Omega$ an algebraic volume form whose divisor is contained in $D$.
\end{itemize} 

As in \cite[Proposition 15.3.6]{SV}, we now refer to \cite{Igusa} and \cite[p.5]{Denef} for the fact that such an integral has rational continuation with linear poles. 
\end{proof}

\begin{remark}
 For symmetric spaces, an alternative proof of rationality and linearity of the poles was given by Blanc-Delorme in \cite[Theorem 2.8(iv) and Theorem 2.7(i)]{BD}.
\end{remark}

Composing this with the map $R_\Theta$ of \eqref{RTheta} we now get, for every $\Lambda\in \Hom_{L_\Theta} (\mathcal S(X_\Theta^L),\sigma)$, a rational family of $P_\Theta$-morphisms:
$$\tilde\Lambda_\omega: \mathcal S(X)\to \sigma \otimes \omega^{-1}\delta_\Theta,$$
whose specialization at $\omega=1$ (if regular) is the operator $\tilde\Lambda$.

If we let $\Lambda$ vary, this defines a rational family of $P_\Theta$-morphisms from $\mathcal S(X)$ to the coinvariant space $\mathcal S(X_\Theta^L)_{\sigma \otimes \delta_\Theta}$; recall that 
$$\mathcal S(X_\Theta^L)_{\sigma}= \Hom_{L_\Theta} (\mathcal S(X_\Theta^L),\sigma)^*\otimes \sigma.$$
Inducing from $P_\Theta$, and forgetting the isomorphism \eqref{fixiso}, we land in the coinvariant space:
$$\mathcal S(X_\Theta^h,\delta_\Theta)_\sigma := \left(\Hom_{L_\Theta} (\mathcal S(X_\Theta^L,\delta_\Theta), \sigma\otimes\delta_\Theta\right)^*\otimes I_{\Theta} \sigma,$$

This completes the definition of the map $R_{\Theta,\sigma}$, and the definition of $T_{\Theta,\sigma}$ is completely analogous. Notice that $T_{\Theta,\sigma}$ factors through the quotient $\mathcal S(X_\Theta)\to \mathcal S(X_\Theta)_\sigma$, and essentially coincides with the standard intertwining operator $I_{\Theta^-}(\sigma)\to I_\Theta(\sigma)$:
$$\mathcal S(X_\Theta)_{\sigma}= \Hom_{L_\Theta} (\mathcal S(X_\Theta^L),\sigma)^*\otimes I_{\Theta^-} \sigma $$
$$ \longrightarrow \mathcal S(X_\Theta^h,\delta_\Theta)_\sigma = \Hom_{L_\Theta} (\mathcal S(X_\Theta^L,\delta_\Theta), \sigma\otimes\delta_\Theta)^*\otimes I_{\Theta} \sigma.$$
Notice that $\Hom_{L_\Theta} (\mathcal S(X_\Theta^L),\sigma) = \Hom_{L_\Theta} (\mathcal S(X_\Theta^L)\otimes \delta_\Theta, \sigma\otimes\delta_\Theta) $ canonically, and the non-canonical difference between $\mathcal S(X_\Theta^L)\otimes\delta_\Theta$ and $\mathcal S(X_\Theta^L,\delta_\Theta)$ accounts for the non-canonicity of the choice of measure on $U_\Theta$ for the intertwining operator.

We have the following:

\begin{lemma}\label{invertible} 
For $\sigma$ in general position (in a family of irreducible representations of $L_\Theta$ twisted by elements of $\widehat{X_\Theta^L}^\unr$) the representation $I_{\Theta^-}(\sigma)$ is irreducible and the operator $T_{\Theta,\sigma}$ is an isomorphism. 
\end{lemma}

\begin{proof}
Indeed, this follows from the fact that $\widehat{X_\Theta^L}^\unr$ contains ``$P_\Theta$-regular'' characters --- i.e.\ characters which are non-trivial on the image of all coroots corresponding to roots in the unipotent radical of $P_\Theta$ (s.\ the proof of \cite[Corollary 15.3.3]{SV}) --- that $T_{\Theta,\sigma}$ is always non-zero (wherever defined), and that the points of reducibility are contained in divisors of the form $\{\sigma_0\otimes\omega\}_\omega$, where $\sigma_0$ is some fixed point in this family and $\omega$ varies along unramified characters of $L_\Theta$ satisfying $\omega|_{\check\alpha(F^\times)} = 1$ for some root $\alpha$ in the unipotent radical of $P_\Theta$, cf.\ \cite[Th\'eor\`eme 3.2]{Sauvageot} and also Lemma \ref{lemmainduced} later in the present paper.
\end{proof}

Hence, the inverses of the operators $T_{\Theta,\sigma}$ form a meromorphic family of operators, and we can define
the normalized constant terms:
$$E_{\Theta,\sigma}^* = T_{\Theta,\sigma}^{-1} \circ R_{\Theta,\sigma}: \mathcal S(X)\to \mathcal S(X_\Theta)_\sigma.$$
The normalized Eisenstein integrals are by definition their adjoints:
\begin{equation}
 E_{\Theta,\sigma}: \widetilde{\mathcal S(X_\Theta)_\sigma} \to C^\infty(X).
\end{equation}

\begin{lemma}\label{linearpoles}
The normalized constant terms $E_{\Theta,\sigma}^*$ (equivalently, the normalized Eisenstein integrals $E_{\Theta,\sigma}$) 
are rational in $\sigma$, with linear poles. 
\end{lemma} 

\begin{proof}
The rationality and linear poles of $R_\Theta$, $T_\Theta$ follow from standard Igusa theory, as we recalled in Lemma \ref{linearpoles1}. The fact that the inverse $T_\Theta^{-1}$ of the standard intertwining operator has linear poles is also well-known, but we recall an argument here, for the sake of completeness. By abuse of notation, we will denote by $T_\Theta$ the standard intertwining operator $I_{\Theta^-}(\sigma)\to I_\Theta(\sigma)$, depending on a choice of Haar measure on $U_\Theta$. We also denote by $T_{\Theta^-}$ the corresponding operator when the roles of $P_\Theta$ and $P_{\Theta^-}$ are reversed.

The composition $T_{\Theta^-}\circ T_\Theta: I_{\Theta^-}(\sigma)\to I_{\Theta^-}(\sigma)$ is a scalar $\gamma(\sigma)$, varying rationally as $\sigma$ is twisted by unramified characters of $P_\Theta$. If that scalar is zero, that representation is reducible. (This is \cite[Theorem 6.6.2]{Casselman-notes}, which is stated for $\sigma$ cuspidal, but the argument in this direction works for any $\sigma$.) As we saw in the proof of Lemma \ref{invertible}, the induced representation is reducible along linear divisors. Thus, the zeros of $\gamma$ and the poles of $T_\Theta^{-1}$ are linear divisors.
\end{proof}

Now we project normalized constant terms to discrete and cuspidal quotients of $\mathcal S(X_\Theta)_\sigma$, to obtain the morphisms used in the previous section:
$$ E_{\Theta, \disc}^* \in \CC\left(\widehat{X_\Theta^L}^\disc, \Hom_G(\mathcal S(X), \mathscr L_\Theta)\right),$$
$$ E_{\Theta, \cusp}^* \in \CC\left(\widehat{X_\Theta^L}^\cusp, \Hom_G(\mathcal S(X), \mathcal L_\Theta)\right).$$

Linearity of the poles and their role in the Plancherel formula imply that the poles actually do not meet the unitary set:
\begin{proposition}\label{regularEisenstein}
 Normalized Eisenstein integrals are regular on the subsets of unitary representations, i.e.:
$$ E_{\Theta,\disc}^* \in \Gamma\left(\widehat{X_\Theta}^\disc, \Hom_G(\mathcal S(X),\mathscr L_\Theta)\right),$$
and:
$$ E_{\Theta,\cusp}^* \in \Gamma\left(\widehat{X_\Theta}^\cusp, \Hom_G(\mathcal S(X),\mathcal L_\Theta)\right).$$
\end{proposition}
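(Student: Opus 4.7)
The plan is to combine two inputs: the \emph{linear} structure of the polar divisor of $E_{\Theta,\disc}^*$, established in \S\ref{sec:Eisenstein} via the Igusa-integral description of the Radon transform (or via Blanc--Delorme in the symmetric case); and the $L^2$-boundedness of the Bernstein adjoint $\iota_{\Theta,\disc}^*: L^2(X)\to L^2(X_\Theta)_\disc$, which is part of the Plancherel theorem of \cite{SV, Delorme-Plancherel}. The cuspidal statement will follow from the discrete one by post-composing with the regular bundle map $\mathscr L_\Theta \twoheadrightarrow \mathcal L_\Theta$.

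First I would reduce to a scalar assertion: fix an open compact subgroup $J$, a vector $\phi\in \mathcal S(X)^J$, and a flat functional $v^*$ on $\mathscr L_\Theta^J$ in the sense of \S\ref{ssgeneralities}. Then
$$F(\sigma) := \langle E_{\Theta,\sigma,\disc}^*(\phi),\, v^*\rangle$$
is a rational function on $\widehat{X_\Theta^L}^\disc_\CC$ whose polar divisor is linear, by the construction of $E_{\Theta,\sigma}^*$ in diagram \eqref{Eisdiagram} and the linear pole bound for Igusa integrals. Since $\phi$, $v^*$ and $J$ are arbitrary, it suffices to prove that $F$ has no poles on the unitary torus $\widehat{X_\Theta^L}^\disc$.

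Next I would identify $F$, restricted to the unitary set, with a locally $L^1$ function. The element $\iota_{\Theta,\disc}^*\phi$ lies in $L^2(X_\Theta)_\disc$ and so corresponds, under the Plancherel isomorphism \eqref{L2discrete} for $X_\Theta$, to an $L^2$-section $s_\phi$ of $\mathscr L_\Theta$. Taking adjoints in the wave-packet formula of Theorem \ref{explicitunitary} yields the identity
$$\langle \iota_{\Theta,\disc}^*\phi,\, g\rangle_{L^2(X_\Theta)} \;=\; \int_{\widehat{X_\Theta^L}^\disc} \langle E_{\Theta,\sigma,\disc}^*(\phi),\, g^{\tilde\sigma}\rangle\, d\sigma,$$
valid for all $g\in \mathcal S(X_\Theta)_\disc$ with spectral decomposition $g=\int g^{\tilde\sigma}d\sigma$, which identifies $s_\phi(\sigma)$ with $E_{\Theta,\sigma,\disc}^*(\phi)$ almost everywhere. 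Pairing with the flat functional $v^*$ (which is bounded fiberwise) then shows that $F$ coincides almost everywhere with a locally $L^2$, hence locally $L^1$, function on the unitary torus. Lemma \ref{lemmalinearpoles} applied to $F$ delivers the desired conclusion: a rational function with linear poles that is locally integrable near $\widehat{X_\Theta^L}^\disc$ cannot have any poles there.

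The hard part will be justifying the identification $s_\phi(\sigma) = E_{\Theta,\sigma,\disc}^*(\phi)$ without circularly presupposing the regularity we wish to prove. My plan is to first work on a shifted contour $\omega^{-1}\widehat{X_\Theta^L}^\disc$ for $\omega\gg 0$, where $E_{\Theta,\sigma,\disc}^*$ is manifestly regular and the shifted analog of the Plancherel identity makes unambiguous sense (invoking the contour-shift argument underlying Theorem \ref{explicitsmooth}), and then to deform the contour back toward $\omega = 1$. Since linear divisors are nowhere dense, a small generic deformation avoids all poles, and the $L^2$-boundedness of $\iota_{\Theta,\disc}^*$ combined with the linear structure of the poles precludes any accumulation of residues in the limit; thus the Plancherel identity persists on the unitary contour, giving the required identification.
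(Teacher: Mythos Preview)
Your proposal is essentially the paper's proof: linear poles from the Igusa/Blanc--Delorme analysis, integrability on the unitary set from the explicit Plancherel formula (Theorem \ref{explicitunitary}), and then Lemma \ref{lemmalinearpoles}. The paper pairs $E_{\Theta,\disc}^*\Phi$ against a \emph{regular} $L^2$-section $v$ using the Plancherel hermitian form \eqref{Plmeasure-Theta} rather than a flat functional, but by Lemma \ref{isflat} that form is flat, so this is only a cosmetic difference.

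Your ``hard part'' paragraph, however, is unnecessary and slightly misconceived. There is no circularity: Theorem \ref{explicitunitary} is imported wholesale from \cite[Theorem 15.6.1]{SV} and \cite[Theorem 7]{Delorme-Plancherel}, where it is proved without any appeal to the regularity of $E_{\Theta,\disc}^*$ on the unitary axis. That theorem already asserts the $L^2$-identity $\iota_\Theta f = \int E_{\Theta,\sigma,\disc} f^{\tilde\sigma}\,d\sigma$, whose adjoint immediately identifies the Plancherel component of $\iota_{\Theta,\disc}^*\phi$ with $E_{\Theta,\sigma,\disc}^*(\phi)$ as an $L^2$-section; the $L^1$ bound against a bounded test section follows at once. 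So you can simply invoke Theorem \ref{explicitunitary} and delete the contour-deformation argument entirely. (Moreover, the deformation you sketch---working first at $\omega\gg 0$ via Theorem \ref{explicitsmooth}---is only stated there for the \emph{cuspidal} constant term, not the full discrete one, so that route would require extra work anyway.)
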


\begin{proof}
Theorem \ref{explicitunitary} shows that for every $\Phi\in \mathcal S(X)$, and every $L^2$-section $\sigma\mapsto v_\sigma$ of $\mathscr L_\Theta$, the inner product:
$$ \left< E_{\Theta,\disc}^* \Phi, v\right>_\sigma d\sigma$$
(see \eqref{Plmeasure-Theta} for the unitary structure on $\mathscr L_\Theta$) is integrable over $\widehat{X_\Theta^L}^\disc$. 

Dividing by a Haar measure $d\sigma$, and assuming that $v$ is actually \emph{regular}, we get a function $\sigma\mapsto \left< E_{\Theta,\disc}^* \Phi, v\right>_\sigma $ which has linear poles. By Lemma \ref{lemmalinearpoles}, these poles cannot meet the unitary spectrum $\widehat{X_\Theta^L}^\disc$.
\end{proof}

\part{Scattering}

\section{Goals} \label{sec:scatteringgoals}

This part is technically at the heart of our proof of Paley--Wiener theorems. The goal here is to prove Theorems \ref{unitaryscattering}, \ref{smoothscattering}; before we formulate them, let us explain what we will mean by saying that for associates $\Theta,\Omega\subset \Delta_X$, and $w\in W_X(\Omega,\Theta)$, a morphism:
\begin{equation}\label{ThetaOmega}\mathcal S(X_\Theta)_\cusp\to C^\infty(X_\Omega)_\cusp
\end{equation}
is ``$w$-equivariant with respect to the cuspidal center'' $\mathfrak z^\cusp(X_\Theta^L)$ (\S \ref{sscuspidalcenter}), and similarly that a morphism:
$$\mathscr C(X_\Theta)_\disc\to \mathscr C(X_\Omega)_\disc$$
is ``$w$-equivariant with respect to the discrete center'' $\mathfrak z^\disc(X_\Theta^L)$ (\S \ref{ssdiscretecenter}).

Conjugation by $w$ induces an isomorphism between the Levi $L_\Theta$ and the Levi $L_\Omega$ (unique up to conjugacy), and hence between their unitary duals. It is not a priori clear that this preserves the discrete and cuspidal subsets:
$$\widehat{X_\Theta^L}^\disc\xrightarrow\sim \widehat{X_\Omega^L}^\disc,$$
$$\widehat{X_\Theta^L}^\cusp\xrightarrow\sim \widehat{X_\Omega^L}^\cusp,$$ 
however this will be implicit (and hence will be proven) whenever we say that a morphism of the form \eqref{ThetaOmega} is ``$w$-equivariant''. Recall from \S \ref{sscuspidalcenter}, \ref{ssdiscretecenter} that the cuspidal, resp.\ discrete center of $X_\Theta$ can be identified with regular, resp.\ smooth functions on $\widehat{X_\Theta^L}^\cusp$, resp.\ $\widehat{X_\Theta^L}^\disc$. Thus, $w$ induces isomorphisms:
$$ \mathfrak z^\cusp(X_\Theta^L) \xrightarrow{\sim} \mathfrak z^\cusp(X_\Omega^L)$$
and:
$$ \mathfrak z^\disc(X_\Theta^L) \xrightarrow{\sim} \mathfrak z^\disc(X_\Omega^L),$$
and by saying that the map is ``$w$-equivariant'' we mean with respect to this isomorphism. Notice that, by duality to $\mathcal S(X_\Omega)$, the space $C^\infty(X_\Omega)$ decomposes as a direct sum:
$$ C^\infty(X_\Omega) = C^\infty(X_\Omega)_\cusp \oplus C^\infty(X_\Omega)_\noncusp,$$
and the action of $\mathfrak z^\cusp(X_\Omega^L)$ on $C^\infty(X_\Omega)_\cusp$ is defined by duality in such a way that it extends the action on $\mathcal S(X_\Omega)_\cusp$: for $Z\in \CC[\widehat{X_\Omega^L}^\cusp_\CC]$ we let $Z^\vee$ denote the dual element: $Z^\vee(\pi)=Z(\tilde\pi)$ and we define $Z\cdot f$, for each $f\in C^\infty(X_\Omega)_\cusp$ by the property:
$$ \int_{X_\Omega} \Phi \cdot (Z\cdot f) = \int_{X_\Omega} (Z^\vee\cdot \Phi) f$$
for all $\Phi\in \mathcal S(X_\Omega)_\cusp$.

Notice that $\Omega$ could be equal to $\Theta$, but $w\ne 1$, in which case the isomorphism between centers is not the identity map.

Now we state the three main theorems of scattering theory, which will be proven in the following sections.

\begin{theorem}\label{unitaryscattering} 
 Consider the composition $i_\Omega^*\circ i_\Theta$, restricted to $L^2(X_\Theta)_\disc$. It is zero unless $\Omega$ contains a $W_X$-translate of $\Theta$, and it has image in $L^2(X_\Omega)_\cont$ unless $\Omega$ is \emph{equal} to a $W_X$-translate of $\Theta$, in which case it has image in $L^2(X_\Omega)_\disc$. In the last case, it admits a decomposition:
 $$\iota_\Omega^*\circ \iota_\Theta|_{L^2(X_\Theta)_\disc} = \sum_{w\in W_X(\Omega,\Theta)} S_w,$$
 where the morphism:
 $$S_w: L^2(X_\Theta)_\disc \to L^2(X_\Omega)_\disc$$ 
 is $w$-equivariant with respect to $\mathfrak z^\disc(X_\Theta^L)$, and is an isometry.

  The operators $S_w$ restrict to continuous morphisms between Harish-Chandra Schwartz spaces:
  $$S_w: \mathscr C(X_\Theta)_\disc \to \mathscr C(X_\Omega)_\disc,$$
  and they 
  satisfy the natural associativity conditions: 
\begin{eqnarray}\label{assoc-unit}S_{w'} \circ S_w &=&  S_{w'w} \text{ for }  w\in W_X(\Omega,\Theta), w'\in W_X(\Xi,\Omega).
\end{eqnarray}
(In particular, since $S_1=1$, they are topological isomorphisms.)
 \end{theorem}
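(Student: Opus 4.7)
The plan is to leverage Theorem \ref{L2theorem}---which already produces the $S_w$ as $L^2$-isometries with the decomposition $\iota_\Omega^* \iota_\Theta = \sum_w S_w$ from \eqref{ii}---together with the explicit realisation of $\iota_\Theta$ via normalised Eisenstein integrals from Theorem \ref{explicitunitary}, in order to describe each $S_w|_\disc$ fiberwise over $\widehat{X_\Theta^L}^\disc$. The remaining assertions (image structure, $w$-equivariance, Harish-Chandra Schwartz continuity, and associativity) will all follow from this fiberwise description.

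First I would substitute the wave-packet formula of Theorem \ref{explicitunitary} for $\iota_\Theta f$ (for $f \in L^2(X_\Theta)^\infty_\disc$) and pair against an Eisenstein integral from $\Omega$-data. This yields a rational spectral kernel $E_{\Omega,\sigma',\disc}^* \circ E_{\Theta,\sigma,\disc}$ in the two spectral parameters. The Jacquet-module description of the normalised constant term forces this kernel to be supported on the finite set of ``diagonals'' $\sigma' = w\sigma$, $w \in W_X(\Omega,\Theta)$: no other pair of characters of the maximal split centres of $L_\Theta$ and $L_\Omega$ is compatible with matching. This immediately yields the vanishing of $\iota_\Omega^* \iota_\Theta|_\disc$ when $W_X(\Omega,\Theta) = \emptyset$; when $\Omega$ strictly contains a $W_X$-translate of $\Theta$, the image of the $\Omega$-constant term lies in a proper parabolically-induced subquotient disjoint from $\widehat{X_\Omega^L}^\disc$, so the composition takes values in $L^2(X_\Omega)_\cont$. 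When $\Omega \sim \Theta$, isolating each diagonal produces a rational fiberwise operator $\mathscr{S}_w \in \CC(\widehat{X_\Theta^L}^\disc, \Hom_G(\mathscr L_\Theta, w^* \mathscr L_\Omega))$, with linear poles inherited from the Igusa-integral analysis behind Proposition \ref{regularEisenstein}; by construction $\mathscr{S}_w$ is $w$-equivariant for the actions of $\mathfrak z^\disc(X_\Theta^L)$ and $\mathfrak z^\disc(X_\Omega^L)$, since its support lies on the graph $\sigma' = w\sigma$.

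The main technical obstacle is to show that each $\mathscr{S}_w$ is regular on the unitary spectrum. The sum $\sum_w \mathscr{S}_w$ represents the bounded $L^2$-operator $\iota_\Omega^* \iota_\Theta|_\disc$; to separate the summands I would exploit the double action of $\mathfrak z^\disc(X_\Theta^L) \otimes \mathfrak z^\disc(X_\Omega^L)$ (equivalently, the $w$-equivariance), which puts each $\mathscr{S}_w$ in a distinct joint character component on a Zariski-open subset of $\widehat{X_\Theta^L}^\disc$. Spectral projection against this double action then isolates each $\mathscr{S}_w$ as a bounded operator between $L^2$-sections of the relevant bundles. Combined with linearity of the poles, Corollary \ref{linearpolesmap} forces regularity on the unitary set. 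Matching with the $L^2$-scattering maps $S_w$ of Theorem \ref{L2theorem} via the spectral isomorphism of Theorem \ref{thmdiscrete} then gives the identification of $S_w|_\disc$ with the fiberwise multiplier $\mathscr{S}_w$, and in particular the claim that $S_w$ maps $L^2(X_\Theta)_\disc$ isometrically into $L^2(X_\Omega)_\disc$.

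Continuity of $S_w$ on Harish-Chandra Schwartz spaces is then immediate from Theorem \ref{thmdiscrete}: under the isomorphism $\mathscr C(X_\Theta)_\disc \simeq C^\infty(\widehat{X_\Theta^L}^\disc, \mathscr L_\Theta)$, the map $S_w$ becomes multiplication by the smooth operator-valued section $\mathscr{S}_w$, which acts continuously on $C^\infty$-sections. Finally, the associativity $S_{w'} S_w = S_{w'w}$ is a formal consequence of Theorem \ref{L2theorem}: for every $f \in L^2(X)$ the collection $(\iota_{\Theta,\disc}^* f / c(\Theta))_\Theta$ lies in the $S_\bullet$-invariants, so $S_w \iota_{\Theta,\disc}^* f = \iota_{\Omega,\disc}^* f$ (using $c(\Theta) = c(\Omega)$ for associates, via the bijection $w \mapsto w w_0^{-1}$ on chambers), and hence $S_{w'} S_w \iota_{\Theta,\disc}^* = S_{w'w} \iota_{\Theta,\disc}^*$; surjectivity of $\iota_{\Theta,\disc}^*$ onto $L^2(X_\Theta)_\disc$ then gives the identity on $L^2$, which extends to $\mathscr C$ by the continuity just established.
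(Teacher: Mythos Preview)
Your overall strategy is correct and matches the paper's, but the route to the fiberwise operators $\mathscr S_w$ differs in a way worth noting. You propose to analyse the two-variable spectral kernel $E_{\Omega,\sigma',\disc}^* \circ E_{\Theta,\sigma,\disc}$ directly. The paper instead studies the composition $E_{\Theta,\disc}^* \circ e_\Omega$ (the \emph{smooth} asymptotic map, not $\iota_\Omega^*$), regarded as a rational family of $G$-morphisms $\mathcal S(X_\Omega)\to \mathscr L_{\Theta,\sigma}$, and decomposes it into generalised eigenspaces for the $A_{X,\Omega}'$-action (Section~\ref{sec:finiteness}). This produces $\sum_{w\in W_X(\Theta,\Omega)} \mathscr S_w + \mathscr S_{\Subunit}$ together with an explicit description of the poles introduced by the separation (Corollary~\ref{decompEisenstein}): they are linear, lying in the union of the poles of $E_{\Theta,\disc}^*$ and certain coincidence divisors $\{\,{}^{w_X}(\chi\tilde{\textswab t})={}^{w}(\chi\tilde{\textswab t})\,\}$. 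Your claim that the poles of the individual $\mathscr S_w$ are simply ``inherited from the Igusa-integral analysis behind Proposition~\ref{regularEisenstein}'' skips this point; separating summands of a regular family can create new poles, and it is precisely the linear-algebra of Section~\ref{sec:finiteness} that controls them.

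A second, more minor, difference concerns the order of deductions. You match $\mathscr S_w$ with the $S_w$ from prior work via $\mathfrak z^\disc$-equivariance, but that equivariance of $S_w$ is one of the new assertions being proven. The paper makes the match first via the coarser $A_{X,\Theta}'$-equivariance (which is what \cite{SV}, \cite{Delorme-Plancherel} already give for $S_w$), identifies $S_w$ with the fiberwise $\mathscr S_w$ (Propositions~\ref{propsubunitary}, \ref{scatteringtool}), and only then upgrades to $\mathfrak z^\disc$-equivariance by showing that $\mathscr S_w$ factors through $\mathscr L_{\Omega,w\sigma}$. After that, regularity via Corollary~\ref{linearpolesmap}, Schwartz continuity via Theorem~\ref{thmdiscrete}, and associativity (deduced from the $L^2$ associativity in \cite{SV}) proceed exactly as you outline. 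The paper's longer detour through Section~\ref{sec:finiteness} also pays off later: the same eigenspace machinery is reused verbatim for the smooth scattering maps of Section~\ref{sec:smooth}.
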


The theorem is part of the main $L^2$-scattering theorem \cite[Theorem 7.3.1]{SV}, \cite[Theorem 6]{Delorme-Plancherel}, except for two assertions: First, the condition on equivariance with respect to $\mathfrak z^\disc(X_\Theta^L)$; indeed, the condition used in \emph{loc.cit.\ }to characterize the scattering maps $S_w$ was $w$-equivariance with respect to the action of $A_{X,\Theta}'$ (via $w:A_{X,\Theta}\xrightarrow{\sim}A_{X,\Omega}$ on $L^2(X_\Omega)$), where $A_{X,\Theta}'$ denotes \emph{the image of the $F$-points of $\mathcal Z(L_\Theta)$} via the quotient map: $\mathcal Z(L_\Theta)\to A_{X,\Theta}$. This condition is slightly weaker than $\mathfrak z^\disc(X_\Theta^L)$-equivariance. Secondly and most importantly, the fact that the scattering maps (continuously) preserve Harish-Chandra Schwartz spaces. Both of these will be proven in Section \ref{sec:unitaryscattering}.

\begin{theorem}\label{smoothscattering}
 Consider the composition $e_\Omega^*\circ e_\Theta$, restricted to $\mathcal S(X_\Theta)_\cusp$. It is zero unless $\Omega$ contains a $W_X$-translate of $\Theta$, and it has image in $C^\infty(X_\Omega)_\noncusp$ unless $\Omega$ is \emph{equal} to a $W_X$-translate of $\Theta$, in which case it has image in $C^\infty(X_\Omega)_\cusp$. In the last case, it admits a decomposition:
 $$e_\Omega^*\circ e_\Theta|_{\mathcal S(X_\Theta)_\cusp} = \sum_{w\in W_X(\Omega,\Theta)} \textswab S_w,$$
 where the morphism:
 $$\textswab S_w: \mathcal S(X_\Theta)_\cusp \to C^\infty(X_\Omega)_\cusp$$ 
 is $w$-equivariant with respect to $\mathfrak z^\cusp(X_\Theta^L)$.
 
 If we denote the subspace of $C^\infty(X_\Omega)_\cusp$ spanned by the images of all operators $\textswab S_w$ by $\mathcal S^+(X_\Omega)_\cusp$, as $\Theta$ varies and $w\in W_X(\Omega,\Theta)$, then there is a unique extension of the operators $\textswab S_w$ to the spaces $\mathcal S^+$, i.e.:
 $$\textswab S_w: \mathcal S^+(X_\Theta)_\cusp \to \mathcal S^+(X_\Omega)_\cusp$$ 
 satisfying the natural associativity conditions: 
\begin{eqnarray}\label{assoc-smooth}\textswab S_{w'} \circ \textswab S_w &=&  \textswab S_{w'w} \text{ for }  w\in W_X(\Omega,\Theta), w'\in W_X(\Xi,\Omega).
\end{eqnarray}

\end{theorem}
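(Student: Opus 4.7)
\medskip

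\noindent\textbf{Proof proposal.} The plan is to reduce everything to a study of the smooth asymptotics $e_\Omega^*$ applied to the normalized Eisenstein integrals $E_{\Theta,\sigma,\cusp}$, then rely on the boundedness of support provided by Proposition \ref{support} to force cuspidality or vanishing. Concretely, for $f\in\mathcal S(X_\Theta)_\cusp$, Theorem \ref{explicitsmooth} gives, for any sufficiently positive $\omega$,
\[
e_\Theta f(x)=\int_{\omega^{-1}\widehat{X_\Theta^L}^\cusp} E_{\Theta,\sigma,\cusp}\, f^{\tilde\sigma}(x)\, d\sigma.
\]
Since $e_\Omega^*$ extends continuously (it is defined on all smooth functions of moderate growth on $X$, cf.\ \S \ref{sec:asymptotics}), I would commute it past the integral to reduce the problem to understanding the family $\sigma\mapsto e_\Omega^*E_{\Theta,\sigma,\cusp}$ of rational maps with values in $C^\infty(X_\Omega)$.

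The central step is a decomposition of this family according to $W_X$-translates. Along a $J$-good neighborhood of $\Omega$-infinity, the function $e_\Omega^*E_{\Theta,\sigma,\cusp}\Psi$ is $A_{X,\Omega}$-finite with prescribed exponents. By the same Mackey-type decomposition of the $\sigma$-coinvariants of $X_\Omega^h$ that governs the $L^2$-scattering theorem (and which produces the decomposition \eqref{ii} in the unitary case), together with generic injectivity of the small Mackey restriction, I expect to obtain a decomposition
\[
e_\Omega^* E_{\Theta,\sigma,\cusp} \;=\; \sum_{w\in W_X(\Omega,\Theta)} \mathscr S_w(\sigma),
\]
where each $\mathscr S_w(\sigma)\colon\mathcal L_{\Theta,\sigma}\to C^\infty(X_\Omega)^{w\tilde\sigma}$ depends rationally on $\sigma$, with linear poles (inherited from those of the Eisenstein integrals via Proposition \ref{regularEisenstein} and the proof of its regularity). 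When $\Omega$ contains no $W_X$-translate of $\Theta$ the indexing set is empty, so the composition vanishes; this gives the first assertion. Otherwise I define
\[
\textswab S_w(f) \;=\; \int_{\omega^{-1}\widehat{X_\Theta^L}^\cusp} \mathscr S_w(\sigma)\, f^{\tilde\sigma}\, d\sigma,
\]
independent of the chosen sufficiently positive $\omega$ by the rationality, and the $w$-equivariance with respect to $\mathfrak z^\cusp(X_\Theta^L)$ is immediate from the construction because $\mathscr S_w(\sigma)$ lands in the $w\tilde\sigma$-isotypic part.

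The cuspidal/non-cuspidal dichotomy should then be forced by Proposition \ref{support}: the function $e_\Omega^*\,e_\Theta f$ has bounded support in an affine embedding of $X_\Omega$. On the Levi variety $X_\Omega^L$, this boundedness translates to a vanishing condition on the $A_{X,\Omega}$-exponents of the summands $\textswab S_w f$. If $\Omega$ strictly contains a $W_X$-translate of $\Theta$, then $A_{X,\Omega}\subsetneq A_{X,w\Theta}=w A_{X,\Theta}$ and further asymptotics along any intermediate $\Xi\subsetneq\Omega$ would introduce genuinely non-trivial exponents on $A_{X,\Xi}$, incompatible with any cuspidal summand; this shows the image is in $C^\infty(X_\Omega)_\noncusp$. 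When $\Omega$ equals a translate of $\Theta$, the cuspidality of $f$ forces all the relevant exponents on $A_{X,\Omega}$ to be trivial, placing $\textswab S_w f$ inside $C^\infty(X_\Omega)_\cusp$.

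For the extension to $\mathcal S^+$ and associativity, I would define $\mathcal S^+(X_\Omega)_\cusp$ as the sum of the images of all $\textswab S_w$ and identify it spectrally with a fractional-ideal space $\CC^+[\widehat{X_\Omega^L}^\cusp,\mathcal L_\Omega]$ on which the fiberwise operators $\mathscr S_w$ act rationally: the extension of $\textswab S_w$ is then dictated by composition at the level of fibers, and the relation $\mathscr S_{w'}\circ\mathscr S_w=\mathscr S_{w'w}$ holds because the same identity holds for the unitary scattering operators $S_w$ of Theorem \ref{unitaryscattering} and the fiberwise maps $\mathscr S_w$ are common to both (this is exactly the point of the commuting diagram on $\mathcal S^+$ following \eqref{spectral-Splus}). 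The hardest step will be the clean separation of the $w$-summands in the smooth category: in the $L^2$ setting one exploits orthogonality of distinct eigencharacters of $A_{X,\Omega}^L$, but here I must use the rationality and linear-poles structure of the Eisenstein integrals together with generic injectivity to argue that the different $\mathscr S_w(\sigma)$ are canonically distinguished as rational sections, so that shifting the contour back to the unitary set gives well-defined operators $\textswab S_w$ and not merely their sum.
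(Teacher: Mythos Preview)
Your overall strategy---express $e_\Omega^* e_\Theta$ via the shifted wave-packet formula, decompose the asymptotics of Eisenstein integrals into $W_X$-pieces, and use Proposition~\ref{support} to control the result---is indeed the paper's approach. The extension to $\mathcal S^+$ and the associativity argument are also essentially what the paper does in Lemma~\ref{extension}. However, two of your key steps have genuine gaps.

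First, the decomposition of $e_\Omega^* E_{\Theta,\sigma,\cusp}$ is not simply $\sum_{w\in W_X(\Omega,\Theta)} \mathscr S_w(\sigma)$. The eigenspace analysis of the $A_{X,\Omega}'$-action on the module $\mathcal M_Y$ (Proposition~\ref{finite} and Corollary~\ref{decompEisenstein}) produces an additional term $\mathscr S_\Subunit$ collecting all eigencharacters not of the form $^w\textswab t$ for $w\in W_X(\Omega,\Theta)$. In the $L^2$ setting this remainder is handled by the subunitarity of its exponents (Proposition~\ref{propsubunitary}), but in the smooth/cuspidal setting one must actually prove that the projection of $\mathscr S_\Subunit$ to the cuspidal bundle $\mathcal L_\Theta$ vanishes. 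This is the content of the last part of Proposition~\ref{fibercuspidal}, and the paper's argument is not about exponents being ``trivial'' but about the \emph{weak tangent space} of the resulting family of maps $\mathcal S(X_\Omega^L)_\cusp\to(\mathcal L_{\Theta,\sigma})_\Omega$: using strong generic injectivity (Lemma~\ref{GIcorollary}) one shows that any such contribution would force a family of cuspidal quotients whose supercuspidal support varies over a set of dimension strictly larger than $\dim\mathfrak a_{X,\Omega}^*$, which is impossible.

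Second, your claim that ``when $\Omega$ contains no $W_X$-translate of $\Theta$ the indexing set is empty, so the composition vanishes'' is circular: you have not established that the decomposition \emph{is} a sum over $W_X(\Omega,\Theta)$ until you have already disposed of all other contributions. The paper's proof of vanishing (first part of Proposition~\ref{fibercuspidal}) is substantive: for $|\Omega|<|\Theta|$ it uses bounded support (Proposition~\ref{support}) plus moderate growth to show $|\psi|^n e_\Omega^* e_\Theta f|_Z\in L^2$ along an $A_{X,\Omega}$-orbit, then constructs a nonzero polynomial on $\widehat{A_{X,\Omega}}$ annihilating $E^{*,\Omega}_{\Theta,\cusp}$ (possible precisely because $\dim\widehat{A_{X,\Theta}}<\dim\widehat{A_{X,\Omega}}$), and concludes via the Mellin transform that the restriction is zero. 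The general case then follows by induction combined with the same weak-tangent-space dimension argument as above. The case where $\Omega$ strictly contains a conjugate of $\Theta$ is handled by a simple duality: swap the roles of $\Theta$ and $\Omega$ and use the vanishing just proved.
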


Both types of scattering operators have spectral expressions, which should be seen as the analogs of Theorems \ref{explicitsmooth} and \ref{explicitunitary}; to formulate them, let $w\in W_X(\Omega,\Theta)$ and denote by $w^*\mathscr L_\Omega$ the pullback of the vector bundle $\mathscr L_\Omega$ to $\widehat{X_\Theta^L}^\disc$ under the isomorphisms afforded by $w$: $\widehat{X_\Theta^L}^\disc \to \widehat{X_\Omega^L}^\disc$. We will not distinguish between sections of $\Hom_G(\mathscr L_\Theta,w^*\mathscr L_\Omega )$ over $\widehat{X_\Theta^L}^\disc$ and sections of $\Hom_G((w^{-1})^*\mathscr L_\Theta,\mathscr L_\Omega )$ over $\widehat{X_\Omega^L}^\disc$; this allows us to compose such sections for a sequence of maps: 
$$\widehat{X_\Theta^L}^\disc \to \widehat{X_\Omega^L}^\disc \to \widehat{X_\Xi^L}^\disc.$$

\begin{theorem}\label{fiberwisescattering}
 For each $w\in W_X(\Omega,\Theta)$ there is a rational family of operators, with linear poles which do not meet the unitary set:
 $$\mathscr S_w \in \Gamma\left(\widehat{X_\Theta^L}^\disc, \Hom_G(\mathscr L_\Theta,w^*\mathscr L_\Omega )\right), $$
 preserving the cuspidal direct summands $\mathcal L_\Theta, \mathcal L_\Omega$, such that the scattering operators of Theorems \ref{unitaryscattering} and \ref{smoothscattering} admit the following decompositions:
 
\begin{itemize}
 \item For any $\omega\gg 0$, if $f\in \mathcal S(X_\Theta)_\cusp$ admits the decomposition (\ref{offaxis}) then: 
\begin{equation}\label{smoothSw}
 \textswab S_w f (x) = \int_{\omega^{-1}\widehat{X_\Theta^L}^\cusp} \mathscr S_{w^{-1}}^* f^{\tilde\sigma}(x) d\sigma.
\end{equation}
 \item If $f\in L^2(X_\Theta)_\disc$ admits the decomposition (\ref{onaxis}), then:
\begin{equation}\label{unitarySw} S_w f(x) = \int_{\widehat{X_\Theta^L}^\disc} \mathscr S_{w^{-1}}^* f^{\tilde\sigma} (x) d\sigma.
\end{equation}
\end{itemize}

The operators $\mathscr S_w$ satisfy the natural associativity conditions: 
\begin{eqnarray}\label{assoc-fiberwise}\mathscr S_{w'} \circ \mathscr S_w &=&  \mathscr S_{w'w} \text{ for }  w\in W_X(\Omega,\Theta), w'\in W_X(\Xi,\Omega).
\end{eqnarray}

\end{theorem}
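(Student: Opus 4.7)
My plan is to build $\mathscr S_w$ directly from the asymptotic analysis of normalized Eisenstein integrals and then control its analytic behaviour using the $L^2$-scattering theorem that is already in hand. First I would study the composition $e_\Omega^* \circ E_{\Theta,\sigma,\disc}\colon \widetilde{\mathscr L_{\Theta,\sigma}} \to C^\infty(X_\Omega)$ for generic $\sigma$. By Proposition \ref{moderategrowth} combined with standard asymptotic analysis, the image is $A_{X,\Omega}$-finite, and its exponents lie in the set of $W_X$-translates of the central character of $\sigma$. Those exponents coming from distinct $w\in W_X(\Omega,\Theta)$ are generically distinct, so one may project onto each generalized eigenspace by a linear-algebraic, rational operation (this is the ``decomposition of Eisenstein integrals'' hinted at in the introduction). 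The $w$-equivariant piece defines, by duality, a rational family $\mathscr S_w$ of morphisms $\mathscr L_{\Theta,\sigma}\to \mathscr L_{\Omega,w\sigma}$, and the rationality and linearity of its poles are inherited from the corresponding properties of normalized Eisenstein integrals established in Section \ref{sec:Eisenstein}.

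Next I would verify the unitary expansion \eqref{unitarySw}. Substituting Theorem \ref{explicitunitary} into $\iota_\Omega^*\iota_\Theta$ and using that $e_\Omega^*$ computes the asymptotics of $\iota_\Omega^*$ near $\Omega$-infinity, the known $L^2$-decomposition $\iota_\Omega^*\iota_\Theta = \sum_w S_w$ matches the fiberwise decomposition of $e_\Omega^*\circ E_{\Theta,\sigma,\disc}$ term by term: both pieces are uniquely characterised by their $w$-equivariance properties under $A_{X,\Theta}'$, so each $S_w$ is necessarily represented by the fiberwise operator $\mathscr S_{w^{-1}}^*$ defined in the previous step.

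The crucial, and I expect hardest, step is showing that the poles of $\mathscr S_w$ do not meet the unitary set. The $L^2$-scattering theorem guarantees that $S_w$ is an isometry; transferring this through the identification \eqref{L2discrete} and the formula \eqref{unitarySw}, the fiberwise operator $\mathscr S_{w^{-1}}^*$ defines an $L^2$-bounded map of sections. Since $\mathscr S_w$ has only linear poles, Corollary \ref{linearpolesmap} forces those poles to avoid the unitary set, yielding $\mathscr S_w\in \Gamma\bigl(\widehat{X_\Theta^L}^\disc, \Hom_G(\mathscr L_\Theta, w^*\mathscr L_\Omega)\bigr)$. This is the point where the linear-poles structure established in Proposition \ref{regularEisenstein} is essential and cannot be replaced by a weaker form of meromorphy.

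The remaining assertions are then routine. Formula \eqref{smoothSw} is obtained by applying $e_\Omega^*$ to the expression of $e_\Theta f$ furnished by Theorem \ref{explicitsmooth} and exchanging the fiberwise decomposition with the shifted contour integral, which is legitimate for $\omega\gg 0$ where the integrand decays sufficiently. Preservation of the cuspidal summands follows from the fact (Lemma \ref{isflat} and the discussion in Section \ref{sec:coinvariants}) that $\mathcal L_\Theta\subset \mathscr L_\Theta$ is the fibrewise orthogonal complement via a constant projector in the natural trivialisation, combined with the unitarity of $S_w$ and its compatibility with the cuspidal orthogonal decomposition of $L^2$. Finally, the associativity \eqref{assoc-fiberwise} follows from the associativity \eqref{assoc-unit} of the unitary scattering operators together with \eqref{unitarySw}: two rational families of operators whose $L^2$-integrals agree must themselves coincide.
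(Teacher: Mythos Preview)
Your overall architecture matches the paper's: construct $\mathscr S_w$ by spectrally decomposing $E_{\Theta,\disc}^{*,\Omega}$ with respect to $A_{X,\Omega}'$-exponents, read off \eqref{unitarySw} by matching against the known $L^2$-decomposition of $\iota_\Omega^*\iota_\Theta$, and deduce regularity on the unitary set from $L^2$-boundedness plus linear poles via Corollary~\ref{linearpolesmap}. The associativity argument is also the paper's. Two points, one minor and one a genuine gap.

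\textbf{Minor.} The exponents of $e_\Omega^* E_{\Theta,\sigma,\disc}$ are \emph{not} a priori $W_X$-translates of the central character: Proposition~\ref{finite} only bounds them by ${}^w(\chi\tilde{\textswab t})$ with $w$ ranging in $W_{L_\Omega}\backslash W$ for the \emph{full} Weyl group. The decomposition (Corollary~\ref{decompEisenstein}) therefore has a residual piece $\mathscr S_\Subunit$ in addition to the $\mathscr S_w$, $w\in W_X(\Theta,\Omega)$; one must separately argue (Proposition~\ref{propsubunitary}) that this residual piece has subunitary exponents over the unitary set, so that it drops out when matching with $\iota_\Omega^*\iota_\Theta$. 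Also, the spectral projection introduces \emph{new} poles along the loci where two eigencharacters collide, not just the poles of $E_{\Theta,\disc}^*$; their linearity is a separate (easy) observation recorded in Corollary~\ref{decompEisenstein}.

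\textbf{Gap.} Your argument for preservation of the cuspidal summands is circular. Unitarity of $S_w$ gives that $\mathscr S_w$ is fibrewise an isometry $\mathscr L_{\Theta,\sigma}\to \mathscr L_{\Omega,w\sigma}$, but an isometry between Hilbert spaces has no reason to respect a \emph{particular} orthogonal decomposition on each side: the cuspidal subspace $\mathcal L_{\Theta,\sigma}\subset \mathscr L_{\Theta,\sigma}$ is not characterised by anything intrinsic to the unitary structure, but by the condition that the dual matrix coefficients be compactly supported modulo centre. Your phrase ``compatibility with the cuspidal orthogonal decomposition of $L^2$'' is precisely the assertion to be proved. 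The paper (Proposition~\ref{fibercuspidal}) first establishes one direction---that the cuspidal projection $E_{\Theta,\cusp}^{*,\Omega}$ factors through $\mathcal S(X_\Omega)_\cusp$, and that the projection of $\mathscr S_\Subunit$ to $\mathcal L_\Theta$ vanishes---by a genuinely new argument: the support theorem~\ref{support} forces $e_\Omega^* e_\Theta f$ to have bounded support, and a weak-tangent-space / Bernstein-centre count (Corollary~\ref{finite2}, Lemma~\ref{GIcorollary}, using the strong generic injectivity hypothesis) shows that no non-cuspidal contribution can survive. Only \emph{after} this is known does unitarity supply the reverse inclusion $\mathscr S_w(\mathcal L_\Omega)\subset \mathcal L_\Theta$. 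You will need to supply this step; it is not formal.
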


\begin{remark}
 The operator $\mathscr S_{w^{-1}}=\mathscr S_w^{-1}$ is a rational section of morphisms: $\mathscr L_{\Omega,{^w\sigma}}\to \mathscr L_{\Theta,\sigma}$ as $\sigma$ varies in  $\widehat{X_\Theta^L}^\disc$, and hence its  adjoint $\mathscr S_{w^{-1}}^*$ is a rational section of morphisms: $\widetilde{\mathscr L_{\Theta,\sigma}} \to \widetilde{\mathscr L_{\Omega,{^w\sigma}}}$. 
 
 The spaces $\widetilde{\mathscr L_{\Theta,\sigma}} \to \widetilde{\mathscr L_{\Omega,w^{-1}\sigma}}$ are considered as subspaces of $C^\infty(X_\Theta)$ and $C^\infty(X_\Omega)$, respectively, by duality with $\mathcal S(X_\Theta)$, resp.\ $\mathcal S(X_\Omega)$ --- recall that $f^{\tilde\sigma}\in \widetilde{\mathscr L_{\Theta,\sigma}}$.
\end{remark}

The proofs of the above theorems will occupy the rest of this part.

\section{Eigenspace decomposition of Eisenstein integrals} \label{sec:eigenspace}

Recall that the (discrete part of the) normalized constant term gives the  morphisms \eqref{Fourier}:
$$E_{\Theta,\disc}^*: \mathcal S(X)\to \CC(\widehat{X_\Theta^L}^\disc, \mathscr L_\Theta).$$

If we compose with the equivariant exponential map $e_\Omega$ (for some $\Omega\subset\Delta_X$), we get morphisms which we will denote by $E_{\Theta,\disc}^{*,\Omega}$:
\begin{equation}\label{asympEisenstein}
E_{\Theta,\disc}^{*,\Omega}:= E_{\Theta,\disc}^*\circ e_\Omega: \mathcal S(X_\Omega)\to \CC(\widehat{X_\Theta^L}^\disc, \mathscr L_\Theta) 
\end{equation}
These morphisms express the asymptotics, in the $\Omega$-direction, of normalized Eisenstein integrals. Their projection to the cuspidal part (i.e.\ the projection from $\mathscr L_\Theta$ to $\mathcal L_\Theta$) will be denoted by $E_{\Theta,\cusp}^{*,\Omega}$.

We notice that we have an action of a torus $A_{X,\Omega}$ on $\mathcal S(X_\Omega)$.
If we fix $\sigma\in \widehat{X_\Theta^L}^\disc$, any map: 
\begin{equation} \label{kmapspecialization} \mathcal S(X_\Omega)\to \mathscr L_{\Theta,\sigma}
\end{equation}
(= the fiber $\mathcal S(X_\Theta)_{\sigma,\disc}$ of $\mathscr L_\Theta$ over $\sigma$) is finite under the $A_{X,\Omega}$-action, because $\mathscr L_{\Theta,\sigma}$ if of finite length, but we will prove a stronger statement which takes into account the variation of $\sigma$. This has nothing to do with the Eisenstein integrals per se, and in fact we want to apply it to their derivatives as well (in order to prove that the normalized constant term of the Harish-Chandra space gives smooth sections over the spectrum), so we will discuss it in a more general setting.

We have already defined in \S \ref{sec:asymptotics} the sheaf $\Hom_G(\mathcal S(X), \mathscr L_\Theta)$ which by Lemma \ref{coherent} is a coherent, torsion-free sheaf over $\widehat {X_\Theta^L}_\CC^\disc$; recall that its sections over an open subset $U$ are, by definition, $G$-morphisms: $\mathcal S(X)\to \CC[U, \mathscr L_\Theta]$ --- cf.\ also Remark \ref{remarkdef}. By the same argument as in Lemma \ref{coherent}, the sheaf:
$$\mathfrak M:= \Hom_G\left(\mathcal S(X_\Omega), \mathscr L_\Theta\right)$$
is also coherent and torsion-free.

Let us fix a connected component $Y\subset \widehat{X_\Theta^L}$. We let:
\begin{equation}\label{moduleM}
\mathcal M_Y:=\CC\left(Y, \Hom_G\left(\mathcal S(X_\Omega), \mathscr L_\Theta\right)\right) = \Hom_G \left(\mathcal S(X_\Omega),\CC(Y, \mathscr L_\Theta)\right)
\end{equation}
denote the \emph{rational} global sections of this sheaf over $Y$ --- they form a finite-dimensional (by coherence) vector space over the field  $\mathcal K_Y:=\CC(Y)$.

This vector space carries a smooth $A_{X,\Omega}$-action via the action of this torus on $\mathcal S(X_\Omega)$. Thus, over a finite extension of $\mathcal K_Y$, it splits into a direct sum of generalized eigenspaces. We will describe the eigencharacters.

Let $\hat T$ denote the unitary dual of the ``universal'' split torus $T$, defined as the maximal central split torus of the Levi quotient of the minimal parabolic of $G$. In what follows, characters of a Levi $L_\Theta$ are considered as characters of $T$ via the embedding of the minimal parabolic into the parabolic $P_\Theta$. Thus, we have restriction maps:
\begin{equation}\label{restriction} \widehat{X_\Theta^L}_\CC^\unr\to \widehat{L_\Theta}_\CC^\unr \hookrightarrow \hat T_\CC,
 \end{equation}
and recall that there is also a quotient map:
\begin{equation}\label{quotient} \mathcal Z(L_\Omega)^0 \to A_{X,\Omega},
\end{equation}
whose image at the level of $F$-points we are denoting by $A_{X,\Omega}'$. 

For a (not necessarily unitary) character $\chi\in \hat T_\CC$  and an element $w\in W$, it may happen that for \emph{all} $\omega\in \widehat{X_\Theta^L}_\CC^\unr$ the restrictions:
$$ \left.{^w(\chi\omega)}\right|_{\mathcal Z(L_\Omega)^0}$$
factor through the quotient map \eqref{quotient}. For example, this is the case if $\chi$ arises as the restriction of a character in \eqref{restriction}, and $w\in W_X(\Omega,\Theta)$. We will then write:
$$ \left.{^w(\chi\omega)}\right|_{A_{X,\Omega}'},$$
and whenever we use this notation we will implicitly mean that the characters do factor through $A_{X,\Omega}'$. This is also implicitly assumed for the characters that appear in the following:

\begin{proposition}\label{finite}
Choose a base point $\sigma\in Y$, and use it to construct the finite cover: $\widehat{X_\Theta^L}^\unr\ni \omega\mapsto \sigma\otimes\omega\in Y$. Let $\widetilde{\mathcal K}_Y = \CC(\widehat{X_\Theta^L}^\unr)$ be the corresponding finite field extension. Then all eigencharacters of $A_{X,\Omega}'$ on $\mathcal M_Y$ are defined over $\widetilde{\mathcal K}_Y$.

More precisely, if $\tilde{\textswab t}:L_\Theta\to \CC[\widehat{X_\Theta^L}^\unr]^\times\subset \widetilde{\mathcal K}_Y^\times$ denotes the tautological character $a \mapsto (\omega\mapsto \omega(a))$, there is a character $\chi \in \hat T_\CC$, with restriction to $\mathcal Z(L_\Theta)^0$ equal to the central character of $\sigma$, and a subset $W_1\subset W_{L_\Omega}\backslash W$ such that 
the operator:
\begin{equation}\label{minimal}
 \prod_{w\in W_1}(z-{^w(\chi\tilde{\textswab t})} (z))
\end{equation}
annihilates $\mathcal M_Y\otimes_{\mathcal K_Y} \widetilde{\mathcal K}_Y$, for every $z\in A_{X,\Omega}'$.
\end{proposition}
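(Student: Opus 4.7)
The strategy is to combine Bernstein's second adjunction with the Bernstein--Zelevinsky geometric lemma, applied uniformly over the family parametrized by $Y$. I would reduce the computation of the $A_{X,\Omega}'$-eigencharacters on $\mathcal M_Y$ to the computation of the $\mathcal Z(L_\Omega)^0$-eigencharacters on a Jacquet module of $\mathscr L_\Theta$, and then read off the latter from the geometric lemma.

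First, using the isomorphism $\mathcal S(X_\Omega) = I_{\Omega^-}\mathcal S(X_\Omega^L)$ from \eqref{Schwartzinduced}, second adjointness would give a canonical identification of $\mathcal K_Y$-modules
\[
\mathcal M_Y \;=\; \Hom_{L_\Omega}\!\bigl(\mathcal S(X_\Omega^L),\, \CC(Y,\mathscr L_\Theta)_{P_\Omega}\bigr),
\]
where $(\,\cdot\,)_{P_\Omega}$ denotes the (fiberwise) normalized Jacquet module with respect to $P_\Omega$. Under this identification, the action of $z\in A_{X,\Omega}'\subset \mathcal Z(L_\Omega)^0$ on $\mathcal M_Y$ coincides with the action induced by multiplication by $z$ on the Jacquet module, since $\mathcal Z(L_\Omega)^0$ centralizes $L_\Omega$ and hence commutes with the target of any $L_\Omega$-morphism. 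Consequently, it suffices to bound the eigencharacters of $\mathcal Z(L_\Omega)^0$ on $\CC(Y,\mathscr L_\Theta)_{P_\Omega}$.

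Next, I would pass to the finite \'etale cover $\widehat{X_\Theta^L}^\unr \to Y$ afforded by the base point $\sigma$. Over $\widetilde{\mathcal K}_Y$, the bundle $\mathscr L_\Theta$ trivializes, and as a $\widetilde{\mathcal K}_Y[G]$-module it becomes a finite direct sum of copies of $I_{\Theta^-}(\sigma \otimes \tilde{\textswab t})$; this is exactly the mechanism explained in \S\ref{sec:coinvariants} for giving the coinvariant bundle its algebraic structure (the twist by $\tilde{\textswab t}$ amounts to the tautological universal twist under which the fiber at $\omega$ is $\sigma\otimes\omega$). Applying the geometric lemma to $I_{\Theta^-}(\sigma\otimes\tilde{\textswab t})$, its Jacquet module acquires a finite $\widetilde{\mathcal K}_Y[L_\Omega]$-stable filtration indexed by double cosets $w\in W_{L_\Omega}\backslash W / W_{L_\Theta}$; on the subquotient indexed by $w$, the central torus $\mathcal Z(L_\Omega)^0$ acts through the character ${^w(\chi\tilde{\textswab t})}|_{\mathcal Z(L_\Omega)^0}$, where $\chi\in\hat A_\CC$ is any extension of the central character of $\sigma$ along the restriction map \eqref{restriction}.

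Taking $\Hom_{L_\Omega}(\mathcal S(X_\Omega^L),\,\cdot\,)$ of this filtration, every generalized eigencharacter of $A_{X,\Omega}'$ on $\mathcal M_Y\otimes_{\mathcal K_Y}\widetilde{\mathcal K}_Y$ therefore lies in the set $\{\,{^w(\chi\tilde{\textswab t})}|_{A_{X,\Omega}'}\,\}$ as $w$ ranges over a subset $W_1 \subset W_{L_\Omega}\backslash W$ --- the left quotient suffices because $W_{L_\Omega}$ centralizes $\mathcal Z(L_\Omega)^0$ --- which yields the annihilation statement \eqref{minimal}. The main technical point, and the place where I expect the principal obstacle, is carrying out the geometric lemma \emph{in families}: verifying that the standard Bernstein--Zelevinsky filtration of $(I_{\Theta^-}\tau)_{P_\Omega}$ can be taken uniformly over $\widetilde{\mathcal K}_Y$ (compatibly with specialization at fibers), and checking that in the composition of normalized induction from $P_\Theta^-$ with normalized Jacquet from $P_\Omega$, the modular twists by $\delta_\Theta^{1/2}$ and $\delta_\Omega^{-1/2}$ cancel precisely so that the $w$-th central character is exactly ${^w(\chi\tilde{\textswab t})}|_{\mathcal Z(L_\Omega)^0}$, with no residual modular factor.
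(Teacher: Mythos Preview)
Your overall architecture is exactly the paper's (the paper even remarks that one ``could alternatively have used second adjunction from the beginning, to analyze the $A_{X,\Omega}'$-action in terms of the Jacquet module $I_P(\tau\otimes\omega)_\Omega$''), so the use of second adjunction in place of duality and Frobenius reciprocity is fine.

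There is, however, a genuine gap in the step where you apply the geometric lemma directly to $I_{\Theta^-}(\sigma\otimes\tilde{\textswab t})$. Your claim that ``on the subquotient indexed by $w$, the central torus $\mathcal Z(L_\Omega)^0$ acts through the character ${^w(\chi\tilde{\textswab t})}|_{\mathcal Z(L_\Omega)^0}$, where $\chi\in\hat A_\CC$ is any extension of the central character of $\sigma$'' is not correct as stated. The $w$-th subquotient of $\bigl(I_{\Theta^-}\sigma\bigr)_{P_\Omega}$ is parabolically induced from ${^w}\bigl(\sigma_{L_\Theta\cap{^{w^{-1}}P_\Omega}}\bigr)$, and the element ${^{w^{-1}}z}\in\mathcal Z(M_w)^0$ (with $M_w=L_\Theta\cap{^{w^{-1}}L_\Omega}$) acts on this Jacquet module of $\sigma$. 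When $M_w\subsetneq L_\Theta$, that Jacquet module has in general several generalized $\mathcal Z(M_w)^0$-eigencharacters --- the exponents of $\sigma$ --- so the action is neither by a single character nor by ${^w\chi}$ for an \emph{arbitrary} extension $\chi$ of $\chi_\sigma$. The eigencharacters that actually occur are governed by the supercuspidal support of $\sigma$, not merely by its central character.

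The paper handles this by first realizing $\sigma$ as a subrepresentation of $I_P^{L_\Theta}(\tau)$ with $\tau$ supercuspidal, so that $I_{\Theta^-}(\sigma\otimes\omega)\hookrightarrow I_P(\tau\otimes\omega)$, and then applies the geometric lemma to $I_P(\tau\otimes\omega)$. Because $\tau$ is supercuspidal, each nonvanishing subquotient now has $\mathcal Z(L_\Omega)^0$ acting by a single character ${^w\chi_\tau}$; one then takes $\chi$ to be an extension of $\chi_\tau$ (not an arbitrary extension of $\chi_\sigma$) to $A$. With this one extra embedding your argument goes through, and the indexing set $W_1\subset W_{L_\Omega}\backslash W$ you end up with is typically larger than the double-coset set $W_{L_\Omega}\backslash W/W_{L_\Theta}$ you originally wrote down.
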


We used $\tilde{\textswab t}$ for the tautological character of $L_\Theta$ into $\widetilde{\mathcal K}_Y^\times$ in order to reserve the symbol $\textswab t$ for the tautological character:
$$ \textswab t: A_{X,\Theta}' \to \CC[Y]^\times \subset \mathcal K_Y^\times,$$
$a\mapsto (\sigma\mapsto \chi_\sigma(a))$, where $\chi_\sigma$ denotes the central character of $\sigma$.

\begin{proof}
Since $\mathscr L_\Theta$ is (non-canonically) a direct sum of a finite number of copies of $I_{\Theta^-}(\bullet)$ (the sheaf over $Y$ whose local sections are regular sections $\sigma'\mapsto \phi(\sigma')\in I_{\Theta^-}(\sigma')$), it is enough to prove the proposition for the module:
 $$\mathcal M_Y' := \Hom_G\left(\mathcal S(X_\Omega), \CC(Y, I_{\Theta^-}(\bullet))\right).$$

We have: 
$$ \mathcal M_Y'\otimes_{\mathcal K_Y} \widetilde{\mathcal K}_Y \subset \mathcal M_Y'':= \Hom_G \left(\mathcal S(X_\Omega), \CC(\widehat {X_\Theta^L}^\unr_\CC, I_{\Theta^-}(\sigma\otimes\bullet))\right),$$
therefore it is enough to show that $\mathcal M_Y''$ decomposes into a direct sum of generalized eigenspaces as in the statement of the proposition.

We can represent $\sigma$ as a subrepresentation of a representation parabolically induced from a supercuspidal $\tau$, and then $I_{\Theta^-}(\sigma\otimes \omega)$ becomes a subrepresentation of $I_P(\tau\otimes\omega)$, where $P$ is a suitable parabolic, and $\tau$ is a supercuspidal representation of its Levi quotient $L$.

 Notice that: 
 $$\Hom_G\left (\mathcal S(X_\Omega), I_P(\tau\otimes\omega)\right) \overset{(*)}{\simeq} \Hom_G\left(I_P(\tilde\tau\otimes\omega^{-1}), C^\infty(X_\Omega)\right) \simeq $$
 $$ \simeq \Hom_{L_\Omega} \left(I_P(\tilde\tau\otimes\omega^{-1})_{\Omega^-}, C^\infty(X_\Omega^L)\right),$$
 where we recall that the index $~_{\Omega^-}$ denotes \emph{normalized}  Jacquet module with respect to the parabolic $P_\Omega^-$. 
 
 The Jacquet module $I_P(\tilde\tau\otimes\omega^{-1})_{\Omega^-}$ is $\mathcal Z(L_\Omega)$-finite, and it is annihilated for every $z\in \mathcal Z(L_\Omega)$ by the product:
 $$\prod_{w\in (W_{L_\Omega}\backslash W(L\to L_\Omega)/W_L)} \left(z- {^{w}(\chi_\tau\omega)^{-1}} (z)\right), $$
 (corresponding to its canonical filtration in terms of $P_\Omega^-\backslash G/P$-orbits),
where $W(L\to L_\Omega)$ denotes the set of elements $w\in W$  with ${^wL}\subset L_\Omega$, and $\chi_\tau$ is the central character of $\tau$. (We have implicitly chosen here a maximal split torus and hence a class of standard Levis, for the Weyl group to act on them.) Moreover, for $\omega$ in general position, the statement will remain true if we restrict the product to the subset $(W_{L_\Omega}\backslash W(L\to L_\Omega)/W_L )^\bullet$ of those cosets for which the restriction of elements of $^{w}(\chi_\tau \widehat{X_\Theta^L})$ to $\mathcal Z(L_\Omega)^0$ factors through the quotient \eqref{quotient}; indeed, $\mathcal Z(L_\Omega)^0$ acts on $C^\infty(X_\Omega^L)$ through the quotient $\mathcal Z(L_\Omega)^0 \to A_{X,\Omega}'$, and therefore this has to be the case for any generalized $\mathcal Z(L_\Omega)^0$-eigenspace of the Jacquet module $I_P(\tilde\tau\otimes\omega^{-1})_{\Omega^-}$ on which a morphism $I_P(\tilde\tau\otimes\omega^{-1})_{\Omega^-}\to C^\infty(X_\Omega^L)$ is non-zero.

Notice that at the step $(*)$ we have used a duality which inverts characters of $A_{X,\Omega}'$, therefore the module $\Hom_G\left (\mathcal S(X_\Omega), I_P(\tau\otimes\omega)\right) $ will be annihilated by:
 $$\prod_{w\in (W_{L_\Omega}\backslash W(L\to L_\Omega)/W_L)^\bullet} \left(z- {^{w}(\chi_\tau\omega)} (z)\right). $$

(We could alternatively have used second adjointness from the beginning, to analyze the $A_{X,\Omega}'$-action in terms of the Jacquet module $I_P(\tau\otimes\omega)_\Omega$.)
 
This essentially completes the proof of the proposition, except that the proposition was formulated with $w$ an element of $W_{L_\Omega}\backslash W$ (instead of $W_{L_\Omega}\backslash W/W_L$) and $\chi$ a character of $T$ (instead of $\chi_\tau$, a character of the center of the Levi $L$ of $P$); this formulation was chosen for uniformity, since $P$ and $L$ depend on choices, and the component of the spectrum under consideration. We may arrive at the statement of the proposition by choosing representatives in $W_{L_\Omega}\backslash W$, and choosing a character $\chi$ of $T$ which restricts to the character $\chi_\tau$ of the maximal split torus in the center of $L$.
\end{proof}

\begin{remark} \label{nochoice}
As we have seen in the last sentence of the proof, there is some choice involved in the subset $W_1\subset W_{L_\Omega}\backslash W$. Notice, however, that for cosets represented by elements $w_X\in W_X(\Omega,\Theta)$ there is no choice involved, since $W_{L_\Omega} w_X W_{L_\Theta} = W_{L_\Omega} w_X$. Moreover, for those elements we have, by construction:
\begin{equation}\label{nochoiceeq}
{^{w}(\chi\omega)} = \mbox{ the $L_\Omega$-central character of } {^w(\sigma\omega)}
\end{equation}
which, automatically, factors through $A_{X,\Omega}'$.
\end{remark}

We can think of the eigencharacters described in the last proposition as \emph{correspondences} $Y_\CC \dashrightarrow \widehat{A_{X,\Omega}'}_\CC$:

\begin{equation} \label{correspondence}\xymatrix{
& \ar@{|->}[dl] \omega\in \widehat{X_\Theta^L}^\unr \ar@{|->}[dr] & \\
\sigma\otimes \omega \in Y_\CC &&  \left.{^{w_i}(\chi\omega)}\right|_{A_{X,\Omega}'} \in \widehat{A_{X,\Omega}'}_\CC.
}\end{equation}

As a corollary of the proposition:

\begin{corollary} \label{rationaldecomp-initial}
The space $\mathcal M_Y\otimes_{\mathcal K_Y} \tilde{\mathcal K}_Y$ decomposes in a direct sum of generalized eigenspaces for the action of $A_{X,\Omega}'$, with eigencharacters ${^w(\chi\tilde{\textswab t})}$ as in the statement of Proposition \ref{finite}.
\end{corollary}

\begin{proof}
Indeed, choose a finite number of elements of $A_{X,\Omega}'$ so that they distinguish the distinct characters ${^w(\chi\tilde{\textswab t})}$. The space $\mathcal M_Y\otimes_{\mathcal K_Y} \tilde{\mathcal K}_Y$ decomposes in a direct sum of joint generalized eigenspaces of those elements, and by the proposition those have to be generalized eigenspaces for $A_{X,\Omega}'$.
\end{proof}

\subsection{Derivatives}

Now recall from \ref{ssthebundles} that $\mathscr L_\Theta$ carries a flat connection, which depends (in a very mild way) on choosing a base point $x\in X$. The resulting action of $\mathcal D(Y)$ (the ring of differential operators on $Y$) on elements of the space:
$$\mathcal M_Y=\Hom_G\left(\mathcal S(X_\Omega), \CC(Y, \mathscr L_\Theta)\right)$$
does not preserve $G$-equivariance, but it does preserve eigencharacters up to multiplicity:

\begin{lemma*}\label{derivatives}
Let $E\in \mathcal M_Y$ and let $W_E\subset W_1$, in the notation of \eqref{minimal}, be any subset such that the corresponding operator:
$$P_E(z) :=  \prod_{w\in W_E}(z-{^w(\chi\tilde{\textswab t})} (z))$$
annihilates $E$, for every $z\in A_{X,\Omega}'$.

Let $D\in \mathcal D(Y)$, so $DE \in \Hom\left(\mathcal S(X_\Omega), \CC(Y, \mathscr L_\Theta)\right)$. Then a power of $P_E(z)$ (depending only on $D$) annihilates $DE$, for every $z\in A_{X,\Omega}'$.
\end{lemma*}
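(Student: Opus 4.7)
The plan is to establish that $z$ (as an operator on $\mathcal M_Y$) and the flat connection on $\mathscr L_\Theta$ give commuting actions on $\mathcal M_Y\otimes_{\mathcal K_Y}\widetilde{\mathcal K}_Y$, and then to deduce the result by a Leibniz-rule computation together with an induction on the order of $D$.

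First I would observe that the action of $z\in A_{X,\Omega}'$ on $\mathcal M_Y$ defined by $(z\cdot E)(f):=E(zf)$ commutes with the action of $\mathcal D(Y)$ arising from the flat connection on $\mathscr L_\Theta$: $z$ acts on the source $\mathcal S(X_\Omega)$, while $\mathcal D(Y)$ acts on the target $\CC(Y,\mathscr L_\Theta)$, and the two operations are on different factors. Equivalently, for any derivation $\partial$ on $Y$, extended to $\widetilde{\mathcal K}_Y$ (possible since $\widetilde{\mathcal K}_Y/\mathcal K_Y$ is a finite separable extension), one has $[\partial,z]=0$ as operators on $\mathcal M_Y\otimes\widetilde{\mathcal K}_Y$. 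Note also that $z$ is $\widetilde{\mathcal K}_Y$-linear, so the factors $z-\phi_w(z)$ in $P_E(z)$ (writing $\phi_w(z):={}^w(\chi\tilde{\textswab t})(z)\in\widetilde{\mathcal K}_Y$) commute with one another and with the scalars $\partial(\phi_w(z))$.

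The Leibniz rule then gives
\[
[\partial,P_E(z)] \;=\; -\sum_{w_0\in W_E}\partial(\phi_{w_0}(z))\prod_{w\ne w_0}\bigl(z-\phi_w(z)\bigr),
\]
and since $P_E(z)E=0$, we deduce $P_E(z)\,\partial E = -[\partial,P_E(z)]E$ and hence, by commuting $P_E(z)$ past the scalars $\partial(\phi_{w_0}(z))$ and the commuting factors $z-\phi_w(z)$, that $P_E(z)^2\partial E=0$. More generally, I would prove by induction on $m$ that if $G\in\mathcal M_Y\otimes\widetilde{\mathcal K}_Y$ satisfies $P_E(z)^m G=0$ for every $z$, then $P_E(z)^{m+1}\partial G=0$ for every $z$; the inductive step is the same commutator calculation applied to $H:=P_E(z)G$, which satisfies $P_E(z)^{m-1}H=0$. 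Iterating this and expressing $D\in\mathcal D(Y)$ of order $k$ as a sum of $k$-fold compositions of vector fields (plus lower-order terms), one obtains $P_E(z)^{k+1}DE=0$, so $N=k+1$ is a fixed power of $P_E(z)$ that annihilates $DE$, uniformly in $z$.

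I do not expect a serious obstacle. The only step requiring any care is verifying that the flat connection and the $A_{X,\Omega}'$-action commute, which is immediate once one has unwound the definition of each. The rest is a formal Leibniz-rule manipulation, exploiting that $P_E(z)$ is a polynomial in the $\widetilde{\mathcal K}_Y$-linear operator $z$ with coefficients in $\widetilde{\mathcal K}_Y$, so that all the factors and scalar derivatives that appear commute freely.
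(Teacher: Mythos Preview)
Your approach is correct and is essentially the paper's own argument made explicit: both rest on the fact that the operator $z$ commutes with the connection (they act on source and target respectively), so that the commutator $[\partial,P_E(z)]$ is purely a scalar, and a Leibniz computation then shows $P_E(z)^{n+1}$ kills $DE$ for $D$ of order $n$. One small slip in your write-up: the inductive step via $H:=P_E(z)G$ does not go through as stated, since $H$ depends on the chosen $z$ while the hypothesis you would need is $P_E(z')^{m-1}H=0$ for \emph{all} $z'$; but no induction on $m$ is actually required --- the identity $P_E(z)^m\,\partial G=-m\,P_E(z)^{m-1}[\partial,P_E(z)]\,G$ (Leibniz for $[\partial,P_E(z)^m]$) already gives $P_E(z)^{m+1}\partial G=-m\,[\partial,P_E(z)]\,P_E(z)^m G=0$ directly for every $m\ge 1$ and every fixed $z$.
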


\begin{proof}
For every fixed $z$, the polynomial:
$$P_{E,z}(x) = \prod_{w\in W_E}(x-{^w(\chi\tilde{\textswab t})} (z))$$
is divided by the minimal polynomial of the operator $z$ acting on $E$. The ring $\mathcal D(Y)$ acts on polynomials with coefficients in $\mathcal K_Y$, simply by acting on the coefficients. If $D\in D(Y)$ is of degree $n$, then the commutator:
$$[D, P_{E,z}^{n+1}]$$
lies in the ideal generated by $P_{E,z}$; therefore, $P_{E,z}^{n+1}(z) = P_E^{n+1}(z)$ annihilates $DE$. 
\end{proof}

\subsection{Weak tangent space of a family} \label{ssweaktangent}

In order to obtain more precise information about the characters ${^w(\chi\tilde{\textswab t})}$ that appear in the annihilator of Eisenstein integrals and their constant terms, we need a way to encode represen\-tation-theoretic information on families of representations. This information will be a ``Lie algebra'' version of the usual notion of supercuspidal support.

Recall that an irreducible representation $\sigma$ of $G$ is a subquotient of a parabolically induced supercuspidal representation $\tau$ of a Levi subgroup $L$, and the pair $(\tau,L)$ is called the \emph{supercuspidal support} of $\sigma$. It is well defined modulo $G$-conjugacy (we think of $\tau$ as an isomorphism class of representations), and the set of $G$-conjugacy classes of such pairs has a natural orbifold structure. Notationally, we can also write $(\tau,P)$ when $P$ is a parabolic with Levi subgroup $L$.

Let us denote by $SP_G$ the space of supercuspidal pairs $(\tau,L)$ and by $SC_G$ the set of their equivalence classes, which we may consider either as an orbifold or (by invariant-theoretic quotient) as an affine variety. The fiber of $SP_G$ over a fixed $L$ is acted upon with finite stabilizers by the character group $\hat L^\unr_\CC$,  and therefore the tangent space of any point on the fiber can be canonically identified with the Lie algebra of $\hat L^\unr_\CC$. 

A choice of parabolic $P$ with Levi $L$ gives an embedding $\hat L^\unr_\CC \subset \hat A^\unr_\CC$, where $A$ is the universal Cartan, and hence of the Lie algebra $\mathfrak l^*_\CC$ of the former into the Lie algebra $\mathfrak a^*_\CC = \Hom(A, \Gm) \otimes \CC$ of the latter. For a pair $x=(\tau,L)\in SP_G$ we will call \emph{weak tangent space} $WT_{G,x}$ the image of $\mathfrak l^*_\CC$ in the set-theoretic quotient $\mathfrak a^*_\CC/W$. It does not depend on $\tau$, neither on the choice of parabolic $P$.

Any two points in $SP_G$ in the preimage of a point in $SC_G$ have the same weak tangent space, thus this notion descends to the orbifold $SC_G$. Moreover, if by ``tangent space'' $T_x$ of a point $x$ on an orbifold we mean the quotient of the tangent space of a preimage on the covering manifold by the finite stabilizer, there is a well defined map: $$T_x\to WT_{G,x}$$
at every $x\in SC_G$.

Now, consider a set $\mathcal I$ of finite-length representations of $G$. Let $\mathcal I'$ be the set of isomorphism classes of irreducible subquotients of elements of $\mathcal I'$, and let $SC_G(\mathcal I)\subset SC_G$ denote the set of supercuspidal supports of elements of $\mathcal I'$. 
Then at every point $x\in SC_G(\mathcal I)$ we have a well-defined subset:
$$ WT_{G,x}(\mathcal I)\subset WT_{G,x} \subset \mathfrak a^*_\CC/W,$$
defined as the union of the weak tangent spaces at $x$ of all embedded suborbifolds: $S\subset  SC_G(\mathcal I) \subset SC_G$. We use ``embedded suborbifold'' to refer to the image in $SC_G$ of a smooth embedded submanifold of a finite (smooth manifold) cover of $SC_G$. Equivalently, $WT_{G,x}(\mathcal I)$ denotes the set of images in the ``tangent space'' $T_x$ of the derivatives at zero of all smooth one-parameter families: 
$$ \gamma: (-\epsilon,\epsilon) \to SP_G$$
with $[\gamma(0)] = x$ and $[\gamma((-\epsilon,\epsilon))]\subset \mathcal I'$, where $[\bullet]$ denotes the quotient map $SP_G\to SC_G$.

The union of the spaces $WT_{G,x}(\mathcal I)$ over all $x\in SC_G(\mathcal I)$ will be called, for brevity, the ``weak tangent space of $\mathcal I$'' instead of ``weak tangent space of the supercuspidal support of $\mathcal I$'', and denoted $WT_G(\mathcal I)$.

In what follows, we will apply these notions to Levi subgroups, instead of the group $G$. Notice that for a Levi subgroup $L$, the corresponding notion of weak tangent space for $SC_L$ gives a subset of $\mathfrak a^*_\CC/W_L$, where $W_L$ is the Weyl group of $L$. If $L=L_\Omega$ for some $\Omega\subset \Delta_X$ we will be using the index $\Omega$ instead of $L_\Omega$.

Let $\Omega\subset\Delta_X$. Let $\mathcal J$ be a family of $G$-morphisms $\{ \mathcal S(X_\Omega) \to \pi\}_\pi$, where $\pi$ varies over a set $\mathcal J_0$ of $G$-representations of finite length. Each such morphism is equivalent, by second adjointness, to a morphism:
\begin{equation}\label{Nisyros} \mathcal S(X_\Omega^L) \to \pi_\Omega,
\end{equation}
where $\pi_\Omega$ denotes the Jacquet module of $\pi$ with respect to $P_\Omega$ --- also of finite length. Let $\mathcal I$ denote the union, over all $\pi\in \mathcal J_0$, of the images of the maps \eqref{Nisyros}. We define:
$$ SC_{\Omega}(\mathcal J):= SC_{\Omega}(\mathcal I)$$
and
$$ WT_\Omega(\mathcal J):= WT_\Omega(\mathcal I),$$
the latter whenever $SC_{\Omega}(\mathcal J)$ is a suborbifold of $SC_\Omega$.

These definitions can also be given without appealing to second adjointness, of course; it suffices to dualize the morphisms:
$$ \tilde\pi\to C^\infty(X_\Omega)$$
and to use Frobenius reciprocity, as in the proof of the preceding proposition.

In the notation of the previous subsection ($Y\subset \widehat{X_\Theta^L}^\disc$, etc), an element $E$ of $\Hom_G(\mathcal S(X_\Omega),\CC(Y,\mathscr L_\Theta))$ will be considered as a family $\mathcal J$ as above by considering the evaluations of its points wherever they are defined, and we will also be using $SC(E), WT(E)$ to denote the supercuspidal support, resp.\ weak tangent space, of this family.

In this language, the proof of Proposition \ref{finite} shows:
\begin{corollary} \label{finite2}
 Let $Y\subset \widehat{X_\Theta^L}^\disc$ be a connected component and let $E \in \mathcal M_Y= \Hom_G(\mathcal S(X_\Omega),\CC(Y,\mathscr L_\Theta))$.
 
 Then:
\begin{equation}\label{SC} SC_\Omega(E) \subset \bigcup_{g\in G(x,L_\Omega)} [{^g\left(x\cdot \widehat{X_\Theta^L}^\unr_\CC\right)}],
\end{equation}
\begin{equation}\label{WT} WT_\Omega(E) = \bigcup_{w\in W'\subset W_{L_\Omega}\backslash W} [{^w\left(\mathfrak a_{X,\Theta,\CC}^*\right)}],
\end{equation}
 where:
 \begin{itemize}
  \item $x$ is a supercuspidal pair for $L_\Theta$ (i.e.\ $x\in SP_\Theta$);
  \item $G(x,L_\Omega)$ denotes the set of elements in $G$ carrying the Levi $L$ of $x$ into $L_\Omega$;
  \item $\mathfrak a_{X,\Theta,\CC}^*$ denotes the Lie algebra of $\widehat{X_\Theta^L}^\unr_\CC$, which can also be identified with the Lie algebra of $\widehat{A_{X,\Theta}}_\CC$ (hence the notation), inside of $\mathfrak a^*_\CC$;
  \item $[\bullet]$ denotes classes in $SC_\Omega$, resp.\ $WT_\Omega$;
  \item $W'$ denotes some subset of the given set.
 \end{itemize}
\end{corollary}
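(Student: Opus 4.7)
The plan is to extract the geometric information from the argument in Proposition \ref{finite}, keeping track of full supercuspidal supports in $L_\Omega$ rather than just the action of $A_{X,\Omega}'$. First, I would realize an irreducible constituent $\sigma$ of a fiber of $\mathscr L_\Theta$ over $Y$ as a subrepresentation of a representation of $L_\Theta$ parabolically induced from a supercuspidal representation $\tau$ of a Levi subgroup $L\subset L_\Theta$, so that $x = (\tau,L)\in SP_\Theta$. Then for $\sigma' = \sigma \otimes \omega$ with $\omega \in \widehat{X_\Theta^L}^\unr_\CC$, the fiber $\mathscr L_{\Theta,\sigma'}$ is a multiplicity space tensored with a subrepresentation of $I_P(\tau \otimes \omega)$, where $P$ is a parabolic of $G$ with Levi $L$, chosen compatibly with $P_\Theta^-$. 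An element $E\in\mathcal M_Y$ produces, after Frobenius reciprocity applied fiberwise, a morphism from $\mathcal S(X_\Omega^L)$ into the Jacquet module $(\mathscr L_{\Theta,\sigma'})_{P_\Omega^-}$, so the $L_\Omega$-supercuspidal support of the image of $E_{\sigma'}$ is contained in that of $I_P(\tau\otimes\omega)_{P_\Omega^-}$.

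Next, I would apply the Geometric Lemma --- the same filtration by $P\backslash G/P_\Omega^-$-orbits already alluded to in the proof of Proposition \ref{finite} --- which gives a filtration of $I_P(\tau\otimes\omega)_{P_\Omega^-}$ indexed by double cosets $W_L\backslash W/W_{L_\Omega}$, with subquotients that are normalized parabolic induction in $L_\Omega$ from further Jacquet modules of ${^w(\tau\otimes\omega)}$. Reading off the $L_\Omega$-supercuspidal support, each such subquotient contributes a class $[{^w(x\cdot\omega)}]$ in $SC_\Omega$. Letting $\omega$ vary over $\widehat{X_\Theta^L}^\unr_\CC$ and identifying the relevant representatives with elements of $G(x,L_\Omega)$ modulo left $L_\Omega$-conjugation yields the inclusion \eqref{SC}.

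For \eqref{WT}, each component $[{^g(x\cdot\widehat{X_\Theta^L}^\unr_\CC)}]$ appearing on the right of \eqref{SC} is an embedded suborbifold of $SC_\Omega$, being the image of $\widehat{X_\Theta^L}^\unr_\CC$ under an algebraic map with finite fibers, and its tangent space at any regular point is $[{^g\mathfrak a_{X,\Theta,\CC}^*}]$ inside $\mathfrak a^*_\CC/W_{L_\Omega}$. By the definition of weak tangent space in \S\ref{ssweaktangent}, $WT_\Omega(E)$ is then exactly the union of the tangent spaces of those components that actually appear in $SC_\Omega(E)$; taking $W'\subset W_{L_\Omega}\backslash W$ to label precisely these components delivers \eqref{WT}.

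The main technical point to watch will be matching the three parametrizations in play --- the double cosets $W_L\backslash W/W_{L_\Omega}$ from the Geometric Lemma, the set $G(x,L_\Omega)$ used in the statement, and the cosets $W_{L_\Omega}\backslash W$ labelling tangent directions --- and in particular verifying that only those Weyl elements $w$ with ${^wL}\subset L_\Omega$ contribute subquotients whose $L_\Omega$-supercuspidal support genuinely involves a full conjugate of $\tau\otimes\omega$, which is precisely the condition cutting out $G(x,L_\Omega)$.
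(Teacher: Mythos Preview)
Your proposal is correct and follows essentially the same route as the paper: embed $\mathscr L_{\Theta,\sigma\otimes\omega}$ into $I_P(\tau\otimes\omega)^r$, pass to the $L_\Omega$-side via adjunction, and read off the supercuspidal support of the Jacquet module from the Geometric Lemma filtration indexed by $W_L\backslash W/W_{L_\Omega}$. The paper dualizes first (working with $I_P(\tilde\tau\otimes\omega^{-1})_{\Omega^-}$ acting on $C^\infty(X_\Omega^L)$) whereas you invoke second adjunction directly, but the paper itself notes these are equivalent; note that with second adjunction the relevant Jacquet functor is $(\,\cdot\,)_\Omega$ rather than $(\,\cdot\,)_{P_\Omega^-}$. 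The one place the paper is more explicit than your sketch is in securing \eqref{WT}: it uses a partition of unity in $\mathfrak z(L_\Omega)\otimes\widetilde{\mathcal K}_Y$ to show that on a Zariski-open set of $\omega$ the supercuspidal support of $E_{\Omega,\omega}$ is \emph{exactly} the finite set indexed by some $R$, so that each component that appears in $SC_\Omega(E)$ is hit along a full-dimensional piece and hence contributes its entire tangent space $[{^w\mathfrak a_{X,\Theta,\CC}^*}]$. Your argument implicitly relies on this genericity (rationality of $E$ in $\omega$), and it would be worth making that step explicit.
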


\begin{proof}
The proof of the corollary is essentially identical to that of Proposition \ref{finite}, if we replace the action of $\mathcal Z(L_\Omega)$ (or its quotient $A_{X,\Omega}'$) by that of the Bernstein center $\mathfrak z(L_\Omega)$. \emph{The reader should notice here that, although we are going to use the structure of the Bernstein center of $L_\Omega$ as the ring of polynomial functions on the variety of supercuspidal supports, the present corollary will only be used in the proof of Proposition \ref{fibercuspidal} when $\Omega\ne \Delta_X$; thus, we are not applying a circular argument when reproving the structure of the Bernstein center in \S \ref{ssBcenter}, since one can establish it inductively on the size of the group.}

In the notation of the proof of Proposition \ref{finite}, with $\sigma$ a base point in $Y$, choosing a basis for $\Hom_{L_\Theta} (\mathcal S(X_\Theta^L), \sigma)_\disc$ we can identify the bundle $\mathscr L_{\Theta,\sigma\otimes\bullet}$ over $\widehat{X_\Theta^L}^\unr_\CC$ with a subbundle of $I_P(\tau\otimes\bullet)^r$, for some $r$, and hence $E$ with a rational section of the bundle:
\begin{equation}\label{thisbundle}\Hom_G(\mathcal S(X_\Omega),I_P(\tau\otimes\bullet)^r).\end{equation}

At each point $\omega$ where it is regular, we get a specialization $E_\omega$, whose dual, $E_\omega^*$ is an element of:
$$\Hom_G(I_P(\tilde\tau\otimes\omega^{-1})^r, C^\infty(X_\Omega)) = \Hom_{L_\Omega}(I_P(\tilde\tau\otimes\omega^{-1})_{\Omega^-}^r, C^\infty(X_\Omega^L)). $$

The $L_\Omega$-supercuspidal support of the Jacquet module $I_P(\tilde\tau\otimes\omega^{-1})_{\Omega^-}$ consists of the classes $[({^w(\tilde\tau\otimes\omega^{-1})},{^wL})]$, where $w$ ranges in $W_{L_\Omega}\backslash W(L\to L_\Omega)/W_L$.

If we recall that $\mathfrak z(L_\Omega)$ is the set of regular functions on the variety $SC_\Omega$, each such $w$ determines by pull-back a character:
$$\mathfrak t_w: \mathfrak z(L_\Omega) \to \tilde{\mathcal K}_Y,$$
and hence as in Proposition \ref{finite} and Corollary \ref{rationaldecomp-initial}, $E$ decomposes into its generalized eigen-components with respect to the $ \mathfrak z(L_\Omega)$-action, with those eigencharacters $\mathfrak t_w$:
\begin{equation}\label{Edecomp}
E = \sum_{w\in R} E^w,
\end{equation}
where by $R$ we denote a minimal subset of representatives $R\subset W$, such that the corresponding generalized eigensummands $E^w$ are non-zero.

Thus, there is a (nonempty) Zariski open subset $U\subset \widehat{X_\Theta^L}_\CC^\unr$ such that for $\omega\in U$ the supercuspidal support of $E_{\Omega,\omega}:\mathcal S(X_\Omega^L) \to I_P(\tau\otimes\omega)_\Omega^r$ is precisely equal to the set of classes $[({^{w}(\tau\otimes\omega)},{^{w}L})]$ with $w\in R$. 

It is easy to see that the supercuspidal support of $E_{\Omega,\omega}$ at \emph{all} points where it is defined is \emph{contained} in this set of classes. Indeed, if $E$ is regular at $\omega\in \widehat{X_\Theta^L}^\unr_\CC$, and we group together the summands $E^w$ of \eqref{Edecomp} for which the specializations of the corresponding eigencharacters $\mathfrak t_w$ coincide at $\omega$ (equivalently, the classes $[{^{w}(\tau\otimes\omega)},{^{w}L}]$ coincide), then the elements of this coarser decomposition of $E$ are also regular at $\omega$, and we get the same set of supercuspidal supports. (However, the summands could vanish at some points, which explains why ``precisely equal'' was replaced by ``contained''.)

The corollary now follows.
\end{proof}

\subsection{Generic injectivity} \label{ssgenericinjectivity}

We recall the notion of ``generic injectivity of the map: $\mathfrak a_X^*/W_X\to \mathfrak a^*/W$'' in the language of \cite[\S 14.2]{SV} (for brevity we will just say: ``generic injectivity''), where $\mathfrak a_X^*= \Hom(A_X,\Gm)\otimes\QQ \subset \mathfrak a^* = \Hom(B,\Gm)\otimes \QQ$. We will also introduce a stronger version of this notion, to be termed ``strong generic injectivity'', and will show that it holds for symmetric spaces. 

For each $\Theta\subset\Delta_X$ we let:
$$\mathfrak a_{X,\Theta}^*:=\Hom(A_{X,\Theta},\Gm)\otimes\QQ \simeq \Hom((X_\Theta^L)^\unr,\Gm) \otimes \QQ,$$
which is embedded into $\mathfrak a_X^*=\mathfrak a_{X,\emptyset}^*$ by the map induced from:
$$(X_\emptyset^L)^\unr \to (X_\Theta^L)^\unr.$$

We say that $X$ satisfies the condition of \emph{generic injectivity} if the following holds:
\begin{quote}
Whenever the action of an element $w$ of the \emph{full} Weyl group $W$ on $\mathfrak a^*$ restricts to an isomorphism: 
$$\mathfrak a_{X,\Theta}^* \xrightarrow{\sim} \mathfrak a_{X,\Omega}^*$$ 
(for any $\Theta,\Omega\subset \Delta_X$, obviously of the same order, and possibly equal), there is an element of the little Weyl group $W_X$ which induces the same  isomorphism.
\end{quote}

We will say that $X$ satisfies the \emph{strong generic injectivity} condition if the following holds:
\begin{quote}
Whenever an element $w$ of the \emph{full} Weyl group $W$ on $\mathfrak a^*$ restricts to an injection: 
$$\mathfrak a_{X,\Theta}^* \hookrightarrow \mathfrak a_{X,\Omega}^*$$ 
(for any $\Theta,\Omega\subset \Delta_X$, obviously with $|\Omega|\le |\Theta|$) there is an element $w_X\in W_X$ such that:
\begin{equation}\label{actsasw} w_X|_{\mathfrak a_{X,\Theta}^*} = w|_{\mathfrak a_{X,\Theta}^*}.\end{equation}
\end{quote}

Remember from \S \ref{sec:definitions} that \textbf{we assume throughout that the strong generic injectivity condition to hold for $X$}, even though we will for emphasis repeat it in the main results of this subsection.

The condition holds for all symmetric varieties, essentially by \cite[Lemma 15]{Delorme-Plancherel}. Together with the wavefront and strong factorizability assumptions (both of which hold for symmetric spaces), it guarantees the validity of the full Plancherel decomposition \cite[Theorem 7.3.1]{SV}, \cite[Theorem 6]{Delorme-Plancherel}.

\begin{lemma}\label{stronggi-symmetric}
\begin{enumerate}
 \item If $X$ is a symmetric variety, then it satisfies the strong generic injectivity assumption. 
 \item If $X$ satisfies the strong generic injectivity condition, the element $w_X^{-1}$ in the definition of this condition can be taken to map the set of simple spherical roots $\Omega$ into $\Theta$.
\end{enumerate}
\end{lemma}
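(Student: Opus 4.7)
My plan is to reduce Part (1) to the already-established weak generic injectivity \cite[Lemma~15]{Delorme-Plancherel}. Given $w\in W$ restricting to an injection $\mathfrak{a}_{X,\Theta}^*\hookrightarrow \mathfrak{a}_{X,\Omega}^*$, set $V:=w(\mathfrak{a}_{X,\Theta}^*)\subseteq\mathfrak{a}_{X,\Omega}^*$. The central claim is that $V=\mathfrak{a}_{X,\Omega'}^*$ for some $\Omega'\supseteq\Omega$. Granting this, $w$ becomes an honest isomorphism $\mathfrak{a}_{X,\Theta}^*\xrightarrow{\sim}\mathfrak{a}_{X,\Omega'}^*$ between two parabolic subspaces of equal dimension, so weak generic injectivity supplies $w_X\in W_X$ with $w_X|_{\mathfrak{a}_{X,\Theta}^*}=w|_{\mathfrak{a}_{X,\Theta}^*}$; since $\mathfrak{a}_{X,\Omega'}^*\subseteq\mathfrak{a}_{X,\Omega}^*$, this $w_X$ satisfies the conclusion of strong generic injectivity, and it is also the element that enters Part~(2).

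The technical content is therefore the equality $V=\mathfrak{a}_{X,\Omega'}^*$ for $\Omega':=\{\gamma\in\Delta_X:\gamma|_V=0\}$. The inclusion $V\subseteq\mathfrak{a}_{X,\Omega'}^*$ is tautological, so one only needs the dimension equality $\dim V=\dim\mathfrak{a}_X^*-|\Omega'|$. The point is that subspaces of $\mathfrak{a}_X^*$ of the form $\mathfrak{a}_{X,\Omega'}^*$ are precisely the ``parabolic'' subspaces of the spherical root system $\Delta_X$ inside $\mathfrak{a}_X^*$. In the symmetric case $\mathfrak{a}_X$ sits inside a Cartan $\mathfrak{a}$ as the anti-invariants of the involution; the subspaces $\mathfrak{a}_{X,\Theta}^*$ are exactly the intersections with $\mathfrak{a}_X^*$ of parabolic subspaces of $\mathfrak{a}^*$ for the restricted root system, and the latter family is stable under the full Weyl group $W$. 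Applying $w$ to such a parabolic subspace and intersecting with $\mathfrak{a}_X^*$ — an intersection which is automatic here since the image already lies in $\mathfrak{a}_X^*$ — thus produces a parabolic subspace of $\mathfrak{a}_X^*$, as required. This is essentially an infinitesimal form of Matsuki's decomposition and can be extracted by a minor adaptation of the proof of \cite[Lemma~15]{Delorme-Plancherel}.

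For Part~(2), the $w_X$ produced above satisfies $w_X(\mathfrak{a}_{X,\Theta}^*)=\mathfrak{a}_{X,\Omega'}^*$ with $\Omega\subseteq\Omega'\subseteq\Delta_X$. Taking orthogonal complements in $\mathfrak{a}_X^*$ yields $w_X^{-1}(\langle\Omega'\rangle)=\langle\Theta\rangle$, so the set $w_X^{-1}(\Omega')$ consists of $|\Omega'|$ spherical roots (as $W_X$ permutes the spherical root system) lying in the root subsystem generated by $\Theta$. A standard manoeuvre in finite root systems — replacing $w_X$ by $w_X\cdot u$ for $u\in W_{X,\Theta}$, which fixes $\mathfrak{a}_{X,\Theta}^*$ pointwise and therefore preserves the conclusion of Part~(1) — allows one to arrange that $w_X^{-1}(\Omega')$ consists of positive roots in that subsystem. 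Being $|\Omega'|$ linearly independent positive roots inside a rank-$|\Theta|$ subsystem with simple system $\Theta$, they must be a subset of $\Theta$ itself; in particular $w_X^{-1}(\Omega)\subseteq w_X^{-1}(\Omega')\subseteq\Theta$, as claimed. The main obstacle is the parabolic-image statement in Part~(1): while natural from the picture of restricted root systems, making it airtight in the general (possibly non-quasi-split) symmetric setting may require a direct appeal to the structure theory of symmetric varieties rather than to a slick abstract argument.
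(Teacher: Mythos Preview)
Your central claim in Part~(1) --- that $V:=w(\mathfrak a_{X,\Theta}^*)$ is necessarily of the form $\mathfrak a_{X,\Omega'}^*$ for some $\Omega'\subset\Delta_X$ --- is false. Take any symmetric variety with $\operatorname{rk}W_X\ge 2$ (e.g.\ the group $H=\GL_3$ under $H\times H$), let $\Theta=\{\alpha_2\}$, $\Omega=\emptyset$, and $w=s_{\alpha_1}\in W_X\subset W$. Then $w(\mathfrak a_{X,\Theta}^*)$ is the wall of the non-simple spherical root $\alpha_1+\alpha_2$, which is not $\mathfrak a_{X,\Omega'}^*$ for any $\Omega'\subset\Delta_X$. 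Your justification conflates two meanings of ``parabolic subspace'': the family of \emph{standard} parabolic subspaces of $\mathfrak a^*$ is not $W$-stable, while the $W$-stable family of \emph{Levi} subspaces has no reason, after intersecting with $\mathfrak a_X^*$, to produce only standard subspaces for $\Delta_X$.

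The paper avoids this entirely. It invokes \cite[Lemma~15(iv)]{Delorme-Plancherel} pointwise: for each $Z\in\mathfrak a_{X,\Theta}^*$ there is $w_Z\in W_X$ with $w_Z(Z)=w(Z)$. Since $W_X$ is finite, some fixed $w_X\in W_X$ works on a Zariski-dense subset of $\mathfrak a_{X,\Theta}^*$, hence on all of it by continuity. For Part~(2), the paper takes $w_X$ of minimal length in $W_{X,\Omega}\backslash W_X/W_{X,\Theta}$, so that both $w_X\Theta>0$ and $w_X^{-1}\Omega>0$; the second condition puts $w_X^{-1}\Omega$ in the positive span of $\Theta$, and writing each $\gamma\in\Omega$ as a nonnegative combination of the positive roots $w_X\theta$ forces $w_X^{-1}\gamma\in\Theta$. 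Your one-sided modification (right-multiplying by $u\in W_{X,\Theta}$) does not suffice here: a set of $|\Omega|<|\Theta|$ linearly independent positive roots in the rank-$|\Theta|$ subsystem need not consist of simple roots --- you need both positivity conditions, or equivalently the two-sided minimality.
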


\begin{proof}
 By \cite[Lemma 15(iv)]{Delorme-Plancherel}, for every $Z\in \mathfrak a_{X,\Theta}^*$ there is an element $w_Z\in W_X$ such that:
 $$ w_Z(Z) = w(Z).$$
 Since $W_X$ is finite, an element $w_X\in W_X$ will be equal to $w_Z$ for a Zariski dense set of elements of $\mathfrak a_{X,\Theta}^*$, but then $w_X$ will actually work for all elements of $\mathfrak a_{X,\Theta}^*$.
 
 The second claim follows from known root system combinatorics: Thinking of $\mathfrak a_{X,\Theta}^*$ and $\mathfrak a_{X,\Omega}^*$ as the orthogonal complements, in $\mathfrak a_X^*$, of the sets $\Theta$, resp.\ $\Omega$, for any element $w_X\in W_X$ which maps $\mathfrak a_{X,\Theta}^*$ into $\mathfrak a_{X,\Omega}^*$ we necessarily have that $w_X^{-1}\Omega$ is in the linear span of $\Theta$.
  The set of elements in $W_X$ which satisfy \eqref{actsasw} is a union of $W_{X_\Omega}\backslash W_X/W_{X_\Theta}$-cosets. (It is actually a single coset, but that doesn't matter here.) If we choose a representative $w_X$ of minimal length for one of these cosets, then:
$$w_X \Theta >0 \mbox{ and } w_X^{-1}\Omega>0.$$
The second statement implies that $w_X^{-1}\Omega$ belongs to the positive span of $\Theta$, since it is already known to belong to its linear span;  the first statement, then, implies that it actually belongs to $\Theta$.
 \end{proof}

\begin{remark}
 The strong version of the generic injectivity condition will only be used to prove that ``scattering maps preserve cuspidality'', cf.\ Proposition \ref{fibercuspidal}. This result has been proven in a different way for symmetric varieties by \cite{CD}, relying heavily on the structure of these varieties. In each specific case, the strong generic injectivity condition is easy to check once one knows the dual group of the spherical variety; of course, it would be desirable to have a proof of this property in some more general setting.
\end{remark}

For the following lemma we identify $\mathfrak a_{X,\Omega}^*$, as we did before, with a subspace of $\mathfrak a_\Omega^*:=\Hom(P_\Theta^-,\Gm)\otimes\QQ \subset \mathfrak a^*$; on the other hand, we have a restriction map from characters of the Borel $B$ to characters of the center $\mathcal Z(L_\Omega)$ of the Levi of $P_\Omega^-$; we will write:
$$\Cent_\Omega: \mathfrak a^* \to \mathfrak a_\Omega^*\simeq \Hom(\mathcal Z(L_\Omega),\Gm)\otimes\QQ$$ for the corresponding map.

\begin{lemma}\label{GIcorollary}
Assume the strong generic injectivity condition for $X$.
Then:

\begin{enumerate}
 \item For $w\in W$ we have 
 ${^w\left(\mathfrak a_{X,\Theta,\CC}^*\right)}\subset \mathfrak a_{X,\Omega,\CC}^*$ iff $w$ is equivalent in $W/W_{L_\Theta}$ to an element $w_X\in W_X$ with $w_X^{-1} \Omega\subset \Theta$.
 
 \item For $w_X\in W_X(\Omega,\Theta)$ and $w\in W$ we have:
 $$\Cent_\Omega \circ w_X|_{\mathfrak a_{X,\Theta,\CC}^*} = \Cent_\Omega\circ w|_{\mathfrak a_{X,\Theta,\CC}^*}$$
 iff $w\equiv w_X$ in $W_{L_\Omega}\backslash W$.
\end{enumerate}
\end{lemma}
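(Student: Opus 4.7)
The proof rests on two ingredients: strong generic injectivity together with Lemma \ref{stronggi-symmetric}, and the identification of $W_{L_\Omega}$ as the pointwise stabilizer of $\mathfrak a_{X,\Omega,\CC}^*$ in $W$. I would first record the latter as a preliminary: since the structure theory of boundary degenerations provides $L_\Omega = Z_G(A_{X,\Omega})$, an element $v \in W$ fixes $\mathfrak a_{X,\Omega}^*$ pointwise iff a lift $n \in N_G(A)$ centralizes $A_{X,\Omega}$, iff $n \in N_{L_\Omega}(A)$, iff $v \in W_{L_\Omega}$. The reverse inclusion is immediate, since $W_{L_\Omega}$ fixes the larger space $\mathfrak a_\Omega^* \supset \mathfrak a_{X,\Omega}^*$ pointwise.

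For part (1), a dimension count shows that both sides of the equivalence force $|\Omega|=|\Theta|$: the containment ${^w\mathfrak a_{X,\Theta,\CC}^*}\supset \mathfrak a_{X,\Omega,\CC}^*$ requires $|\Theta|\leq|\Omega|$, while $w_X^{-1}\Omega \subset \Theta$ requires $|\Omega|\leq|\Theta|$, so automatically $w_X\in W_X(\Omega,\Theta)$. The reverse direction is the direct computation $w(\mathfrak a_{X,\Theta,\CC}^*) = w_L w_X(\mathfrak a_{X,\Theta,\CC}^*) = w_L \mathfrak a_{X,\Omega,\CC}^* = \mathfrak a_{X,\Omega,\CC}^*$ for $w = w_L w_X$ with $w_L\in W_{L_\Omega}$. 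For the forward direction, I would apply strong generic injectivity with the refinement of Lemma \ref{stronggi-symmetric}(2) to the injection $w^{-1}: \mathfrak a_{X,\Omega}^* \hookrightarrow \mathfrak a_{X,\Theta}^*$: this produces $u \in W_X$ with $u|_{\mathfrak a_{X,\Omega}^*} = w^{-1}|_{\mathfrak a_{X,\Omega}^*}$ and $u^{-1}\Theta \subset \Omega$, hence $u^{-1}\Theta = \Omega$ by dimensions. Setting $w_X := u^{-1} \in W_X(\Omega,\Theta)$, the operator $ww_X^{-1}$ fixes $\mathfrak a_{X,\Omega,\CC}^*$ pointwise and therefore lies in $W_{L_\Omega}$ by the preliminary.

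For part (2), the reverse direction is analogous: if $w = w_L w_X$ with $w_L \in W_{L_\Omega}$, then for $\xi \in \mathfrak a_{X,\Theta,\CC}^*$ the vector $w_X\xi$ lies in $\mathfrak a_\Omega^*$ and is fixed by $w_L$, so $w\xi = w_X\xi$ and the two $\Cent_\Omega$-restrictions trivially agree. For the forward direction, set $v := ww_X^{-1}$; the hypothesis rearranges to $\Cent_\Omega(v\eta) = \eta$ for every $\eta \in \mathfrak a_{X,\Omega,\CC}^* = w_X\mathfrak a_{X,\Theta,\CC}^*$. I would then fix a $W$-invariant inner product on $\mathfrak a^*$ and invoke the orthogonal Levi decomposition $\mathfrak a^* = \mathfrak a_\Omega^* \oplus \ker\Cent_\Omega$, the kernel being the span of the roots of $L_\Omega$ and orthogonal to $\mathfrak a_\Omega^*$. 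Writing $v\eta = \eta + r$ with $r\in\ker\Cent_\Omega$, the isometry $\|v\eta\| = \|\eta\|$ together with $\langle\eta,r\rangle = 0$ forces $r = 0$, so $v$ fixes $\mathfrak a_{X,\Omega,\CC}^*$ pointwise and the preliminary yields $v \in W_{L_\Omega}$.

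The main obstacle throughout is securing the preliminary identification of $W_{L_\Omega}$ with the pointwise stabilizer of $\mathfrak a_{X,\Omega,\CC}^*$, which depends on the structural fact $L_\Omega = Z_G(A_{X,\Omega})$ for the Levis attached to boundary degenerations; once this is in place, both equivalences reduce to routine Weyl-group manipulations combined with the orthogonal Levi decomposition.
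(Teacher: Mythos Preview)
Your argument is correct and follows essentially the same route as the paper's: isolate the fact that the pointwise $W$-stabilizer of $\mathfrak a_{X,\Omega}^*$ is $W_{L_\Omega}$, then combine strong generic injectivity with the orthogonal-projection description of $\Cent_\Omega$. Two small remarks. First, the paper justifies the stabilizer fact not via ``$L_\Omega = Z_G(A_{X,\Omega})$'' (which is awkward to state precisely, since $A_{X,\Omega}$ is a \emph{quotient} of $\mathcal Z(L_\Omega)^0$ rather than a subtorus of $G$) but by the more direct observation that $\mathfrak a_{X,\Theta}^*$ contains strictly $P_\Theta^-$-dominant elements, citing \cite[Proof of Corollary 15.3.2]{SV}; this immediately forces any $W$-element fixing such a point into $W_{L_\Theta}$. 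Second, for part~(2) the paper argues slightly more indirectly: after the norm argument shows $w\xi \in \mathfrak a_{X,\Omega}^*$, it reduces to part~(1) rather than invoking the stabilizer fact directly as you do --- your route is marginally cleaner here.
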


We prove the lemma below. If the meaning of the lemma is not immediately obvious, the following corollary has a representation-theoretic content related to supercuspidal supports, more precisely their ``weak tangent spaces''. Recall that we denoted by $\textswab t: A_{X,\Theta}'\to \mathcal K_Y^\times$ the tautological character.

\begin{corollary}\label{simplecharacters}
\begin{enumerate}
 \item Let $Y\subset\widehat{X_\Theta^L}^\disc$ be a connected component, and define $\mathcal M_Y$ as before. Then the only components on the right hand side of \eqref{WT} which are contained in $\mathfrak a_{X,\Omega,\CC}^*$ are those indexed by classes of elements $w_X\in W_X$ with $w_X^{-1} \Omega\subset \Theta$.
 \item  The eigencharacters ${^w\textswab t}$, $w\in W_X(\Omega,\Theta)$, appear in \eqref{minimal} with multiplicity one. The same holds if we replace $A_{X,\Omega}'$ by any subgroup of finite index.
\end{enumerate}
\end{corollary}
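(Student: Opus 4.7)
Plan. My plan is to derive both parts of Corollary \ref{simplecharacters} directly from Lemma \ref{GIcorollary}, using the weak tangent space decomposition \eqref{WT} of Corollary \ref{finite2} for part (1), and the explicit form of the annihilating polynomial \eqref{minimal} from Proposition \ref{finite} for part (2). The Lemma already contains the hard algebra (it is where strong generic injectivity is invoked); what remains in the Corollary is a translation of its content into the language of weak tangent spaces and eigencharacters.

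For part (1), Corollary \ref{finite2} writes $WT_\Omega(E)$ as a union of classes $[{^w(\mathfrak a_{X,\Theta,\CC}^*)}]$ for $w$ ranging over a subset $W'\subset W_{L_\Omega}\backslash W$, each a subspace of $\mathfrak a^*_\CC/W_{L_\Omega}$. Since $\mathfrak a_{X,\Omega,\CC}^*$ is the Lie algebra of unramified characters of $(X_\Omega^L)^\ab$, which factor through $\mathcal Z(L_\Omega)^0$, it is fixed pointwise by $W_{L_\Omega}$ and descends unambiguously to a subspace of $\mathfrak a^*_\CC/W_{L_\Omega}$. The subspace relation between $[{^w(\mathfrak a_{X,\Theta,\CC}^*)}]$ and $\mathfrak a_{X,\Omega,\CC}^*$ is then precisely what Lemma \ref{GIcorollary}(1) characterizes: it holds iff $w$ admits a representative $w_X\in W_X$ with $w_X^{-1}\Omega\subset \Theta$, which is exactly the content of part (1).

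For part (2), recall that the eigencharacters appearing in \eqref{minimal} are ${^w(\chi\tilde{\textswab t})}|_{A_{X,\Omega}'}$ for $w\in W_1\subset W_{L_\Omega}\backslash W$. I first verify that for $w_X\in W_X(\Omega,\Theta)$ one has ${^{w_X}(\chi\tilde{\textswab t})}|_{A_{X,\Omega}'} = {^{w_X}\textswab t}$: by Proposition \ref{finite}, $\chi|_{\mathcal Z(L_\Theta)^0}=\chi_\sigma$, and from the description of $\textswab t$ at the cover $\omega\mapsto\sigma\otimes\omega$ given just after Proposition \ref{finite}, one has $\textswab t = \chi_\sigma\cdot \tilde{\textswab t}|_{A_{X,\Theta}'}$; applying $w_X$ (which carries $A_{X,\Theta}'$ onto $A_{X,\Omega}'$) gives the identity. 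To prove multiplicity one, I suppose another coset $w\in W_1$ yields ${^w(\chi\tilde{\textswab t})}|_{A_{X,\Omega}'} = {^{w_X}\textswab t}$ and take the Lie-algebra derivative at the identity of $\widehat{X_\Theta^L}^\unr$. Since $\chi$ is independent of $\omega$, this derivative records only the $\tilde{\textswab t}$-part and equals $\Cent_\Omega\circ w|_{\mathfrak a_{X,\Theta,\CC}^*}$. The equality of derivatives then reads $\Cent_\Omega\circ w|_{\mathfrak a_{X,\Theta,\CC}^*} = \Cent_\Omega\circ w_X|_{\mathfrak a_{X,\Theta,\CC}^*}$, which by Lemma \ref{GIcorollary}(2) forces $w\equiv w_X\mod W_{L_\Omega}$, so that $w$ represents the same coset as $w_X$ in $W_1$. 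Because the argument is purely infinitesimal, it is insensitive to replacing $A_{X,\Omega}'$ by any finite-index subgroup, proving the last sentence of the Corollary.

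The main obstacle, as I see it, lies entirely in Lemma \ref{GIcorollary} and rests on the strong generic injectivity hypothesis; granted that, the remaining work is a mechanical translation. The only mild subtlety is the identification ${^{w_X}(\chi\tilde{\textswab t})}|_{A_{X,\Omega}'} = {^{w_X}\textswab t}$ for $w_X\in W_X(\Omega,\Theta)$, which amounts to carefully unwinding the definitions of $\tilde{\textswab t}$ and $\textswab t$ on the finite cover $\widehat{X_\Theta^L}^\unr\to Y$ and using the assumption $\chi|_{\mathcal Z(L_\Theta)^0}=\chi_\sigma$ from Proposition \ref{finite}.
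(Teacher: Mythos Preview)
Your approach is essentially the same as the paper's: both parts are read off directly from Lemma \ref{GIcorollary}, with part (1) coming from statement (1) of the Lemma applied to the components $[{^w\mathfrak a_{X,\Theta,\CC}^*}]$ in \eqref{WT}, and part (2) coming from statement (2) via the infinitesimal (derivative) identification you describe. The paper does not spell out the translation, but your unpacking of it---in particular the verification that ${^{w_X}(\chi\tilde{\textswab t})}|_{A_{X,\Omega}'}={^{w_X}}\textswab t$ and the passage to $\Cent_\Omega\circ w$ by differentiating in $\omega$---is correct and is exactly what is implicit in the paper.

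One point worth flagging: like the paper, your argument for part (2) establishes only that the eigencharacters ${^w\textswab t}$ for $w\in W_X(\Omega,\Theta)$ occur with multiplicity \emph{at most} one; the paper explicitly notes this and defers the fact that they actually \emph{appear} to the scattering theory developed later (see the remark immediately following the Corollary). Your proposal is consistent with this, but you may want to acknowledge the same caveat.
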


Strictly speaking, the discussion up to this point implies that the eigencharacters corresponding to elements of $W_X(\Omega,\Theta)$ appear with multiplicity \emph{at most} one. However, scattering theory implies that they do appear --- already in the asymptotics of Eisenstein integrals. We omit the details, since we will encounter this point later. 

\begin{proof}[Proof of Lemma \ref{GIcorollary}] 
 
For the first statement, we notice that if ${^w\left(\mathfrak a_{X,\Theta,\CC}^*\right)} \subset \mathfrak a_{X,\Omega,\CC}^* $ then, by strong generic injectivity, there is a $w_X\in W_X$ such that $w_X^{-1}\cdot w$ fixes all points of $\mathfrak a_{X,\Theta}^*$.

However, it is known that $\mathfrak a_{X,\Theta}^*$ contains strictly $P_\Theta^-$-dominant elements \cite[Proof of Corollary 15.3.2]{SV}, i.e.\ elements that are positive on each coroot corresponding to the unipotent radical of $P_\Theta^-$. Therefore the only elements of $W$ which act trivially on it are the elements of $W_{L_\Theta}$. Hence, $w \in w_X W_{L_\Theta}$.  Since $w_X$ takes $\mathfrak a_{X,\Theta}^*$ into $\mathfrak a_{X,\Omega}^*$, its inverse must map $\Omega$ into $\Theta$, by properties of root systems.

For the second statement we notice that in terms of an orthogonal $W$-invariant inner product on $\mathfrak a^*$, the operator $\Cent_\Omega$ represents the orthogonal projection onto $\mathfrak a_\Omega^*$. On the other hand, $w_X|_{\mathfrak a_{X,\Theta,\CC}^*} $ already has image in $\mathfrak a_{X,\Omega,\CC}^*\subset \mathfrak a_{\Omega,\CC}^*$, therefore the only way that 
$$\Cent_\Omega \circ w_X|_{\mathfrak a_{X,\Theta,\CC}^*} = \Cent_\Omega\circ w|_{\mathfrak a_{X,\Theta,\CC}^*}$$
is that $w|_{\mathfrak a_{X,\Theta,\CC}^*}$ also has image in $\mathfrak a_{X,\Omega}^*$. By the first statement, this implies that $w$ is equivalent to $w_X$ in $W_{L_\Omega}\backslash W$.
\end{proof}

Combining Proposition \ref{finite} with Corollary \ref{simplecharacters}, and observing that the eigencharacters ${^w\textswab t}$, $w\in W_X(\Omega,\Theta)$, are already defined over $\mathcal K_Y$, we arrive at the following strengthening of Corollary \ref{rationaldecomp-initial}:

\begin{proposition}\label{rationaldecomp}
Assume the strong generic injectivity condition for $X$. The $A_{X,\Omega}'$-module $\mathcal M_Y$ admits a decomposition:
 \begin{equation}\label{rationaldecompeq}
  \mathcal M_Y = \bigoplus_{w\in W_X(\Omega,\Theta)} \mathcal M_Y^w \,\,\, \oplus \,\,\,\mathcal M_Y^\rest,
 \end{equation}
 where $\mathcal M_Y^w$ is the (honest) 	eigenspace with eigencharacter ${^w\textswab t}$, and the space $\mathcal M_Y^\rest$ contains none of these eigencharacters.
\end{proposition}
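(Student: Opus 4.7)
The plan is to deduce the decomposition from Proposition \ref{finite} and Corollary \ref{simplecharacters} by an eigenspace descent argument, using the fact that the distinguished eigencharacters are $\mathcal K_Y$-rational and appear with multiplicity one.

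First, I would pass to an algebraic closure $\bar{\mathcal K}_Y$ of $\mathcal K_Y$. By Proposition \ref{finite}, the base-changed module $\mathcal M_Y \otimes_{\mathcal K_Y} \bar{\mathcal K}_Y$ is annihilated, for each $z \in A_{X,\Omega}'$, by a polynomial whose roots are of the form ${^{w'}(\chi\tilde{\textswab t})}(z)$ with $w'$ ranging over a finite subset of $W_{L_\Omega}\backslash W$. Hence $\mathcal M_Y \otimes_{\mathcal K_Y} \bar{\mathcal K}_Y$ decomposes into a direct sum of generalized eigenspaces for the commuting torus $A_{X,\Omega}'$, indexed by the distinct characters that actually occur.

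Second, I would isolate the summands indexed by $w \in W_X(\Omega,\Theta)$. By Corollary \ref{simplecharacters}(2), each eigencharacter ${^w\textswab t}$ for $w \in W_X(\Omega,\Theta)$ appears with multiplicity one in the annihilating polynomial \eqref{minimal}; this simultaneously says that these characters are pairwise distinct from one another and from the remaining occurring eigencharacters, and that the corresponding generalized eigenspaces are \emph{honest} eigenspaces (no nontrivial Jordan structure). Define, over $\bar{\mathcal K}_Y$, the subspace $(\mathcal M_Y^w)_{\bar{\mathcal K}_Y}$ to be the ${^w\textswab t}$-eigenspace, and let $(\mathcal M_Y^\rest)_{\bar{\mathcal K}_Y}$ be the sum of all remaining generalized eigenspaces. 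This produces the desired decomposition over $\bar{\mathcal K}_Y$.

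Third, I would descend the decomposition to $\mathcal K_Y$. Each character ${^w\textswab t}\colon A_{X,\Omega}'\to \mathcal K_Y^\times$ is by construction already $\mathcal K_Y$-rational (it is obtained from the $\mathcal K_Y$-rational tautological character $\textswab t$ by the action of the element $w \in W_X$). Therefore the absolute Galois group $\Gal(\bar{\mathcal K}_Y/\mathcal K_Y)$ permutes the generalized eigenspaces in accordance with its action on characters, and in particular fixes each $(\mathcal M_Y^w)_{\bar{\mathcal K}_Y}$ setwise for $w \in W_X(\Omega,\Theta)$, as well as the remaining summand $(\mathcal M_Y^\rest)_{\bar{\mathcal K}_Y}$. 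By Galois descent these summands are defined over $\mathcal K_Y$, giving the asserted decomposition \eqref{rationaldecompeq}.

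I do not anticipate any serious obstacle: the only subtle point is to make sure that the eigencharacters ${^w\textswab t}$ for $w\in W_X(\Omega,\Theta)$ are genuinely distinct and do not accidentally coincide with other occurring eigencharacters in \eqref{minimal}, but this is precisely what the multiplicity-one statement in Corollary \ref{simplecharacters}(2) provides. A concrete alternative to Galois descent, if preferred, would be to pick an element $z_0 \in A_{X,\Omega}'$ separating the values ${^w\textswab t}(z_0)$ (for $w\in W_X(\Omega,\Theta)$) from each other and from the remaining roots of the minimal polynomial of $z_0$ on $\mathcal M_Y$, and then to build orthogonal idempotents in $\mathcal K_Y[z_0]$ by Lagrange interpolation over $\mathcal K_Y$; commutativity of $A_{X,\Omega}'$ ensures that the resulting summands are simultaneous eigenspaces for the whole torus.
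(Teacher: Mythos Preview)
Your argument is correct and is essentially the same as the paper's, which simply invokes Proposition~\ref{finite} and Corollary~\ref{simplecharacters} together with the observation that the eigencharacters ${^w\textswab t}$ for $w\in W_X(\Omega,\Theta)$ are already defined over $\mathcal K_Y$. The only cosmetic difference is that the paper works over the explicit finite extension $\widetilde{\mathcal K}_Y$ of Proposition~\ref{finite} rather than an algebraic closure, but your Galois descent step is the natural way to spell out the one-line justification the paper gives.
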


Of course, this proposition is vacuous unless $\Theta\sim\Omega$.

\subsection{Polynomial decomposition of morphisms} 

We now return to the torsion-free sheaf 
$$\mathfrak M:=\Hom_G\left(\mathcal S(X_\Omega), \mathscr L_\Theta\right),$$
whose rational sections over a connected component $Y\subset \widehat{X_\Theta^L}^\disc$ we denoted before by $\mathcal M_Y$. We also denote by $\mathfrak M_Y$ the restriction of $\mathfrak M$ to the connected component $Y_\CC$.

Let us discuss to what extent the decomposition of Proposition \ref{rationaldecomp} extends to a decomposition of this sheaf --- the goal being to determine the poles that might get introduced when decomposing an element of $\mathcal M_Y$ as in \eqref{rationaldecompeq}. Our approach is similar to \cite[Proposition 2]{DH}, based on the theory of the resultant, which in turn was inspired by the proof of Lemma VI.2.1 in Waldspurger \cite{Waldspurger-Plancherel}, except that by considering all elements of $A_{X,\Omega}'$ simultaneously we can eliminate some unnecessary poles.

Namely, let $W_1$ be as in \eqref{minimal}, and consider only those pairs $w_X\in W_X(\Omega,\Theta)$, $w\in W_1$
 for which the equality: 
\begin{equation}\label{singular} {^{w_X}(\chi\tilde{\textswab t})}|_{A_{X,\Omega}'} = {^{w}(\chi\tilde{\textswab t})}|_{A_{X,\Omega}'},
\end{equation}
represents a divisor in $\widehat{X_\Theta^L}_\CC^\unr$.  Notice that not all pairs $(w_X,w)$ as above represent a divisor. For example, if $\Theta=\Omega=\emptyset$ so that $W_X(\Omega,\Theta)=W_X$, and $w=1$, only the pairs $(w_X, 1)$ with $w_X$ a reflection in $W_X$ represent a divisor. The images of these divisors under the quotient map $\widehat{X_\Theta^L}_\CC^\unr\ni \omega\mapsto \sigma\otimes\omega\in Y_\CC$ are divisors on $Y_\CC$, and we let $U$ denote their complement.

\begin{proposition}Assume the strong generic injectivity condition for $X$.
The restriction $\mathfrak M_U$ of the sheaf $\mathfrak M=\Hom_G\left(\mathcal S(X_\Omega), \mathscr L_\Theta\right)$ over $U$ decomposes as a direct sum of subsheaves:
$$ \mathfrak M_U = \bigoplus_{w\in W_X(\Omega,\Theta)} \mathfrak M_U^w \,\,\, \oplus \,\,\, \mathfrak M^\rest_U,$$
where $\mathfrak M_U^w$ denotes the subsheaf of $A_{X,\Theta}'$-equivariant morphisms with respect to the map $w: A_{X,\Theta}'\to A_{X,\Omega}'$.
\end{proposition}

\begin{remark}
We can be more precise about the poles of the decomposition \eqref{rationaldecompeq}. Let $E$ be any rational section of $\mathfrak M_Y$ (i.e.\ $E\in \mathcal M_Y$), and for simplicity let us consider its pull-back to $\widehat{X_\Theta^L}^\unr_\CC$ (to be denoted by the same letter). Let $f$ be any function on $Y$ whose scheme-theoretic zero locus contains the divisors \eqref{singular}; for example, 
we could take $f$ to be defined by any element $z\in A'_{X,\Omega}$ as follows:
$$f = \prod \left({^{w_X}(\chi\tilde{\textswab t})}(z) -  {^{w}(\chi\tilde{\textswab t})}(z)\right),$$
where the product ranges over all pairs $(w_X,w)$ defining divisors as above.
(Both sides are functions on $\widehat{X_\Theta^L}^\unr_\CC$.) 
Then the summands in the decomposition \ref{rationaldecompeq} of $f\cdot E$ have no more poles than $E$ itself.

We will later see (Corollary \ref{donotappear}) that for the objects that we are interested in, namely the normalized constant terms, the poles of the above form where $w$ is also in $W_X(\Omega,\Theta)$ actually do not show up.  
\end{remark}

\begin{proof}

Clearly, by the previous section, the sheaf $\mathfrak M_Y$  admits a direct sum decomposition into a finite number of eigenspaces for the maximal compact subgroup of $A_{X,\Omega}'$. Each eigencharacter defines a connected component of $\widehat{A_{X,\Omega}'}$. We fix such a component $V$ and consider $\mathfrak M_Y$ as a sheaf over $V_\CC\times Y_\CC$. (Elements of $A_{X,\Omega}'$ now restrict to polynomials over $V_\CC$.)

Set $ \mathcal R:= \CC[V \times Y].$ The annihilator of $\mathfrak M_Y$ in $\mathcal R$ is the ideal $\mathcal I$ generated by the ``minimal polynomials'' \eqref{minimal}, where $z$ ranges over all elements of $A_{X,\Omega}'$. (Clearly, a finite set of elements generating $A_{X,\Omega}'$ modulo its maximal compact subgroup suffices.)

The spectrum of the ring $\overline{\mathcal R}=\mathcal R/\mathcal I$ has a finite number of irreducible components, parametrized by orbits of the distinct factors of \eqref{minimal} under the Galois group of the extension $(\widehat{X_\Theta^L}^\unr/Y)$. We denote by $Z_w$ the components corresponding to $w\in W_X(\Omega,\Theta)$, and by $Z_\rest$ the union of the rest of the components; we use $P_w, P_\rest$ for the corresponding prime ideals.

Let $Y_\CC^\sing\subset Y_\CC$ denote the union of the images of \emph{all} subvarieties given by equations of the form \eqref{singular}, whether these equations represent divisors or subvarieties of larger codimension. 
For any $f\in \overline{\mathcal R}$ which is not a zero divisor and vanishes on $Y_\CC^\sing$, consider the localization:
$$ \mathfrak M_Y[f^{-1}]$$
which is a sheaf over the spectrum of $\overline{\mathcal R}[f^{-1}]$. 

Notice that the components $Z_\bullet \subset V_\CC\times Y_\CC$ have no intersection lying over the complement of $Y_\CC^\sing$; therefore, $\overline{\mathcal R}[f^{-1}]$ is a direct sum of integral domains, and we have a corresponding decomposition of the identity element:
\begin{equation}\label{unitdecomposition} 1 = \sum_w 1_w + 1_\rest,
\end{equation}
where $1_\bullet\in \overline{\mathcal R}[f^{-1}]$. This gives a decomposition of $ \mathfrak M_Y$ over the complement of the zero set of $f$. Since the only requirement on $f$ was that it vanishes on $Y_\CC^\sing$, we get a decomposition of $\mathfrak M_{Y_\CC\smallsetminus Y_\CC^\sing}$:

$$\mathfrak M_{Y_\CC\smallsetminus Y_\CC^\sing} = \bigoplus_{w\in W_X(\Omega,\Theta)} \mathfrak M_{Y_\CC\smallsetminus Y_\CC^\sing}^w \,\,\, \oplus \,\,\, \mathfrak M^\rest_{Y_\CC\smallsetminus Y_\CC^\sing}.$$

Finally, recall from the proof of Lemma \ref{coherent} that $\mathfrak M=\Hom_G\left(\mathcal S(X_\Omega), \mathscr L_\Theta\right)$ is a subsheaf of a locally free sheaf over $Y_\CC$. A section of $\mathscr L_\Theta$ defined in a neighborhood of a subvariety of codimension $\ge 2$ extends (uniquely) to this subvariety. Therefore, the above decomposition of $\mathfrak M_{Y_\CC\smallsetminus Y_\CC^\sing}$ extends to the complement of all divisors contained in $Y_\CC^\sing$, i.e.\ to $U$.
\end{proof}

We return to the asympotics of the normalized constant terms introduced in \eqref{asympEisenstein}:
$$E_{\Theta,\disc}^{*,\Omega}= E^*_{\Theta,\disc} \circ e_\Omega:\mathcal S(X_\Omega)\to \CC(\widehat{X_\Theta^L}^\disc, \mathscr L_\Theta).$$ 

\begin{corollary}
\label{decompEisenstein}
Let $\Omega\sim\Theta$. There is a decomposition:
  \begin{equation}\label{decompEisensteineq}
   E_{\Theta,\disc}^{*,\Omega} = \sum_{w\in W_X(\Theta,\Omega)} \mathscr S_w + \mathscr S_\Subunit,
  \end{equation}
where all summands are elements of:
$$\mathcal M= \Hom_G \left(\mathcal S(X_\Omega), \CC(\widehat{X_\Theta^L}^\disc, \mathscr L_\Theta)\right)$$
with the following properties:
\begin{enumerate}
 
 \item The operator $\mathscr S_w$ is an eigenvector of $A_{X,\Omega}'$ on $\mathcal M$; more precisely, it is 
 $w$-equivariant with respect to the action of $A_{X,\Omega}'$, i.e.\ $A_{X,\Omega}'$ acts via the character ${^w\mathfrak t}$.
 \item The operator $\mathscr S_\Subunit$ has no $w$-equivariant direct summand with respect to the action of $A_{X,\Omega}'$, for any $w\in W_X(\Omega,\Theta)$.  
 \item The poles of all summands are linear; for each component $Y\subset \widehat{X_\Theta^L}^\disc$, they are contained in the union of the poles of $E_{\Theta,\disc}^*$ and the images of divisors given by equations:
 $$ {^{w_X}(\chi\tilde{\textswab t})}|_{A_{X,\Omega}'} = {^{w}(\chi\tilde{\textswab t})}|_{A_{X,\Omega}'},$$
 for those pairs $w_X\in W_X(\Omega,\Theta),w\in W_1$ in the notation of \eqref{minimal} for which this equality represents a divisor in $\widehat{X_\Theta^L}^\unr$.
 \end{enumerate}

\end{corollary}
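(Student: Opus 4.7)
The plan is to deduce the corollary by applying the preceding proposition to the single rational section $E_{\Theta,\disc}^{*,\Omega}$ of $\mathfrak{M}=\Hom_G(\mathcal{S}(X_\Omega),\mathscr{L}_\Theta)$. First I would observe that $E_{\Theta,\disc}^{*,\Omega}$ really is such a rational section: composing the rational section $E_{\Theta,\disc}^*$ of $\Hom_G(\mathcal{S}(X),\mathscr{L}_\Theta)$ with the $G$-morphism $e_\Omega:\mathcal{S}(X_\Omega)\to \mathcal{S}(X)$ introduces no new poles, so by Proposition \ref{regularEisenstein} its polar divisor is contained in that of $E_{\Theta,\disc}^*$, hence is linear and misses the unitary spectrum.

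Next I would fix a connected component $Y\subset \widehat{X_\Theta^L}^\disc$ and let $U\subset Y$ be the open subset cut out in the preceding proposition. Restricting $E_{\Theta,\disc}^{*,\Omega}$ to $U$ and invoking the direct sum decomposition
$$\mathfrak{M}_U = \bigoplus_{w\in W_X(\Omega,\Theta)} \mathfrak{M}_U^w \,\oplus\, \mathfrak{M}_U^\rest$$
yields a splitting $E_{\Theta,\disc}^{*,\Omega}|_U = \sum_{w} \mathscr{S}_w|_U + \mathscr{S}_\Subunit|_U$ into sections of the respective summands. The equivariance claims (1) and (2) are then built into the definitions of $\mathfrak{M}_U^w$ (the sheaf of $w$-equivariant morphisms for $w:A_{X,\Theta}'\to A_{X,\Omega}'$) and of $\mathfrak{M}_U^\rest$ (the complementary summand, by construction containing none of the eigencharacters indexed by $W_X(\Omega,\Theta)$).

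The third step is to extend the summands from $U$ to all of $Y$ and to control their poles. Extension is automatic because $\mathfrak{M}$ is torsion-free (Lemma \ref{coherent}): $\mathscr{S}_w|_U$ and $\mathscr{S}_\Subunit|_U$ determine unique rational sections $\mathscr{S}_w$, $\mathscr{S}_\Subunit$ of $\mathfrak{M}$ on $Y$. To pin down the polar divisor, I would unpack the resultant/Chinese-remainder construction implicit in the preceding proposition: the idempotents $1_w, 1_\rest$ of the unit decomposition \eqref{unitdecomposition} are rational expressions on $\widehat{A_{X,\Omega}'}_\CC\times Y_\CC$ whose denominators are products of pairwise differences ${^{w_X}(\chi\tilde{\textswab{t}})}(z)-{^{w'}(\chi\tilde{\textswab{t}})}(z)$, with $z$ running over a finite generating set of $A_{X,\Omega}'$ and $(w_X,w')$ over the relevant pairs from Proposition \ref{finite}. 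Pulling back along $\omega\mapsto \sigma\otimes\omega$, these vanishing loci cut out precisely the linear divisors of the form \eqref{singular} on $Y$, which gives claim (3). Reassembling over all connected components $Y$ produces the asserted global decomposition.

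The main obstacle I anticipate is the pole bookkeeping in the third step: one must ensure that the extension from $U$ to $Y$ does not introduce spurious poles along components of $Y\smallsetminus U$ that fail to be divisors of the form \eqref{singular}. Torsion-freeness of $\mathfrak{M}$ eliminates codimension-$\ge 2$ contributions, exactly as exploited already in the proof of the preceding proposition, and the explicit resultant description of the projectors guarantees that every divisorial pole is of the stated linear form; the combinatorics of Corollary \ref{simplecharacters} then confirms that restricting to pairs with $w_X\in W_X(\Omega,\Theta)$ produces only divisors of the shape listed in (3).
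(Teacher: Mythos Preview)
Your proposal is correct and follows precisely the route the paper intends: the corollary is stated without proof because it is an immediate consequence of applying the preceding sheaf decomposition of $\mathfrak{M}_U$ (together with Proposition~\ref{rationaldecomp} at the rational level) to the single element $E_{\Theta,\disc}^{*,\Omega}$, with the pole bookkeeping exactly as you describe. The only minor point is that your invocation of Corollary~\ref{simplecharacters} at the very end is not strictly needed---the description of $Y\smallsetminus U$ in the preceding proposition already matches the divisors listed in item~(3) verbatim.
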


The statements of this corollary will be strengthened in the next couple of sections, in order to arrive at the results of \S \ref{sec:scatteringgoals}. The notation $\mathscr S_\Subunit$ is due to the fact that, as we will see in the next section using $L^2$-theory, the exponents of these morphisms over the unitary subset $\widehat{X_\Theta^L}^\disc$ are ``subunitary''. 

\subsection{Explication of the fiberwise scattering maps}\label{ssexplication}

Here we would like to emphasize here that the fiberwise scattering maps $\mathscr S_w$ play the role of ``functional equations'' between the normalized Eisenstein integrals. We use the fact that these maps are equivariant with respect to discrete centers, which is a yet-unproven statement of Theorem \ref{fiberwisescattering}, because we are not going to use the following result anywhere.

\begin{proposition} \label{propBmatrices}
Let $\Theta,\Omega$ be associates, and $w\in W_X(\Omega,\Theta)$. The corresponding fiberwise scattering map $\mathscr S_w\in \CC \left(\widehat{X_\Theta^L}^\disc, \Hom_G(\mathscr L_\Theta,w^*\mathscr L_\Omega )\right)$ is the unique rational family of maps making the following diagram commute (for almost all $\sigma\in \widehat{X_\Theta^L}^\disc$):

\begin{equation}\label{Bmatrices}
\xymatrix{
& \mathcal S(X_\Theta)_{\sigma,\disc} \ar[dd]^{\mathscr S_{w,\sigma}}\\
\mathcal S(X)  \ar[ur]^{E^*_{\Theta,\sigma}} \ar[dr]_{E^*_{\Omega,\sigma}}\\
& \mathcal S(X_\Omega)_{{^w\sigma},\disc}
}
\end{equation}

\end{proposition}

\begin{proof}
We have $\iota_\Theta f = \iota_\Omega S_w f$. By Theorem \ref{explicitunitary} and \eqref{unitarySw} this becomes:
$$ \int_{\widehat{X_\Theta^L}^\disc} E_{\Theta,\sigma,\disc} f^{\tilde\sigma} d\sigma = \int_{\widehat{X_\Theta^L}^\disc} E_{\Omega, {^w\sigma}, \disc} \mathscr S_{w^{-1}}^* f^{\tilde\sigma} d\sigma,$$
and disintegrating over $\sigma$ we get that $E_{\Theta,\sigma,\disc}$ and $E_{\Omega, {^w\sigma}, \disc} \circ \mathscr S^*_{w^{-1}} $ must be equal for almost every $\sigma$, hence equal as rational functions of $\sigma$. Using \eqref{assoc-unit}, the proposition follows by passing to adjoints.
\end{proof}

This proof is actually rather indirect, to avoid the discussion of ``small Mackey restriction'' of \cite[\S 15.5]{SV}; it can be inferred directly from this discussion, when ``injectivity of small Mackey restriction'' is known (such as in the case of symmetric varieties).

This result is essentially equivalent to the description of the constant term of Eisenstein integrals in terms of ``$B$-matrices'' and intertwining integrals in \cite[Theorem 8.4]{CD}; that work can be considered as a qualitative study of these functional equations in the case of symmetric spaces, which among other things gives some results that we prove here without relying so much on the structure of symmetric varieties, such as the fact that ``scattering maps preserve cuspidal summands'' (Theorem \ref{fiberwisescattering}).

\section{Scattering: the unitary case} \label{sec:unitaryscattering} 

The unitary asympotics (adjoints of Bernstein maps)  were obtained in \cite[\S 11.4]{SV} by filtering out the unitary exponents of the Plancherel decomposition. More precisely, given a (smooth, say) function $\Phi\in L^2(X)$ with Plancherel decomposition:
$$ \Phi(x) = \int_{\hat G} \Phi^\pi(x) \mu(\pi),$$
then it is known that $e_\Omega^* \Phi^\pi$ is $A_{X,\Omega}$-finite with only \emph{unitary} and \emph{subunitary} exponents (generalized eigencharacters) for $\mu$-almost all $\pi$. We recall the notion of \emph{subunitary exponents} \label{subunitary} for a morphism from $\mathcal S(X_\Omega)$ to a smooth representation $V$: it means that the morphism is $A_{X,\Omega}$-finite, and the image of its dual: $V\to C^\infty(X_\Omega)$ has subunitary exponents under the action of $A_{X,\Omega}$, i.e.\ generalized characters which are $<1$ on $\mathring A_{X,\Omega}^+$. (For the definition of $\mathring A_{X,\Omega}^+$ see \S \ref{sec:definitions}.)

By construction 
we have:
\begin{equation}\label{defiota} \iota^*_\Omega(\Phi) = \int_{\hat G} \left( e_\Omega^*\Phi^\pi\right)^{\operatorname{unit}} \mu(\pi),\end{equation}
where $\left( e_\Omega^*\Phi^\pi\right)^{\operatorname{unit}}$ refers to isolating the part of $\left( e_\Omega^*\Phi^\pi\right)$ with unitary generalized exponents, cf.\ \cite[Proposition 11.4.2]{SV}.

Moreover, \cite{SV}, \cite{Delorme-Plancherel} have proven Theorem \ref{unitaryscattering} restricted to $L^2(X_\Theta)_\disc$ and with the modification that the condition of $w$-equivariance with respect to $\mathfrak z^\disc(X_\Theta^L)$ be replaced by the weaker condition of $w$-equivariance with respect to $A_{X,\Theta}'$. In other words, 
\begin{equation}\label{L2scat} \iota_\Omega^*\circ \iota_\Theta|_{L^2(X_\Theta)_\disc} = \sum_{w\in W_X(\Omega,\Theta)} S_w,\end{equation}
with $S_w$ being $w$-equivariant with respect to $A_{X,\Theta}'$.

Combining all the above with Theorem \ref{explicitunitary} and Corollary \ref{decompEisenstein} we obtain:

\begin{proposition}\label{propsubunitary}
Let $\Theta,\Omega\subset \Delta_X$. For every $\sigma\in \widehat{X_\Theta^L}^\disc$ (hence unitary), the $A_{X,\Omega}$-exponents of $   E_{\Theta,\disc}^{*,\Omega}$ are unitary or subunitary.

Let $\Theta\sim \Omega$. Consider the operator $\mathscr S_\Subunit$ of Corollary \ref{decompEisenstein}. For every $\sigma\in \widehat{X_\Theta^L}^\disc$ (hence unitary) where this operator is defined (regular), the resulting morphism:
$$ \mathcal S(X_\Omega)\to \mathscr L_{\Theta,\sigma}$$
has subunitary exponents.
 
We have:
\begin{equation}\label{fiberwiseiotascat} \iota_\Omega^*\iota_\Theta f (x)  = \sum_{w\in W_X(\Omega,\Theta)} \int_{\widehat{X_\Theta^L}^\disc} \mathscr S_{w^{-1}}^* f^{\tilde\sigma} (x) d\sigma.\end{equation}
\end{proposition}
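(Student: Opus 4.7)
I would prove the three assertions in tandem, combining Theorem \ref{explicitunitary}, the $L^2$-scattering identity \eqref{L2scat}, the eigencharacter decomposition of Corollary \ref{decompEisenstein}, and the characterization \eqref{defiota} of $\iota_\Omega^*$ as the ``unitary-exponent part'' of $e_\Omega^*$.

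For assertion (1), I would take $f\in L^2(X_\Theta)^\infty_\disc$ with Plancherel decomposition $f=\int f^{\tilde\sigma}\,d\sigma$ and combine Theorem \ref{explicitunitary} with the locality of $e_\Omega^*$ to obtain, after exchanging $e_\Omega^*$ with the integral,
\[ e_\Omega^*\iota_\Theta f = \int_{\widehat{X_\Theta^L}^\disc} \bigl(E_{\Theta,\disc}^{*,\Omega}(\sigma)\bigr)^{*} f^{\tilde\sigma}\,d\sigma. \]
The Plancherel components of $\iota_\Theta f$ live in the $I_{\Theta^-}\sigma$-isotypes, and the principle recalled just before \eqref{defiota} tells us that $e_\Omega^*$ applied to such a component has only unitary or subunitary $A_{X,\Omega}$-exponents, for $\mu$-almost every $\sigma$. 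Since by Proposition \ref{finite} these eigencharacters depend rationally on $\sigma$ and the condition ``unitary or subunitary'' is closed on the regular locus, assertion (1) follows at every unitary $\sigma$.

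Next, inserting Corollary \ref{decompEisenstein} (after reindexing $w\mapsto w^{-1}$ and taking adjoints) rewrites the above as
\[ e_\Omega^*\iota_\Theta f = \sum_{w\in W_X(\Omega,\Theta)} \int_{\widehat{X_\Theta^L}^\disc} \mathscr S_{w^{-1}}^{*} f^{\tilde\sigma}\,d\sigma \;+\; \int_{\widehat{X_\Theta^L}^\disc} \mathscr S_\Subunit^{*} f^{\tilde\sigma}\,d\sigma, \]
each summand of the finite sum being fiberwise $A_{X,\Omega}'$-equivariant along the character $^{w}\textswab t$, which is unitary at unitary $\sigma$. Applying $\iota_\Omega^*$ to both sides and invoking \eqref{L2scat}, the left-hand side becomes $\sum_{w\in W_X(\Omega,\Theta)} S_w f$, while the right-hand side reduces by \eqref{defiota} to its unitary-exponent part. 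The splitting of Proposition \ref{rationaldecomp} separates the distinct $A_{X,\Omega}'$-eigencharacters, so matching the $^{w}\textswab t$-isotype on both sides yields
\[ S_w f = \int_{\widehat{X_\Theta^L}^\disc} \mathscr S_{w^{-1}}^{*} f^{\tilde\sigma}\,d\sigma, \]
which is assertion (3), and simultaneously forces the integral $\int \mathscr S_\Subunit^{*} f^{\tilde\sigma}\,d\sigma$ to contribute no unitary $A_{X,\Omega}$-exponent. For assertion (2), I would then disintegrate this vanishing to the fibers: if $\mathscr S_\Subunit(\sigma)$ had a unitary eigencharacter at some regular unitary $\sigma$, then choosing $f^{\tilde\sigma'}$ concentrated near $\sigma$ would produce a nonzero unitary exponent in the corresponding integral, contradicting the vanishing just obtained.

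\textbf{Main obstacle.} The delicate point is the disintegration step, transferring the vanishing of integrated unitary exponents to the absence of unitary eigencharacters in each individual fiber $\mathscr S_\Subunit(\sigma)$. This relies on both the rationality of the eigencharacter decomposition in Proposition \ref{finite} and the uniqueness of the splitting of Proposition \ref{rationaldecomp}; both ultimately hinge on the strong generic injectivity hypothesis of \S \ref{ssgenericinjectivity}, which ensures that the unitary characters $^{w}\textswab t$ for $w\in W_X(\Omega,\Theta)$ are genuinely distinct from the remaining eigencharacters produced by $W_1$.
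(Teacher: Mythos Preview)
Your approach is the paper's, and correct. The paper, however, avoids your ``main obstacle'' entirely by never integrating and then disintegrating: the identity \eqref{defiota} is already a fiberwise statement over the Plancherel decomposition of $\iota_\Theta f$, so combined with the $A_{X,\Theta}'$-equivariance of each $S_w$ in \eqref{L2scat} it identifies, for almost every unitary $\sigma$, the unitary $A_{X,\Omega}'$-exponents of $e_\Omega^* E_{\Theta,\sigma,\disc}$ as precisely $\{{^w\chi_\sigma}: w\in W_X(\Omega,\Theta)\}$, i.e.\ exactly the exponents carried by the $\mathscr S_w$'s. Since $\mathscr S_\Subunit$ has none of these exponents by its construction in Corollary \ref{decompEisenstein}, and since by assertion (1) every exponent is unitary or subunitary, $\mathscr S_\Subunit$ is subunitary fiberwise with no further argument needed. (Minor phrasing point: ``applying $\iota_\Omega^*$ to both sides'' is not literally what you do, since $e_\Omega^*\iota_\Theta f$ already lives on $X_\Omega$; you mean passing from $e_\Omega^*$ to $\iota_\Omega^*$ by extracting the unitary-exponent part via \eqref{defiota}.)
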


\begin{proof}
In the notation of Theorem \ref{explicitunitary}:
$$e_\Omega^*\iota_\Theta f(x) = \int_{\widehat{X_\Theta^L}^\disc} e_\Omega^*E_{\Theta,\sigma,\disc}f^{\tilde\sigma} (x) d\sigma,$$
and therefore, by the above, $e_\Omega^*E_{\Theta,\sigma,\disc}$ can only have unitary or subunitary $A_{X,\Omega}$-exponents (for almost all $\sigma$); hence, the same holds for $\mathscr S_\Subunit$.

On the other hand, \eqref{defiota} together with the property of $w$-equivariance with respect to $A_{X,\Theta}'$ of the maps $S_w$ of \eqref{L2scat} implies  that all unitary $A_{X,\Omega}'$-exponents of $e_\Omega^*E_{\Theta,\sigma,\disc}$ are contained among the exponents of the $\mathscr S_w$'s.

This proves \eqref{fiberwiseiotascat}, and it shows that $\mathscr S_\Subunit$ only has subunitary exponents.
\end{proof}

This proves assertion \eqref{unitarySw} of Theorem \ref{fiberwisescattering}: the scattering operator $S_w:L^2(X_\Theta)\to L^2(X_\Omega)$, i.e.\ the $w$-equivariant part of $\iota_\Omega^*\iota_\Theta f $ with respect to the action of $A_{X,\Theta}$, is given by:
$$ S_w f (x) = \int_{\widehat{X_\Theta^L}^\disc} \mathscr S_{w^{-1}}^* f^{\tilde\sigma} (x) d\sigma.$$

The following proves the assertion on $\mathfrak z^\disc(X_\Theta^L)$-equivariance of Theorem \ref{unitaryscattering}; assertion \eqref{assoc-fiberwise} of Theorem \ref{fiberwisescattering}; and the regularity statement of Theorem \ref{fiberwisescattering}. The regularity statement, i.e.\ the fact that the operators $\mathscr S_w$, and hence also $\mathscr S_\Subunit$ by Proposition \ref{regularEisenstein}, are actually regular on the unitary set, means that the condition ``where this operator is defined'' is superfluous in Proposition \ref{propsubunitary}.

\begin{proposition}\label{scatteringtool}
Let $\Theta\sim\Omega$, $w\in W_X(\Theta,\Omega)$.  For every $\sigma\in \widehat{X_\Theta^L}_\CC^\disc$ where the operator $\mathscr S_w$ is defined, the resulting morphism:
 $$ \mathcal S(X_\Omega)\to \mathscr L_{\Theta,\sigma}$$
 factors through the discrete ${^w\sigma}$-coinvariants $\mathcal S(X_\Omega)_{{^w\sigma},\disc} = \mathscr L_{\Omega,{^w\sigma}}$ and is generically an isomorphism between $\mathscr L_{\Omega,{^w\sigma}}$ and $\mathscr L_{\Theta,\sigma}$. In particular, $w$ induces an isomorphism: $\widehat{X_\Theta^L}^\disc\xrightarrow{\sim} \widehat{X_\Omega^L}^\disc$.
 
 Thus, $\mathscr S_w$ is a rational section of the sheaf $\Hom_G( w^*\mathscr L_\Omega, \mathscr L_\Theta )$ over $\widehat{X_\Theta^L}_\CC^\disc$. Its poles do not meet the unitary set, i.e.:
\begin{equation}\label{Sweq}\mathscr S_w \in \Gamma\left(\widehat{X_\Theta^L}^\disc,\Hom_G( w^*\mathscr L_\Omega, \mathscr L_\Theta )\right).
\end{equation}

 The operators $\mathscr S_w$ satisfy the natural associativity conditions: 
\begin{eqnarray*}\mathscr S_{w'} \circ \mathscr S_w &=&  \mathscr S_{w'w} \text{ for }  w\in W_X(\Omega,\Theta), w'\in W_X(\Xi,\Omega).
\end{eqnarray*}

The scattering map $S_w$ is $w$-equivariant with respect to the discrete center $\mathfrak z^\disc(X_\Theta^L)$.
\end{proposition}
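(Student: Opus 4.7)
The strategy is to extract all six assertions from three ingredients: the $L^2$-scattering decomposition \eqref{L2scat} with its unitary isomorphisms $S_w$, the explicit fiberwise formula \eqref{fiberwiseiotascat}, and the regularity criterion of Corollary~\ref{linearpolesmap} for sections with linear poles.

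I would begin by establishing the regularity statement \eqref{Sweq}. By Corollary~\ref{decompEisenstein}, $\mathscr S_w$ is a rational section of $\Hom_G(\mathcal S(X_\Omega),\mathscr L_\Theta)$ with only linear poles. On the other hand, combining \eqref{fiberwiseiotascat} with the fact (from \cite{SV}, \cite{Delorme-Plancherel}) that $S_w$ is an $L^2$-isometry, the operator $\mathscr S_{w^{-1}}^{*}$ induces a bounded (indeed norm-one) map between the $L^2$-sections of $\mathscr L_\Theta$ and $\mathscr L_\Omega$ under the Plancherel identifications \eqref{L2discrete}. Corollary~\ref{linearpolesmap} then forces the polar divisor of $\mathscr S_w$ to avoid the unitary spectrum $\widehat{X_\Theta^L}^\disc$.

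Next I would deduce the factorization through discrete coinvariants and the generic isomorphism. Now that $\mathscr S_w$ is regular on the unitary set, it is meaningful to evaluate fiberwise. For $f \in L^2(X_\Theta)^\infty_\disc$ with Plancherel decomposition \eqref{onaxis}, the $w$-equivariant component of $\iota_\Omega^*\iota_\Theta f$ equals $S_w f$, which by \eqref{unitarySw} must coincide with the $w\sigma$-Plancherel disintegration of $S_w f \in L^2(X_\Omega)_\disc$ under the change of variables $\tau = w\sigma$. This forces $\mathscr S_{w^{-1}}^{*} f^{\tilde\sigma}$, as an element of $C^\infty(X_\Omega)$, to lie in the discrete $\widetilde{w\sigma}$-eigenspace, i.e., in the smooth dual of $\mathscr L_{\Omega,w\sigma}$. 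Dualizing, $\mathscr S_w$ factors through the quotient $\mathcal S(X_\Omega) \twoheadrightarrow \mathcal S(X_\Omega)_{w\sigma,\disc} = \mathscr L_{\Omega,w\sigma}$. The $L^2$-unitarity of $S_w$ then upgrades this to a fiberwise isomorphism $\mathscr L_{\Omega,w\sigma} \xrightarrow{\sim} \mathscr L_{\Theta,\sigma}$ for generic $\sigma$, and in particular forces $w$ to induce a bijection $\widehat{X_\Theta^L}^\disc \xrightarrow{\sim} \widehat{X_\Omega^L}^\disc$.

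Associativity will follow by disintegration. The $L^2$-level relation $S_{w'}\circ S_w = S_{w'w}$ (for $w \in W_X(\Omega,\Theta)$, $w' \in W_X(\Xi,\Omega)$) is a consequence of \eqref{L2scat} applied to $\iota_\Xi^*\iota_\Theta$ together with uniqueness of the $A_{X,\Theta}'$-equivariant decomposition. Iterating \eqref{unitarySw} and performing the change of variables $\sigma \leadsto w\sigma$ in the intermediate integration yields the identity $\mathscr S_{w'}\circ \mathscr S_w = \mathscr S_{w'w}$ pointwise for generic $\sigma$, hence as rational sections. Finally, $\mathfrak z^\disc(X_\Theta^L)$-equivariance of $S_w$ is immediate from the fiberwise description: via Theorem~\ref{thmdiscrete}, the ring $\mathfrak z^\disc(X_\Theta^L) = C^\infty(\widehat{X_\Theta^L}^\disc)$ acts on $\mathscr C(X_\Theta)_\disc$ by fiberwise multiplication, and the formula \eqref{unitarySw} intertwines this action with that of $\mathfrak z^\disc(X_\Omega^L)$ along $w$. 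The principal obstacle is the bootstrap between a.e.\ $L^2$-boundedness (which is all one extracts directly from unitarity of $S_w$) and the regularity needed to draw pointwise conclusions at every unitary $\sigma$; this is resolved precisely by Corollary~\ref{linearpolesmap}, which converts the linear-pole form of Eisenstein integrals, plus $L^2$-bounds, into genuine regularity on the unitary set.
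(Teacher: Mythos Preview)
There is a genuine gap in the ordering and justification of your argument.

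You invoke Corollary~\ref{linearpolesmap} first, by asserting that $\mathscr S_{w^{-1}}^*$ ``induces a bounded map between the $L^2$-sections of $\mathscr L_\Theta$ and $\mathscr L_\Omega$ under the Plancherel identifications''. But Corollary~\ref{linearpolesmap} applies to rational sections of $\Hom(L_1,L_2)$ for \emph{finite-rank} bundles over a common torus, whereas at this stage $\mathscr S_w$ is only known to be a rational section of $\Hom_G(\mathcal S(X_\Omega),\mathscr L_\Theta)$: a family of maps out of the infinite-dimensional space $\mathcal S(X_\Omega)$, parametrized by $\widehat{X_\Theta^L}^\disc$. To view it as a section of $\Hom_G(w^*\mathscr L_\Omega,\mathscr L_\Theta)$ and to identify the two parameter spaces via $\sigma\mapsto w\sigma$ is precisely the factorization you postpone to the next paragraph. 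So your regularity step silently presupposes what it is meant to precede; the paper's order (factorization first, regularity afterwards) is not merely stylistic.

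The factorization argument itself is also incomplete. From $w$-equivariance with respect to $A_{X,\Theta}'$ (the only equivariance known a priori) one gets that the specialization of $\mathscr S_w$ at $\sigma$ factors through $\mathcal S(X_\Omega)_{{}^w\chi_\sigma}$, and from $S_w(L^2(X_\Theta)_\disc)\subset L^2(X_\Omega)_\disc$ one then gets factorization through $\mathcal S(X_\Omega)_{{}^w\chi_\sigma,\disc}=\bigoplus_\tau\mathscr L_{\Omega,\tau}$, the sum over \emph{all} $\tau\in\widehat{X_\Omega^L}^\disc$ with central character ${}^w\chi_\sigma$. Your appeal to ``the $w\sigma$-Plancherel disintegration under the change of variables $\tau=w\sigma$'' does not explain why only the single summand $\tau={}^w\sigma$ survives: comparing two direct-integral decompositions of $S_wf$ over the \emph{different} bases $\widehat{X_\Theta^L}^\disc$ and $\widehat{X_\Omega^L}^\disc$ cannot by itself manufacture a bijection between them, nor does it show that ${}^w\sigma$ even lies in $\widehat{X_\Omega^L}^\disc$. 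The paper closes this with a representation-theoretic input (Lemma~\ref{lemmainduced}): for $\sigma$ in general position the representations $I_{\Omega^-}(\tau)$ and $I_{\Omega^-}({}^w\sigma)\simeq I_{\Theta^-}(\sigma)$ are irreducible and non-isomorphic unless $\tau\simeq{}^w\sigma$, so any $G$-map $\mathscr L_{\Omega,\tau}\to\mathscr L_{\Theta,\sigma}$ vanishes for $\tau\ne{}^w\sigma$. Only after this is $\mathscr S_w$ genuinely a section of $\Hom_G(w^*\mathscr L_\Omega,\mathscr L_\Theta)$, and then Corollary~\ref{linearpolesmap} applies. Your associativity and $\mathfrak z^\disc$-equivariance deductions are fine once these earlier steps are correctly in place.
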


For the proof of the proposition we will need the following lemma:

\begin{lemma}\label{lemmainduced} 
 Suppose that $\tau_1,\tau_2$ are non-isomorphic, irreducible representations of the Levi quotient of a parabolic $P$, and that $\varchi$ is a subtorus of the unramified characters of $P$ containing \emph{$P$-regular} characters (i.e.\ those which are non-trivial on the image of all coroots corresponding to roots in the unipotent radical of $P$).
 
 Then, for $\omega\in \varchi$ in general position, $I_P(\tau_1\otimes\omega)$ and $I_P(\tau_2\otimes\omega)$ are irreducible and non-isomorphic.
\end{lemma}

\begin{proof}
The irreducibility statement is \cite[Th\'eor\`eme 3.2]{Sauvageot} --- notice that irreducibility of the induced representation is a Zariski open condition \cite[VI.8.4, Proposition]{Renard}. 

Suppose $I_P(\tau_1\otimes\omega)\simeq I_P(\tau_2\otimes\omega)$ for $\omega$ in some Zariski dense subset $\varchi'$ of $\varchi$, fix such an isomorphism for each such $\omega$ and denote this representation by $\pi_\omega$. Without loss of generality, we may assume that the trivial character $\omega=1$ belongs to $\varchi'$. From the two realizations of $\pi_\omega$ we deduce that its (normalized) Jacquet module $(\pi_\omega)_P$  with respect to $P$ has an irreducible quotient which is isomorphic to $\tau_1\otimes\omega$, and an irreducible quotient which is isomorphic to $\tau_2\otimes\omega$. We will show that, if $\omega$ belongs to some fixed open dense subset of $\mathcal X$, these quotients have to coincide. 

To see that, let $M$ be the Levi quotient of $P$, and consider semisimplifications, to be denoted by $[\pi]$ (or, alternatively, elements in the Grothendieck group of admissible representations). We have $[\tau_1\otimes\omega]\subset [(\pi_\omega)_P]$ and $[\tau_2\otimes\omega]\subset[(\pi_\omega)_P]$ as representations of $M$.

Now let $(Q,\rho)$ be a pair consisting of a parabolic $Q\subset P$ and a supercuspidal representation $\rho$ of its Levi quotient $L$ such that $\pi_1$ is a subquotient of $I_Q(\rho)$.  
We will compute semisimplifications of Jacquet modules of $\pi_\omega$ with respect to $Q$. As in the proof of \cite[Th\'eor\`eme 3.2]{Sauvageot}, we have, from the two realizations of $\pi_\omega$:
\begin{equation}\label{JacquetQ1}
[(\pi_\omega)_Q] = \sum_{\underset{wMw^{-1}\supset L}{w\in W/W_M}} {^w[ (\tau_1\otimes\omega)_{M\cap w^{-1} Q w}]},
\end{equation}
and
\begin{equation}\label{JacquetQ2}
[(\pi_\omega)_Q] = \sum_{\underset{wMw^{-1}\supset L}{w\in W/W_M}} {^w[ (\tau_2\otimes\omega)_{M\cap w^{-1} Q w}]}.
\end{equation}
Here,  $W_M$ denotes the Weyl group of $M$.

Notice that the term of each of the above sums corresponding to the trivial coset $1 W_M$ is the semi-simplicifaction of the Jacquet module of $\tau_i\otimes\omega$ with respect to $M\cap Q$ ($i=1,2$). If the two irreducible quotients $\pi_\omega\to \tau_1\otimes\omega$ and $\pi_\omega\to \tau_2\otimes\omega$ do not coincide (i.e.\ do not have the same kernel), since $\tau_1$ and $\tau_2$ are irreducible, that means that all the irreducible summands of $[ (\tau_1\otimes\omega)_{M\cap Q}]$ should appear among the irreducible summands of the subsum of \eqref{JacquetQ2} with $w\ne 1 W_M$.

Let $\rho_1$ be an irreducible (necessarily cuspidal) summand of ${[ (\tau_1)_{M\cap Q}]}$ --- so $\rho_1\otimes\omega$ is an irreducible summand of $[{(\tau_1\otimes\omega)_{M\cap Q}}]$. If we assume it to be isomorphic, for each $\omega\in \varchi'$, to some irreducible (cuspidal) summand of the terms of \eqref{JacquetQ2} with $w\ne 1 W_M$, and since $\varchi'$ is Zariski dense, there is a $w\in W\smallsetminus W_M$, a Zariski dense subset $\varchi''\subset\varchi'$ and an irreducible summand $\rho_2$ of ${^w [(\tau_2)_{M\cap w^{-1} Q w}]}$ with $$\rho_1\otimes\omega \simeq \rho_2 \otimes {^w\omega}$$ for all $\omega\in \varchi''$.

Let $\omega_0\in \varchi''$, so that the central characters of $\rho_1\otimes\omega_0$ and $\rho_2\otimes{^w\omega_0}$ coincide. Thus, the restrictions of $\omega/\omega_0$ and ${^w\omega}/{^w\omega_0}$ to the center of $L$ have to coincide for all $\omega\in \varchi''$. Since $\varchi''$ is Zariski dense in $\varchi$, the restrictions of all $\omega$ and ${^w\omega}$ to the center of $L$ have to coincide, for all $\omega\in \varchi$. But, having assumed that $\varchi$ contains $P$-regular characters, this is only possible if $w\in W_M$, a contradiction. 
\end{proof}

Notice that, by the wavefront and strong factorizability assumption, $\widehat{X_\Theta^L}^\unr$ contains $P_\Theta$-regular (in fact: ``strictly $P$-dominant'') elements for every $\Theta\subset\Delta_X$ --- s.\ the proof of \cite[Corollary 15.3.3]{SV}. Therefore, the lemma applies to families of irreducible representations of $L_\Theta$ twisted by $\widehat{X_\Theta^L}^\unr$.

\begin{proof}[Proof of Proposition \ref{scatteringtool}]
By $w$-equivariance with respect to $A_{X,\Theta}'$, it follows that the specialization of $\mathscr S_w$ at $\sigma$ factors through the $A_{X,\Omega}'$-coinvariant space:
$$\mathcal S(X_\Omega)_{{^w\chi_\sigma}},$$
where $\chi_\sigma$ is the central character of $\sigma$. Moreover, since $S_w$ is a morphism: $L^2(X_\Theta)_\disc\to L^2(X_\Omega)_\disc$, it follows that $\mathscr S_w$ is zero on the kernel of the map:
$$\mathcal S(X_\Omega)_{{^w\chi_\sigma}} \to L^2(X_\Omega/A_{X,\Omega}', ^w\chi_\sigma)_\disc$$
(for almost all, and hence for all $\sigma$ where it is defined), and hence factors through the discrete coinvariants $\mathcal S(X_\Omega)_{{^w\chi_\sigma}, \disc}$.
By the fact that $S_w$ is an isometry, we get that $\mathscr S_w$ is non-zero on every connected component of $\widehat{X_\Theta^L}^\disc$.

By definition, the space $\mathcal S(X_\Omega)_{{^w\chi_\sigma}, \disc}$ is equal to:
$$ \bigoplus_\tau \mathscr L_{\Omega,\tau},$$
where $\tau$ ranges over the fiber of the map: $\widehat{X_\Omega^L}_\CC^\disc\to \widehat{A'_{X,\Omega}}_\CC$ (central character) over ${^w\chi_\sigma}$.

We claim that for $\sigma$ in general position, the only such $\tau$ with the property that $I_\Theta(\sigma)$ and $I_\Omega(\tau)$ have a common subquotient is $\tau={^w\sigma}$. To see this, let $\sigma$ vary in a family of the form $\sigma_0\otimes \chi$, with $\sigma_0\in \widehat{X_\Theta^L}^\disc$ and $\chi$ varying in $\widehat{X_\Theta^L}^\unr$. Thus, ${^w\sigma} = {^w\sigma_0}\otimes\omega$, with $\omega={^w\chi}$ varying in $\widehat{X_\Omega^L}^\unr$. By Lemma \ref{lemmainduced}, for $\omega$ in general position the representation $I_\Theta(\sigma)$ is irreducible, and hence the standard intertwining operator is an isomorphism:
$$I_\Theta(\sigma) \simeq I_\Omega({^w\sigma}).$$ 
Thus, any non-zero morphism:
$$ I_\Omega(\tau)\to I_\Theta(\sigma)$$
gives, by composition, a non-zero morphism: $I_\Omega(\tau)\to I_\Omega({^w\sigma})$. 

If $\{\tau_1,\dots,\tau_k\}$ is the fiber of $\widehat{X_\Omega^L}^\disc\to \widehat{A'_{X,\Omega}}$ over ${^w\chi_{\sigma_0}}$, then the fiber over ${^w\chi_{\sigma}}$, for $\sigma$ as above, is $\{\tau_1\otimes\omega,\dots,\tau_k\otimes\omega\}$. Again by Lemma \ref{lemmainduced}, for $\omega$ in general position, we cannot have a non-zero morphism $I_\Omega(\tau_i\otimes\omega)\to I_\Omega({^w\sigma_0}\otimes\omega)$, unless $\tau_i\otimes\omega\simeq {^w\sigma_0}\otimes\omega$, or equivalently $\tau_i \simeq {^w\sigma_0}$.

Thus, the specialization of $\mathscr S_w$ at $\sigma$ factors through $\mathscr L_{\Omega,{^w\sigma}}$. Using \eqref{unitarySw}, this proves that the scattering map $S_w$ is $w$-equivariant with respect to the discrete center $\mathfrak z^\disc(X_\Theta^L)$.
The fact that $S_w$ is an isometry now proves that the resulting map: $\mathscr L_{\Omega,{^w\sigma}}\to \mathscr L_{\Theta,\sigma}$ is an isomorphism for generic $\sigma$.

For the regularity statement, we will proceed as in the proof of Proposition \ref{regularEisenstein}, where a priori knowledge of the integrability of Eisenstein integrals gave us their regularity on the unitary spectrum. Here we will use the a priori knowledge (Theorem \ref{unitaryscattering}) that the scattering operators are bounded operators between $L^2$-spaces (in fact, isometries, but we will not use that):
$$ S_w: L^2(X_\Theta)_\disc\to L^2(X_\Omega)_\disc.$$

In terms of Theorem \ref{thmdiscrete}, this can be written as a map:
$$ L^2(\widehat{X_\Theta^L}^\disc,\mathscr L_\Theta)\to L^2(\widehat{X_\Omega^L}^\disc,\mathscr L_\Omega),$$
which, we now know, is induced by some element:
$$\mathscr S_w\in  \CC(\widehat{X_\Theta^L}^\disc, \Hom_G(\mathscr L_\Theta,w^*\mathscr L_\Omega)).$$

By Corollary \ref{decompEisenstein}, $\mathscr S_w$ has linear poles. Corollary \ref{linearpolesmap} now implies that it is regular on the unitary spectrum.

Finally, the associativity conditions on $\mathscr S_w$ follow from those of the unitary scattering maps $S_w$. Indeed, the only way that the composition of the following maps:
$$ L^2(\widehat{X_\Theta^L}^\disc,\mathscr L_\Theta)\to L^2(\widehat{X_\Omega^L}^\disc,\mathscr L_\Omega)\to L^2(\widehat{X_\Xi^L}^\disc,\mathscr L_\Xi),$$
given by fiberwise application of $\mathscr S_w$ and $\mathscr S_{w'}$, is equal to the fiberwise application of $\mathscr S_{w'w}$ is that $\mathscr S_{w'w}|_{\mathscr L_{\Theta,\sigma}} = \mathscr S_{w'}\circ \mathscr S_{w}|_{\mathscr L_{\Theta,\sigma}}$ for almost all $\sigma \in \widehat{X_\Theta^L}^\disc$, and thus for all.

\end{proof}

\begin{corollary}\label{donotappear}
When we decompose $E_{\Theta,\disc}^{*\Omega}$ as in \ref{decompEisensteineq}, the poles of the form \eqref{singular} with $w\in W_X(\Omega,\Theta)$ do not appear.
\end{corollary}

\begin{proof}
Indeed, by \eqref{nochoiceeq} these poles intersect the unitary set, where we have just proven that the summands are regular.
\end{proof}

Finally, we prove the continuous preservation of Harish-Chandra Schwartz spaces under the scattering maps, thus completing the proof of Theorem \ref{unitaryscattering}:

\begin{proof}[Proof that $S_w$, $w\in W_X(\Omega,\Theta)$, restricts to a continuous map: $\mathscr C(X_\Theta)_\disc\to \mathscr C(X_\Omega)_\disc$]
 By \eqref{unitarySw} the following diagram commutes:
 $$ \xymatrix{
 L^2(X_\Theta)_\disc \ar[r]^{S_w}\ar[d] &  L^2(X_\Omega)_\disc\ar[d]\\
 L^2(\widehat{X_\Theta^L}^\disc, \mathscr L_\Theta) \ar[r]^{\mathscr S_w} & L^2(\widehat{X_\Omega^L}^\disc, \mathscr L_\Omega),}$$
 where the vertical arrows are the isomorphisms of the Plancherel formula \eqref{L2discrete}.
 
 By the regularity statement of Proposition \ref{scatteringtool}, the restriction of the bottom arrow to smooth sections gives an isomorphism:
 $$ \xymatrix{C^\infty(\widehat{X_\Theta^L}^\disc, \mathscr L_\Theta) \ar[r]^{\mathscr S_w} & C^\infty(\widehat{X_\Omega^L}^\disc, \mathscr L_\Omega),}$$
 which by Theorem \ref{thmdiscrete} corresponds to an isomorphism between discrete summands of the corresponding Harish-Chandra Schwartz spaces.
 
\end{proof}

\section{Scattering: the smooth case} \label{sec:smooth}

We now turn to the smooth case, in order to prove Theorem \ref{smoothscattering} and the remainder of Theorem \ref{fiberwisescattering}. As in the unitary case, the smooth scattering maps $\textswab S_w$ will be given by integrating the fiberwise scattering maps $\mathscr S_w$, but now over a shift of the unitary set in analogy to Theorem \ref{explicitsmooth}. However, there is an important result that needs to be proven first: that ``cuspidal scatters to cuspidal''. This is the analog of ``discrete scatters to discrete'' in the unitary case, which was proven in the course of the development of the Plancherel theorem, by an analytic argument. Similarly, here, ``cuspidal scatters to cuspidal'' will be proven using a priori knowledge about smooth asymptotics, and more precisely the support theorem \ref{support}.

We notice that both the statement ``discrete scatters to discrete'' and ``cuspidal scatters to cuspidal'' have been proven by Carmona-Delorme \cite{CD} in the symmetric case. The proofs there heavily use the structure of symmetric varieties. Here we present a different argument which applies in greater generality.

Recall again the asymptotics of normalized constant terms, defined in section \ref{sec:eigenspace}:
$$E_{\Theta,\disc}^{*,\Omega} = E_{\Theta,\disc}^*\circ e_\Omega:\mathcal S(X_\Omega)\to \CC(\widehat{X_\Theta^L}^\disc, \mathscr L_\Theta).$$

We may project those to the cuspidal quotient (and summand) $\mathcal L_\Theta$ of $\mathscr L_\Theta$, in which case we will denote them by:
$$E_{\Theta,\cusp}^{*,\Omega}:\mathcal S(X_\Omega)\to \CC(\widehat{X_\Theta^L}^\cusp, \mathcal L_\Theta).$$

\begin{proposition}\label{fibercuspidal}
If $\Omega$ does not contain a conjugate of $\Theta$, then $E_{\Theta,\cusp}^{*,\Omega}$ is zero.

If $\Omega\sim \Theta$ then $E_{\Theta,\cusp}^{*,\Omega}$ factors through $\mathcal S(X_\Omega)_\cusp$:
$$ E_{\Theta,\cusp}^{*,\Omega} \in \Hom_G \left(\mathcal S(X_\Omega)_\cusp, \CC(\widehat{X_\Theta^L}^\cusp, \mathcal L_\Theta)\right).$$
The summands $\mathscr S_w$ of \eqref{decompEisensteineq}, viewed as in \eqref{Sweq}, restrict to elements of: 
\begin{equation}\label{Sweq-cusp} \Gamma\left(\widehat{X_\Theta^L}^\cusp,\Hom_G( w^*\mathcal L_\Omega, \mathcal L_\Theta )\right)\end{equation}
(i.e.\ preserve the cuspidal summands of the bundles $\mathscr L_\bullet$), and the projection of $\mathscr S_\Subunit$ of \eqref{decompEisensteineq} to $\mathcal L_\Theta$ is zero, hence we have a decomposition:
\begin{equation}\label{cuspdecomp}
   E_{\Theta,\cusp}^{*,\Omega} = \sum_{w\in W_X(\Theta,\Omega)} \mathscr S_w|_{\mathcal L},
\end{equation}
where $~|_{\mathcal L}$ denotes the restriction of $\mathscr S_w$ to the subbundle $\mathcal L_\bullet$.
\end{proposition}

We will prove this proposition below; let us first see how it implies  Theorems \ref{smoothscattering} and \ref{fiberwisescattering}.

First of all, the decomposition \eqref{cuspdecomp}, combined with Theorem \ref{explicitsmooth}, allows us to express the composition $e_\Omega^*e_\Theta$, restricted to $\mathcal S(X_\Theta)_\cusp$ as a sum:

$$ e_\Omega^*e_\Theta|_{\mathcal S(X_\Theta)_\cusp} = \sum_{w\in W_X(\Omega,\Theta)} \textswab S_w,$$
as claimed in Theorem \ref{smoothscattering}, where $\textswab S_w$ is defined as in \eqref{smoothSw}. Notice that \eqref{smoothSw} is independent of $\omega$ as long as $\omega\gg 0$; this follows from Corollary \ref{decompEisensteineq}, according to which the $\mathscr S_w$ are rational with linear poles (hence no poles for $\omega\gg 0$). Moreover, by Proposition \ref{fibercuspidal}, 
$$\mathscr S_w(\mathcal S(X_\Theta)_\cusp)\subset C^\infty(X_\Omega)_\cusp.$$

Now define, as in Theorem \ref{smoothscattering} the space $\mathcal S^+(X_\Theta)_\cusp\subset C^\infty(X_\Theta)_\cusp$ as the span of all spaces:
$$\textswab S_w \mathcal S(X_\Omega)_\cusp$$
with $\Omega \sim \Theta$ and $w\in W_X(\Theta,\Omega)$. This includes the case $w=1$ where $\textswab S_w=\Id$, so $\mathcal S^+(X_\Theta)_\cusp \supset \mathcal S(X_\Theta)_\cusp$. Let us prove that the scattering maps (and, incidentally, the Bernstein maps) extend to the spaces $\mathcal S^+(X_\Theta)_\cusp$, and that they satisfy the associativity properties asserted in Theorem \ref{smoothscattering}:

\begin{proposition}\label{extension}
For $f \in \mathcal S^+(X_\Theta)_\cusp$ and $w\in W_X(\Omega,\Theta)$, define $\textswab S_w f\in \mathcal S^+(X_\Omega)_\cusp$ and $e_\Theta f\in C^\infty(X)$ as follows:
If 
$$f = \sum_{(\Theta',w')} \textswab S_{w'} f_{(\Theta',w')} $$
with $w'\in W_X(\Theta,\Theta')$ and $f_{(\Theta',w')} \in \mathcal S(X_{\Theta'})_\cusp$, we set: 
$$ \textswab S_w f =  \sum_{(\Theta',w')} \textswab S_{ww'} f_{(\Theta',w')}.$$
and:
$$ e_\Theta f = \sum_{(\Theta',w')} e_{\Theta'} f_{(\Theta',w')}.$$

Then the resulting maps are well-defined 
(do not depend on the decomposition of $f$ chosen),  and $\textswab S_w$  is an isomorphism:
$$\textswab S_w: \mathcal S^+(X_\Theta)_\cusp \to \mathcal S^+(X_\Omega)_\cusp.$$

Moreover, $\textswab S_w$ is $w$-equivariant with respect to the actions of $\mathfrak z^\cusp(X_\Theta^L)$ on $C^\infty(X_\Theta)_\cusp$, $C^\infty(X_\Omega)_\cusp$, and the maps $\textswab S_w$ satisfy the associativity properties of Theorem \ref{smoothscattering}. 
\end{proposition}

\begin{proof}
First of all, \eqref{smoothSw} implies that every element of $\mathcal S^+(X_\Theta)_\cusp$ is of moderate growth, as was the case for elements of $e_\Theta^*(\mathcal S(X))$, cf.\ Proposition \ref{moderategrowth}. Hence, every element of $\mathcal S^+(X_\Theta)_\cusp$ admits a \emph{unique} spectral decomposition of the form \eqref{offaxis}, with the only difference from \eqref{offaxis} being that the forms $f^{\tilde\sigma}d\sigma$ are not polynomial, but rational with linear poles, given by \eqref{smoothSw}.
We point the reader to \cite[\S 15.4.4]{SV} for details on the spectral decomposition of functions of moderate growth. 

If $f = \sum_{(\Theta',w')} \textswab S_{w'} f_{(\Theta',w')} $ as in the statement of the proposition then, using \eqref{smoothSw} and the associativity property \eqref{assoc-fiberwise} of the fiberwise scattering maps $\mathscr S_w$ we conclude that the operator $\textswab S_w$ described in the proposition also admits the expression \eqref{smoothSw}, which proves that it is well-defined. 

Similarly for $e_\Theta$: by the commutativity of \eqref{Bmatrices}, it is expressed explicitly by applying the formula of Theorem \ref{explicitsmooth} to the spectral decomposition \eqref{offaxis} of $f$.

Moreover, \eqref{assoc-smooth} now follows from \eqref{assoc-fiberwise}, and the fact that $\textswab S_1 = \Id$ shows that these maps are isomorphisms. The extension of the action of the cuspidal center with the given properties is obvious.

The associativity relations of the operators $\textswab S_w$ follow from those of the operators $\mathscr S_w$, which were proven in the previous section.
\end{proof}

This completes the proof of Theorem \ref{fiberwisescattering}, and of Theorem \ref{smoothscattering} for the case $\Omega\sim\Theta$. 

If $\Omega$ does not contain a conjugate of $\Theta$ then the same calculation and Proposition \ref{fibercuspidal} show that the projection of $e_\Theta^* e_\Omega$ to $C^\infty(X_\Theta)_\cusp$ is zero or, equivalently, $e_\Omega^* e_\Theta$, when restricted to $\mathcal S(X_\Theta)_\cusp$, is zero.

Finally, if $\Omega$ contains, but is not equal to, a conjugate of $\Theta$ then by switching the roles of $\Theta$ and $\Omega$ in the above argument, since $\Theta$ does not contain a conjugate of $\Omega$ we have: 
$$ e_{\Theta,\cusp}^* e_\Omega |_{\mathcal S(X_\Omega)_\cusp} = 0,$$
which means that $e_\Omega^* e_\Theta$, when restricted to $\mathcal S(X_\Theta)_\cusp$, has image in $C^\infty(X_\Omega)_\noncusp$.

This completes the proof of Theorem \ref{smoothscattering}, assuming Proposition \ref{fibercuspidal}.

Now let us come to the proof of Proposition \ref{fibercuspidal}.

\begin{proof}[Proof of Proposition \ref{fibercuspidal}]

The proof is based on the same result as Theorem \ref{explicitsmooth}, namely Proposition \ref{support} on the support of elements of $e_\Theta^*(\mathcal S(X))$. This proposition implies, in particular, 
that for every $f\in \mathcal S(X_\Theta)_\cusp$, the support of $e_\Omega^* e_\Theta f$ has compact closure in an affine embedding $X_\Omega^a$ of $X_\Omega$. Moreover, Proposition \ref{moderategrowth} states that this function is of moderate growth.

By Theorem \ref{explicitsmooth}, 
\begin{equation}\label{eOmega} e_\Omega^* e_\Theta f (x) = e_\Omega^* \int_{\omega^{-1}\widehat{X_\Theta^L}^\cusp}  E_{\Theta,\sigma,\cusp} f^{\tilde\sigma} (x) d\sigma = 
\int_{\omega^{-1}\widehat{X_\Theta^L}^\cusp} E^{\Omega}_{\Theta,\sigma,\cusp} f^{\tilde\sigma} (x) d\sigma,
\end{equation}
the second equality because $e_\Omega^*$ commutes with the integral (because it is equivariant and commutes after evaluating ``close to infinity'' --- cf.\ the proof of Proposition \ref{characterization}).

We first claim that if $|\Omega|<|\Theta|$, i.e.\ $\dim A_{X,\Omega}>\dim A_{X,\Theta}$, then $e_\Omega^* e_\Theta f$ has to be zero. Since we may translate $f$ by the action of $G$, it is enough to fix an $A_{X,\Omega}$-orbit $Z$ and show that 
$$\left.e_\Omega^* e_\Theta f\right|_Z\equiv 0.$$
We identify $Z$ with $A_{X,\Omega}$ by fixing a base point. Let $X_\Omega^a$ be an affine embedding of $X_\Omega$ as above, and let $\psi$ be an algebraic character of $A_{X,\Omega}$ which extends to the closure $\bar Z$ of $Z$ in $X_\Omega^a$ by zero. We remind that an affine embedding of a torus $A_{X,\Omega}$ is described by the set of characters of $A_{X,\Omega}$ which vanish on the complement of the open orbit, and this set (monoid) of characters has to generate the character group; in particular, such a character $\psi$ exists. The function $e_\Omega^* e_\Theta f$ is of moderate growth; since $\bar Z\smallsetminus Z$ is a divisor, this is equivalent to saying that there is an open cover $\bar Z=\cup_i U_i$ and for every $i$ a function $F_i$ which is \emph{regular} on $U_i\cap Z$ such that $|e_\Omega^* e_\Theta f| \le |F_i| $ on $U_i\cap Z$. Multiplied by a high enough power of $\psi$, $F_i$ becomes regular on the whole $U_i$. The support of $\left.e_\Omega^* e_\Theta f\right|_Z$ has compact closure in $\bar Z$, and the Haar measure on $Z\simeq A_{X,\Omega}$, after multiplied by a high enough power of $\psi$, also extends to a finite measure on the closure of the support of $\left.e_\Omega^* e_\Theta f\right|_Z$.  Therefore, for a large enough $n$, the function $\left.|\psi|^n\cdot e_\Omega^* e_\Theta f\right|_Z$ belongs to $L^2(Z) = L^2(A_{X,\Omega})$, and its abelian Fourier/Mellin transform is in $L^2(\widehat A_{X,\Omega})$.

On the other hand, let us revisit Proposition \ref{finite}, fixing again a connected component $Y$ of $\widehat{X_\Theta^L}^\cusp$: As $\omega$ varies in $\widehat {X_\Theta^L}_\CC^\unr$, each factor of \eqref{minimal} varies over a set of characters of $A_{X,\Omega}$ of \emph{positive codimension} in $\widehat{A_{X,\Omega}}_\CC$. More precisely, the support of the $\CC[\widehat{A_{X,\Omega}}]$-module generated by $E_{\Theta,\cusp}^{*,\Omega}$ (restricted to the connected component $Y$) is contained in a subscheme $S$ of $\widehat{A_{X,\Omega}}_\CC$ whose reduction is the union: 
$$\bigcup_{w\in W_1} \left.{^w(\chi\widehat{A_{X,\Theta,\CC}})}\right|_{A_{X,\Omega}} \subset \widehat{A_{X,\Omega}}_\CC,$$
in the notation of \eqref{minimal}. 

Hence, on one hand we have that the $\CC[\widehat{A_{X,\Omega}}]$-module generated by $E_{\Theta,\cusp}^{*,\Omega}$ is supported on a subscheme of $\widehat{A_{X,\Omega}}_\CC$ which does not contain any connected component, and, on the other hand, the restriction of any $e_\Omega^* e_\Theta f$ to any $A_{X,\Omega}$-orbit (which decomposes as in \eqref{eOmega}), when multiplied by a high enough power $|\psi|^n$ of the absolute value of a character $\psi$, belongs to $L^2(A_{X,\Omega})$. Elementary Fourier analysis will now prove that this function is zero. 

Indeed, the $\CC[\widehat{A_{X,\Omega}}]$-action on $E_{\Theta,\cusp}^{*,\Omega}$ corresponds to an action of the completed Hecke algebra
$$ \hat{\mathcal H}(A_{X,\Omega}) := \lim_{\underset{J}\leftarrow} \mathcal H(A_{X,\Omega}, J),$$ 
with $J$ ranging over a basis of compact open subgroups. More specifically, given a polynomial $P\in \CC[\widehat{A_{X,\Omega}}]$, there is a compatible system of measures $(h_{P,J})_J\in (\mathcal H(A_{X,\Omega}, J))_J$ such for $\chi \in \widehat{A_{X,\Omega}}$ a $J$-invariant character, the Mellin transform satisfies
$$\widecheck{h_{P,J}}(\chi) := \int_{A_{X,\Omega}} h_{P,J}(a) \chi^{-1}(a) = P(\chi).$$

Since the dimension of the support subscheme $S\subset \widehat{A_{X,\Omega}}_\CC$ is smaller than that of $\widehat{A_{X,\Omega}}_\CC$, there is $P\in \CC[\widehat{A_{X,\Omega}}]$, non-zero on every connected component, which vanishes on $S$  (i.e.\ vanishes with the appropriate multiplicity, since $S$ is not necessarily reduced).  
Explicitly, if for every factor of \eqref{minimal} of the form $(z-{^w(\chi\tilde{\textswab t})} (z))$ we choose an element $z_{\chi,w}\in A_{X,\Omega}$ on which ${^w(\chi\omega)} (z_{\chi,w})$ is equal to some constant $a_{\chi,w}$ for \emph{all} $\omega\in \widehat{X_\Theta^L}^\unr$ (such an element exists for dimension reasons), then the restriction of $P$ to a connected component of $\widehat{A_{X,\Omega}}$ can be taken to be the product, over all pairs $(\chi,w)$ such that ${^w\chi}$ belongs to that component, of the terms $(z_{\chi,w} - a_{\chi,w})$ (where $z_{\chi,w}$ is by evaluation a polynomial on this component and $a_{\chi,w}$, we repeat, is a constant complex number). For every open compact subgroup $J$, the measure $h_{P,J}$ annihilates the restriction of $E_{\Theta,\cusp}^{*,\Omega}$ to the chosen component $Y\subset \widehat{X_\Theta^L}^\cusp$.

Now take $f\in \mathcal S(X_\Theta)_\cusp$, spectrally supported on the chosen component $Y\subset \widehat{X_\Theta^L}^\cusp$, and an open compact subgroup $J\subset A_{X,\Omega}$ such that $e_\Omega^* e_\Theta f$ is $J$-invariant. By \eqref{eOmega}, we will have $h_{P, J}\star e_\Omega^* e_\Theta f = 0$, and therefore $$(|\psi|^n h_{P,J}) \star (|\psi|^n e_\Omega^* e_\Theta f) = |\psi|^n \cdot (h_{P,J} \star e_\Omega^* e_\Theta f) = 0.$$

But the function $|\psi|^n e_\Omega^* e_\Theta f$, restricted to any $A_{X,\Omega}$-orbit, belongs to $L^2(A_{X,\Omega})$ (where we have chosen a base point to identify this restriction with a function $\Phi$ on $A_{X,\Omega}$). Thus, its Mellin transform $\check \Phi(\chi) = \int_{A_{X,\Omega}} \Phi(a\chi) \chi^{-1}(a) da$ is in $L^2(\widehat{A_{X,\Omega}})$. On the other hand, the Mellin transform of $\Phi':= (|\psi|^n h_{P,J}) \star \Phi$ is equal to $\check \Phi' (\chi) = P(\chi |\psi|^{-n}) \check\Phi(\chi |\psi|^{-n})$. Since $\Phi'=0$, and $P$ is non-vanishing outside of a set of measure zero in $\widehat{A_{X,\Omega}}$, it follows that $\check\Phi = 0$, and hence $\Phi=0$. 

This proves that $e_\Omega^* e_\Theta f \equiv 0$ when $|\Omega|<|\Theta|$.

Now assume that $\Omega$ does not contain a conjugate of $\Theta$. By induction on $|\Omega|$, we may assume that $e_{\Omega'}^* e_\Theta f = 0$ for every $\Omega'\subsetneq \Omega$ and hence $e_\Omega^* e_\Theta f \in C^\infty(X_\Omega)_\cusp$, for all $f\in \mathcal S(X_\Theta)_\cusp$. That means that $E_{\Theta,\cusp}^{*,\Omega}$ factors through $\mathcal S(X_\Omega)_\cusp$, so by evaluating at points of regularity we get a family $\mathcal I$ of morphisms:
$$ \mathcal S(X_\Omega)_\cusp \to \mathcal L_{\Theta,\sigma}.$$
In the language of \S \ref{ssweaktangent}, we will examine the weak tangent space of this family which, we recall, has to do with the set of morphisms obtained by second adjointness:
\begin{equation}\label{red} \mathcal S(X_\Omega^L)_\cusp \to (\mathcal L_{\Theta,\sigma})_\Omega.
\end{equation}

By Corollary \ref{finite2} we have that $WT_\Omega(\mathcal I)$, if nonempty, is a union of sets of the form: 
$$[{^w\left(\mathfrak a_{X,\Theta,\CC}^*\right)}],$$
where $[\,\bullet \,]$ denotes image in $\mathfrak a^*/W_{L_\Omega}$. On the other hand, we can twist the morphisms \eqref{red} by elements of $\widehat{X_\Omega^L}^\unr_\CC$, thus obtaining a possibly larger family $\mathcal J$ whose weak tangent space will be a union of components of the form:
$$[{^w\left(\mathfrak a_{X,\Theta,\CC}^*\right)}]+ \mathfrak a^*_{X,\Omega,\CC}.$$

By the first statement of Lemma \ref{GIcorollary} (with $\Theta$ and $\Omega$ interchanged), since $\Omega$ does not contain a conjugate of $\Theta$, the dimension of this is \emph{strictly larger} than the dimension of $\mathfrak a^*_{X,\Omega,\CC}$. This is a contradiction: the supercuspidal supports of all finite-length quotients of $\mathcal S(X_\Omega^L)_\cusp $ belong to the equivalence classes of a \emph{countable} union of families of the form $(\tau\otimes\omega,L)$, with $(\tau,L)$ a supercuspidal pair in $L_\Omega$ and $\omega$ varying in  $\widehat{X_\Omega^L}_\CC^\unr$; hence, we cannot have a family of finite-length quotients of $\mathcal S(X_\Omega^L)_\cusp $ whose weak tangent space has dimension larger than that of $\widehat{X_\Omega^L}_\CC^\unr$, i.e.\ larger than that of $A_{X,\Omega}$. This shows that $E_{\Theta,\cusp}^{*,\Omega}$ is zero.

Finally, consider the case $\Omega\sim\Theta$. First of all:

\begin{lemma}\label{cuspidalsummands}
Let $Y\subset \widehat{X_\Theta^L}^\cusp$ be a connected component, and $E\in \mathcal M_Y^\cusp:=\Hom_G(\mathcal S(X_\Omega), \CC(Y,\mathcal L_\Theta))$. Let $\mathcal K_Y = \CC(Y)$, and let $E= \sum_i E_i$ be the decomposition of $E$ into elements of (distinct) generalized eigenspaces\footnote{``Generalized eigenspaces'' is used here in the generality of non-algebraically closed fields, i.e.\ a generalized eigenspace does not necessarily correspond to an eigenvalue, but to an irreducible monic polynomial.} for the action of $A_{X,\Omega}$ on the finite-dimensional $\mathcal K_Y$-vector space $\mathcal M_Y^\cusp$.
If $E$ factors through $\mathcal S(X_\Omega)_\cusp$, so does each of the $E_i$'s.
\end{lemma}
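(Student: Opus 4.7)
The plan is to realize each generalized eigenspace component $E_i$ as $\pi_i\cdot E$ for an element $\pi_i$ of the group algebra $\mathcal K_Y[A_{X,\Omega}]$ acting on $\mathcal M_Y^\cusp$ by precomposition through the action of $A_{X,\Omega}$ on $\mathcal S(X_\Omega)$, and then to check that this precomposition action preserves the subspace of morphisms that factor through $\mathcal S(X_\Omega)_\cusp$.

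First I would verify that the direct sum $\mathcal S(X_\Omega)=\mathcal S(X_\Omega)_\cusp\oplus\mathcal S(X_\Omega)_\noncusp$ of Theorem \ref{thmcuspidal} is $A_{X,\Omega}$-stable. This is immediate from the definitions: the orthogonal projection onto $\mathcal S(X_\Omega)_\cusp$ (equivalently, onto $L^2(X_\Omega)_\cusp$) is a $G$-equivariant operator, and the action of $A_{X,\Omega}=\mathcal Z(X_\Omega)$ is by $G$-equivariant operators, so the two commute. Consequently, the $\mathcal K_Y$-subspace $\mathcal N\subset\mathcal M_Y^\cusp$ of those morphisms vanishing on $\mathcal S(X_\Omega)_\noncusp$ is stable under the precomposition action of $\mathcal K_Y[A_{X,\Omega}]$, and it contains $E$ by hypothesis.

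Next I would consider the $\mathcal K_Y[A_{X,\Omega}]$-submodule $V_E\subset\mathcal M_Y^\cusp$ generated by $E$. It is finite-dimensional over $\mathcal K_Y$, because $\mathcal M_Y^\cusp$ is, and the generalized eigenspace decomposition of $\mathcal M_Y^\cusp$ for the commuting action of $A_{X,\Omega}$ restricts to one of $V_E$, in which each $E_i$ is precisely the projection of $E$ onto the $\chi_i$-summand. Standard linear algebra — applied to the minimal polynomial of a finite family of elements of $A_{X,\Omega}$ whose joint spectrum separates the distinct characters $\chi_i$, and then invoking the Chinese remainder theorem — produces elements $\pi_i\in\mathcal K_Y[A_{X,\Omega}]$ with $\pi_i\cdot E=E_i$. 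Since $\mathcal N$ is $\mathcal K_Y[A_{X,\Omega}]$-stable and contains $E$, it contains each $E_i$, which is the desired conclusion.

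The only substantive point is the first, namely the $A_{X,\Omega}$-stability of the cuspidal summand, and this is essentially formal given the $G$-equivariance of the decomposition of Theorem \ref{thmcuspidal} and the commutation of $A_{X,\Omega}$ with $G$; the rest is abstract linear algebra over the field $\mathcal K_Y$, so I do not anticipate a serious obstacle.
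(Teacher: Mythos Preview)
Your proposal is correct and follows essentially the same approach as the paper: the paper's proof is the single sentence ``The validity of the lemma is obvious, since $\Hom_G(\mathcal S(X_\Omega)_\cusp, \CC(Y,\mathcal L_\Theta))$ is an $A_{X,\Omega}$-stable subspace of $\mathcal M_Y^\cusp$,'' and you have simply unpacked this by (i) explaining why that subspace is $A_{X,\Omega}$-stable (via $G$-equivariance of the cuspidal projector and commutation of $A_{X,\Omega}$ with $G$) and (ii) spelling out the elementary linear-algebra step that an invariant subspace contains the generalized-eigenspace components of each of its elements.
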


The validity of the lemma is obvious, since $\Hom_G(\mathcal S(X_\Omega)_\cusp, \CC(Y,\mathcal L_\Theta))$ is an $A_{X,\Omega}$-stable subspace of $\mathcal M_Y^\cusp$. 

Because of the lemma, 
 the projections to $\mathcal L_\Theta$ of all summands $\mathscr S_\bullet$ of \eqref{decompEisensteineq} all factor  through $\mathcal S(X_\Omega)_\cusp$, in other words by \eqref{Sweq} they restrict to elements of \eqref{Sweq-cusp}.

Finally, we claim that the projection of $\mathscr S_\Subunit$ to $\mathcal L_\Theta$ is zero. The argument here is identical to the one one we used for the case that $\Omega$ does not contain a conjugate of $\Theta$, using the weak tangent space of the corresponding family of maps: 
$$\mathcal S(X_\Omega^L)_\cusp \to (\mathcal L_{\Theta,\sigma})_\Omega$$ which arises from $\mathscr S_\Subunit$.
 If this family were non-zero, based on Lemma \ref{GIcorollary} it would give rise again to a family of morphisms of the form \eqref{red} whose weak tangent space has dimension larger than that of $\mathfrak a^*_{X,\Omega,\CC}$, a contradiction.

\end{proof}

\begin{remark}
 Regarding the last step of the proof: in the discrete case, there is no contradiction to the existence of subunitary exponents. The reason is that the ``subunitary parts'' of Eisenstein integrals do not need to be ``discrete modulo center'' (while the cuspidal parts were necessarily cuspidal modulo center by Lemma \ref{cuspidalsummands}). Indeed, they could be non-discrete, but with a central character that makes them decay ``towards infinity''.
\end{remark}

\part{Paley--Wiener theorems}

\section{The Harish-Chandra Schwartz space}

We start by proving the following two results:

\begin{proposition}\label{Theta to X}
 $\iota_\Theta$ takes $\mathscr C(X_\Theta)_\disc$ continuously into $\mathscr C(X)$.
\end{proposition}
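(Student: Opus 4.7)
The strategy is to combine the explicit wave packet expression \eqref{explicitiota} with the asymptotic decomposition of normalized Eisenstein integrals from Corollary \ref{decompEisenstein} and Proposition \ref{propsubunitary}, leaning on the just-established Theorem \ref{unitaryscattering}. Given $f \in \mathscr C(X_\Theta)_\disc$, Theorem \ref{thmdiscrete} realizes it as a smooth section $\sigma \mapsto f^{\tilde\sigma}$, and by \eqref{explicitiota}
\[
\iota_\Theta f(x) \;=\; \int_{\widehat{X_\Theta^L}^\disc} E_{\Theta,\sigma,\disc}\, f^{\tilde\sigma}(x)\, d\sigma.
\]
To prove $\iota_\Theta f \in \mathscr C(X)$ I would bound this wave packet, for each $\Omega \subset \Delta_X$, on a $J$-good neighborhood $N_\Omega$ of $\Omega$-infinity, using the identification \eqref{Nident} with a piece of $X_\Omega$.

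The heart of the argument is the fiberwise asymptotic expansion of the integrand. Taking adjoints of Corollary \ref{decompEisenstein} and refining with Proposition \ref{propsubunitary}, one gets over $\widehat{X_\Theta^L}^\disc$
\[
e_\Omega^*\, E_{\Theta,\sigma,\disc} \;=\; \sum_{w \in W_X(\Omega,\Theta)} \mathscr S_{w^{-1}}^* \;+\; (\mathscr S_\Subunit)^*,
\]
where by Proposition \ref{scatteringtool} the unitary-type summands $\mathscr S_{w^{-1}}^*$ are regular on $\widehat{X_\Theta^L}^\disc$, and $(\mathscr S_\Subunit)^*$ is $A_{X,\Omega}$-finite with strictly subunitary exponents. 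Integrating against $f^{\tilde\sigma}\,d\sigma$ and invoking \eqref{unitarySw} together with \eqref{fiberwiseiotascat}, the unitary contribution is exactly $\sum_{w \in W_X(\Omega,\Theta)} S_w f$, which lies in $\mathscr C(X_\Omega)_\disc$ by Theorem \ref{unitaryscattering} and is therefore tempered on $X_\Omega$. The residual contribution $R_\Omega(f) := \int (\mathscr S_\Subunit)^* f^{\tilde\sigma}\, d\sigma$ is a wave packet whose pointwise values on $N_\Omega$ are controlled by $A_{X,\Omega}$-finite functions with subunitary (a fortiori tempered) exponents. Together these yield the desired tempered bound for $\iota_\Theta f$ on $N_\Omega$. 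Continuity of $\iota_\Theta: \mathscr C(X_\Theta)_\disc \to \mathscr C(X)$ will follow because every estimate above is linear in the seminorms of $f$ via Theorem \ref{thmdiscrete}, the continuity of $S_w$ provided by Theorem \ref{unitaryscattering}, and smoothness of $f^{\tilde\sigma}$ in $\sigma$.

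\textbf{Main obstacle.} The principal technical point is converting the fiberwise asymptotic identity into a genuine bound for the integrated wave packet on $N_\Omega$, uniformly in $x \in N_\Omega$: one has to exchange the asymptotic expansion of $E_{\Theta,\sigma,\disc}$ with integration over $\widehat{X_\Theta^L}^\disc$ while controlling possible poles of $(\mathscr S_\Subunit)^*$ along the divisors \eqref{singular}, which do not appear in the sum $E_{\Theta,\sigma,\disc}$ itself. The linearity of these poles and the smoothness of the spectral section, combined with $L^1$-integrability arguments in the spirit of Lemma \ref{lemmalinearpoles}, should make the remainder term genuinely integrable; but carrying this out cleanly and uniformly in $x$ is where the real work lies.
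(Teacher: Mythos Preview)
Your proposal follows a different route from the paper's, and it has a real gap that you do not flag.

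The decomposition from Corollary \ref{decompEisenstein} and the identification of the unitary part with $\sum_{w\in W_X(\Omega,\Theta)} S_w f$ are only available when $\Omega\sim\Theta$. But to put $\iota_\Theta f$ in $\mathscr C(X)$ you must control it on neighborhoods $N_\Omega$ for \emph{every} $\Omega\subset\Delta_X$. When $\Omega$ strictly contains an associate of $\Theta$, the set $W_X(\Omega,\Theta)$ is empty, yet the unitary part of $e_\Omega^*\,\iota_\Theta f$ is \emph{not} zero: by \eqref{defiota} it equals $\iota_\Omega^*\iota_\Theta f$, which by Theorem \ref{unitaryscattering} lands in $L^2(X_\Omega)_\cont$, not in $\mathscr C(X_\Omega)_\disc$. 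So Theorem \ref{unitaryscattering} gives you nothing for this piece, and you are back to proving essentially the same proposition one level down. Your acknowledged ``main obstacle'' for $\mathscr S_\Subunit$ is a second, independent difficulty that you also do not resolve.

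The paper sidesteps both problems. It proves a pointwise bound (Lemma \ref{decaying}) showing that the alternating sum $\sum_\Omega (-1)^{|\Omega|}\tilde\tau_\Omega\iota_\Omega^*\Phi$ is controlled by $\Vert\Phi\Vert_{L^2(X)}$ times a decaying function; this converts $L^2$ control into $\mathscr C(X)$ control without ever integrating a ``subunitary remainder'' of Eisenstein integrals. The proof of the proposition is then an induction on $|\Delta_X|-|\Theta|$: one reduces, via Lemma \ref{decaying}, to showing that $\iota_\Omega^*\iota_\Theta$ maps $\mathscr C(X_\Theta)_\disc$ into $\mathscr C(N_\Omega)$ for each $\Omega\subsetneq\Delta_X$. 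For this the paper writes $\iota_\Omega^*\iota_\Theta = c(\Omega_1,\Omega)^{-1}\,\iota_{\Omega_1}^\Omega\circ\iota_{\Omega_1}^*\iota_\Theta$ with $\Omega_1\subset\Omega$ an associate of $\Theta$; now $\iota_{\Omega_1}^*\iota_\Theta:\mathscr C(X_\Theta)_\disc\to\mathscr C(X_{\Omega_1})_\disc$ by Theorem \ref{unitaryscattering}, and $\iota_{\Omega_1}^\Omega:\mathscr C(X_{\Omega_1})_\disc\to\mathscr C(X_\Omega)$ by the inductive hypothesis applied to the smaller variety $X_\Omega$. This handles exactly the case your approach misses.
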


And in the other direction:
\begin{proposition}\label{X to Theta}
 $\iota_{\Theta,\disc}^*$  takes $\mathscr C(X)$ continuously into $\mathscr C(X_\Theta)_\disc$.
\end{proposition}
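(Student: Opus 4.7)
The plan is to identify $\iota_{\Theta,\disc}^*$ with the normalized constant term map $E^*_{\Theta,\disc}$, use Theorem \ref{thmdiscrete} to locate its image in $\mathscr C(X_\Theta)_\disc = C^\infty(\widehat{X_\Theta^L}^\disc, \mathscr L_\Theta)$, and then establish continuity of the extension $\mathscr C(X)\to \mathscr C(X_\Theta)_\disc$ from uniform temperedness estimates on Eisenstein integrals. For $\Phi\in \mathcal S(X)$, formula \eqref{Fourier} shows that $\iota_{\Theta,\disc}^*\Phi$ corresponds under \eqref{isomdiscrete} to the section $\sigma\mapsto E^*_{\Theta,\sigma,\disc}\Phi$ of $\mathscr L_\Theta$, which by Proposition \ref{regularEisenstein} is regular on the unitary set, hence $C^\infty$. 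Since $\mathcal S(X)$ is dense in $\mathscr C(X)$ and both $\iota_{\Theta,\disc}^*$ and any $\mathscr C$-continuous extension of $E^*_{\Theta,\disc}$ are continuous into $L^2(X_\Theta)_\disc$ (where $\iota_{\Theta,\disc}^*$ is bounded), the two must agree on $\mathscr C(X)$; it therefore suffices to show that $\Phi\mapsto E^*_{\Theta,\disc}\Phi$ extends continuously from $\mathcal S(X)$, with the topology induced from $\mathscr C(X)$, to $\mathscr C(X)$, valued in $C^\infty(\widehat{X_\Theta^L}^\disc, \mathscr L_\Theta)$.

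Fix an open compact subgroup $J$. By Proposition \ref{finiteds} only finitely many components of $\widehat{X_\Theta^L}^\disc$ carry nonzero $J$-invariants and each is compact, so the relevant seminorms on the $J$-invariant part of the target are indexed by differential operators $D\in \mathcal D(\widehat{X_\Theta^L}^\disc)$. For a flat functional $\tilde v$ on $\mathscr L_\Theta$ in the sense of \S \ref{ssgeneralities},
$$ \langle E^*_{\Theta,\sigma,\disc}\Phi,\tilde v\rangle = \int_X \Phi(x)\, \overline{(E_{\Theta,\sigma,\disc}\tilde v)(x)}\, dx, $$
and by the very construction of the flat connection, differentiation in $\sigma$ commutes with pairing against $\tilde v$, yielding
$$ D_\sigma\langle E^*_{\Theta,\sigma,\disc}\Phi,\tilde v\rangle = \int_X \Phi(x)\, \overline{(D_\sigma E_{\Theta,\sigma,\disc}\tilde v)(x)}\, dx. $$
The required continuity thus reduces to the following uniform estimate: for each $D$ and each bounded family $V$ of flat functionals, there is a tempered function $F$ on $X$ such that $|(D_\sigma E_{\Theta,\sigma,\disc}\tilde v)(x)| \le F(x)$ for all $x\in X$, all $\tilde v\in V$, and all $\sigma$ in the (compact) support of the $J$-invariant discrete spectrum.

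The main obstacle is this uniform temperedness of normalized Eisenstein integrals with discrete data, including their spectral derivatives. I would deduce it from the asymptotic structure already built into the scattering theory. By Proposition \ref{propsubunitary} and Theorem \ref{fiberwisescattering}, for every $\Omega\subset \Delta_X$ and every unitary $\sigma$, the asymptotics $e_\Omega^* E_{\Theta,\sigma,\disc}\tilde v$ decomposes, modulo strictly subunitary (hence decaying) exponents, into the unitary contributions $(\mathscr S_w\tilde v)\cdot {^w\chi_\sigma}$ indexed by $w\in W_X(\Omega,\Theta)$ when $\Omega\sim \Theta$ (and is entirely subunitary otherwise); the coefficients depend smoothly on $\sigma$ over the compact support of the $J$-invariants, by the regularity statement of Theorem \ref{fiberwisescattering}. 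Transferring this through the identification \eqref{Nident} with a good neighborhood $N_\Omega$, one obtains on each $N_\Omega$ an asymptotic expansion for $E_{\Theta,\sigma,\disc}\tilde v$ whose unitary terms are uniformly bounded on each $A_{X,\Omega}$-orbit, while the remainder decays. Since $X$ is covered modulo $\mathcal Z(X)$ by finitely many such $N_\Omega$'s, and since the action of $D_\sigma$ on the characters $^w\chi_\sigma$ introduces at most polynomial growth in $\log a$ (still tempered) while acting smoothly on the coefficients, we obtain the uniform tempered bound $|D_\sigma E_{\Theta,\sigma,\disc}\tilde v|\le F$, completing the proof.
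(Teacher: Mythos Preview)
Your overall strategy coincides with the paper's: both reduce the question to uniform (in unitary $\sigma$) control of the $\Omega$-asymptotics $E_{\Theta,\disc}^{*,\Omega}=E_{\Theta,\disc}^*\circ e_\Omega$, using that for unitary $\sigma$ these have only unitary or subunitary $A_{X,\Omega}$-exponents (Proposition~\ref{propsubunitary}) and that spectral derivatives preserve the exponents up to bounded multiplicity (Lemma~\ref{derivatives}). The paper carries this out on the constant-term side, decomposing $\Phi\in\mathscr C(X)^J$ along $X=\bigsqcup_\Omega N_\Omega$ and extending each $E_{\Theta,\disc}^{*,\Omega}\big|_{\mathcal S(N_\Omega)}$ continuously to $\mathscr C(N_\Omega)$; you work dually, seeking a tempered majorant for $D_\sigma E_{\Theta,\sigma,\disc}\tilde v$ uniform in $\sigma$. (A minor slip: the pairing with $\tilde v$ involves the duality $\mathcal S(X)\!-\!C^\infty(X)$, not a Hermitian pairing, so the complex conjugation should be dropped.)

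The place where your argument is incomplete is precisely the uniform bound. You invoke the decomposition $E_{\Theta,\disc}^{*,\Omega}=\sum_w\mathscr S_w+\mathscr S_\Subunit$ and declare the remainder ``decaying'', but this decomposition is set up only for $\Omega\sim\Theta$ (Corollary~\ref{decompEisenstein}); when $\Omega$ strictly contains an associate of $\Theta$ there \emph{are} unitary exponents (they feed $\iota_\Omega^*\iota_\Theta\neq 0$, with image in $L^2(X_\Omega)_\cont$) not organized by any $\mathscr S_w$, so your case ``entirely subunitary otherwise'' is false. Even for $\Omega\sim\Theta$, reading off a uniform decay rate from $\mathscr S_\Subunit$ would require decomposing it further into its own eigenspaces, and that secondary decomposition can acquire poles where subunitary exponents collide, so ``coefficients depend smoothly on $\sigma$'' is not automatic for the pieces. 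The paper sidesteps both issues with a short linear-algebra lemma: an $A_{X,\Omega}^+$-finite vector of uniformly bounded degree with unitary/subunitary generalized eigencharacters is bounded on $A_{X,\Omega}^+$ by a tempered function times the maximum over a \emph{fixed} finite set of translates; the latter are then bounded uniformly in $\sigma$ by regularity of $E_{\Theta,\disc}^*$ on the unitary set (Proposition~\ref{regularEisenstein}). Inserting that lemma into your framework closes the gap.
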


We first reduce both statements to the case $\mathcal Z(X)=1$. This is achieved by using \eqref{Cfactorizable} and \eqref{Lfactorizable}, which identify $\mathscr C(X)$ and $L^2(X)$ as closed subspaces of a direct sum of spaces of the form:
$$ \mathscr C(\mathcal Z(X)\times Y) \simeq \mathscr C(\mathcal Z(X))\hat\otimes\mathscr C(Y),$$
respectively:
$$ L^2(\mathcal Z(X)\times Y) \simeq L^2(\mathcal Z(X))\hat\otimes L^2(Y),$$
where $Y$ is a spherical $[G,G]$-variety with $\mathcal Z(Y)=1$.

It is obvious from the definitions that the Bernstein maps on those spaces are induced by Bernstein maps for the second factor:
$$\iota_\Theta: L^2(Y_\Theta)\to L^2(Y),$$
which reduces both problems to the case $\mathcal Z(X)=1$. We will assume this for the two proofs.

The proof of Proposition \ref{Theta to X} will require a lemma: 
Fix an open compact subgroup $J\subset G$ and a collection $(N_\Theta)_\Theta$ of $J$-good neighborhoods of infinity. We may, and will, assume that this collection is determined by the neighborhoods $ N_{\hat \alpha}$, where $\alpha$ runs over all simple spherical roots and $\hat\alpha:= \Delta_X\smallsetminus\{\alpha\}$, in the following sense:
\begin{equation}\label{nbhdcompat}
N_\Theta = \bigcap_{\alpha\notin\Theta} N_{\hat \alpha}.
\end{equation}
We will also be denoting:
$$ N'_\Theta := N_{\Theta} \smallsetminus \bigcup_{\Omega\subsetneq\Theta} N_{\Omega},$$
remembering that the image of $N'_\Theta$ in $X_\Theta/A_{X,\Theta}$ is compact.

By a \emph{decaying function} on $X$ we will mean a positive, smooth function whose restriction to \emph{each} $N_\Theta$ is $A_{X,\Theta}$-finite function with \emph{subunitary} exponents. Notice that, by our definition, a subunitary exponent on $A_{X,\Theta}$ is allowed to be unitary on a ``wall'' of $A_{X,\Theta}^+$ (it only has to be $<1$ on $\mathring A_{X,\Theta}^+$). However, by demanding that the exponents of our function are $A_{X,\Theta}^+$-subunitary on \emph{every} $N_\Theta$, this possibility is ruled out: no exponent can be unitary on a wall of $A_{X,\Theta}^+$. We will call such exponents \emph{strictly subunitary}. Together with our assumption that $\mathcal Z(X)=1$ a decaying function is automatically in $\mathscr C(N_\Theta)$.

This definition is essentially compatible with the way the notion of ``decaying function on $A_{X,\Theta}^+$'' that was introduced in \cite{SV} (and will be used in the proof below): Indeed, for each $\Theta\subset\Delta_X$, a decaying function on $A_{X,\Theta}^+$, according to \cite{SV}, is any function bounded by the restriction of a positive, $A_{X,\Theta}$-finite function with strictly subunitary $A_{X,\Theta}^+$-exponents. Hence, a decaying ($J$-invariant) function $f$ on $X$, in our present sense, is precisely a $J$-invariant function with the property that for every $\Theta\subset \Delta_X$ and every $x\in N'_\Theta$ (or equivalently: $x\in N_\Theta$) the function:
\begin{equation}\label{deconA} A_{X,\Theta}^+ \ni a\mapsto |\left< f, a^{-1}\cdot 1_{xJ}\right>|,\end{equation}
where $1_{xJ}$ denotes the characteristic function of $xJ$, is bounded by a decaying function on $ A_{X,\Theta}^+$. (Notice that this is stronger than saying that the restriction of the function to the $A_{X,\Theta}^+$-orbit is a decaying function; our definition of the action of $A_{X,\Theta}^+$ is normalized by the square root of the volume, s.\ \S \ref{sseigenmeasures}, so the above bound is equivalent to a bound of $f(ax)$  by $\Vol(axJ)^{-\frac{1}{2}} $ times a decaying function on $A_{X,\Theta}^+$.)

Let $\tilde\tau_\Theta$ denote, for each $\Theta$, the map of restriction to $N_\Theta$.

\begin{lemma}\label{decaying} 
When $\mathcal Z(X)=1$, for any $\Phi\in L^2(X)^J$ the alternating sum:
 \begin{equation}\label{alternating}
\operatorname{Alt}(\Phi):= \sum_{\Theta\subset\Delta_X} (-1)^{|\Theta|} \tilde\tau_\Theta \iota_\Theta^* \Phi
 \end{equation}
 is bounded in absolute value by $\Vert \Phi\Vert_{L^2(X)}$ times a decaying function which depends only on $J$.
\end{lemma}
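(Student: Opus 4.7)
The plan is to estimate $\operatorname{Alt}(\Phi)(x)$ pointwise by stratifying $X = \bigsqcup_{\Omega \subset \Delta_X} N_\Omega$ and exploiting the ``asymptotic identity'' character of each Bernstein map. A direct combinatorial check using \eqref{nbhdcompat} shows that for $\alpha \in \Omega \smallsetminus \Theta$ one has $\tilde N_{\Omega \smallsetminus \{\alpha\}} = \tilde N_\Omega \cap \tilde N_{\hat\alpha}$, so $x \in N_\Omega$ forces $x \notin \tilde N_{\hat\alpha} \supset \tilde N_\Theta$ whenever $\Omega \not\subset \Theta$; hence $x \in N_\Omega$ belongs to $\tilde N_\Theta$ precisely when $\Omega \subset \Theta$, and
\[
\operatorname{Alt}(\Phi)(x) \;=\; \sum_{\Theta \supset \Omega} (-1)^{|\Theta|}\,\iota_\Theta^*\Phi(x),
\]
each evaluation using the identification \eqref{Nident} of $\tilde N_\Omega \subset X_\Theta$ with $\tilde N_\Omega \subset X_\Omega$.

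The heart of the plan is a pointwise comparison of neighboring adjoint Bernstein maps: for every pair $\Omega \subset \Theta$, I aim to prove
\begin{equation}\label{planbd}
\bigl|\,\iota_\Theta^*\Phi(x) - \iota_\Omega^*\Phi(x)\,\bigr| \;\le\; \|\Phi\|_{L^2(X)} \cdot h_{\Omega,\Theta}(x) \qquad (x \in N_\Omega),
\end{equation}
where $h_{\Omega,\Theta}$ is a $J$-determined function on $N_\Omega$ with $A_{X,\Omega}^+$-subunitary exponents. This will rest on the iterated tower of boundary degenerations: $X_\Omega$ is itself the boundary degeneration of $X_\Theta$ attached to $\Omega \subset \Theta$, with a Bernstein map $\iota_\Omega^\Theta : L^2(X_\Omega) \to L^2(X_\Theta)$ satisfying $\iota_\Omega = \iota_\Theta \circ \iota_\Omega^\Theta$, hence $\iota_\Omega^* = (\iota_\Omega^\Theta)^* \circ \iota_\Theta^*$. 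The ``unitary'' part of the asymptotic expansion of $\iota_\Theta^*\Phi$ at $\Omega$-infinity in $X_\Theta$ is precisely $\iota_\Omega^\Theta \iota_\Omega^*\Phi$, and the complementary piece $(I - \iota_\Omega^\Theta(\iota_\Omega^\Theta)^*)\iota_\Theta^*\Phi$ carries only strictly subunitary exponents by a Casselman-type argument as used in \cite[\S 11]{SV}, \cite[\S 6]{Delorme-Plancherel}; combining this with the asymptotic-identity property of $\iota_\Omega^\Theta$ on $N_\Omega \subset X_\Theta$ gives \eqref{planbd}.

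Granting \eqref{planbd}, I split
\[
\operatorname{Alt}(\Phi)(x) \;=\; \Bigl(\sum_{\Theta \supset \Omega}(-1)^{|\Theta|}\Bigr)\iota_\Omega^*\Phi(x) \;+\; \sum_{\Theta \supset \Omega}(-1)^{|\Theta|}\bigl(\iota_\Theta^*\Phi(x) - \iota_\Omega^*\Phi(x)\bigr).
\]
The first sum vanishes for $\Omega \subsetneq \Delta_X$ and equals $(-1)^{|\Delta_X|}\Phi(x)$ when $\Omega = \Delta_X$; in that case $N_{\Delta_X}$ is compact (since $\mathcal Z(X) = 1$), and for $\Phi \in L^2(X)^J$ the Cauchy--Schwarz bound $|\Phi(x)| \le \|\Phi\|_{L^2}/\sqrt{\operatorname{vol}(xJ)}$ gives control by $\|\Phi\|_{L^2}$ times a compactly supported positive function. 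The remainder terms on each $N_\Omega$ are majorized by $\|\Phi\|_{L^2}$ times the finite sum $\sum_{\Theta \supset \Omega} h_{\Omega,\Theta}$, and because the strict subunitarity asserted in \eqref{planbd} holds in every $\Delta_X \smallsetminus \Omega$ direction it remains strict when restricted to any subwall $\mathring A_{X,\Theta}^+ \subset A_{X,\Omega}$ (with $\Theta \supset \Omega$); thus the global function assembled across the finitely many $\Omega$ is $A_{X,\Theta}$-finite with subunitary exponents on every $\tilde N_\Theta$, yielding a single decaying function on $X$.

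The principal difficulty will be \eqref{planbd}: the operator identity $(\iota_\Omega^\Theta)^*\iota_\Theta^* = \iota_\Omega^*$ is formal, but translating it into a pointwise subunitary-decay bound whose constant is controlled by $\|\Phi\|_{L^2(X)}$ (rather than a less tractable seminorm of $\iota_\Theta^*\Phi$) requires exploiting both the $L^2$-contraction $\|\iota_\Theta^*\Phi\|_{L^2(X_\Theta)} \le \|\Phi\|_{L^2(X)}$ and the finite-dimensionality of the $J$-fixed coefficient space of each asymptotic exponent, together with a Casselman-type subunitarity that is uniform across the spectral decomposition. Once \eqref{planbd} is in hand, the inclusion--exclusion cancellation and the gluing into a single global decaying function are routine.
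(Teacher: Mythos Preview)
Your combinatorial reduction is correct and clean: on $N_\Omega$ only the terms with $\Theta\supset\Omega$ survive, and subtracting $\iota_\Omega^*\Phi$ makes the alternating sum of constants vanish. The difficulty is not there but in the decay properties you attribute to the remainder.

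The bound \eqref{planbd} does hold, via Cauchy--Schwarz and the estimate $\|(e_\Omega^\Theta-\iota_\Omega^\Theta)a^{-1}\cdot 1_{xJ}\|_{L^2(X_\Theta)}\le C\,Q^J(a)$ of \cite[Lemma~11.5.1]{SV} applied inside $X_\Theta$; your spectral-projection phrasing is not the right mechanism, but the estimate is available. The real gap is that this only gives $h_{\Omega,\Theta}$ with subunitary exponents in the directions of $\Theta\smallsetminus\Omega$. The directions of $\Delta_X\smallsetminus\Theta$ correspond to the center $A_{X,\Theta}$ of $X_\Theta$, and there the estimate gives no decay at all (the action of $A_{X,\Theta}$ is unitary on $L^2(X_\Theta)$, so $\|(e_\Omega^\Theta-\iota_\Omega^\Theta)a^{-1}\cdot 1_{xJ}\|$ is constant along those directions). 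Consequently the sum $\sum_{\Theta\supsetneq\Omega}h_{\Omega,\Theta}$ is \emph{not} a decaying function on $A_{X,\Omega}^+$: the term indexed by $\Theta=\Omega\cup\{\alpha\}$, for instance, decays only in the $\alpha$-direction and is merely bounded in the others. Your final gluing step therefore fails.

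The paper's proof circumvents this by a different organization: for each single root $\alpha$ it pairs the entire alternating sum as $\sum_{\Theta\ni\alpha}(-1)^{|\Theta|}(\tilde\tau_\Theta\iota_\Theta^*\Phi-\tilde\tau_{\Theta\smallsetminus\{\alpha\}}\iota_{\Theta\smallsetminus\{\alpha\}}^*\Phi)$, and each pair is controlled by the codimension-one estimate for $(e_{\hat\alpha}-\iota_{\hat\alpha})$ (after using transitivity $\iota_{\Theta\smallsetminus\{\alpha\}}^*=\iota_{\Theta\smallsetminus\{\alpha\}}^{\Theta,*}\circ\iota_\Theta^*$ and the $L^2$-contraction of $\iota_\Theta^*$). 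This yields decay of the \emph{whole} function $\operatorname{Alt}(\Phi)$ along $A_{X,\hat\alpha}^+$, for every $\alpha$ separately; one then takes the minimum over all factorizations $a=a'b$ with $b\in A_{X,\hat\alpha}^+$ to obtain decay in every $\Delta_X\smallsetminus\Omega$ direction simultaneously. The point is that the min-over-$\alpha$ trick only works because it is applied to a single function, not to a sum of individually-bounded pieces. That pairing is the missing idea in your approach.
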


\begin{proof}

We claim that for every $\alpha\in \Delta_X$, and every $x\in N_{\hat\alpha}$, the restriction of \eqref{alternating} to $A_{X,\hat\alpha}^+ \cdot x$ satisfies the bound:
\begin{equation}\label{bd} |\operatorname{Alt}(\Phi)(a\cdot x)| \le \Vert\Phi\Vert_{L^2(X)} \Vol(axJ)^{-\frac{1}{2}} R_\alpha(a),\end{equation}
where $R_\alpha$ is a decaying function on $A_{X,\hat\alpha}^+ $ that only depends on $J$. 

This will prove the Lemma: Indeed, for an arbitrary $\Theta$, $x\in N'_\Theta$ and $a\in A_{X,\Theta}^+$ we get a bound:
$$|\operatorname{Alt}(\Phi)(a\cdot x)| = |\operatorname{Alt}(\Phi)(b\cdot a'x)|\le  \Vert\Phi\Vert_{L^2(X)}   \Vol(axJ)^{-\frac{1}{2}}  R_\Theta(a), $$
for every $\alpha \in \Delta_X\smallsetminus \Theta$ and decomposition $a = a' b$ with $a'\in A_{X,\Theta}^+$ and $b \in A_{X,\hat\alpha}^+$, 
where $R_\Theta$ is the decaying function on $A_{X,\Theta}^+$ defined as:
$$R_\Theta(a)= \min_{\alpha\in\Delta_X\smallsetminus \Theta}  \left(\min_{a = a' b,  a'\in A_{X,\Theta}^+, b \in A_{X,\hat\alpha}^+} R_\alpha(b)\right).$$

(Notice that $A_{X,\Theta}$ is generated by the one-dimensional tori $A_{X,\hat\alpha}$, $\alpha\in \Delta_X\smallsetminus \Theta$, up to finite index. Checking that $R_\Theta$, as defined above, is a decaying function essentially reduces the problem to the monoid $\mathbb N^{\Delta_X\smallsetminus \Theta}$, decaying functions $R_{\hat\alpha}$ for each of the coordinates, which can also be assumed to be equal to the same function $R$, and $R_\Theta((n_\alpha)_{\alpha\in \Delta_X\smallsetminus \Theta}) = R(\max_{\alpha} n_\alpha)$.) 

To prove \eqref{bd}, we notice that $\operatorname{Alt}(\Phi)$ is equal to the sum over all $\Theta$ containing $\alpha$ of the terms:
$$(-1)^{|\Theta|} (\tilde\tau_\Theta \iota_\Theta^* \Phi - \tilde\tau_{\Theta\smallsetminus\{\alpha\}} \iota_{\Theta\smallsetminus\{\alpha\}}^* \Phi).$$

By the transitivity property of Bernstein maps:
$$ \iota_\Theta^* \Phi  = \iota_\Theta^{\Theta', *} \circ \iota_{\Theta'}^* \Phi \mbox{ for } \Theta\subset \Theta',$$
where $\iota_\Theta^{\Theta', *} $ is the corresponding adjoint Bernstein map for the variety $X_{\Theta'}$,
and by the fact that the norm of $\iota^*_{\Theta'}  \Phi$ for some $\Theta'\subset\Delta_X$ with $\alpha\in \Theta'$, is bounded by a fixed multiple of $\Vert\Phi\Vert_{L^2(X)}$, it is enough to prove the statement when \eqref{alternating} is replaced by $\Phi - \iota_{\hat \alpha}^*\Phi$ (the rest of the terms being similar, with $\Phi$ replaced by $\iota_\Theta^*\Phi$, whose norm is bounded by a constant times $\Vert \Phi\Vert_{L^2(X)}$). Thus, we need to prove that for every $x\in N_{\hat\alpha}$ the restriction of $\Phi - \iota_{\hat \alpha}^*\Phi$ to $A_{X,\hat\alpha}^+ \cdot x$ is bounded by $\Vert \Phi \Vert_{L^2(X)} \Vol(axJ)^{-\frac{1}{2}} \cdot R_\alpha$, where $R_\alpha$ is a decaying function on $A_{X,\hat\alpha}^+ $ that only depends on $J$.  

This follows from \cite[Lemma 11.5.1]{SV} (and its proof): Indeed, if $\Psi = 1_{xJ}$, the characteristic function of some $J$-orbit on $ N_{\hat\alpha}$, and $a\in A_{X,\hat\alpha}^+$, then we have:
 $$ \left|\left< \Phi - \iota_{\hat\alpha}^* \Phi, a^{-1}\cdot \Psi\right> \right| = \left|\left< \Phi, (e_{\hat\alpha}-\iota_{\hat\alpha})a^{-1}\cdot \Psi\right> \right| \le$$ 
 $$ \le \Vert  \Phi \Vert_{L^2(X)} \cdot \Vert  (e_{\hat\alpha}-\iota_{\hat\alpha})a^{-1}\cdot \Psi \Vert_{L^2(X)} \le \Vert  \Phi\Vert_{L^2(X)} C_\Psi Q^J(a)$$
in the notation of \emph{loc.cit.}  so we can set $R_\alpha(a)=  C_\Psi Q^J(a)$, where $Q^J(a)$ is a decaying function on $A_{X,\hat\alpha}^+$ which is independent of $\Phi,\Psi$. In the proof of \cite[Lemma 11.5.1]{SV} it is seen, actually, that the constant $C_\Psi$ can be bounded by a fixed multiple of:
$$\Vert \Psi\Vert_{L^2(X_{\hat\alpha})} + \sum_i \Vert e_{\hat\alpha} a_i \cdot \Psi\Vert_{L^2(X)},$$
where the $a_i$'s range in a fixed finite set of elements of $A_{X,\hat\alpha}^+$. When $\Psi$ is the characteristic function of a $J$-orbit $xJ$ on $N_{\hat\alpha}$, this sum will simply be bounded by a fixed (the number of $a_i$'s $+1$) multiple of $\Vol(xJ)^{\frac{1}{2}}$, the $L^2$ norm of $\Psi$ --- recall that ``close to infinity'' the Bernstein maps $e_\Theta$ are induced by measure-preserving identifications of $J$-orbits, cf.\ \cite[Proposition 4.3.3]{SV} and \cite[Theorem 2]{Delorme-Plancherel}. Therefore, for every $x\in N_{\hat\alpha}$ and $a\in A_{X,\hat\alpha}^+$ we get:
$$\left|\left< \Phi - \iota_{\hat\alpha}^* \Phi, a^{-1}\cdot 1_{xJ}\right> \right| \ll  \Vert  \Phi\Vert_{L^2(X)} \cdot \Vol(xJ)^{\frac{1}{2}} Q^J(a) $$ 
where the implicit constant only depends on $J$. Taking into account that the measure on $N_{\hat\alpha}$ is an $A_{X,\hat\alpha}$-eigenmeasure, say with character $\delta_{\hat\alpha}$, we have by definition:
$$\left< \Phi - \iota_{\hat\alpha}^* \Phi,  a^{-1}\cdot 1_{xJ} \right>= \left< \Phi - \iota_{\hat\alpha}^* \Phi, \delta_{\hat\alpha}^{-\frac{1}{2}}(a) 1_{axJ}\right>=$$
$$= (\Phi - \iota_{\hat\alpha}^* \Phi)(ax) \delta_{\hat\alpha}^{-\frac{1}{2}}(a) \Vol(axJ) = (\Phi - \iota_{\hat\alpha}^* \Phi)(ax) \delta_{\hat\alpha}^{\frac{1}{2}}(a) \Vol(xJ),$$
and hence the above inequality becomes:
$$ |\Phi - \iota_{\hat\alpha}^* \Phi|(ax) \ll \Vert  \Phi\Vert_{L^2(X)} \cdot \Vol(a\cdot xJ)^{-\frac{1}{2}} Q^J(a) .$$

This proves the lemma.

\end{proof}

\begin{proof}[Proof of Proposition \ref{Theta to X}]
Fix a $\Theta$, as in a statement of the proposition. We will prove the proposition inductively on $|\Delta_X|-|\Theta|$, the base case $\Theta=\Delta_X$ being trivial. Assume that it has been proven for all orders of $|\Delta_X|-|\Theta|$ smaller than the given one.

We have already reduced the proposition to the case $\mathcal Z(X)=1$, which we will henceforth assume. (Notice, however, that by the inductive assumption we are free to assume the proposition for any smaller value of $|\Delta_X|-|\Theta|$ without this assumption.) We fix an open compact subgroup $J$ and $J$-good neighborhoods $N_\Omega$ of $\Omega$-infinity as in the setup of Lemma \ref{decaying}, and use the notation $N'_\Omega$ as before.

By Lemma \ref{decaying}, it is enough to prove:

\begin{quote}\emph{ For all $\Omega\subsetneq \Delta_X$, the composition of $\iota^*_\Omega \iota_\Theta: L^2(X_\Theta)_\disc\to L^2(X_\Omega)$ with restriction to $N_\Omega$ takes $\mathscr C(X_\Theta)_\disc$ continuously into $\mathscr C(N_\Omega)$.}\end{quote}
\begin{equation}\label{claimforprop}\end{equation}
Indeed, by Lemma \ref{decaying}, for every $f\in \mathscr C(X_\Theta)^J$ the difference:
$$ \iota_\Theta f - \sum_{\Omega\subsetneq\Delta_X} (-1)^{|\Omega|} \tilde\tau_\Omega \iota_\Omega^*\iota_\Theta f$$ is bounded by a fixed decaying function times $\Vert \iota_\Theta f\Vert \ll \Vert f \Vert$, and by the claim the subtrahend above is (continuously) in the Harish-Chandra Schwartz space.

First of all, if $\Omega$ does not contain a conjugate of $\Theta$ then $\left.\iota_\Omega^*\iota_\Theta\right|_{L^2(X_\Theta)_\disc} =0$ and there is nothing to prove. Now let $(\Omega_i)_i$ denote representatives for $W_{X_\Omega}$-conjugacy classes of subsets of $\Omega$ which are $W_X$-conjugate to $\Theta$. (Here, $W_{X_\Omega}\subset W_X$ denotes the little Weyl group of $X_\Omega$, which is generated by the simple reflections corresponding to elements of $\Omega$.) Denote by $\iota_{\Omega_i}^\Omega: L^2(X_{\Omega_i})\to L^2(X_\Omega)$ the analogous Bernstein maps for the variety $X_\Omega$. We claim that there are non-zero integers $d_\Omega(\Omega_i)$ such that:
\begin{equation}\label{previous}
 \iota_\Omega^* \circ \iota_\Theta|_{L^2(X_\Theta)_\disc} = \sum_i \sqrt{d_\Omega(\Omega_i)^{-1}} \iota_{\Omega_i}^\Omega \circ \iota_{\Omega_i}^* \circ \iota_\Theta.
\end{equation}

Indeed, the image of $L^2(X_\Theta)_\disc$ under $\iota_\Omega^* \circ \iota_\Theta$ lies in the direct sum (over all $i$) of the spaces $L^2(X_\Omega)_{[\Omega_i]}$, where $L^2(X_\Omega)_{[\Omega_i]}$ denotes the image of $L^2(X_{\Omega_i})_\disc$ under $\iota_{\Omega_i}^\Omega$. This follows from the transitivity property $\iota_{\Xi}^* = \iota_{\Xi}^{\Omega,*} \circ \iota_\Omega^*$ of the Bernstein maps, and the fact that $\iota_{\Xi,\disc}^*\iota_\Theta=0$ unless $\Xi$ is a $W_X$-conjugate of $\Theta$. 

Let $d_\Omega(\Omega_i) = \# W_{X_\Omega}(\Omega_i,\Omega_i)$.
It follows from Theorem \ref{L2theorem} that the map: $$\sqrt{d_\Omega(\Omega_i)^{-1}} \iota_{\Omega_i}^\Omega  \circ \iota_{\Omega_i}^{\Omega,*}: L^2(X_\Omega)\to L^2(X_\Omega)$$
is the identity on $L^2(X_\Omega)_{[\Omega_i]}$, and zero on the summands $L^2(X_\Omega)_{[\Omega_j]}$ with $j\ne i$. Hence,
$$\sum_i \sqrt{d_\Omega(\Omega_i)^{-1}} \iota_{\Omega_i}^\Omega  \circ \iota_{\Omega_i}^{\Omega,*}$$
is the identity on the image of $L^2(X_\Theta)_\disc$ under $\iota_\Omega^* \circ \iota_\Theta$, and \eqref{previous} follows.

The map $\iota_{\Omega_i}^* \circ \iota_\Theta$ is a continuous map: 
$$\mathscr C(X_\Theta)_\disc \to \mathscr C(X_{\Omega_i})_\disc$$
by Theorem \ref{unitaryscattering},
so by replacing $\Theta$ by $\Omega_i$ we have reduced the claim \eqref{claimforprop} to the statement of the proposition when $\Theta\subset\Omega$ and $X$ is replaced by $X_\Omega$. It now follows by the induction hypothesis.
\end{proof}

Now we come to proving the other direction. We keep assuming that $\mathcal Z(X)=1$, having reduced the problem to this case.

\begin{proof}[Proof of Proposition \ref{X to Theta}]
 Let $f\in \mathscr C(X)^J$, and let $X=\bigsqcup_\Theta N'_\Theta$ be a decomposition as above; then $f_\Theta:=f|_{N'_\Theta} \in \mathscr C(N'_\Theta)\subset \mathscr C(X_\Theta)$. By Theorem \ref{thmdiscrete}, we need to show that the image of $\iota_{\Theta,\disc}^* f$ in $L^2(X_\Theta)_\disc = L^2(\widehat{X_\Theta^L}^\disc, \mathscr L_\Theta)$ actually lies in $C^\infty(\widehat{X_\Theta^L}^\disc,\mathscr L_\Theta)$ (smooth sections), and that the resulting map: $\mathscr C(X)^J\to C^\infty(\widehat{X_\Theta^L}^\disc,\mathscr L_\Theta)$ is continuous.

 First of all, for each $\Theta$ and $\Omega$ consider the composition of maps:
\begin{equation}\label{kmaps} \mathcal S(X_\Omega)\xrightarrow{e_\Omega} \mathcal S(X) \xrightarrow{\iota_{\Theta,\disc}^*} L^2(X_\Theta)_\disc \xrightarrow{\sim} L^2(\widehat{X_\Theta^L}^\disc, \mathscr L_\Theta).
\end{equation}
By \eqref{explicitunitary}, this composition is given by the restriction of the maps $E_{\Theta,\disc}^{*,\Omega}$ of \eqref{asympEisenstein} ($\Omega$-asymptotics of normalized constant terms). Recall that by Proposition \ref{regularEisenstein}, the image of $E_{\Theta,\disc}^{*,\Omega}$ lies in $\Gamma(\widehat{X_\Theta^L}^\disc, \mathscr L_\Theta)$ (i.e.\ rational sections whose poles do not meet the unitary set).

Our goal is show that the maps $E_{\Theta,\disc}^{*,\Omega}$ extend continuously to operators represented by the bottom horizontal row of the following diagram, where the vertical arrows are the natural inclusions:
$$\xymatrix{
 \mathcal S(N'_\Omega) \ar[r] \ar[d] & \Gamma(\widehat{X_\Theta^L}^\disc, \mathscr L_\Theta) \ar[d]\\
 \mathscr C(N'_\Omega) \ar@{-->}[r]	     & C^\infty(\widehat{X_\Theta^L}^\disc, \mathscr L_\Theta).
}$$

This will prove the proposition, once we know it for all $\Omega$.

Fix a connected component $Y$ of $\widehat{X_\Theta^L}^\disc$, and recall that $\Gamma(Y, \mathscr L_\Theta)$ is actually a $\mathcal D(Y)$-module (module for the ring of polynomial differential operators on $Y$). Fix any $D\in \mathcal D(Y)$ and apply it to the operator $E_{\Theta,\disc}^{*,\Omega}$. As we have seen in Lemma \ref{derivatives}, the resulting element:
$$DE_{\Theta,\disc}^{*,\Omega} \in \Hom\left(\mathcal S(X_\Omega), \Gamma(Y, \mathscr L_\Theta)\right)$$ 
(not a $G$-equivariant homomorphism) has the same exponents, possibly with higher multiplicity, as $E_{\Theta,\disc}^{*,\Omega}$, and by Proposition \ref{propsubunitary}, these are either unitary or subunitary with respect to $\mathring A_{X,\Theta}^+$.

We will use the following lemma of linear algebra:
\begin{lemma}
Suppose $S$ is a finitely generated abelian group 
together with a finitely generated submonoid $S^+ \subset S$ that generates $S$. If $S$ has a locally finite action on a complex vector space $V$, with the degrees of all vectors uniformly bounded by an integer $m$, and generalized eigencharacters which are unitary or subunitary with respect to $S^+$, and if $\Vert \bullet \Vert$ is any norm on $V$, there exist a tempered function $T$ on $S$ and a finite subset $S_0\subset S$, depending only on $S^+$ and $m$, with the property that:
$$\Vert s\cdot v\Vert \le T(s) \max_{s'\in S_0} \Vert s'\cdot v\Vert$$
for all $s\in S^+$, $v\in V$.
\end{lemma}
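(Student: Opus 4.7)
The plan is to exploit the Jordan decomposition of the action of $S$ on $V$ together with the subunitarity of generalized eigencharacters on $S^+$. Since $S$ is abelian and $V$ finite-dimensional, the action decomposes $V$ canonically into a finite direct sum of $S$-invariant generalized eigenspaces, $V = \bigoplus_\chi V_\chi$, where $\chi$ ranges over the finite set of generalized eigencharacters; the projections $V \twoheadrightarrow V_\chi$ are $S$-equivariant with operator norms bounded in terms of $V$ alone. It therefore suffices to prove the bound separately on each $V_\chi$ and sum, with the common finite set $S_0$ taken to be the union over $\chi$ of the sets produced for each eigenspace.

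Next, fix monoid generators $s_1, \ldots, s_m$ of $S^+$; on $V_\chi$, write $s_i = \chi(s_i)(I + M_i)$ with $M_i := \chi(s_i)^{-1}(s_i - \chi(s_i) I)$ nilpotent of order at most $d$, the uniform bound on degrees. The operators $M_i$ commute, so for $s = s_1^{a_1} \cdots s_m^{a_m} \in S^+$ the binomial theorem yields
$$ s = \chi(s) \sum_{0 \le k_i < d} \prod_i \binom{a_i}{k_i} M_i^{k_i}.$$
Expanding each $M_i^{k_i} = \chi(s_i)^{-k_i}(s_i - \chi(s_i) I)^{k_i}$ in powers of $s_i$ rewrites $sv$ as a linear combination of translates $s_1^{j_1} \cdots s_m^{j_m} v$ with $0 \le j_i < d$, in which each coefficient is, up to binomial factors $\binom{a_i}{k_i}\binom{k_i}{j_i}$, a product $\prod_i \chi(s_i)^{a_i - j_i}$. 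The binomial factors are jointly polynomial in $(a_1, \ldots, a_m)$ of total degree at most $m(d-1)$; and the exponents $a_i - j_i$ are nonnegative whenever the binomial $\binom{a_i}{k_i}$ is nonzero, so the subunitarity $|\chi(s_i)| \le 1$ makes the $\chi$-factors bounded by $1$. This establishes the claimed inequality on $V_\chi$ with $S_0 = \{s_1^{j_1} \cdots s_m^{j_m} : 0 \le j_i < d\}$ and $T$ any polynomial dominating the binomial product.

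The main subtlety will be arranging that the degree of $T$ and the cardinality of $S_0$ depend only on the rank $r$ of $S$ and on $d$, not on the particular monoid presentation of $S^+$. To achieve this, I would first split off the torsion of $S$: it acts through a finite quotient whose contribution is absorbed into a multiplicative constant. After passing to the free part, I would choose $s_1, \ldots, s_r \in S^+$ whose images form a $\QQ$-basis of $S \otimes_\ZZ \QQ$, so that the submonoid $\Sigma := \NN\{s_1, \ldots, s_r\}$ has the property that any $s \in S^+$ can be written as $\sigma s'$ with $\sigma \in \Sigma$ and $s'$ in a fixed finite subset $F \subset S^+$ (standard finite generation of $S^+$ relative to $\Sigma$). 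Applying the above bound to $\sigma$ acting on $s'v$ and enlarging $S_0$ to include all products $s' \cdot s_1^{j_1} \cdots s_r^{j_r}$ for $s' \in F$ and $0 \le j_i < d$ completes the argument. The real content is the binomial calculation on a single eigenspace; this final reduction is routine finite-generation bookkeeping.
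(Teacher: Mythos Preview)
There is a genuine gap in your first reduction step. The lemma requires $T$ and $S_0$ to depend \emph{only} on the rank of $S$ and the maximal degree $d$, not on $V$ or on the particular action; this uniformity is precisely what is needed in the application, where the lemma is invoked for a \emph{family} of actions (the asymptotics of Eisenstein integrals as the spectral parameter $\sigma$ varies, with a fixed degree bound coming from Lemma~\ref{derivatives}). Your decomposition $V=\bigoplus_\chi V_\chi$ and the subsequent summing introduce the operator norms of the spectral projections $P_\chi$ and the number of eigencharacters, both of which depend on the action rather than on $d$ alone. Concretely, for $S=\mathbb{Z}$, $S^+=\mathbb{N}$, and the generator acting on $\mathbb{C}^2$ (standard norm) by $\left(\begin{smallmatrix}1&0\\N&1/2\end{smallmatrix}\right)$, the degree is $2$ and both eigenvalues are subunitary, yet $\|P_1\|+\|P_{1/2}\|\asymp N$; your final $T$ would carry this factor, whereas the lemma demands a bound uniform in $N$ (and indeed $\|g^n e_1\|\le 2\|g\,e_1\|$ holds for all $n\ge 1$ here).

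The paper avoids this by never projecting: it reduces to $S=\mathbb{Z}$, $S^+=\mathbb{N}$, and then argues by induction on the degree of the minimal polynomial of the generator $g$ on the \emph{single vector} $v$. Writing that polynomial as $(x-\zeta)Q(x)$ with $|\zeta|\le 1$, the vector $w=(g-\zeta)v$ has degree one less; the telescoping identity $g^n v=\zeta^n v+\sum_{k=0}^{n-1}\zeta^{n-1-k}g^k w$, together with the inductive bound on $\|g^k w\|$ and the trivial estimate $\|g^j w\|\le \|g^{j+1}v\|+\|g^j v\|$, yields the claim with $S_0=\{0,1,\ldots,d-1\}$ and $T$ polynomial of degree $d-1$. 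Your binomial computation is essentially this argument specialized to a single generalized eigenspace; the missing idea is to peel off eigenvalues one at a time from a fixed $v$, rather than to decompose $V$ globally.
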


We remind that ``locally finite'' means that the span of the $S$-translates of each vector is finite dimensional, and the degree of a vector is the dimension of this $S$-span. Compare this lemma with \cite[Lemma 10.2.5]{SV}.

\begin{proof}
We may replace $S^+$ by the free monoid on a set of generators, and $S$ by the free group on this set. We can then reduce to the case $S=\mathbb Z$, $S^+=\mathbb N$, because, if $T', S_0'$ work for $(\mathbb Z,\mathbb N)$, then $T(n_1, \dots, n_r) = T'(n_1) \cdots T'(n_r)$, $S_0 = S_0'\times \cdots \times S_0'$ will work for $(\mathbb Z^r,\mathbb N^r)$.

For $(\mathbb Z,\mathbb N)$ we apply induction on the degree: Writing the minimal polynomial of the generator $M:={'1'}\in \mathbb N$ on an element $v\in V$ as $P(x) = (x-\zeta) Q(x)$, and assuming by induction that the lemma holds for the vector $v'=(M-\zeta)v$ (with some tempered function $T'$ and some set $S_0'$ depending only on the degree of $Q$), we get an estimate:
$$ \Vert M^n v \Vert \le \Vert M^{n-1} v'\Vert + |\zeta| \Vert M^{n-1}v \Vert  \le T'(n-1) \cdot \max_{s'\in S_0'} \Vert s' \cdot v'\Vert + \Vert M^{n-1}v\Vert,$$
where we have used the induction hypothesis and the assumption that $|\zeta|\le 1$. Repeating this estimate for $M^{n-1} v$ and so forth, in the end we get:
$$ \Vert M^n v\Vert \le (T'(n-1)+T'(n-2)+\dots + T'(1)) \max_{s'\in S_0'} \Vert s' \cdot v'\Vert + \Vert v\Vert \le T(n) \max_{s'\in S_0} \Vert s' \cdot v\Vert$$
(again using $|\zeta|\le 1$) where $T(n) = 2(T'(n-1)+T'(n-2)+\dots + T'(1))+1$, $S_0 = \{0\} \cup S_0' \cup (S_0'+1)$.
\end{proof}

We now fix a Haar measure $d\sigma$ on $\widehat{X_\Theta^L}^\disc$, which determines norms $\Vert \bullet\Vert_\sigma$ on the fibers of $\mathscr L_\Theta$ over the unitary set, cf.\ \eqref{Plmeasure-Theta}. By regularity, for any $F\in \mathcal S(X_\Omega)$ the numbers 
$$\left\Vert DE_{\Theta,\disc,\sigma}^{*,\Omega}(F)\right\Vert_\sigma,$$
as $\sigma$ varies in $Y$, are uniformly bounded in $\sigma$. If we now fix a set of $J$-orbits on $N'_\Omega$ whose $A_{X,\Omega}^+$-translates cover $N'_\Omega$, and denote by $F_i$ their characteristic functions, we claim that there is a finite set $S_0$ of elements of $A_{X,\Omega}$ and a tempered function $T$ on $A_{X,\Omega}$ such that:
\begin{equation}\label{Eest}\left\Vert DE_{\Theta,\disc,\sigma}^{*,\Omega}(aF_i)\right\Vert_\sigma \le T(a) \max_{s\in S_0} \left\Vert DE_{\Theta,\disc,\sigma}^{*,\Omega}(sF_i)\right\Vert_\sigma
\end{equation}
for all $a\in A_{X,\Omega}^+$. Indeed, this follows from the above lemma, using the fact that $A_{X,\Omega}$ acts on $\mathcal S(X_\Omega)^J$ through a finitely generated quotient, and that the $DE_{\Theta,\disc,\sigma}^{*,\Omega}$ are all $A_{X,\Omega}$-finite with uniformly bounded degree, by Lemma \ref{derivatives}.

For an arbitrary element $\Phi\in L^2(N'_\Omega)^J$, writing it as a series in $A_{X,\Omega}^+$-translates of the $F_i$s:
$$ \Phi = \sum_{i,j} c_{ij} a_j\cdot F_i,$$
its image in $L^2(\widehat{X_\Theta^L}^\disc, \mathscr L_\Theta)$ is given by the corresponding series:
$$ \sum_{i,j} c_{ij} E_{\Theta,\disc}^{*,\Omega}(a_j\cdot F_i).$$

If, in particular, $\Phi\in \mathscr C(N'_\Omega)$, by \eqref{Eest} we deduce that the corresponding series for $DE_{\Theta,\disc}^{*,\Omega}(\Phi)$ converges in $L^2(\widehat{X_\Theta^L}^\disc, \mathscr L_\Theta)$, and is bounded by continuous seminorms on $\mathscr C(N'_\Omega)$. 

Since the seminorms:
$$ f\mapsto \left\Vert Df \right\Vert_{L^2(\widehat{X_\Theta^L}^\disc, \mathscr L_\Theta)}, \,\,\, D\in \mathcal D(Y),$$
form a complete system of seminorms for $C^\infty(\widehat{X_\Theta^L}^\disc, \mathscr L_\Theta)$, we deduce that the maps $E_{\Theta,\disc}^{*,\Omega}$, restricted to $\mathcal S(N'_\Omega)^J$, extend continuously to:
$$ \mathscr C(N'_\Omega)^J \to C^\infty(\widehat{X_\Theta^L}^\disc, \mathscr L_\Theta).$$

This proves Proposition \ref{X to Theta}.

\end{proof}

We are now ready to complete the proof of our main result on the Harish-Chandra Schwartz space:

\begin{theorem}\label{HCtheorem}
For each $\Theta$, orthogonal projection to $L^2(X_\Theta)_\disc$ gives a topological direct sum decomposition:
$$\mathscr C(X_\Theta) = \mathscr C(X_\Theta)_\disc \oplus \mathscr C(X_\Theta)_\cont.$$

 For each $w\in W_X(\Omega,\Theta)$ the scattering map $S_w$ restricts to a topological isomorphism: 
$$\mathscr C(X_\Theta)_\disc\xrightarrow{\sim} \mathscr C(X_\Omega)_\disc.$$ 

The map $\iota^*$ of \eqref{L2} restricts to a topological isomorphism:
\begin{equation}\label{HCtheoremeq}  \mathscr C(X)\xrightarrow{\sim} \left(\bigoplus_{\Theta\subset\Delta_X} \mathscr C(X_\Theta)_\disc\right)^\inv, \end{equation}
where the exponent ${~^\inv}$ denotes invariants of the scattering maps $S_w$.
\end{theorem}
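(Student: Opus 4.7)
The plan is to assemble the three claims from ingredients already established, with Propositions \ref{Theta to X} and \ref{X to Theta} doing all the real work. For the first assertion, I would invoke Proposition \ref{propdecomp} for each boundary degeneration $X_\Theta$ (equivalently, apply it to the Levi variety $X_\Theta^L$ and transfer via \eqref{HCinduced}). That the splitting is topological, not merely algebraic, follows from the finer decomposition \eqref{finerdiscrete}: for any compact open subgroup $J$ only finitely many connected components of $\widehat{X_\Theta^L}^\disc$ contribute $J$-invariants, so projection onto $\mathscr C(X_\Theta)_\disc^J$ is a finite sum of continuous projections onto Fr\'echet direct summands. The second assertion is exactly the content of Theorem \ref{unitaryscattering}.

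For the third and main assertion, my strategy is to exhibit a continuous inverse of $\iota^*$ and rely on the $L^2$ Plancherel formula (Theorem \ref{L2theorem}). Applying Proposition \ref{X to Theta} summand by summand, the map $\iota^* = \sum_\Theta c(\Theta)^{-1}\iota^*_{\Theta,\disc}$ takes $\mathscr C(X)$ continuously into $\bigoplus_\Theta \mathscr C(X_\Theta)_\disc$; its image is automatically contained in the $\inv$-subspace because this is already the case at the $L^2$ level. In the opposite direction, the $L^2$-adjoint of $\iota^*$ is $\sum_\Theta c(\Theta)^{-1}\iota_{\Theta,\disc}$ restricted to the invariants, and by Proposition \ref{Theta to X} each $\iota_\Theta$ continuously maps $\mathscr C(X_\Theta)_\disc$ into $\mathscr C(X)$. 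This produces a continuous map in the inverse direction, and since the two are mutual inverses on the dense $L^2$-subspace, they must be mutual inverses as continuous maps of LF-spaces, giving the claimed topological isomorphism.

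The main analytic difficulty is entirely absorbed into Propositions \ref{Theta to X} and \ref{X to Theta}; once those are in hand, Theorem \ref{HCtheorem} is essentially a formal consequence of Theorem \ref{L2theorem}. I expect Proposition \ref{Theta to X} to be the hardest of the preparatory steps, since it relies on the alternating-sum decay estimate of Lemma \ref{decaying} together with an induction on $|\Delta_X \setminus \Theta|$ used to transfer bounds from the boundary pieces $\tilde N_\Omega$ to all of $X$, and on the unitary scattering isomorphisms to reduce the case of general $\Omega$ to the case $\Theta\subset\Omega$.
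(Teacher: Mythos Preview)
Your proposal is correct and follows essentially the same route as the paper: the first two assertions are exactly Proposition~\ref{propdecomp} and Theorem~\ref{unitaryscattering}, and for the main isomorphism both you and the paper use Proposition~\ref{X to Theta} to show that $\iota^*$ lands continuously in the invariants and Proposition~\ref{Theta to X} to produce a continuous inverse via $\sum_\Theta c(\Theta)^{-1}\iota_\Theta$ (the paper phrases this as ``$\sum_\Theta \iota_\Theta^*\circ\iota_\Theta$ is a multiple of the identity on the invariants'', which is the same observation). Your commentary on where the analytic weight lies is accurate.
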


\begin{proof}
The first two statements have been proven in Proposition \ref{propdecomp} and Theorem \ref{unitaryscattering}.

Because of the second statement, the space $\left(\bigoplus_{\Theta\subset\Delta_X} \mathscr C(X_\Theta)_\disc\right)^\inv$ makes sense. By Proposition \ref{X to Theta} and Theorem \ref{L2theorem} the space $\mathscr C(X)$ injects continuously into it. Finally, since $\sum_\Theta (i_\Theta^*\circ i_\Theta)$ is a multiple of the identity on $\left(\bigoplus_{\Theta\subset\Delta_X} L^2(X_\Theta)_\disc\right)^\inv$, it follows from Proposition \ref{Theta to X} that the map from $\mathscr C(X)$ to $\left(\bigoplus_{\Theta\subset\Delta_X} \mathscr C(X_\Theta)_\disc\right)^\inv$ is onto.
\end{proof}

The combination of Theorems \ref{explicitunitary} and \ref{HCtheorem} gives Theorem \ref{HCPW2}, which we repeat for convenience of the reader:

\begin{theorem}[cf.\ Theorem \ref{HCtheorem2}] \label{HCtheorem2}
 The normalized constant terms $E_{\Theta,\disc}^*$ extend to an isomorphism of LF-spaces:
\begin{equation} \label{HCtheorem2eq}\mathscr C(X) \xrightarrow\sim \left(\bigoplus_\Theta C^\infty(\widehat{X_\Theta^L}^\disc, \mathscr L_\Theta)\right)^\inv,\end{equation}
where $~^\inv$ here denotes $\mathscr S_w$-invariants, i.e.\ collections of sections $(f_\Theta)_\Theta$ such that for all triples $(\Theta,\Omega, w\in W_X(\Omega,\Theta))$ we have: $\mathscr S_w f_\Theta = f_\Omega$. 
\end{theorem}

In particular, the existence of a ring $\mathfrak z^\temp(X)$ of multipliers on $\mathscr C(X)$, as described in Corollary \ref{mult-HC}, immediately follows from either of the above two versions of our Paley--Wiener theorem for the Harish-Chandra Schwartz space:

\begin{corollary}\label{mult-HC2} Let $$\mathfrak z^\temp(X) = \left(\bigoplus_\Theta \mathfrak z^\disc(X_\Theta^L)\right)^\inv,$$
where the exponent $~\inv$ denotes invariants of all the isomorphisms induced by triples $(\Theta,\Omega, w\in W_X(\Omega,\Theta))$.

 There is a canonical action of $\mathfrak z^\temp(X)$ by continuous $G$-endomorphisms on $\mathscr C(X)$, characterized by the property that for every $\Theta$, considering the map:
 $$ \iota_{\Theta,\disc}^*: \mathscr C(X) \to \mathscr C(X_\Theta)_\disc$$
we have:
$$ \iota_{\Theta,\disc}^*(z\cdot f)  = z_\Theta (\iota_{\Theta,\disc}^* f)$$
for all $z\in \mathfrak z^\temp(X)$, where $z_\Theta$ denotes the $\Theta$-coordinate of $z$.
\end{corollary}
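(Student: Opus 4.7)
The plan is to transport the tautological action on the spectral side of Theorem \ref{HCtheorem2} back to $\mathscr C(X)$ via the Paley-Wiener isomorphism \eqref{HCtheorem2eq}, and then verify the listed properties componentwise. First I would observe that for each $\Theta$, the ring $\mathfrak z^\disc(X_\Theta^L) = C^\infty(\widehat{X_\Theta^L}^\disc)$ acts by pointwise multiplication, hence by continuous $G$-endomorphisms, on the LF-space $C^\infty(\widehat{X_\Theta^L}^\disc,\mathscr L_\Theta)$ (the $G$-equivariance is automatic because $G$ acts fiberwise on $\mathscr L_\Theta$, while multiplication acts on the base). Consequently, a collection $z = (z_\Theta)_\Theta \in \bigoplus_\Theta \mathfrak z^\disc(X_\Theta^L)$ yields a continuous $G$-endomorphism of $\bigoplus_\Theta C^\infty(\widehat{X_\Theta^L}^\disc, \mathscr L_\Theta)$ by componentwise multiplication.

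Next I would check that, when $z \in \mathfrak z^\temp(X)$, this componentwise action preserves the subspace of $(\mathscr S_w)_w$-invariants. Concretely, if $(f_\Theta)_\Theta$ satisfies $\mathscr S_w f_\Theta = f_\Omega$ for every triple $(\Theta,\Omega, w\in W_X(\Omega,\Theta))$, then using the $\mathfrak z^\disc(X_\Theta^L)$-equivariance of the fiberwise scattering maps $\mathscr S_w$ (Proposition \ref{scatteringtool}, extended to the stated form of Theorem \ref{fiberwisescattering}) together with the defining invariance of $z$, namely $z_\Omega = w \cdot z_\Theta$ under the isomorphism \eqref{wisomdisc}, we obtain
\[
\mathscr S_w(z_\Theta \cdot f_\Theta) = (w \cdot z_\Theta) \cdot \mathscr S_w f_\Theta = z_\Omega \cdot f_\Omega.
\]
Thus $z$ defines a continuous $G$-endomorphism of $\left(\bigoplus_\Theta C^\infty(\widehat{X_\Theta^L}^\disc, \mathscr L_\Theta)\right)^\inv$.

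Now I would transport this action to $\mathscr C(X)$ via the topological isomorphism \eqref{HCtheorem2eq} of Theorem \ref{HCtheorem2}. By construction, the diagram
\[
\xymatrix{
\mathscr C(X) \ar[r]^-{\sim} \ar[d]_{z\cdot} & \left(\bigoplus_\Theta C^\infty(\widehat{X_\Theta^L}^\disc, \mathscr L_\Theta)\right)^\inv \ar[d]^{(z_\Theta\cdot)_\Theta}\\
\mathscr C(X) \ar[r]^-{\sim} & \left(\bigoplus_\Theta C^\infty(\widehat{X_\Theta^L}^\disc, \mathscr L_\Theta)\right)^\inv
}
\]
commutes, and since the $\Theta$-component of this isomorphism is exactly $\iota^*_{\Theta,\disc}$ followed by the identification \eqref{mapdiscreteisom}, we have $\iota^*_{\Theta,\disc}(z\cdot f) = z_\Theta(\iota^*_{\Theta,\disc} f)$, which is the stated characterization.

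Finally, uniqueness is immediate: the direct sum of the maps $\iota^*_{\Theta,\disc}$ is injective on $\mathscr C(X)$ (it is in fact the isomorphism of Theorem \ref{HCtheorem}), so any $G$-endomorphism satisfying the characterization must coincide with the one just constructed. No step presents a real obstacle here; the only point that requires care is that the $w$-equivariance of $\mathscr S_w$ with respect to $\mathfrak z^\disc(X_\Theta^L)$ is precisely what is needed to match the invariance definition of $\mathfrak z^\temp(X)$, but this was already established as part of Theorem \ref{fiberwisescattering}.
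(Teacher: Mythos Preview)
Your proof is correct and follows essentially the same approach as the paper: transport the obvious componentwise action of $\mathfrak z^\temp(X)$ on the spectral side of the Paley-Wiener isomorphism back to $\mathscr C(X)$. The paper's own proof is a two-line remark to this effect; you have simply spelled out the verification that invariant elements preserve the $\mathscr S_w$-invariant subspace and that the characterization and uniqueness follow from the isomorphism, all of which is implicit in the paper's argument.
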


\begin{proof}
Indeed, $\mathfrak z^\temp(X)$ acts by continuous $G$-automorphisms on the right hand side of \eqref{HCtheoremeq} or \eqref{HCtheorem2eq}, and the action is characterized by the stated property.
\end{proof}

We complete this section by formulating an extension of the properties of Bernstein and scattering maps from the discrete components of Harish-Chandra Schwartz spaces to the whole space. For the proof, we point the reader to the proof of Theorem \ref{smoothextension}, which will be completely analogous.

\begin{theorem}\label{unitaryextension} 
For every triple $(\Theta,\Omega,w\in W_X(\Omega,\Theta))$ the scattering map:
$$S_w: L^2(X_\Theta)\to L^2(X_\Omega)$$
restricts to a topological isomorphism:
$$\mathscr C(X_\Theta) \to \mathscr C(X_\Omega)$$
which is $\mathfrak z^\temp(X_\Theta^L)$-equivariant with respect to the obvious isomorphism:
$$ \mathfrak z^\temp(X_\Theta^L)\xrightarrow\sim \mathfrak z^\temp(X_\Omega^L)$$
induced by $w$.

The Bernstein maps $\iota_\Theta$ and their adjoints $\iota_\Theta^*$ map $\mathscr C(X_\Theta)$ continuously into $\mathscr C(X)$ and vice versa.
\end{theorem}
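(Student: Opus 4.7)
The plan is to bootstrap from Theorem \ref{HCtheorem}, applied not only to $X$ but to every boundary degeneration $X_\Theta$. Since the spherical root system of $X_\Theta$ is $\Theta$ and its little Weyl group---which I denote $W_{X,\Theta}$---is the subgroup of $W_X$ generated by reflections through elements of $\Theta$, and the other assumptions of \S\ref{ssassumptions} are inherited, that theorem yields a topological isomorphism
\[
\mathscr C(X_\Theta)\xrightarrow{\sim} \Bigl(\bigoplus_{\Xi\subset\Theta}\mathscr C(X_\Xi)_\disc\Bigr)^{\inv},
\]
whose inverse sends $(f_\Xi)_{\Xi\subset\Theta}$ to $\sum_{\Xi\subset\Theta}\iota_\Xi^\Theta(f_\Xi)/c_\Theta(\Xi)$, where $\iota_\Xi^\Theta$ is the Bernstein map for $X_\Theta$ and invariance is with respect to all triples $(\Xi,\Xi',w'\in W_{X,\Theta}(\Xi',\Xi))$.

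To extend each scattering map $S_w$ for $w\in W_X(\Omega,\Theta)$, I would observe that for every $\Xi\subset\Theta$ one has $w\in W_X(w\Xi,\Xi)$ with $w\Xi\subset\Omega$, and Theorem \ref{HCPW1} furnishes a topological isomorphism $S_w\colon \mathscr C(X_\Xi)_\disc\xrightarrow{\sim}\mathscr C(X_{w\Xi})_\disc$. I would then define $S_w\colon\mathscr C(X_\Theta)\to\mathscr C(X_\Omega)$ componentwise by $(f_\Xi)_{\Xi\subset\Theta}\mapsto (S_w f_{w^{-1}\Xi'})_{\Xi'\subset\Omega}$. That this lands in the $W_{X,\Omega}$-invariants is a combinatorial application of associativity \eqref{assoc-unit}: since $w^{-1}\Omega=\Theta$, conjugation by $w$ sends $W_{X,\Omega}$ into $W_{X,\Theta}$; thus for $w''\in W_{X,\Omega}(\Xi'_2,\Xi'_1)$ the element $w':=w^{-1}w''w$ lies in $W_{X,\Theta}(w^{-1}\Xi'_2,w^{-1}\Xi'_1)$, and the identity $S_{w''}S_w=S_{w''w}=S_{ww'}=S_wS_{w'}$ reduces invariance of the image to invariance of the source. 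Tempered-center equivariance is inherited from Corollary \ref{mult-HC2} on each discrete summand, and $S_w$ is a topological isomorphism since it is a permutation of components followed by topological isomorphisms on each.

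For the Bernstein maps, transitivity $\iota_\Theta\circ\iota_\Xi^\Theta=\iota_\Xi$ gives, for $\Phi\in\mathscr C(X_\Theta)$ represented by $(f_\Xi)_{\Xi\subset\Theta}$, the formula $\iota_\Theta\Phi=\sum_\Xi\iota_\Xi(f_\Xi)/c_\Theta(\Xi)$; each summand lies in $\mathscr C(X)$ by Proposition \ref{Theta to X}. Dually, given $F\in\mathscr C(X)$ decomposed as $F=\sum_{\Xi\subset\Delta_X}\iota_\Xi(g_\Xi)/c(\Xi)$ via Theorem \ref{HCtheorem}, identity \eqref{ii} yields $\iota_\Theta^*F=\sum_\Xi\sum_{w\in W_X(\Theta,\Xi)} S_w g_\Xi/c(\Xi)$, which lies in $\mathscr C(X_\Theta)_\disc\subset\mathscr C(X_\Theta)$ by Theorem \ref{HCPW1}. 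The main obstacle, as I see it, is to reconcile the extended $S_w$ on $\mathscr C(X_\Theta)$ with the unitary operator $S_w\colon L^2(X_\Theta)\to L^2(X_\Omega)$ of Theorem \ref{L2theorem}, so that the two uses of the notation agree and the extension is in particular an $L^2$-isometry; this should follow by checking agreement on the dense subspace $\sum_{\Xi\subset\Theta}\iota_\Xi^\Theta\bigl(\mathscr C(X_\Xi)_\disc\bigr)$ using \eqref{ii} on the $\Xi=\Theta$ summand and transporting via the $L^2$-Plancherel isomorphism \eqref{L2} applied to both $X_\Theta$ and $X_\Omega$.
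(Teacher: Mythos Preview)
Your overall strategy matches what the paper indicates (the proof is left to the reader there, with the remark that it is straightforward from Theorem~\ref{HCtheorem} together with the associativity and composition properties of the Bernstein and scattering maps). The componentwise extension of $S_w$ via the Paley--Wiener decomposition of $\mathscr C(X_\Theta)$, the associativity check for $W_{X,\Omega}$-invariance, and the argument for $\iota_\Theta$ via transitivity and Proposition~\ref{Theta to X} are all correct and are the intended route.

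There is, however, a genuine gap in your treatment of $\iota_\Theta^*$. The decomposition \eqref{ii} is only asserted for associates: when $\Xi$ is strictly subordinate to $\Theta$ (i.e.\ $\Theta$ properly contains a $W_X$-conjugate of $\Xi$), the set $W_X(\Theta,\Xi)$ is empty but $\iota_\Theta^*\iota_\Xi|_{L^2(X_\Xi)_\disc}$ is nonzero with image in $L^2(X_\Theta)_\cont$ (Theorem~\ref{unitaryscattering}). Your displayed formula for $\iota_\Theta^*F$ therefore drops exactly these contributions, and the conclusion $\iota_\Theta^*F\in\mathscr C(X_\Theta)_\disc$ is false in general. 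The clean fix is to run transitivity in the other direction: by the Paley--Wiener description of $\mathscr C(X_\Theta)$ it suffices to check that $\iota_{\Xi,\disc}^{\Theta,*}(\iota_\Theta^*F)\in\mathscr C(X_\Xi)_\disc$ for every $\Xi\subset\Theta$; since $\iota_\Xi^{\Theta,*}\circ\iota_\Theta^*=\iota_\Xi^*$, this is precisely $\iota_{\Xi,\disc}^*F$, which lies in $\mathscr C(X_\Xi)_\disc$ by Proposition~\ref{X to Theta}. The same transitivity device also settles the compatibility question you flag at the end: once both $\iota_\Theta,\iota_\Theta^*$ (and likewise for $\Omega$) are known to preserve $\mathscr C$, one can identify the $L^2$-map $S_w$ on $\mathscr C(X_\Theta)$ componentwise by applying $\iota_{\Xi',\disc}^{\Omega,*}$ for each $\Xi'\subset\Omega$ and invoking the associativity relations \eqref{assoc-unit} already established on discrete summands.
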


\section{The Schwartz space} 

We now come to our Paley--Wiener theorem for the Schwartz space of compactly supported, smooth functions on $X$. Besides the properties of the scattering operators $\textswab S_w$ of \S \ref{sec:scatteringgoals}, we will use the following basic result:

\begin{theorem}\label{propbasicdecomp} 
Let $[\Theta]$ run over all associate classes of subsets of $\Delta_X$, and for each such class let $\mathcal S(X)_{[\Theta]}$ denote the space generated by all $e_\Omega \mathcal S(X_\Omega)_\cusp$, $\Omega\in [\Theta]$. Then:

\begin{equation} \label{basicdecomp} \mathcal S(X) = \bigoplus_{[\Theta]} \mathcal S(X)_{[\Theta]}.
\end{equation}

\end{theorem}

\begin{proof}
The sum is direct by Theorem \ref{smoothscattering}. We need to show that the map:
$$ \sum_\Theta e_\Theta: \bigoplus_\Theta \mathcal S(X_\Theta)_\cusp \to \mathcal S(X)$$
is surjective.

We will use induction on the size of $\Delta_X$, the case $\Delta_X=\emptyset$ being tautologically satisfied (because then $\mathcal S(X)=\mathcal S(X)_\cusp$). Assume that the proposition has been proven when $X$ is replaced by $X_\Omega$, for all $\Omega\subsetneq \Delta_X$.  Let us denote by $e_\Theta^\Omega: \mathcal S(X_\Theta) \to \mathcal S(X_\Omega)$ ($\Theta\subset\Omega$) the corresponding maps for the variety $X_\Omega$. Recall the transitivity property:
$$ e_\Omega\circ e_\Theta^\Omega = e_\Theta.$$
Therefore, 
$$ \sum_\Theta e_\Theta\left(\mathcal S(X_\Theta)_\cusp\right) = \mathcal S(X)_\cusp + \sum_{\Theta \ne\Delta_X} e_\Theta\left(\mathcal S(X_\Theta)\right).$$

Assume that $\mathcal S(X)\ne \bigoplus_{[\Theta]} \mathcal S(X)_{[\Theta]}$, then there would be a non-zero subspace $V$ of the smooth dual (i.e.\ $C^\infty(X)$) which would vanish on all the spaces on the right hand side of the last equation. In particular, $e_\Theta^*V= 0$ for all $\Theta\ne \Delta_X$, hence the elements of $V$ are compactly supported modulo the center of $X$. But then they cannot be orthogonal to the cuspidal part $\mathcal S(X)_\cusp=\mathcal S(X)_{[\Delta_X]}$.

\end{proof}

\begin{remark}\label{falseifnotfactorizable}
 This theorem is \emph{false}, in general, in the non-factorizable case, if we define the cuspidal subspace $\mathcal S(X)_\cusp$, as in section \ref{sec:cuspidal}, by requiring that the image under the Plancherel decomposition \eqref{L2discrete} of the (smooth) function is relatively cuspidal. For example, if $X=\PGL_2$ under the $G=\Gm\times \PGL_2$ action (with $\Gm$ acting as a split subtorus by multiplication on the left, and $\PGL_2$ acting by multiplication on the right) then it is known that the tensor product of the trivial character of $F^\times$ by the Steinberg representation $\rm{St}$ of $\PGL_2$ is relatively cuspidal on $X$, while this is not the case for non-trivial characters of $F^\times$. What this means is that the image of an embedding $\rm{St}\hookrightarrow \mathcal S(F^\times\backslash \PGL_2)$ will be orthogonal to $e_\emptyset(\mathcal S(X_\emptyset))$ (this is the property of $1 \otimes \St$ being relatively cuspidal), but also orthogonal to $\mathcal S(X)_\cusp$ (which has no Steinberg-equivariant part, since for generic characters of $F^\times$ the Steinberg representation is not relatively cuspidal).
\end{remark}

Recall that for each $\Theta$ we have defined $\mathcal S^+(X_\Theta)_\cusp$ as the subspace of $C^\infty(X_\Theta)$ generated by all spaces of the form:
$$\textswab S_w \mathcal S(X_\Omega)_\cusp$$
where $\Omega$ is an associate of $\Theta$ and $w\in W_X(\Omega,\Theta)$, and in Theorem \ref{smoothscattering} (and Proposition \ref{extension}) we extended the scattering operators $\textswab S_w$ to isomorphisms between these spaces. 

We are now ready to prove a Paley--Wiener theorem, reminding first that the exponent $~^\inv$ in:
\begin{equation}\label{spaceofinvariants}\left( \oplus_\Theta \mathcal S^+(X_\Theta)_\cusp\right)^\inv
\end{equation}
denotes invariants of these maps. Notice that, as follows easily from the definitions, any element of \eqref{spaceofinvariants} can be obtained by averaging elements of the spaces $\mathcal S(X_\Theta)_\cusp$ via the operators $\textswab S_w$, i.e.:

\begin{lemma}\label{averaging} For any element $f = (f_\Theta)_\Theta$ of \eqref{spaceofinvariants} there is a (non-unique) element
$$(f'_\Theta)_\Theta \in \oplus_\Theta \mathcal S(X_\Theta)_\cusp$$
such that:
\begin{equation}\label{ftheta} f_\Theta = \sum_{\Omega; w\in W_X(\Theta, \Omega)} \textswab S_w f'_\Omega.\end{equation}
\end{lemma} 

\begin{proof}
Let $\Theta_i$ vary in a set of representatives for associate classes of subsets of $\Delta_X$. For each $i$ there is, by definition of the spaces $\mathcal S^+(X_\Theta)$, a collection 
$$(f'_{\Omega,w})_{\Omega\sim \Theta_i; w\in W_X(\Theta_i, \Omega)} \in \oplus_{\Omega\sim \Theta_i; w\in W_X(\Theta_i, \Omega)} \mathcal S(X_\Omega)_\cusp$$ with $f_{\Theta_i} = \sum_{\Omega; w\in W_X(\Theta_i, \Omega)} \textswab S_w f'_{\Omega,w}$.

Setting then $$f'_\Omega = \frac{1}{|W_X(\Omega,\Omega)|}\sum_{w\in W_X(\Theta_i, \Omega)} f'_{\Omega,w}$$ (where $\Theta_i$ is the representative for the associate class of $\Omega$), we easily get from the $W_X(\Theta_i,\Theta_i)$-invariance of $f_{\Theta_i}$ and the associativity properties of the scattering operators that \eqref{ftheta} holds for every $\Theta$.

\end{proof}

\begin{theorem}\label{Schwartztheorem} 
 The sum of the morphisms $e_{\Theta,\cusp}^*$ defines an isomorphism:
\begin{equation}\label{Schwartztheoremeq} \mathcal S(X) \simeq \left( \oplus_\Theta \mathcal S^+(X_\Theta)_\cusp\right)^\inv.\end{equation}
\end{theorem}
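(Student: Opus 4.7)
The plan is to work one associate class at a time. By Proposition \ref{propbasicdecomp} we have the direct sum decomposition $\mathcal S(X) = \bigoplus_{[\Theta]} \mathcal S(X)_{[\Theta]}$, where $\mathcal S(X)_{[\Theta]}$ is generated by $e_\Xi \mathcal S(X_\Xi)_\cusp$ as $\Xi$ ranges over the associate class $[\Theta]$. By Theorem \ref{smoothscattering}, $e_{\Omega,\cusp}^* \circ e_\Xi$ vanishes on $\mathcal S(X_\Xi)_\cusp$ whenever $\Omega \not\sim \Xi$, so $\Phi := \bigoplus_\Theta e_{\Theta,\cusp}^*$ respects this grading, carrying $\mathcal S(X)_{[\Theta]}$ into $\bigoplus_{\Omega \in [\Theta]} \mathcal S^+(X_\Omega)_\cusp$. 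The image lies in the $(\textswab S_w)$-invariants because, writing $f = \sum_\Xi e_\Xi g_\Xi$, the associativity \eqref{assoc-smooth} together with the bijection $w \mapsto w'w$ of $W_X(\Omega,\Xi)$ onto $W_X(\Omega', \Xi)$ yields $\textswab S_{w'}(e_{\Omega,\cusp}^* f) = e_{\Omega',\cusp}^* f$ for every $w' \in W_X(\Omega', \Omega)$. It therefore suffices to prove that $\Phi$ restricts to an isomorphism on each $[\Theta]$-summand.

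To invert $\Phi$ on the $[\Theta]$-summand, I would define
$$\Psi: \Bigl(\bigoplus_{\Omega \in [\Theta]} \mathcal S^+(X_\Omega)_\cusp\Bigr)^\inv \to \mathcal S(X)_{[\Theta]}, \qquad (f_\Omega)_\Omega \mapsto \frac{1}{|W_X|} \sum_{\Omega \in [\Theta]} e_\Omega f_\Omega,$$
where $e_\Omega$ has been extended to $\mathcal S^+(X_\Omega)_\cusp$ via Lemma \ref{extension}. That same lemma, applied to the trivial one-term decomposition of $\textswab S_w g \in \mathcal S^+(X_\Omega)_\cusp$, gives the key identity $e_\Omega \textswab S_w g = e_\Xi g$ for $w \in W_X(\Omega, \Xi)$ and $g \in \mathcal S(X_\Xi)_\cusp$, and shows that the image of $\Psi$ lies in $\mathcal S(X)_{[\Theta]}$.

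To verify that $\Phi$ and $\Psi$ are mutually inverse, I would use Lemma \ref{averaging} to write any $(f_\Omega)_\Omega$ in the invariants as $A((g_\Omega))$ with $g_\Omega \in \mathcal S(X_\Omega)_\cusp$, where $A((g_\Omega))_\Omega := \sum_{\Xi} \sum_{w \in W_X(\Omega, \Xi)} \textswab S_w g_\Xi$. Applying the identity $e_\Omega \textswab S_w g_\Xi = e_\Xi g_\Xi$ term by term gives
$$\sum_{\Omega \in [\Theta]} e_\Omega f_\Omega = \sum_\Xi \Bigl(\sum_{\Omega \in [\Theta]} |W_X(\Omega, \Xi)|\Bigr) e_\Xi g_\Xi = |W_X| \sum_\Xi e_\Xi g_\Xi,$$
via the orbit-stabilizer identity $\sum_{\Omega \in [\Theta]} |W_X(\Omega, \Xi)| = |W_X|$ for any $\Xi \in [\Theta]$. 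Thus $\Psi((f_\Omega)) = \sum_\Xi e_\Xi g_\Xi$, from which both $\Phi \circ \Psi = \mathrm{id}$ (applying $\Phi$ recovers $A((g_\Omega)) = (f_\Omega)$) and $\Psi \circ \Phi = \mathrm{id}$ (every element of $\mathcal S(X)_{[\Theta]}$ is of the form $\sum e_\Xi g_\Xi$ by Proposition \ref{propbasicdecomp}) follow immediately.

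The main point requiring care is the combinatorial bookkeeping with the Weyl-group normalization $|W_X|$, and checking that $\Psi$ is well defined despite the non-uniqueness of the averaging decomposition in Lemma \ref{averaging}. The latter causes no trouble because $\Psi$ is defined directly on the $f_\Omega$ via the extension of $e_\Omega$ to $\mathcal S^+$, without reference to any auxiliary decomposition. Once this combinatorial setup is in place, the proof is essentially formal, resting on Theorem \ref{smoothscattering}, Proposition \ref{propbasicdecomp}, and Lemma \ref{extension}.
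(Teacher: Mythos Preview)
Your overall strategy—constructing an explicit inverse $\Psi$ by averaging the extended maps $e_\Omega$ from Lemma~\ref{extension}—is sound and is somewhat more concrete than the paper's route, which argues surjectivity via Lemma~\ref{averaging} and injectivity separately via Proposition~\ref{characterization}. Your verification that the image lies in the invariants, and your use of the identity $e_\Omega\textswab S_w g = e_\Xi g$, are both correct.

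There is, however, a genuine error in the combinatorics. The claimed ``orbit--stabilizer identity''
\[
\sum_{\Omega\in[\Theta]} |W_X(\Omega,\Xi)| = |W_X|
\]
is false in general. The point is that $W_X$ does \emph{not} act on the set of subsets of $\Delta_X$: for most $w\in W_X$, the image $w\Xi$ is not a set of simple roots. The left-hand side counts exactly those $w\in W_X$ for which $w\Xi\subset\Delta_X$, and this number is the constant $c(\Xi)$ appearing in Theorem~\ref{L2theorem}, not $|W_X|$. For instance, with $W_X$ of type $A_2$ and $\Xi$ a singleton, one gets $c(\Xi)=2$ while $|W_X|=6$; and for $\Xi=\Delta_X$ one always gets $c(\Xi)=1$. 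The quantity $c(\Xi)$ is constant on associate classes (right-multiplication by any $w_0\in W_X(\Xi',\Xi)$ gives a bijection between the two sets), so once you replace $|W_X|$ by $c([\Theta])$ in the definition of $\Psi$, your computation of $\Psi\circ\Phi$ and $\Phi\circ\Psi$ goes through verbatim.
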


\begin{proof}
It is an immediate corollary of Theorem \ref{propbasicdecomp} and Theorem \ref{smoothscattering} that the image of $\bigoplus_\Theta e_{\Theta,\cusp}^*$ lies in $\left( \oplus_\Theta \mathcal S^+(X_\Theta)_\cusp\right)^\inv$. 

Lemma \ref{averaging} shows that the map is surjective, and injectivity follows from Proposition \ref{characterization}.
\end{proof}

It is easy from this to deduce the fiberwise version in terms of normalized constant terms. First of all, for every $\Theta\subset\Delta_X$ let:
$$ \CC^+\left[\widehat{X_\Theta^L}^\cusp, \mathcal L_\Theta\right] \subset  \CC\left(\widehat{X_\Theta^L}^\cusp, \mathcal L_\Theta\right)$$
be the subspace generated by the images of all fiberwise scattering maps $\mathscr S_w$, for $\Omega$ and associate of $\Theta$ and $w\in W_X(\Theta,\Omega)$. Notice that by the regularity of scattering maps on the unitary spectrum (Theorem \ref{fiberwisescattering}), we might as well have written $\Gamma( \,\,\,)$ instead of $\CC(\,\,\,)$. Then it is clear that such an $\mathscr S_w$ induces an isomorphism:
$$\CC^+\left[\widehat{X_\Omega^L}^\cusp, \mathcal L_\Omega\right] \xrightarrow\sim \CC^+\left[\widehat{X_\Theta^L}^\cusp, \mathcal L_\Theta\right],$$
and the combination of Theorems \ref{explicitsmooth} and \ref{Schwartztheorem} gives Theorem \ref{PW2}, which we repeat for convenience of the reader:

\begin{theorem}\label{Schwartztheorem2}
 The normalized cuspidal constant terms $E_{\Theta,\cusp}^*$ give rise to an isomorphism:
\begin{equation}\label{Schwartztheorem2eq}\mathcal S(X) \xrightarrow\sim \left(\bigoplus_\Theta \CC^+[\widehat{X_\Theta^L}^\cusp, \mathcal L_\Theta]\right)^\inv,\end{equation}
where $~^\inv$ here denotes $\mathscr S_w$-invariants.
\end{theorem}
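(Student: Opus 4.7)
The plan is to deduce this explicit statement from the abstract version, Theorem \ref{Schwartztheorem}, by constructing for each $\Theta$ a canonical isomorphism
$$\Psi_\Theta : \mathcal S^+(X_\Theta)_\cusp \xrightarrow{\sim} \CC^+[\widehat{X_\Theta^L}^\cusp, \mathcal L_\Theta]$$
which (i) restricts on $\mathcal S(X_\Theta)_\cusp$ to the spectral isomorphism \eqref{mapcuspidalisom}, (ii) intertwines the smooth scattering maps $\textswab S_w$ with the fiberwise scattering maps $\mathscr S_w$, and (iii) identifies the sum of morphisms $e_{\Theta,\cusp}^*$ from Theorem \ref{Schwartztheorem} with the sum of normalized constant terms $E_{\Theta,\cusp}^*$. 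Granting such $\Psi_\Theta$, the theorem follows immediately, because $\Psi_\Theta$ sends the invariants of the $\textswab S_w$ onto the invariants of the $\mathscr S_w$ by (ii).

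To construct $\Psi_\Theta$, I will use the description of $\mathcal S^+(X_\Theta)_\cusp$ provided by Lemma \ref{extension}: every element $f \in \mathcal S^+(X_\Theta)_\cusp$ can be written as
$$f = \sum_{(\Omega,w)} \textswab S_w f_{(\Omega,w)}, \qquad f_{(\Omega,w)} \in \mathcal S(X_\Omega)_\cusp, \ w \in W_X(\Theta,\Omega).$$
Apply the isomorphism \eqref{mapcuspidalisom} to each $f_{(\Omega,w)}$, producing regular sections $g_{(\Omega,w)} \in \CC[\widehat{X_\Omega^L}^\cusp, \mathcal L_\Omega]$, and set
$$\Psi_\Theta(f) := \sum_{(\Omega,w)} \mathscr S_w\, g_{(\Omega,w)} \; \in \; \CC^+[\widehat{X_\Theta^L}^\cusp, \mathcal L_\Theta],$$
which lies in the target by the very definition of $\CC^+$. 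Well-definedness of $\Psi_\Theta$ (independence of the decomposition of $f$) and injectivity are the first point that needs verification; I will handle both simultaneously by identifying $\Psi_\Theta(f)$ intrinsically as the rational section $\sigma \mapsto f^{\tilde\sigma}$ obtained from the spectral decomposition \eqref{offaxis} applied to a suitable contour shift. Indeed, combining Theorem \ref{explicitsmooth} with the formula \eqref{smoothSw} for $\textswab S_w$, together with \eqref{mapcuspidalisom} and the associativity \eqref{assoc-fiberwise} of the fiberwise scattering, yields
$$f = \int_{\omega^{-1}\widehat{X_\Theta^L}^\cusp} E_{\Theta,\sigma,\cusp}\bigl(\Psi_\Theta(f)(\sigma)\bigr) \, d\sigma$$
for $\omega \gg 0$, so $\Psi_\Theta(f)$ is uniquely determined by $f$. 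Surjectivity of $\Psi_\Theta$ onto $\CC^+[\widehat{X_\Theta^L}^\cusp, \mathcal L_\Theta]$ is tautological, since both sides are generated by the same scattering orbits.

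Property (ii), the intertwining relation $\Psi_\Omega \circ \textswab S_w = \mathscr S_w \circ \Psi_\Theta$, is then automatic from the construction together with associativity \eqref{assoc-smooth}--\eqref{assoc-fiberwise}. Property (iii) is the link between the ``abstract'' constant term $e_{\Theta,\cusp}^*$ of Theorem \ref{Schwartztheorem} and the normalized constant term $E_{\Theta,\cusp}^*$: given $\Phi \in \mathcal S(X)$, one decomposes $\Phi$ via Proposition \ref{propbasicdecomp} as $\Phi = \sum_\Omega e_\Omega f_\Omega$ with $f_\Omega \in \mathcal S(X_\Omega)_\cusp$, and then applies $E_{\Theta,\cusp}^*$ termwise; by the diagram \eqref{Bmatrices} (equivalently, by pairing Theorem \ref{explicitsmooth} against $\Phi$) one obtains exactly $\Psi_\Theta(e_{\Theta,\cusp}^* \Phi)$, using that $E_{\Theta,\cusp}^* \circ e_\Omega$ decomposes as $\sum_{w \in W_X(\Theta,\Omega)} \mathscr S_w|_{\mathcal L}$ on cuspidal input (Proposition \ref{fibercuspidal}, cf.\ \eqref{cuspdecomp}).

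The main obstacle is the well-definedness of $\Psi_\Theta$, equivalently the statement that the spectral realization via $\sigma \mapsto f^{\tilde\sigma}$ sees \emph{all} of $\mathcal S^+(X_\Theta)_\cusp$ faithfully; this in turn reduces to the uniqueness of the shifted spectral decomposition for functions of moderate growth on $X_\Theta$, which is precisely what is discussed in \cite[\S 15.4.4]{SV} and already invoked in the proof of Lemma \ref{extension}. Once that point is in hand, the theorem follows by chaining the isomorphism of Theorem \ref{Schwartztheorem} with $\bigoplus_\Theta \Psi_\Theta$.
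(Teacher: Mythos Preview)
Your approach is correct and is essentially a detailed unpacking of the paper's own one-line proof, which simply says that the theorem follows by combining Theorem~\ref{explicitsmooth} with Theorem~\ref{Schwartztheorem}. The isomorphism $\Psi_\Theta$ you construct is exactly the map \eqref{spectral-Splus} alluded to in the introduction, and your verification of properties (i)--(iii) via Lemma~\ref{extension}, Proposition~\ref{fibercuspidal}, and the associativity relations \eqref{assoc-fiberwise} is the right way to make the paper's terse argument precise.

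One correction: your displayed reconstruction formula
\[
f = \int_{\omega^{-1}\widehat{X_\Theta^L}^\cusp} E_{\Theta,\sigma,\cusp}\bigl(\Psi_\Theta(f)(\sigma)\bigr)\, d\sigma
\]
does not typecheck. The normalized Eisenstein integral $E_{\Theta,\sigma,\cusp}$ maps $\widetilde{\mathcal L_{\Theta,\sigma}}$ into $C^\infty(X)$, whereas $f$ lives in $C^\infty(X_\Theta)$. What you need here is simply the intrinsic spectral decomposition on $X_\Theta$ itself, as in the proof of Lemma~\ref{extension}: every element of $\mathcal S^+(X_\Theta)_\cusp$ has a \emph{unique} shifted decomposition of the form \eqref{offaxis} with $f^{\tilde\sigma}$ rational (linear poles), and this $f^{\tilde\sigma}$ is, up to the Plancherel duality $f^{\tilde\sigma}\,d\sigma = \langle\,\cdot\,,\bar f_{\sigma,\cusp}\rangle\,d\sigma$, precisely your $\Psi_\Theta(f)$. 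That uniqueness is what gives you well-definedness and injectivity of $\Psi_\Theta$; no Eisenstein integral to $X$ is involved at this step. (You may also want to double-check the direction of $w$ versus $w^{-1}$ when passing between $\mathscr S_w$ on $\mathcal L_\Theta$ and $\mathscr S_{w^{-1}}^*$ on the dual in \eqref{smoothSw}, but this is purely notational.)
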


In particular, the existence of a ring $\mathfrak z^\sm(X)$ of multipliers on $\mathcal S(X)$, as described in Corollary \ref{mult-sm}, immediately follows from either of the above two versions of our Paley--Wiener theorem for the Schwartz space:

\begin{corollary}\label{mult-sm2} Let $$\mathfrak z^\sm(X) = \left(\bigoplus_\Theta \mathfrak z^\cusp(X_\Theta^L)\right)^\inv,$$
where the exponent $~\inv$ denotes invariants of all the isomorphisms induced by triples $(\Theta,\Omega, w\in W_X(\Omega,\Theta))$.

 There is a canonical action of $\mathfrak z^\sm(X)$ by continuous $G$-endomorphisms on $\mathcal S(X)$, characterized by the property that for every $\Theta$, considering the map:
 $$ e_{\Theta,\cusp}^*: \mathcal S(X) \to \mathcal S^+(X_\Theta)_\cusp$$
we have:
$$ e_{\Theta,\cusp}^*(z\cdot f)  = z_\Theta (e_{\Theta,\cusp}^* f)$$
for all $z\in \mathfrak z^\sm(X)$, where $z_\Theta$ denotes the $\Theta$-coordinate of $z$.
\end{corollary}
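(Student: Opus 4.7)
The plan is to define the action on the spectral side of the Paley-Wiener isomorphism \eqref{Schwartztheorem2eq} and transport it to $\mathcal S(X)$. First, recall that by definition $\mathfrak z^\cusp(X_\Theta^L) = \CC[\widehat{X_\Theta^L}^\cusp]$, and the spaces $\CC^+[\widehat{X_\Theta^L}^\cusp, \mathcal L_\Theta]$ are, by construction, subspaces of $\CC(\widehat{X_\Theta^L}^\cusp, \mathcal L_\Theta)$; multiplication by regular functions therefore gives an obvious action of $\mathfrak z^\cusp(X_\Theta^L)$ on $\CC^+[\widehat{X_\Theta^L}^\cusp, \mathcal L_\Theta]$ by $G$-equivariant linear maps (which stabilizes the subspace because multiplication by a polynomial only improves regularity). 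Summing these actions over $\Theta$, the ring $\bigoplus_\Theta \mathfrak z^\cusp(X_\Theta^L)$ acts coordinatewise on $\bigoplus_\Theta \CC^+[\widehat{X_\Theta^L}^\cusp, \mathcal L_\Theta]$.

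The next step is to check that the subring $\mathfrak z^\sm(X) \subset \bigoplus_\Theta \mathfrak z^\cusp(X_\Theta^L)$ preserves the subspace of $\mathscr S_w$-invariants. For $z = (z_\Theta)_\Theta \in \mathfrak z^\sm(X)$ and $f = (f_\Theta)_\Theta$ a collection satisfying $\mathscr S_w f_\Theta = f_\Omega$ for all triples $(\Theta,\Omega, w\in W_X(\Omega,\Theta))$, the fiberwise nature of $\mathscr S_w$ (Theorem \ref{fiberwisescattering}) together with the $w$-equivariance of $z$ (i.e.\ $z_\Omega = w_*(z_\Theta)$ under the isomorphism \eqref{wisomcusp}) forces $\mathscr S_w (z_\Theta f_\Theta) = z_\Omega \mathscr S_w f_\Theta = z_\Omega f_\Omega$, so $z \cdot f$ is again in the invariants. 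Thus $\mathfrak z^\sm(X)$ acts by $G$-endomorphisms on the right-hand side of \eqref{Schwartztheorem2eq}, and transporting through the isomorphism produces the desired action on $\mathcal S(X)$.

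To verify the characterization, note that under \eqref{Schwartztheorem2eq} the map $e_{\Theta,\cusp}^*$ is identified with projection onto the $\Theta$-coordinate (composed with \eqref{mapcuspidalisom}); since the action of $z$ on the spectral side is coordinatewise multiplication by $z_\Theta$, one obtains $e_{\Theta,\cusp}^*(z \cdot f) = z_\Theta \cdot e_{\Theta,\cusp}^* f$ directly. Uniqueness of the action with this property follows because the map $\bigoplus_\Theta e_{\Theta,\cusp}^* : \mathcal S(X) \to \bigoplus_\Theta \CC^+[\widehat{X_\Theta^L}^\cusp,\mathcal L_\Theta]$ is injective (being one direction of an isomorphism). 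Continuity is automatic once one checks it on each coordinate of the spectral side, where multiplication by a polynomial is trivially continuous for any reasonable topology on the space of rational sections with controlled poles; this is the only technical point, and should follow from the fact that the ``fractional ideal'' structure of $\CC^+[\widehat{X_\Theta^L}^\cusp, \mathcal L_\Theta]$ is preserved by multiplication by elements of $\CC[\widehat{X_\Theta^L}^\cusp]$. I expect no real obstacle in the argument, since everything has been reduced by the Paley-Wiener theorem to a statement about sections of algebraic vector bundles.
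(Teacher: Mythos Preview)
Your proposal is correct and follows essentially the same approach as the paper: define the action on the spectral side of the Paley-Wiener isomorphism \eqref{Schwartztheorem2eq} (or equivalently \eqref{Schwartztheoremeq}) and transport it back to $\mathcal S(X)$. The paper's proof is a single sentence to this effect; your version simply spells out the verification that $\mathfrak z^\sm(X)$ preserves the invariants and the characterization via the $\Theta$-projections, which is exactly what the paper leaves implicit.
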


\begin{proof}
Indeed, $\mathfrak z^\sm(X)$ acts by continuous $G$-automorphisms on the right hand side of \eqref{Schwartztheoremeq} or \eqref{Schwartztheorem2eq}, and the action is characterized by the stated property.
\end{proof}

We complete this section by extending the smooth scattering maps from the cuspidal components of Schwartz spaces to the whole space.

\begin{theorem}\label{smoothextension}
There are unique extensions of the smooth scattering maps, for all triples $(\Theta,\Omega,w\in W_X(\Omega,\Theta))$:
$$\textswab S_w: \mathcal S(X_\Theta)\to C^\infty(X_\Omega),$$
such that for all $\Theta'\subset\Theta$, setting $\Omega'=w\Theta'$:
\begin{equation}\label{smoothextensioneq} e_{\Omega'}^\Omega \circ \textswab S_w|_{\mathcal S(X_{\Theta'})_\cusp} =
\textswab S_w \circ e_{\Theta'}^\Theta|_{\mathcal S(X_{\Theta'})_\cusp},
\end{equation}
where as usual we denote by $e_{\Theta'}^\Theta$, $e_{\Omega'}^\Omega$ the analogous equivariant exponential maps for the varieties $X_\Theta$, $X_\Omega$, respectively.

These maps satisfy the same associativity relations as their restrictions to cuspidal spectra (s.\ Theorem \ref{smoothscattering}), and $\textswab S_w$ is $\mathfrak z^\sm(X_\Theta^L)$-equivariant with respect to the obvious isomorphism:
$$ \mathfrak z^\sm(X_\Theta^L)\xrightarrow\sim \mathfrak z^\sm(X_\Omega^L)$$
induced by $w$.
\end{theorem}

\begin{proof}
Given Proposition \ref{basicdecomp} (applied to $X_\Theta$), property \eqref{smoothextensioneq} characterizes the extension of $\textswab S_w$, provided it is unambiguous. For this, we need to show that if $F= e_{\Theta_1'}^\Theta f_1 = e_{\Theta_2'}^\Theta f_2 \in \mathcal S(X_\Theta)$, with $f_1 \in \mathcal S(X_{\Theta_1'})_\cusp$ and $f_2\in \mathcal S(X_{\Theta_2'})_\cusp$, then the left hand side of \eqref{smoothextensioneq}, applied to either $f_1$ or $f_2$, gives the same result.

Theorem \ref{Schwartztheorem}, applied to $X_\Theta$, implies that the kernel of the map:
$$ \sum_{i=1,2} e_{\Theta_i'}^\Theta: \bigoplus_{i=1,2} \mathcal S(X_{\Theta'_i})_\cusp \to \mathcal S(X_\Theta)$$
is generated by elements of the form: 
$ (f_1,-f_2)$ with $f_i \in \mathcal S(X_{\Theta'_i})_\cusp$ and $f_2 = \textswab S_{w'} f_1$ for some $w'\in W_X(\Theta'_2,\Theta'_1)$; thus, we can assume our pair $(f_1,f_2)$ to be of this form.

Then we have: 
$$\textswab S_{w} f_2 = \textswab S_{w}\textswab S_{w'} f_1 = \textswab S_{w w'} f_1 = \textswab S_{ww'w^{-1}} \textswab S_{w} f_1$$
(by the associativity properties of scattering maps),
hence the element $(\textswab S_{w} f_1, - \textswab S_{w} f_2)$ belongs to the kernel of the map:
$$ \sum_{i=1,2} e_{\Omega_i'}^\Omega: \bigoplus_{i=1,2} \mathcal S(X_{\Omega'_i})_\cusp \to \mathcal S(X_\Omega).$$

Thus, the operator $\textswab S_w$ is well-defined on $\mathcal S(X_\Theta)$. The associativity and $\mathfrak z^\sm(X_\Theta^L)$-equivariance properties follow easily from the construction. 
\end{proof}

\section{Examples of scattering operators} \label{sec:examples}

\subsection{Scattering operators in the group case} \label{ssgroupscattering}

Let us consider the case of the group, $X=H$, $G=H\times H$. We consider it not just as a homogeneous space, but as a pointed space, with a distinguished element $1\in H$, which will help us fix isomorphisms for its boundary degenerations. Its boundary degenerations are parametrized by conjugacy classes of parabolics in $H$, where a given class of parabolics $[P]$ corresponding to $\Theta\subset\Delta_X$ we have:
$$ X_{[P]}:= X_\Theta= L_P\times^{P\times P^-} (H\times H) \simeq X_\Theta\simeq L_P^\diag\backslash \left(U_P\backslash H\times U_P^-\backslash H\right).$$
Here we have chosen representatives $P$ for $[P]$ and $P^-$ for the opposite class, $L_P=P\cap P^-$ a Levi subgroup and $U_P, U_P^-$ the corresponding unipotent radicals. The space $X_{[P]}$ lives over an open subset $Y$ in the product of Grassmannians of $H$ corresponding to $[P]\times [P^-]$, and its fiber over $(P, P^-)$ is isomorphic to $L_P$ \emph{canonically up to inner automorphism}. In particular, each fiber has a canonical point $1\in L_P$; the subgroup $H^\diag$ acts transitively on $Y$, preserving those points.

Let us identify the space $\widehat{X_\Theta^L}^\disc_\CC$ with the space $\widehat{L_P}_\CC^\disc$ of isomorphism classes of discrete series representations of $L_P$ by identifying $\sigma\in \widehat{L_P}_\CC^\disc$ with the representation $\tilde\sigma \otimes \sigma$ of the Levi quotient $L_P$ of $P\times P^-$. Fixing a Haar measure on $H$, which induces invariant measures on all $X_{[P]}$, we can identify the smooth dual of the bundle $\mathscr L_{[P]}$ of discrete coinvariants for $X_{[P]}$ with the bundle whose fiber $\widetilde{\mathscr L_{[P],\sigma}}$ over $\sigma\in\widehat{L_P}_\CC^\disc$ is the induced representation $I_{P\times P^-}^{H\times H}(C_{\tilde\sigma})$, where $C_{\tilde\sigma}\subset C^\infty(L_P)$ is the space of matrix coefficients of $\tilde\sigma$. The matrix coefficient map $$\sigma\otimes\tilde\sigma \ni v\otimes \tilde v \mapsto \left< v, \tilde\sigma(\bullet) \tilde v\right>\in C_{\tilde\sigma}$$ allows us to \emph{canonically} identify $I_{P\times P^-}^{H\times H}(C_{\tilde\sigma})$ with $I_P(\sigma)\otimes I_{P^-}(\tilde\sigma)$. 

Now consider a pair $([P],[Q])$ of associate classes of parabolics of $H$, and an element $w\in W_H(Q,P)$. If we fix a representative $P$ for $[P]$ and a Levi subgroup $L$, the pair $([Q],w)$ gives rise to a representative $Q$ of $[Q]$ which shares the Levi subgroup $L$ with $P$; the relation is that ${^wQ}$ shares a minimal parabolic with $P$. We will say that ``the relative position of $P$ and $Q$ is determined by $w$''. 

We let $T_{P|Q}$ denote the rational family of standard intertwining operators, as $\sigma$ varies: 
\begin{equation} T_{P|Q}: I_Q^H(\sigma)\to I_P^H(\sigma).\end{equation}
This family depends on the choice of a Haar measure on $U_P/U_P\cap U_Q$, and is generically invertible with a rational family $ T_{P|Q}^{-1}$ of inverses. 

We denote by $P^-, Q^-$ the opposite parabolics with respect to the chosen Levi. The product:
$$T_{P|Q} \otimes T^{-1}_{Q^-|P^-}: I_Q^H(\sigma)\otimes I_{Q^-}^H(\tilde\sigma) \simeq \widetilde{\mathscr L_{[Q],{^w\sigma}}} \to \widetilde{\mathscr L_{[P],\sigma}} \simeq I_P^H(\sigma)\otimes I_{P^-}^H(\tilde\sigma)$$
does \emph{not} depend on choices of Haar measures, because of the isomorphisms:
$$U_P/U_Q\cap U_P \simeq U_P\cap U_{Q^-} \simeq U_{Q^-}/U_{P^-}\cap U_{Q^-}.$$
Notice also that there is no lack of symmetry here, because $T_{P|Q} \otimes T_{Q^-|P^-}^{-1} = T_{Q|P}^{-1} \otimes T_{P^-|Q^-}$.

\begin{proposition} \label{groupscatteringprop}
For any pair $([P],[Q])$ of associate classes of parabolics of $H$, and $w\in W_H(Q,P)$, the adjoints of the corresponding fiberwise scattering operators $\mathscr S_w$ are the family of operators:
$$ T_{P|Q} \otimes T^{-1}_{Q^-|P^-}: \widetilde{\mathscr L_{[Q],{^w\sigma}}} \to \widetilde{\mathscr L_{[P],\sigma}}.$$
\end{proposition}

\begin{proof}
The result will follow from Proposition \ref{propBmatrices}.

Let us for simplicity denote by $T_0$ the standard intertwining operator between induction from a given parabolic and its opposite; it will be clear from the context which parabolic we are referring to.

By \cite[Lemma 15.7.1]{SV}, the normalized Eisenstein integrals can be written as the composition of matrix coefficients with $T_0^{-1}$ as follows:
$$E_{Q} : I_Q^H(\sigma)\otimes I_{Q^-}^H(\tilde\sigma) \xrightarrow{\Id \otimes T_0^{-1}} I_Q^H(\sigma)\otimes I_{Q}^H(\tilde\sigma)  \xrightarrow{M} C^\infty(H).$$

Here $M$ denotes the matrix coefficient map, which depends on the choice of a Haar measure on $U_{Q^-}$, which brings $ I_Q^H(\tilde\sigma)$ and $I_{Q}^H(\sigma) $ in duality. On the other hand, $T_0: I_{Q}^H(\tilde\sigma) \to I_{Q^-}^H(\tilde\sigma)$ is also proportional to the choice of a measure on $U_{Q^-}$, so the composition of $M$ with $\Id\otimes T_0^{-1}$ does not depend on choices.

Now, we have a commutative diagram:
\begin{equation}\label{cd-group1}\xymatrix{ & C^\infty(H)   & \\   I_Q^H(\sigma)\otimes I_{Q}^H(\tilde\sigma) \ar[ur]^{M}  \ar[rr]^{T_{P|Q} \otimes T_{Q|P}^{-1} } && I_P^H(\sigma)\otimes I_P^H(\tilde\sigma) \ar[ul]_{M}
\\  I_Q^H(\sigma)\otimes I_{Q^-}^H(\tilde\sigma) \ar[u]^{1\otimes T_0^{-1}} \ar[rr]^{T_{P|Q} \otimes T_{Q^-|P^-}^{-1}} 
&& I_P^H(\sigma)\otimes I_{P^-}^H(\tilde\sigma) \ar[u]_{1\otimes T_0^{-1}}
}\end{equation}
of operators varying rationally with $\widehat{L_P}_\CC^\disc$. The fact that the operator $T_{P|Q} \otimes T_{Q|P}^{-1} $ commutes with matrix coefficients follows from the fact that $T_{P|Q}$ is adjoint to $T_{Q|P}$. The fact that the operator on the last horizontal arrow making the diagram commute is $T_{P|Q} \otimes T_{Q^-|P^-}^{-1}$ follows from the fact that $T_0 T_{Q|P} = T_{Q^-|P^-} T_0 = T_{Q^-|P^-} T_{P^-|Q} T_{Q|P}$.

From the commutative diagram \eqref{Bmatrices} (dualized), we now infer that the adjoint of $\mathscr S_w$ is the operator $T_{P|Q} \otimes T^{-1}_{Q^-|P^-}$.
\end{proof}

\subsection{Unramified scattering operators}

We now assume that $G$ is split, and $G, X$ are endowed with compatible models over the ring of integers $\mathfrak o$. We will discuss results of \cite{SaSph} in the light of our current framework. For this, we assume that the conditions of \cite[\S 1.7 and 2.4]{SaSph} hold. In particular, there is a way to identify the universal torus $A_X$ as a torus orbit $A_X\subset X$ (over $\mathfrak o$), so that its ``anti-dominant'' elements $A_X^+\subset A_X(F)$ represent all $K=G(\mathfrak o)$-orbits on $X$.

We will consider only the most degenerate boundary degeneration $X_\emptyset$, which carries an action of $A_X$. Scattering operators for that degeneration are parametrized by elements of the little Weyl group $W_X$ of $X$. We have
\begin{equation}\label{Iwahori}
X_\emptyset(F)/K \simeq A_X(F)/A(\mathfrak o),
\end{equation} where $A$ is the universal Cartan of $G$ (whose quotient as an algebraic variety is $A_X$). The isomorphism \eqref{Iwahori} is fixed so that for ``very antidominant'' elements $a\in A_X^+$, the $K$-orbit represented by $a$ on $X_\emptyset$ corresponds, under the exponential map, to the $K$-orbit represented by $a$ on $X$, cf.\ \cite[Theorem 4.2]{SaSatake}.

A technical comment is in order: For the purpose of interpreting expressions of the form $e^{\check\alpha}(\chi)$, where $\check\alpha$ is a coroot of the universal Cartan of $G$ and $\chi$ an unramified character of $A_X$ (or $A_X'=$the image of $A(F)$ in $A_X(F)$), we identify $A_X$ as a quotient of the universal Cartan $A$ in such a way that the action of $A_X$ on the open Borel orbit $\mathring X_\emptyset\subset X_\emptyset$ is compatible with the action of $A=B/N$ on  $\mathring X_\emptyset/N$. Then, $e^{\check\alpha}(\chi)$ just means the value of $\chi$ on $\check\alpha(\varpi)$, where $\varpi$ is a uniformizer in the field. (We use exponential notation, because we use additive notation for the coroots.) This convention is compatible with \cite{SaSph}.

We may interpret the functional equations established in \emph{loc.cit}.\ in terms of normalized Eisenstein integrals and scattering operators as we did above for the group, but we do not actually need to worry about normalization: Indeed, Theorem 4.2.2 in \emph{loc.cit}.\ implies the following, which we state before defining the terms used:

\begin{theorem}\label{withBw}
For every $w\in W_X$ there is a rational family of $\mathcal H(G,K)$-equivariant operators 
$$\underline B_w(\chi) := \left(\prod_{\check\alpha>0, w\check\alpha<0} (-e^{\check\alpha}(\chi))\right) \underline{b}_w(\delta_{(X)}^\frac{1}{2}\chi): C^\infty(X_\emptyset)^{\chi,K} \to C^\infty(X_\emptyset)^{{^w\chi},K},$$
satisfying the cocycle conditions $\underline B_{w'}({^w\chi}) \circ \underline B_{w'w}(\chi) = \underline B_w(\chi)$,
such that, for a Zariski dense subset of $X$-distinguished Satake parameters, the space of $e_\emptyset^*$-asymptotics of $\mathcal H(G,K)$-eigenfunctions with those Satake parameters is precisely the space of all $\underline B_w$-invariants.
\end{theorem}

The theorem itself does not talk about asymptotics, actually, but about the evaluation of $\mathcal H(G,K)$-eigenfunctions (where $\mathcal H(G,K)$ denotes the unramified Hecke algebra $G$ with respect ot $K$) on $A_X^+\subset X(F)$. It was explained in \cite{SaSatake} that can also be seen as a formula for $e_\emptyset^*$-asymptotics on $X_\emptyset$.

We explain the notation: The space $C^\infty(X_\emptyset)^{\chi}$ is the subspace of $C^\infty(X_\emptyset)$ where $A(F)$ acts with unramified character $\chi$; this notation is compatible with the notation $C^\infty(X_\Theta)_\cusp^{\tilde\sigma}$ that we have been using for the dual of the fiber $\mathcal L_{\Theta,\sigma}$ (the index $~_\cusp$ here is superfluous).

The character $\chi$ lives in the space of unramified characters of $A(F)$ which are trivial on the kernel of $A\to A_X$. The normalized action of $A$ explained in \ref{sseigenmeasures} implies that $C^\infty(X_\emptyset)^{\chi}$ is a direct sum of copies of the normalized induced representation $I_{P(X)^-}(\chi)$; thus, its Satake parameter is the $W$-conjugacy class of $\delta_{(X)}^\frac{1}{2}\chi$, where $\delta_{(X)}$ denotes the modular character of $L(X)$. Those are the ``$X$-distinguished Satake parameters'' of the theorem. 

The notation $\underline{b}_w$ refers precisely to the operators (matrices there, because a basis has been chosen) denoted by the same symbol in Theorem 4.2.2, while the notation $\underline B_w$ is adapted from (6.1) of \emph{loc.cit}.\ (s.\ also Theorem 1.2.1 there), which refers to a slightly special case. The coroots $\check\alpha$ appearing in the relation between $\underline B_w$ and $\underline b_w$ are the coroots of $G$.

Finally, the notion of ``$\underline B_w$-invariants'' is completely analogous to the ``$\mathscr S_w$-invariants'' of our main theorems: a vector 
\begin{equation}\label{Wspaces} (f_w)_w\in \bigoplus_{w\in W_X} C^\infty(X_\emptyset)^{^w\chi}
\end{equation} is in the space of ``$\underline B_w$-invariants'' if for every $w, w'\in W_X$ we have:
$$ f_{w'w} = \underline B_{w'} f_w.$$
Notice that the map from $W_X$-conjugacy classes of $\chi$'s to $X$-distinguished Satake parameters is not necessarily injective (as happens, for example, when $X=N\backslash G$, with $N$ maximal unipotent, where $W_X=1$). Therefore, to obtain the $e_\emptyset^*$-asymptotics of all $\mathcal H(G,K)$-eigenfunctions with a given Satake parameter, as in the theorem, one might need to take the direct sum of $\underline{B}_w$-invariants of several of the spaces \eqref{Wspaces}.

From Theorem \ref{withBw} we can now deduce:

\begin{proposition}
In the notation of Theorem \ref{withBw}, the adjoints of the fiberwise scattering operators $$\mathscr S_w(\chi): \mathcal L_{\emptyset,{^w\chi}}\to \mathcal L_{\emptyset,{\chi}},$$ restricted to $K$-invariants, are given by:
\begin{equation}
\mathscr S_{w^{-1}}(\chi)^* = \underline{B}_w(\chi^{-1}): C^\infty(X_\emptyset)^{\chi^{-1},K} \to C^\infty(X_\emptyset)^{{^w\chi^{-1}},K}.
\end{equation}
\end{proposition}

\begin{proof}
From the fact that the images of normalized Eisenstein integrals:
$$E_{\emptyset,\chi}: C^\infty(X_\emptyset)^{\chi^{-1}}\to C^\infty(X)$$ 
 also span the space of $\mathcal H(G,K)$-eigenfunctions on $X$ with given Satake parameter, for a Zariski-dense set of $X$-distinguished Satake parameters, we deduce:
$$e_\emptyset^* E_{\emptyset,\chi} = \sum_{w\in W_X} \underline B_w (\chi^{-1}) : C^\infty(X_\emptyset)^{\chi^{-1}} \to C^\infty(X_\emptyset) ,$$
and by the definition of the fiberwise scattering maps in \eqref{decompEisensteineq}, the claim follows.
\end{proof}

Up to this point we have presented nothing more than a new symbol for the scattering maps; however, the results of \cite{SaSph} now give rise to a many examples of scattering operators, restricted to unramified vectors. We will only discuss the two most characteristic examples, that of course are much older than \emph{loc.cit.}

\begin{example} (Whittaker model.) \label{exWhit}
Consider the case of $X=N^-\backslash G$, where $N^-$ is a maximal unipotent subgroup over $\mathfrak o$, and $N^-(F)$ is equipped with a non-degenerate character $\Psi$. We keep assuming that $G$ is split, for simplicity. The character is chosen to be trivial on all $\check\alpha(\mathfrak o)$ for all simple coroots $\check\alpha$, but non-trivial on $\check\alpha(\varpi^{-1} \mathfrak o)$, where $\varpi$ denotes a uniformizer. In this case, $X_\emptyset = X$ as varieties, but with trivial character on $N^-(F)$. For every unramified character $\chi$ of the universal Cartan $A(F)$, the space $C^\infty(X_\emptyset)^K$ is $1$-dimensional (isomorphic to the unramified vectors of the normalized principal series $I_{B^-}^G(\chi)$), with a canonical basis element $\varphi^-_{K,\chi}$ which is equal to $1$ on $N^-1K$. (The normalization of the character $\Psi$ on $N^-(F)$ makes the double coset $N^-1K$ unambiguous; the exponent $^-$ on $\varphi$ is to remind that we are using the opposite Borel than that containing $N^-$ to identify the character $\chi$ with a character of the universal Cartan.)

The Shintani-Casselman-Shalika formula states that, in this case, $\underline b_w(\chi) = 1$ for all $w\in W_X=W$ in terms of the canonical basis elements, i.e.\ $\underline b_w(\chi) \varphi^-_{K,\chi} = \varphi^-_{K,{^w\chi}}$, cf.\ \S 5.5 in \emph{loc.cit}. Hence, 
$$\mathscr S_w^*(\chi)=\left(\prod_{\check\alpha>0, w\check\alpha<0} (-e^{-\check\alpha}(\chi))\right): C^\infty(X_\emptyset)^{\chi^{-1},K} \to C^\infty(X_\emptyset)^{{^w\chi^{-1}},K}.$$

In particular, the scattering operators have no poles and we get:

\begin{corollary}
In the Whittaker case, we have $\mathcal S^+(X_\emptyset)^K = \mathcal S(X_\emptyset)^K$.
\end{corollary}

This is the only example that we know where the extended Schwartz space of the boundary degeneration is equal to the original Schwartz space, as far as $K$-invariants go. As we will see in the next subsection, this is not true for Iwahori-invariants.
\end{example}

\begin{example} (Group case.) \label{exGp}
We discussed scattering operators for the group case $X=H$ in the previous subsection, but for unramified vectors we can also describe them explicitly using Macdonald's formula for spherical functions. We keep assuming that $G$ (hence $H$) is split, for simplicity. Using the notation of \S \ref{ssgroupscattering}, for $[P]=[B]$ (where $B$ here denotes the class of Borel subgroups of $H$, not of $G$) we may identify the smooth dual $C^\infty(X_\emptyset)^{\chi^{-1}}$ of $\mathcal L_{[B],\chi}$ with  $I_B(\chi)\otimes I_{B^-}(\chi^{-1})$ (since the fiber of $X_{[B]}$ over $B$ is trivialized, cf.\ \S \ref{ssgroupscattering}). Here $\chi$ is an unramified character of $B$, and we keep the convention from \S \ref{ssgroupscattering} of using the parameter $\chi$ to denote the representation $\chi\otimes \chi^{-1}$ of $B\times B^-$.

Using the canonical basis vector of $(I_B(\chi)\otimes I_{B^-}(\chi^{-1}))^K$, we may again express scattering operators on $K=H(\mathfrak o)\times H(\mathfrak o)$-invariants as scalars.
In this case we have $W_X = W_H$ (the Weyl group of $H$), and Macdonald's formula implies:
$$\mathscr S_w^*(\chi): \left(\prod_{\check\alpha>0, w\check\alpha<0} (-e^{-\check\alpha}) \frac{1-q^{-1} e^{\check\alpha}}{1-q^{-1} e^{-\check\alpha}}\right)(\chi): C^\infty(X_\emptyset)^{\chi^{-1},K} \to C^\infty(X_\emptyset)^{{^w\chi^{-1}},K}$$
where the coroots $\check\alpha>0$ in the product above are the positive (with respect to $B$) coroots of $H$, \emph{not} all positive coroots of $G$.
\end{example}

\subsection{Examples with Iwahori-fixed vectors}

Let us consider the Whittaker and the group case in rank one.

\begin{example} (Whittaker model)
Consider the Whittaker model of $G=\PGL_2$, with conventions (about integral models and characters) as above. Let $w$ be the non-trivial element of the Weyl group. We can reinterpret Example \ref{exWhit} as saying that 
\begin{equation}\label{STWhit}\mathscr S_w^*(\chi)= \frac{1-e^{-\check\alpha}}{1-q^{-1} e^{\check\alpha}}(\chi) T_0: \end{equation}
$$ C^\infty(X_\emptyset)^{\chi^{-1}} = I_{B^-}^G(\chi^{-1}) \to I_{B^-}^G({^w\chi^{-1}}) = C^\infty(X_\emptyset)^{{^w\chi^{-1}}}$$

Indeed, this holds for $K$-fixed vectors by the formula 
$$T_0 \varphi^-_{K,\chi^{-1}} = \frac{1-q^{-1} e^{\check\alpha}}{1-e^{\check\alpha}}(\chi) \varphi^-_{K,{^w\chi^{-1}}},$$ but for $\chi$ in general position these generate the whole representation.

The rational family of operators \eqref{STWhit} may be regular on $K$-fixed vectors, but this is not true for all vectors in the representation. More precisely, for a holomorphic family of functions $\chi\mapsto \varphi_{\chi^{-1}}\in C^\infty(X_\emptyset)^{\chi^{-1}}$ with the property that $\varphi_{\delta_B^{\frac{1}{2}}}$ does not belong to the trivial subrepresentation $\CC \subset I_{B^-}^G(\delta_B^\frac{1}{2})$, the section $\mathscr S_w^*(\chi)(\varphi_{\chi^{-1}})$ has a simple pole at $\chi = \delta_B^{-\frac{1}{2}}$. It follows from this that we have a short exact sequence:
$$ 0 \to \mathcal S(X_\emptyset)^J \to \mathcal S^+(X_\emptyset)^J \to \St^J\to 0$$ 
of $\mathcal H(G,J)$-modules, where $J$ is the Iwahori subgroup and $\St$ denotes the Steinberg representation. (Restricting to Iwahori-invariants is just a way to isolate the spectral contribution of unramified principal series.) The quotient $\St^J$ lives over the character $\chi=\delta^{-\frac{1}{2}}$ of $A$, which is where the trivial representation is a quotient of $\mathcal S(X_\emptyset)_\chi = \mathcal L_{\emptyset,\chi}$. The trivial representation is no longer a quotient of $\mathcal S^+(X_\emptyset)$, as should be the case for the Whittaker model.
\end{example}

\begin{example}
(The group case)
Consider the case $X=H=\PGL_2$ under the $G=\PGL_2\times \PGL_2$-action. We have seen that for the non-trivial element $w\in W_X=W_H$ the scattering map is given by:
$$ \mathscr S_w^*(\chi) = T_0 \otimes T_0^{-1}: C^\infty(X_\emptyset)^{\chi^{-1}} \simeq I_{B}(\chi) \otimes I_{B^-}(\chi^{-1})\to C^\infty(X_\emptyset)^{{^w\chi}^{-1}},$$
where again we denote here by $B$ a Borel subgroup of $H$ (not $G$).

It can easily be seen that, for $\chi$ unramified, $T_0 \otimes T_0^{-1}$ has poles precisely at $\chi=\delta^{\pm\frac{1}{2}}$; more precisely, for a holomorphic family of functions $\chi\mapsto \varphi_{\chi^{-1}}\in C^\infty(X_\emptyset)^{\chi^{-1}}$, the section $ \mathscr S_w^*(\chi)(\varphi_{\chi^{-1}})$ has a (simple) pole at $\chi = \delta^{\pm\frac{1}{2}}$ if and only if the specialization of $\varphi_{\chi^{-1}}$ at that point does not belong to any proper subrepresentation of $$C^\infty(X_\emptyset)^{\delta^{\mp \frac{1}{2}}} \simeq I_{B}(\delta^{\pm\frac{1}{2}}) \otimes I_{B^-}(\delta^{\mp \frac{1}{2}}).$$

From this it can be inferred that we have a short exact sequence:
$$ 0 \to \mathcal S(X_\emptyset)^J \to \mathcal S^+(X_\emptyset)^J \to \St^J\otimes \St^J \oplus \CC \otimes \CC\to 0,$$ 
with the quotient $\St^J\otimes \St^J$ living over $\chi = \delta^{-\frac{1}{2}}$ and the quotient $\CC \otimes \CC$ living over $\chi=\delta^{\frac{1}{2}}$. The fiber $V$ of $\mathcal S^+(X_\emptyset)$ over either of $\chi = \delta^{\pm\frac{1}{2}}$ admits a short exact sequence:
$$0 \to \St\otimes \CC \oplus \CC \otimes\St \to V \to \St\otimes \St \oplus \CC \otimes \CC \to 0$$
and, of course, as a result both $\St\otimes \St$ and $\CC \otimes \CC$ are quotients of $\mathcal S(H)$.
\end{example}

\section{The Bernstein center and the group Paley--Wiener theorem} \label{sec:group}

\subsection{The Bernstein center}\label{ssBcenter} 
We will now see how our Paley--Wiener theorem, and in particular the description of multipliers (Corollary \ref{mult-sm}), implies the well-known theorem on the structure of the Bernstein center in the case of the group, $X=H$, $G=H\times H$.  The argument is inductive in the size of $H$; in particular, we have used the structure of the Bernstein center for its proper Levi subgroups in Corollary \ref{finite2} and hence Proposition \ref{fibercuspidal} in order to deduce our Paley--Wiener theorem and the existence of the multiplier ring $\mathfrak z^\sm(H)$ on $\mathcal S(H)$.

Recall that the Bernstein center $\mathfrak z(H)$ is, by definition, the center of the category $\mathcal M(H)$ of smooth representations of $H$, i.e.\ the algebra of natural transformations of the idendity functor of $\mathcal M(H)$. When $X=H$ the boundary degenerations $X_\Theta$, $\Theta\subset \Delta_X$ are parametrized by classes of parabolics in $H$, where for a given parabolic $P$ corresponding to $\Theta\subset\Delta_X$ we have:
$$ X_P:= X_\Theta\simeq L_P^\diag\backslash \left(U_P\backslash H\times U_P^-\backslash H\right) \simeq L_P\times^{P\times P^-} (H\times H).$$
Here $P^-$ is an opposite parabolic, $L_P=P\cap P^-$ a Levi subgroup and $U_P, U_P^-$ the corresponding unipotent radicals. 

For all $H\times H$-representations that appear below, if not specified otherwise, we let the Bernstein center of $H$ act via the embedding $H\xrightarrow{\Id \times 1} H\times H$.

\begin{theorem}
 \begin{enumerate}
  \item The canonical morphism: 
  \begin{equation}\label{centertoS}
\mathfrak z(H) \to \End_{H\times H} (\mathcal S(H))   
  \end{equation}
is an isomorphism.
  \item For every class of parabolics $P$ in $H$ (corresponding to $\Theta\subset\Delta_X$) the Bernstein center acts fiberwise, i.e.\ $\mathfrak z^\cusp(X_\Theta^L) = \CC[\hat L^\cusp]$-equivariantly, on $\mathcal S(X_\Theta)_\cusp \simeq \CC[\hat L_P^\cusp, \mathcal L_\Theta]$.
  \item The action of any element of $\mathfrak z(H)$ on each fiber of $\mathcal L_\Theta$ is scalar; this scalar varies polynomially on $\hat L^\cusp_{P,\CC}$, i.e.\ we get a canonical morphism:
  \begin{equation}\label{Bmorphism}
    \mathfrak z(H) \to \bigoplus_P \CC[\widehat L_P^\cusp].  
  \end{equation}
  \item The above map gives rise to an isomorphism:
  \begin{equation}
    \mathfrak z(H) \xrightarrow\sim \left(\bigoplus_P \CC[\widehat L_P^\cusp],\right)^\inv = \mathfrak z^\sm(H),
  \end{equation}
  where the exponent $~^\inv$ denotes invariants with respect to the isomorphisms: 
  $$\widehat L_P^\cusp \simeq \widehat L_Q^\cusp$$
  induced by all $w\in W_H(P,Q)$.
 \end{enumerate}
\end{theorem}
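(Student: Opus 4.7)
The plan is to derive all four parts by combining a single input from outside our framework, namely the standard identification of the Bernstein center with centralizers of regular representations, together with our Paley-Wiener theorem (Theorem~\ref{Schwartztheorem2}) and the multiplier description (Corollary~\ref{mult-sm2}).

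First I would dispense with (1) by recalling the classical interpretation of $\mathfrak z(H)$ as the centre of the idempotented Hecke algebra $\mathcal H(H)$. Under the identification $\mathcal S(H) = \mathcal H(H)$ as an $H\times H$-bimodule via left and right translation, a central element acts by an $H\times H$-equivariant endomorphism, and conversely every such endomorphism is given by convolution with a central distribution; both directions are standard and may be proved by reducing, via taking $J$-invariants for small $J$, to the centre of each finite-type Hecke algebra $\mathcal H(H, J)$. Part (2) is then a direct consequence of functoriality of the Bernstein centre: $\mathfrak z(H)$ acts through the first $H$-factor on the smooth $G$-representation $\mathcal S(X_\Theta)_\cusp$, commutes with the $H$-action on the first factor by definition and with the second factor because $H \times \{1\}$-endomorphisms commute with all $H$-intertwiners, hence it acts by $G$-endomorphisms; the action of $\mathfrak z^\cusp(X_\Theta^L)$ described in Corollary~\ref{mult-sm2} also proceeds by $G$-endomorphisms, and the two actions commute because the connected component of $\widehat{L_P}^\cusp$ decomposes (generically) into multiplicity-free $G$-isotypic fibers, which forces any two commuting-with-$G$ actions to commute everywhere by continuity of the action on $\CC[\widehat{L_P}^\cusp, \mathcal L_\Theta]$.

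For part (3), I would exploit the fiberwise isomorphism $\mathcal L_{\Theta,\sigma} \simeq I_P(\sigma) \boxtimes \widetilde{I_{P^-}(\sigma)}$ as an $H\times H$-representation. For $\sigma$ in a Zariski-dense open subset of $\widehat{L_P}^\cusp_\CC$ the representation $I_P(\sigma)$ is irreducible, so Schur's lemma forces any $z\in \mathfrak z(H)$ to act on $\mathcal L_{\Theta,\sigma}$ by a scalar $z_\Theta(\sigma)$. To upgrade this to a regular function, one uses that the canonical $G$-equivariant map $\mathcal S(X_\Theta)_\cusp \xrightarrow{\sim} \CC[\widehat{L_P}^\cusp, \mathcal L_\Theta]$ is equivariant for $\mathfrak z(H)$ (acting on both sides), while by part (2) this action is also $\mathfrak z^\cusp(X_\Theta^L) = \CC[\widehat{L_P}^\cusp]$-linear. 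A $\CC[\widehat{L_P}^\cusp]$-linear endomorphism of the finitely generated module $\CC[\widehat{L_P}^\cusp, \mathcal L_\Theta]^J$ (for any open compact $J$) whose fiberwise action is generically scalar is itself multiplication by a single element of $\CC[\widehat{L_P}^\cusp]$, so $z_\Theta$ is regular on every connected component. Equivariance under each scattering $\mathscr S_w$ is immediate since $\mathfrak z(H)$ acts naturally on the corresponding $G$-representations, placing the image inside $\mathfrak z^\sm(H) = \left(\bigoplus_P \CC[\widehat L_P^\cusp]\right)^\inv$.

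Part (4) now follows from the Paley-Wiener theorem. Injectivity: an element $z \in \mathfrak z(H)$ in the kernel acts as zero on every $\mathcal L_{\Theta,\sigma}$, hence (by $\mathfrak z(H)$-equivariance of $e^*_{\Theta,\cusp}$ and Theorem~\ref{Schwartztheorem2}) acts as zero on $\mathcal S(H)$, so by part (1) we have $z=0$. Surjectivity: an element $(f_\Theta)_\Theta \in \mathfrak z^\sm(H)$ defines by Corollary~\ref{mult-sm2} a continuous $G$-endomorphism $T$ of $\mathcal S(H)$, which by part (1) is the action of a unique $z\in\mathfrak z(H)$; comparing the fiberwise actions on $\mathcal L_{\Theta,\sigma}$ (scalar $f_\Theta(\sigma)$ on the Corollary~\ref{mult-sm2} side, scalar $z_\Theta(\sigma)$ on the $\mathfrak z(H)$-side) shows that $z$ maps to $(f_\Theta)_\Theta$ under \eqref{Bmorphism}. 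The main obstacle is step (1): the identification $\mathfrak z(H) \simeq \End_{H\times H}(\mathcal S(H))$ is a classical result external to our framework, and must be invoked to make our multiplier ring $\mathfrak z^\sm(H)$ coincide with $\mathfrak z(H)$ rather than just embed into $\End_G(\mathcal S(H))$; once this is granted, the structure theorem for the Bernstein centre becomes essentially a corollary of the Paley-Wiener theorem.
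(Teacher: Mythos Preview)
Your proposal is correct and follows essentially the same strategy as the paper: establish (1) independently, then deduce (2)--(4) from the Paley-Wiener theorem and Corollary~\ref{mult-sm2}. Two minor differences are worth noting. For (1), the paper gives an explicit inverse to \eqref{centertoS}: for $Z\in\End_{H\times H}(\mathcal S(H))$ and a smooth $H$-representation $(\pi,V)$, one sets $\beta(Z)v=\pi\bigl(Z(1_J/\Vol(J))\,dh\bigr)v$ for $v\in V^J$ and checks that $\beta$ is an inverse; you instead invoke this as a classical fact, which is fine but less self-contained. For (2), your argument via generic multiplicity-freeness of the fibers $\mathcal L_{\Theta,\sigma}$ is correct but unnecessarily elaborate: since $\mathfrak z(H)$ consists of natural transformations of the identity functor on smooth $H$-modules, its action automatically commutes with \emph{every} $H$-endomorphism (for the first factor), in particular with the $\mathfrak z^\cusp(X_\Theta^L)$-action, which is by $G$-endomorphisms. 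The paper uses this observation directly.
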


\begin{proof}
 \begin{enumerate}
  \item Choose a Haar measure $dh$ on $H$, and let $z\mapsto \alpha(z)$ denote the morphism \eqref{centertoS}. We can construct an inverse to $\alpha$ as follows: Let $(\pi,V)$ be a smooth representation of $H$ and let $J$ be an open compact subgroup. For $Z\in \End_{H\times H} (\mathcal S(H))   $ we define an endomorphism $\beta(Z)$ of $V^J$ by:
  $$\beta(Z) (v) = \pi(Z(1_J/\Vol(J)) dh) (v),$$
  where $1_J$ is the characteristic function of $J$. It is easy to see that this defines an endomorphism $\beta(Z)$ of $V$, and that the collection of these endomorphisms is an element of the Bernstein center (also to be denoted by $\beta(Z)$). Finally, the fact that $\beta$ is inverse to $\alpha$ follows from applying any $z\in \mathfrak z(H)$ to the morphism of smooth $H$-representations:
  $$ \mathcal S(H) \otimes \pi \ni f\otimes v \mapsto \pi(fdh) (v) \in  \pi,$$
  where the left hand side is considered as an $H$-module only via the action on $\mathcal S(H)$ by left multiplication.
  
  \item This is obvious from the definition of the Bernstein center and the fact that the $\mathfrak z^\cusp (X_\Theta^L)$-action commutes with the $G=H\times H$-action.
  
  \item The action is generically scalar because for $\sigma\in \widehat L^\cusp_\CC$ in general position the representations $I_{P^-}^H(\sigma)$ and $I_P^H(\sigma)$ are irreducible, s.\ Lemma \ref{lemmainduced}. On the other hand, it has to preserve the space $\mathcal S(X_\Theta)_\cusp \simeq \CC[\hat L_P^\cusp, \mathcal L_\Theta]$ of regular sections of $\mathcal L_\Theta$, so it has to be polynomial in $\sigma$.
  
  \item From our Paley--Wiener theorem (e.g.\ in the form of Theorem \ref{Schwartztheorem2}) and the $\mathfrak z^\cusp(X_\Theta^L)$-equivariance properties of the scattering maps, it follows that the image of \eqref{Bmorphism} has to lie in the invariants. On the other hand, by the inverse of \eqref{centertoS} and the fact that $\mathfrak z^\sm(H)\subset\End_{H\times H}(\mathcal S(H))$, every invariant induces an $H\times H$-equivariant endomorphism of $\mathcal S(H)$, thus by the first assertion of this proposition we get the desired isomorphism.
 \end{enumerate}

\end{proof}

\subsection{Paley--Wiener theorem}
In the case of the group, $X=H$, $G=H\times H$, we would like to explain the relation of our theorem to the well-known Paley--Wiener theorem of Bernstein \cite{BePadic} and Heiermann \cite{Heiermann}. We clarify that our theorem goes only half-way towards their result; for the other half, one needs to appeal to Proposition (0.2) of \cite{Heiermann}, which is probably also the hardest part of that paper. This is because the Paley--Wiener theorem of Bernstein and Heiermann for the group does not generalize (as a statement) to spherical varieties; and there is a non-trivial distance to cover in order to obtain one from the other, accomplished through the aforementioned proposition of Heiermann. In fact, the steps taken in part A of \cite{Heiermann} can be recast in the setting of our general proof; thus, our work provides a weak generalization, but not a new proof of the Paley--Wiener theorem for reductive groups. We find it important, nevertheless, to explain the connection.

To state the Paley--Wiener theorem of Bernstein and Heiermann we will use the language of bundles, as in \S \ref{sec:generalities}, \ref{sec:coinvariants}; we will not explicitly detail the algebraic structure of the bundles that we will encounter, since the process is identical to the one we have used thus far.  

\begin{theorem}[{Bernstein \cite{BePadic}, Heiermann \cite{Heiermann}}]\label{PWgroup}
For every parabolic $P$ of $H$, denoting its Levi quotient by $L$, consider the bundle $\sigma\mapsto \End\left(I_P^H(\tilde\sigma)\right)$ over $\hat L^\cusp_\CC$. 

Fixing a Haar measure $dh$, for every smooth representation $\pi$ we have the canonical map:
$$ \mathcal S(H) \ni f \mapsto \pi(f dh) \in \End(\pi).$$

Then this map gives rise to an isomorphism:
$$ \mathcal S(H) \xrightarrow\sim \left( \bigoplus_P \CC\left[\sigma\in \hat L^\cusp, \End\left(I_P^H(\tilde\sigma)\right)\right] \right)^\inv,$$
where:
\begin{itemize}
 \item $P$ ranges over all conjugacy classes of parabolics;
 \item the exponent $~^\inv$ refers to sections of the bundle of endomorphisms which commute with all standard intertwining operators. 
\end{itemize}
\end{theorem}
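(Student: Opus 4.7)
My plan is to deduce Theorem \ref{PWgroup} by specializing Theorem \ref{Schwartztheorem2} (= Theorem \ref{PW2}) to the case $X = H$, $G = H \times H$ and then matching notation. The first step is to identify the fibers and the scattering maps in this setting. For a standard parabolic $P$ of $H$ with Levi $L$, the boundary degeneration $X_P = L \times^{P \times P^-}(H \times H)$ has Levi variety $L$ under the diagonal $L \times L$-action, so $\widehat{X_P^L}^\cusp$ is canonically identified with $\widehat L^\cusp$. For $\sigma \in \widehat L^\cusp$, Frobenius reciprocity gives a canonical identification $\mathcal L_{P,\sigma} = \mathcal S(X_P)_{\sigma,\cusp} \simeq I_P^H(\sigma)\otimes I_{P^-}^H(\tilde\sigma) \simeq \End(I_P^H(\sigma))_{\mathrm{fin}}$, and these assemble into a bundle isomorphism $\mathcal L_P \simeq \End(I_P^H(\bullet))$. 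Under this identification, the fiberwise scattering maps $\mathscr S_w$ (for $w \in W_X(Q,P) \subset W_H$) correspond precisely to conjugation by standard intertwining operators $I_P^H(\sigma) \to I_Q^H({}^w\sigma)$, by virtue of their characterization in Proposition \ref{scatteringtool} (together with the normalization of Eisenstein integrals in \S\ref{sec:Eisenstein}).

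The second step is to match the maps. The cuspidal normalized constant term $E_{P,\cusp}^*: \mathcal S(H) \to \Gamma(\widehat L^\cusp, \mathcal L_P)$ sends $f \in \mathcal S(H)$ to the section $\sigma \mapsto I_P^H(\sigma)(f\, dh) \in \End(I_P^H(\sigma))$: this is essentially tautological for $P$ minimal (it is the statement that the coinvariant map $\mathcal S(H) \to \mathcal S(H)_{\pi,\cusp}$ is $f \mapsto \pi(f\, dh)$) and for general $P$ it follows from the inductive structure of normalized Eisenstein integrals via second adjunction. Consequently, the direct sum of the maps $f \mapsto (\sigma \mapsto I_P^H(\sigma)(f\,dh))_P$ is precisely the isomorphism furnished by Theorem \ref{Schwartztheorem2}, giving
$$\mathcal S(H) \xrightarrow{\sim} \Bigl(\bigoplus_P \CC^+[\widehat L^\cusp, \End(I_P^H(\bullet))]\Bigr)^{\inv},$$
where $^\inv$ refers to invariance under conjugation by all standard intertwining operators.

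The third and hardest step is to compare this with Theorem \ref{PWgroup}, which has $\CC[\,\cdot\,]$ in place of our $\CC^+[\,\cdot\,]$. Since $\CC[\widehat L^\cusp, \End(I_P^H(\bullet))] \subset \CC^+[\widehat L^\cusp, \End(I_P^H(\bullet))]$ and both sides respect the invariance constraints, what is required is to prove the nontrivial equality
$$\Bigl(\bigoplus_P \CC^+[\widehat L^\cusp, \End(I_P^H(\bullet))]\Bigr)^{\inv} = \Bigl(\bigoplus_P \CC[\widehat L^\cusp, \End(I_P^H(\bullet))]\Bigr)^{\inv},$$
that is, to show that an invariant collection of sections with (linear) poles automatically has no poles at all. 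Concretely, the poles of sections in $\CC^+$ are absorbed into the scattering denominators, and the invariance forces the residues at each potential pole hyperplane to arrange themselves so that the sum of local contributions is regular. This is the substance of Proposition (0.2) of Heiermann \cite{Heiermann}, and it is the main obstacle: nothing in our analysis produces the regularity for free, because our scattering operators (unlike the unitary ones) genuinely have poles on $\widehat L^\cusp_\CC$. One expects to establish this regularity by a residue calculus along walls of the form \eqref{singular}, using the associativity $\mathscr S_{w'}\mathscr S_w = \mathscr S_{w'w}$ from Theorem \ref{fiberwisescattering} to cancel residues pairwise; this is the same mechanism as in Heiermann's argument, and we would simply appeal to his result.

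Granting this last equality, the theorem follows, completing the comparison.
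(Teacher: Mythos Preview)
Your overall strategy matches the paper's: specialize Theorem~\ref{Schwartztheorem2} to $X=H$, identify the scattering data explicitly, and then invoke Heiermann's Proposition~(0.2) for the final step. However, your steps~1 and~2 contain a genuine error that breaks the argument as written.

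The identification $\mathcal L_{P,\sigma}\simeq I_P^H(\sigma)\otimes I_{P^-}^H(\tilde\sigma)$ is correct, but the further identification with $\End(I_P^H(\sigma))$ is \emph{not} an isomorphism of algebraic bundles: the smooth dual of $I_P^H(\sigma)$ is $I_P^H(\tilde\sigma)$, not $I_{P^-}^H(\tilde\sigma)$, and passing from one to the other requires the standard intertwining operator $T_0:I_{P^-}^H(\tilde\sigma)\to I_P^H(\tilde\sigma)$, which is only rational in $\sigma$. Consequently your claim in step~2 that $E_{P,\cusp}^*(f)(\sigma)=I_P^H(\sigma)(f\,dh)$ is false; the paper shows (citing \cite[\S 15.7]{SV}) that $E_{P,\cusp}^* = (1\otimes T_0^{-1})\circ M^*$, where $M^*$ is your matrix-coefficient map. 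Likewise, the fiberwise scattering maps are $\mathscr S_w = T_{Q|P}\otimes T_{P^-|Q^-}^{-1}$ on $I_P^H(\sigma)\otimes I_{P^-}^H(\tilde\sigma)$, which only become conjugation $T_{Q|P}\otimes T_{P|Q}^{-1}$ \emph{after} conjugating by the rational map $1\otimes T_0$. Your ``essentially tautological'' justification is therefore incorrect.

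The paper handles this via an explicit commutative diagram relating the $(M^*,\text{conjugation})$-picture on $I_P\otimes I_P(\tilde\sigma)$ to the $(E^*,\mathscr S_w)$-picture on $I_P\otimes I_{P^-}(\tilde\sigma)$, the vertical arrows being $1\otimes T_0^{-1}$. With this in place, the nontrivial step is not the equality $\CC^+=\CC$ you wrote, but rather that every \emph{regular} invariant on the $\End$-side is carried by $1\otimes T_0^{-1}$ into $(\bigoplus_P\CC^+[\mathcal L_P])^{\inv}$; this is exactly what Heiermann's Proposition~(0.2) provides, in the form of writing $\varphi$ as a sum $\sum T_{Q|P}\otimes T_{Q|P^-}\,\xi_P$ with regular $\xi_P$. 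So your step~3 is correct in spirit, but its formulation must be adjusted once steps~1--2 are fixed.
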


We will use the notation of \S \ref{ssgroupscattering}. In particular, $T_{Q|P}$ is the standard intertwining operator between representations induced from parabolics $P, Q$ which share a common Levi subgroup (depending on a choice of Haar measure on $U_Q/U_Q\cap U_P$), $X_{[P]}$ denotes the boundary degeneration corresponding to a class $[P]$ of parabolics in $H$, and the space $\widehat{X_{[P]}^L}^\cusp_\CC$ is identified with $\widehat{L}_\CC^\cusp$ as explained there. 

By setting $I_P^H(\sigma)$ in duality with $I_{P}^H(\tilde\sigma)$ (that depends on the choice of a Haar measure on $U_{P^-}$), the bundle with fibers $\sigma\mapsto \End\left(I_P^H(\tilde\sigma)\right)$ of Theorem \ref{PWgroup} is identified with the bundle whose fiber over $\sigma \in \widehat L_\CC^\cusp$ is $I_P^H(\tilde\sigma)\otimes I_{P}^H(\sigma)$.

Thus, the morphism $f\mapsto \pi(f dh)$ can be understood as a morphism:
\begin{equation}\label{MCdual}M^*:\mathcal S(X)\to \CC\left[ \sigma\in \hat L^\cusp, I_P^H(\tilde\sigma)\otimes I_{P}^H(\sigma)\right]\end{equation}
where the notation $M^*$ is due to the fact that this is dual to the operation of taking matrix coefficients. On the other hand, the condition of invariance under standard intertwining operators in Theorem \ref{PWgroup} can be tranlated to the condition of invariance under the operators:
$$\CC\left(\sigma\in \hat L^\cusp, I_P^H(\tilde\sigma)\otimes I_{P}^H(\sigma)\right) \xrightarrow {T_{Q|P} \otimes T_{P|Q}^{-1} } \CC\left(\sigma\in\hat L^\cusp, I_Q^H(\tilde\sigma)\otimes I_{Q}^H(\sigma)\right).$$

By \cite[\S 15.7]{SV}, one obtains the normalized cuspidal constant terms $E_{\Theta,\cusp}^*$ out of this by composing with the inverse of the standard intertwining operator $T_0: I_{P^-}^H(\sigma) \to I_{P}^H(\sigma)$ in the second variable:
$$\mathcal S(X)\xrightarrow{M^*} \CC\left[ I_P^H(\tilde\sigma)\otimes I_{P}^H(\sigma)\right]\xrightarrow{1\otimes T_0^{-1}} \CC\left( I_P^H(\tilde\sigma)\otimes I_{P^-}^H(\sigma)\right),$$
where we have for brevity omitted $\hat L^\cusp$ from the notation.

Thus, we have a commutative diagram:
\begin{equation}\label{cd-group2}\xymatrix{ & \mathcal S(H) \ar[dl]_{M^*} \ar[dr]^{M^*} & \\  \CC\left[ I_P^H(\tilde\sigma)\otimes I_{P}^H(\sigma)\right] \ar[d]_{\simeq}^{1\otimes T_0^{-1}} \ar[rr]^{T_{Q|P} \otimes T_{P|Q}^{-1} } && \CC\left[ I_Q^H(\tilde\sigma)\otimes I_Q^H(\sigma)\right] \ar[d]_{\simeq}^{1\otimes T_0^{-1}} 
\\  \CC\left( I_P^H(\tilde\sigma)\otimes I_{P^-}^H(\sigma)\right) \ar[rr]^{T_{Q|P} \otimes T_{P^-|Q^-}^{-1}} 
&& \CC\left( I_Q^H(\tilde\sigma)\otimes I_{Q^-}^H(\sigma)\right)
}\end{equation}
which is dual to \eqref{cd-group1}, so the compositions of slanted and vertical arrows are the normalized constant terms. Notice that up to this point we have made choices of Haar measures on $H$ and $U_{P^-}$ (so that the left slanted arrow $M^*$ is proportional to the measure $dh$ and inversely proportional to $U_{P^-}$), and of a Haar measure on $U_P$ (to which $T_0^{-1}$ is inversely proportional; and similarly when $P$ is replaced by $Q$. The measure $dh$ also induces measures on the boundary degenerations $X_{[P]}$, $X_{[Q]}$, and we leave to the reader to check that the choices of measures cancel each other out when we identify the bundles in the bottom row with the bundles of cuspidal coinvariants $\mathcal L_{[P]}$, resp.\ $\mathcal L_{[Q]}$.

Our Theorem \ref{Schwartztheorem}, together with Proposition \ref{groupscatteringprop}, states that the sum of normalized constant terms induces an isomorphism:
\begin{equation}\label{fd} \mathcal S(H) \xrightarrow{\sim} \left(\bigoplus_{P}  \CC^+\left[ I_P^H(\tilde\sigma)\otimes I_{P^-}^H(\sigma)\right] \right)^\inv,\end{equation}
where $\inv$ denotes invariants of the fiberwise scattering maps $\mathscr S_w = T_{Q|P} \otimes T_{P^-|Q^-}^{-1}$. Recall that the space $\CC^+\left[ I_P^H(\tilde\sigma)\otimes I_{P^-}^H(\sigma)\right]$ is generated by applying these scattering maps to regular sections.

To see that this implies Theorem \ref{PWgroup}, the only non-trivial statement to prove is that \emph{every} element of:
\begin{equation}\label{fd2}\left( \bigoplus_Q \CC\left[I_Q^H(\tilde\sigma)\otimes I_Q^H(\sigma)\right] \right)^\inv
\end{equation}
corresponds to an element of the right hand side of \eqref{fd} under the diagram \eqref{cd-group2}, but this is \cite[Proposition 0.2]{Heiermann} which, in our language, states:

\begin{proposition}[Heiermann \cite{Heiermann}]
 For every element $\varphi = (\varphi_Q)_Q$ of \eqref{fd2} there is an element $\xi = (\xi_P)_P \in \bigoplus_{P}  \CC\left[ I_P^H(\tilde\sigma)\otimes I_{P^-}^H(\sigma)\right]$ such that:
 $$ \varphi = \left( \sum_{P\sim Q, w\in W_H(P,Q)} T_{Q|P} \otimes T_{Q|P^-} \xi_P \right)_Q.$$
\end{proposition}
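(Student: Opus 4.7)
The plan is to follow the strategy of Heiermann's proof of his Proposition 0.2, translating it into the language of spherical varieties developed in this paper.

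First, I would apply $1 \otimes T_0^{-1}$ via the commutative diagram \eqref{cd-group} to convert the polynomial invariant tuple $(\varphi_Q)_Q$ into a rational tuple $(\tilde\varphi_Q)_Q$, with each $\tilde\varphi_Q \in \CC(\hat L^\cusp, I_Q^H(\sigma)\otimes I_{Q^-}^H(\tilde\sigma))$. The poles of $\tilde\varphi_Q$ are confined to the singular divisor of $T_0^{-1}$, which is controlled by Harish-Chandra's Plancherel $\mu$-function and the Knapp-Stein $R$-group. The invariance hypothesis of $\varphi$ under the Heiermann operators $T_{Q|P}\otimes T_{P|Q}^{-1}$ translates, by the commutativity of \eqref{cd-group}, exactly into invariance of $\tilde\varphi$ under the fiberwise spherical scattering operators $\mathscr S_w = T_{Q|P}\otimes T_{P^-|Q^-}^{-1}$ identified in \S\ref{ssexplication}.

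Next, I would invoke our Paley-Wiener Theorem \ref{Schwartztheorem2} applied to $X=H$: a tuple of polynomial sections $(\xi_P)_P$, with $\xi_P \in \CC[\hat L^\cusp, I_P^H(\sigma)\otimes I_{P^-}^H(\tilde\sigma)]$, produces under the averaging $\bigl(\sum_{P\sim Q,\,w\in W_H(P,Q)} \mathscr S_w \xi_P\bigr)_Q$ precisely the elements of the $\CC^+$-space of invariants corresponding to $\mathcal S(H)$. The problem therefore reduces to showing that every rational invariant $\tilde\varphi$ arising as above actually lies in $\CC^+$, i.e.\ that it admits a polynomial preimage $\xi$.

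To construct $\xi$, I would proceed by induction on the dimension of $L_P$, starting with $\xi_H = \varphi_H$ in the cuspidal (base) case, where nothing needs to be done. At each inductive step I would subtract from $\tilde\varphi_P$ the contributions of the already-constructed $\xi_Q$ for $Q\supsetneq P$, and then attempt to write the remainder as a polynomial section averaged over the stabilizer $W_H(P,P)$. The main obstacle will be verifying polynomiality of the resulting $\xi_P$: the rational section $\tilde\varphi_P$ carries singularities on the divisors where $T_0^{-1}$ is singular, and the cancellation of these singularities demands a precise residue analysis linking the poles of $T_{P^-|Q^-}^{-1}$ with those of $T_0^{-1}$, mediated by the invariance relations under all of $W_H(P,P)$. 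This residue calculus — which rests on Harish-Chandra's theory of the $\mu$-function together with the explicit $R$-group description of the reducibility and singularities of standard intertwining operators on the unitary set — is precisely the technical heart of \cite{Heiermann}, falls outside the framework developed in the present paper, and is where the ``hard'' half of the classical Paley-Wiener theorem resides; we therefore refer to \emph{loc.\ cit.}\ for the detailed argument.
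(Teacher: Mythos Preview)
The paper does not give a proof of this proposition at all: it is introduced explicitly as a restatement, in the paper's language, of \cite[Proposition 0.2]{Heiermann}, and the proof is entirely deferred to that reference. The surrounding text even emphasizes that this is ``probably also the hardest part'' of Heiermann's paper and that the methods of the present paper only go ``half-way'' towards the classical group Paley-Wiener theorem. So there is no ``paper's own proof'' to compare against; the correct answer here is simply to cite Heiermann.

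Your proposal ends up in the same place --- you explicitly defer the residue/polynomiality analysis to \cite{Heiermann} --- so at the level of what is actually established, you and the paper agree. That said, the intermediate scaffolding you build is partly misguided. The equations
\[
\varphi_Q = \sum_{P\sim Q,\ w\in W_H(P,Q)} T_{Q|P}\otimes T_{Q|P^-}\,\xi_P
\]
decouple completely over associate classes $[Q]$ of parabolics: there is no interaction between classes of different Levi dimension, so an ``induction on $\dim L_P$'' with base case $\xi_H=\varphi_H$ does not organize the problem. What is actually at stake, within a fixed associate class, is inverting an averaging operator over $W_H(P,Q)$ while staying in the space of \emph{polynomial} sections --- and this is exactly the pole-cancellation/residue analysis along the singular hyperplanes of the standard intertwining operators that Heiermann carries out. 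Your invocation of Theorem \ref{Schwartztheorem2} correctly identifies the target space $\CC^+$, but it does not reduce the difficulty of showing that every such $\tilde\varphi$ lands there.
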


Notice that $T_{Q|P} \otimes T_{Q|P^-} = T_{Q|P} \otimes (T_0\circ T_{P^-|Q^-}^{-1})$. Thus, under the vertical arrows of diagram \eqref{cd-group2}, the element $\varphi$ corresponds to the element:
$$\left( \sum_{P\sim Q, w\in W_H(P,Q)} T_{Q|P} \otimes T_{P^-|Q^-}^{-1} \xi_P \right)_Q$$
of $ \left(\bigoplus_{Q}  \CC^+\left[ I_Q^H(\tilde\sigma)\otimes I_{Q^-}^H(\sigma)\right] \right)^\inv$. This recovers Theorem \ref{PWgroup} on the basis of \cite[Proposition 0.2]{Heiermann}.

\appendix

\section{Characterization of strongly factorizable spherical varieties} \label{app:factorizable}

In this appendix we assume that $G$ is split.

Recall that a homogeneous spherical variety $X$ is called \emph{factorizable} if the rank of $X^\ab$ is equal to the rank of $\mathcal Z(X)$, and that a wavefront spherical variety is called \emph{strongly factorizable} if all its Levi varieties are factorizable.

We will characterize factorizable and strongly factorizable spherical varieties in terms of combinatorial invariants attached to $X$. We refer the reader to \cite{Lu} for more details on the definitions and properties of these invariants.

Recall that the group $\varchi(X)=\Hom(A_X,\Gm)$ defined previously denotes the subgroup of characters of a Borel subgroup $B$ which are trivial on generic stabilizers or, equivalently, the group of eigencharacters of the Borel subgroup on the space $F(X)^{(B)}$ of non-zero rational $B$-eigenfunctions on $X$. By definition, the group $\varchi(X)$ is a subgroup of $\varchi(A)$, the character group of the universal Cartan $A=B/N$.  

The little Weyl group $W_X$ acts on $\varchi(X)$. The character group of $\mathcal Z(X)$ can be identified with the quotient of $\varchi(X)$ by the characters in the subspace of $\varchi(X)_\QQ = \varchi(X)\otimes\QQ$ spanned by the set $\Delta_X$ of spherical roots. 

Let $\mathcal D$ be the set of \emph{colors}, i.e.\ prime $B$-stable geometric divisors. Each of them induces a valuation on the function field over the algebraic closure $\bar F(X)$ and, by restriction to $B$-eigenfunctions, a map $\rho_X: \mathcal D\to \Hom(\varchi(X),\Z)$. Indeed, there is a short exact sequence
\begin{equation}\label{char} 1\to \bar F^\times \to \bar F(X)^{(B)} \to \varchi(X)\to 1,\end{equation}
and the valuations are trivial on $\bar F^\times$. Since, in the case of $G$ being split, the Galois group acts trivially on $\varchi(X)$, these valuations are Galois stable --- in particular, Galois-conjugate colors give rise to the same valuation.

\begin{proposition}\label{propositionstrict}
A homogeneous spherical $G$-variety $X$ is factorizable if and only if the following two conditions are satisfied:
\begin{enumerate}
\item $\varchi(X)_\QQ^{W_X} \subset \varchi(A)_\QQ^W$;
\item the set $\rho_X(\mathcal D)$ of valuations induced by colors lies in the subspace of $\varchi(X)_\QQ^*$ spanned by the images of coroots of $G$ under the quotient map $\varchi(A)_\QQ^*\to \varchi(X)_\QQ^*$.
\end{enumerate}

A wavefront homogeneous spherical variety is strongly factorizable if and only if the following two conditions are satisfied:
\begin{enumerate}
\item For every subset $\Theta$ of the set $\Delta_X$ of spherical roots, 
$$\varchi(X)_\QQ^{W_{X_\Theta}} \subset \varchi(A)_\QQ^{W_{L_\Theta}}.$$
(Recall that the little Weyl group $W_{X_\Theta}$ of $X_\Theta$ is generated by the simple reflections associated to elements of $\Theta$; by $W_{L_\Theta}$ we denote the Weyl group of the Levi subgroup $L_\Theta$.)
\item For every color $D$, $\rho_X(D)$ is a multiple of the image of a simple coroot $\check\alpha$ of $G$.
\end{enumerate}
\end{proposition} 

Before we prove the proposition, we make some remarks, prove some lemmas and give some examples that clarify its use.

\begin{remarks}
\begin{enumerate}
\item In terms of the dual groups, the first condition means that the center of the dual group of $X$ is, up to finite indices, contained in the center of the dual group of $G$, and similarly for all Levi varieties (for strong factorizability). Notice that the dual group\footnote{We are using here the Gaitsgory-Nadler dual group, denoted by $\check G_{X,GN}$ in \cite{SV}, assuming that its root datum is the one corresponding to the spherical roots --- see \emph{loc.cit.}, Theorem 2.2.3.} $\check G_X$ of $X$ is a canonical subgroup of the dual group $\check G$ of $G$, if the Tannakian construction of Gaitsgory-Nadler is assumed, or a canonical subgroup up to conjugacy by the canonical maximal torus, if a combinatorial definition based on the set $\Delta_X$ of spherical roots is used. The dual $\check L_\Theta$ of the Levi $L_\Theta$ is determined by the set of simple roots of $G$ in the support of elements of $\Theta$, and the simple roots in the Levi of $P(X)$. Hence, this is a condition that can be easily checked once the spherical roots of $X$ and the parabolic $P(X)$ are known.

\item The second condition (in both cases) cannot be read off from dual groups. It requires more specific knowledge on colors, which can be obtained from the Luna diagram \cite{Lu} of the spherical variety. This condition, for strong factorizability, eventually boils down to a determination of valuations for ``type $T$'' colors, i.e.\ colors attached\footnote{A color $D$ is attached to the simple root $\alpha$ of $G$ if it belongs to $\mathring X\cdot P_\alpha$, where $P_\alpha$ is the parabolic of semisimple rank $1$ associated to $\alpha$.} to simple roots of $G$ belonging to (the set of unnormalized spherical roots)\footnote{Those are the spherical roots as used by Luna. They are multiples of elements of $\Delta_X$.} $\Sigma_X$. More precisely: 
\end{enumerate}
\end{remarks}

\begin{lemma}
The second condition for strong factorizability is satisfied if and only if for every $\alpha \in\Sigma_X$ which is also a simple root for $G$, the two colors contained in $\mathring X\cdot P_\alpha$ induce valuations equal to $\frac{\check\alpha}{2}$.
\end{lemma}

Such colors are called \emph{undetermined} in \cite[\S 5]{KnAu}.

\begin{proof}
Indeed, for every color $D$ there exists at least one $\alpha$ such that $D\subset \mathring X P_\alpha$, and then $\rho_X(D) = $ the image of $\check\alpha$ (hence the second condition of strong factorizability is satisfied) \emph{except} when $\alpha\in \Sigma_X$, cf.\ \cite{Lu}. In this last case, consider the boundary degeneration of rank one associated to $\Theta = \{\alpha\}$. The Levi $L_\Theta$ has simple roots $\{\alpha\}\cup S^p_X$, where, using Luna's notation, $S^p_X$ denotes the simple roots in the Levi of $P(X)$. But it is known that if $\beta\in S^p_X$ then the image of the coroot $\check\beta$  in $\varchi(X)_\QQ^*$ is zero. Hence, $\rho_X(D)$ has to be a multiple of (the image of) $\check\alpha$, and then it has to be equal to $\frac{\check\alpha}{2}$.
(Cf.\ \emph{loc.cit.} for all the facts we are using.) 
\end{proof}

Moreover, it has been proven by Losev \cite[Theorem 2]{Lo} that for $\alpha\in \Sigma_X$ as in the previous lemma, the spherical variety $X_N = N(H)\backslash G = X/\Aut_G(X)$ (where $X=H\backslash G$ and $N(H)$ denotes its normalizer), the spherical root $\alpha$ gets replaced by $2\alpha$. That means that, up to dividing by the $G$-automorphism group or a suitable subgroup thereof, $X$ is a spherical variety whose spherical system has no simple roots, and those have been classified, along with ``strict spherical varieties'' by \cite{BCF}. Most of those are symmetric, and among the non-symmetric ones some are not of wavefront type or do not satisfy the first condition of Proposition \ref{propositionstrict}, but there are some examples that do:

\begin{example} \label{exA1}
Let $X=G_2\backslash \SO_7$. Its spherical system is denoted $b'''(3)$ in \cite{BCF}. With simple roots labelled consecutively (on the Dynkin diagram) as $\alpha_1,\alpha_2,\alpha_3$, where $\alpha_3$ is the short root, we have $S^p_X = \{ \alpha_1,\alpha_2\}$ and $\Sigma_X = \{\alpha_1+2\alpha_2+3\alpha_3\}$.

The variety $X$ is factorizable for trivial reasons: $\mathcal Z(X)$ is trivial. From its spherical system it can be deduced that its only boundary degeneration is the unique horospherical homogeneous variety $X_\emptyset$ with $S^p_{X_\emptyset}=S^p_X$ and character group  spanned by $\alpha_1+2\alpha_2+3\alpha_3$. Thus, $X_\emptyset = \SL_3 U\backslash G$, where $U$ is the unipotent radical of the parabolic with Levi of type $\GL_3$, and  $\SL_3$ is the derived group of this Levi. Hence, the Levi variety is $\SL_3\backslash \GL_3$, which is factorizable.
\end{example}

\begin{example} \label{exA2}
Let $X=H\backslash G$, where $G$ is the exceptional group $G_2$ and $H=\SL_3$; its spherical system is denoted $g(2)$ in \emph{loc.cit.} Here $S^p_X=\{\alpha_2\}$ (the long root) and $\Sigma_X =\{ 2\alpha_1+\alpha_2\}$. Again, $\mathcal Z(X)=1$, and from the spherical system it can be deduced that the only boundary degeneration  $X_\emptyset$  is isomorphic to the quotient of $G$ by the subgroup $\SL_2\cdot U$, where $U$ is the unipotent radical of the parabolic whose Levi has root $\alpha_2$, and $\SL_2$ belongs to that Levi. Thus, $X_\emptyset^L = \SL_2\backslash \GL_2$, which is factorizable.
\end{example}

Now we come to the proof of Proposition \ref{propositionstrict}.

\begin{proof}[Proof of Proposition \ref{propositionstrict}]

Consider the diagram of natural morphisms of tori:
$$ \xymatrix{ 
\mathcal Z(X) \ar[r] &  A_X \ar[r] & X^\ab \\
\mathcal Z(G)^0 \ar[r]\ar[u] & A \ar[r]\ar[u] & G^\ab \ar[u].}$$

In terms of character groups we have a dual diagram:
$$ \xymatrix{ 
\varchi(X)\otimes \QQ/\left<\Sigma_X\right>_{\QQ} \ar[d]& \ar[l] \varchi(X)\otimes \QQ \ar[d]  & \ar[l]\varchi(X^\ab)\otimes \QQ \ar[d]\\
\varchi(A)\otimes \QQ/\left<\Phi\right>_{\QQ} & \ar[l]\varchi(A)\otimes \QQ & \ar[l] \varchi(G^\ab) \otimes\QQ .}$$

The vertical arrows are injective, and so are the horizontal arrows on the right.  Thus, for an element $\chi\in \varchi(X)\otimes \QQ$ to come from $\varchi(X^\ab)\otimes \QQ$, first of all it has to come from $\varchi(G^\ab)\otimes\QQ$; thus, it has to be orthogonal to the subspace of $\Hom(\varchi(A),\QQ)$ spanned by coroots of $G$. Granted that, and assuming without loss of generality that $\chi\in \varchi(X)$, we need to make sure that such a character is trivial on the subgroup $H$ stabilizing a point on the homogeneous variety $X$. This is \emph{equivalent} to saying that $\chi$, thought of as a function on the open Borel orbit (uniquely determined up to scalar by \eqref{char}), extends to a \emph{nonvanishing, regular function} on the spherical variety.  (The equivalence is established by pulling it back to $G$ and using the fact that  a regular, non-vanishing function on $G$ is necessarily a scalar multiple of a character.)  Which, in turn, is equivalent to saying that its valuations on all colors are trivial. Thus:

\begin{equation} \varchi(X^\ab)\otimes \QQ = \varchi(X)\otimes\QQ \cap \overline{\check \Phi}^\perp \cap \rho_X(\mathcal D)^\perp,\end{equation}
where $\overline{\check\Phi}$ denotes the image of coroots of $G$.
Thus, $X$ is factorizable if and only if the span of colors and coroots of $G$ in $\Hom(\varchi(X),\QQ)$ has the same dimension as the span of $\Sigma_X$ in $\varchi(X)_\QQ$. 

This condition can be reformulated, taking into account that there is a Weyl group action on $\varchi(X)$, under which the quotient $\varchi(X)\otimes \QQ/\left<\Sigma_X\right>_{\QQ}$ can be identified with the subspace of $W_X$-fixed vectors. This, in turn, contains the subspace $\varchi(X)\otimes\QQ \cap \overline{\check \Phi}^\perp$. (Notice that, in terms of dual groups, this containment corresponds the embedding into the center of the dual group of $X$ of its intersection with the center of the dual group of $G$.) Thus, a variety is factorizable if and only if the $W_X$-fixed subspace of $\varchi(X)\otimes\QQ $ belongs to the $W$-fixed subspace of $\varchi(A)\otimes \QQ$ \emph{and}, moreover, the valuations induced by colors, i.e.\ the set $\rho_X(\mathcal D)$, are in the subspace spanned by the images of coroots of $G$.

For a Levi variety, the lattice $\varchi(X)$ does not change, the set of spherical roots is a subset of $\Sigma_X$, and the set of colors can be identified with the subset of those $D\in \mathcal D$ with $\left<\gamma,\rho_X(D)\right> >0$ for some $\gamma$ in that subset of $\Sigma_X$ (cf.\ \cite{Lu}). The first condition of strong factorizability follows directly from the first condition of factorizability, and the second follows from considering boundary degenerations with a unique spherical root, and clearly suffices for all other boundary degenerations. 

\end{proof}

Finally, we check that the two examples \ref{exA1}, \ref{exA2} of non-symmetric, strongly factorizable varieties that we saw satisfy the rest of the assumptions of this paper (s.\ \S \ref{ssassumptions}).

In both cases, the character group $\varchi(X)$ is generated by the spherical root, hence is of rank one, and the spherical root is a root of the group. Hence, we have $\mathfrak a_{X,\Theta}^* = $ either the line generated by the spherical root (when $\Theta =\emptyset$) or the trivial space $\{0\}$ (when $\Theta = \Delta_X$). In the non-trivial case, since $\mathfrak a_{X,\Theta}^*$ is one-dimensional, an element of the Weyl group that leaves it invariant either acts trivially on it or acts by $(-1)$; hence, both actions are represented by elements of the little Weyl group $W_X$. Thus, the strong generic injectivity assumption is fulfilled.

We now sketch the argument for the validity of the explicit Plancherel formula of \cite[Theorem 15.6.2]{SV}, by checking the ``generic injectivity of small Mackey restriction'' (we point the reader to \emph{loc.cit}.\ for the definitions). This has to do with representations appearing in the continuous spectrum of $X$, i.e.\ in the spectrum of (the unique boundary degeneration) $X_\emptyset$. In this case, since $X_\emptyset$ is horospherical, the representations will have the form $I_{P(X)}(\chi)$, where $\chi$ is a character that is trivial on the intersection of $P(X)$ with the stabilizer of a point on $X_\emptyset$ belonging to the opposite parabolic. 

We need to show that, for generic such $\chi$, any morphism:
$$ \mathcal S(X)\to I_{P(X)}(\chi)$$
is obtained by the analytic continuation of the functional ``integration over the open $P(X)$-orbit'' (against the character $\chi^{-1}\delta_{P(X)}^{-\frac{1}{2}}$). To show this, we argue that other $P(X)$-orbits (or, for that matter, other Borel orbits) cannot support such a $P(X)$-equivariant (or Borel-equivariant) distribution. For unramified characters, this has been done in \cite{SaSpc}, but the same argument works in general. It is enough to show that no $B$-orbit other than the open one has character group (= the group of characters of the Borel subgroup which are trivial on stabilizers of points on this orbit) different from the open orbit (whose character group coincides with $\varchi(X)$ and hence, in our examples, is generated by the unique spherical root). By \cite{KnOrbits}, the rank of the character group of each Borel orbit is  at most equal to that rank for the open orbit, which in our case is $1$; and all orbits of maximal rank (in our case, rank $1$) are conjugate under an action of the full Weyl group $W$ defined by F.\ Knop, which is compatible with the action of $W$ on character groups (considered as subgroups of $\varchi(A)$, the character group of the Borel). The stabilizer of the open orbit under this action is the product $W_X\ltimes W_{L(X)}$, where $L(X)$ is the Levi of $P(X)$, so the whole problem boils down to checking that the stabilizer of $\varchi(X)\subset \varchi(A)$ in $W$ is equal to $W_X\ltimes W_{L(X)}$ in our examples. This is indeed the case: Let $w\in W$ stabilize $\varchi(X)$. Without loss of generality (multiplying, if necessary, by the non-trivial element of $W_X$), $w$ acts trivially on $\varchi(X)$. But this places $w$ in the centralizer of the dual torus $\check A_X$ (the torus with cocharacter group $\varchi(X)$) in the dual group $\check G$ of $G$, a Levi of $\check G$ which by \cite[Lemma 3.1]{KnAs} is the Levi dual to $P(X)$; thus, $w\in W_{L(X)}$.

\bibliographystyle{alphaurl}
\bibliography{biblio}

\end{document}